\newtheorem{theorem}{Theorem}[section]
\newtheorem{lemma}[theorem]{Lemma}
\newtheorem{proposition}[theorem]{Proposition}
\newtheorem{corollary}[theorem]{Corollary}
\newtheorem{Mthm}{Theorem}
\theoremstyle{definition}
\newtheorem{definition}[theorem]{Definition}
\newtheorem{AN}[theorem]{Assumption-Notation}
\newtheorem{defass}[theorem]{Definition-assumption}
\newenvironment{example}
  {\pushQED{\qed}\examplex}
  {\popQED\endexamplex}
\theoremstyle{remark}
\newtheorem{remark}[theorem]{Remark}
\newcommand{\setmapG}[2]{\underline{w}_{#1,#2}}
\newcommand{\setmap}[2]{w_{#1,#2}}
\newcommand{\setmapS}[2]{\widehat{w}_{#1,#2}}
\newcommand{\setmapSG}[2]{\underline{\widehat{w}}_{#1,#2}}
\def\mol{\mathfrak m}
\def\nm{n_{\mol}}
\def\mL{\mol_{L}}
\def\Ac{A^c}
\def\stab{\operatorname{stab}}
\def\rk{\operatorname{rk}}
\def\rkE{\underline{\operatorname{rk}}}
\def\TT{\mathcal T}
\def\LL{\mathcal L}
\def\SS{\mathcal S}
\def\CC{\mathcal C}
\def\BB{\mathcal B}
\def\AA{\mathscr A}
\def\im{\operatorname{im}}
\def\id{\textup{id}}
\def\scr{\mathscr}
\def\PP{\mathcal P}
\def\kg{\kappa_\GS}
\def\GS{\mathfrak S}
\def\mm{m_\GS}
\def\PS{\mathcal P_\GS}
\def\ES{E_{\GS}}
\def\CS{\unten{\CC}}
\newcommand{\CGS}{{\CC_{\GS}}}
\newcommand{\unten}[1]{\underline{#1}}
\def\nn{\mathbb{N}}
\def\kk{\mathbb{K}}
\def\oo{\mathcal O}
\def\rc{\rk_\mathcal{C}}
\def\rl{\rk_\LL}
\def\lc{\LL(\mathcal{S})}
\def\lcl{\LL(\mathcal{S}_\LL)}
\def\scr{(S,\CC,\rc)}
\def\CCl{\mathcal{C}_\LL}
\def\rcl{\rk_{\CCl}}
\def\ssq{\subseteq}
\newcommand{\oC}[1]{\lceil #1 \rceil^\CC}
\DeclareMathOperator{\cl}{cl}
\DeclareFontFamily{U}{matha}{\hyphenchar\font45}
\DeclareFontShape{U}{matha}{m}{n}{
      <5> <6> <7> <8> <9> <10> gen * mathb
      <10.95> mathb10 <12> <14.4> <17.28> <20.74> <24.88> mathb12
      }{}
\DeclareSymbolFont{matha}{U}{matha}{m}{n}
\DeclareMathSymbol{\precneq}{3}{matha}{"AC}
\def\ST{translative\xspace}
\def\STy{translativity\xspace}
\def\WT{weakly translative\xspace}
\def\WTy{weak translativity\xspace}
\begin{document}

\title{Group actions on semimatroids}

\keywords{Group actions, matroids, posets, Tutte polynomials, hyperplane arrangements, pseudoline arrangements, toric arrangements}

\author{Emanuele Delucchi}
\address[E.~Delucchi]{Departement de Math\'ematiques, universit\'e de Fribourg, chemin du mus\'ee 23, 1700 Fribourg (Switzerland).}
\email{emanuele.delucchi@unifr.ch}
\author{Sonja Riedel}
\address[S.~Riedel]{Departement de Math\'ematiques, universit\'e de Fribourg, chemin du mus\'ee 23, 1700 Fribourg (Switzerland) and 
Institute for algebra, geometry, topology and their applications, Fachbereich Mathematik und Informatik, Universit\"at Bremen, Bibliothekstra\ss e 1,28359 Bremen (Germany).}
\email{sriedel@math.uni-bremen.de}

\maketitle

\begin{abstract} We initiate the
 study of group actions on (possibly infinite) semimatroids and geometric semilattices. To every such action is naturally associated an orbit-counting function, a two-variable "Tutte" polynomial and a poset which, in the realizable case, coincides with the poset of connected components of intersections of the associated toric arrangement.

  In this structural framework we recover and strongly generalize many
  enumerative results about arithmetic matroids, arithmetic Tutte
  polynomials and toric arrangements by finding new combinatorial
  interpretations beyond the realizable case. In particular, we thus
  find  
  the first class of natural examples of nonrealizable arithmetic matroids. Moreover, under additional conditions 
these actions give rise to a matroid over $\mathbb
  Z$. As a stepping stone toward our results we also prove an extension of the cryptomorphism between semimatroids and geometric semilattices to the infinite case.
\end{abstract}

\tableofcontents

\vspace{-15pt}

\section*{Introduction}
This paper is about group actions on combinatorial structures. 
There is an extensive literature on enumerative aspects of group
actions, from P\'olya's classical work \cite{Polya} 
to, e.g., recent results on polynomial invariants of actions on
graphs \cite{PJC}. The chapter on group actions in Stanley's book \cite{StAC} offers a survey of some of the results in this vein, together with a sizable literature list. 
Moreover, group actions on (finite) partially ordered sets have been
studied from the point of view of representation theory \cite{StaG}, of homotopy theory \cite{MSZ}, and of the poset's topology \cite{BK,ThWe}. 

Here we consider group actions on (possibly infinite) semimatroids and geometric semilattices from a structural perspective. 
We develop an abstract setting that fits different contexts arising in the
 literature, allowing us to unify and generalize many recent results.\medskip

\noindent{\bf Motivation.} Our original motivation came from the desire to better understand the different new combinatorial structures that have been introduced in the wake of recent work of De Concini--Procesi--Vergne \cite{dCP2,dCPV} on toric arrangements and partition functions, and have soon gained independent research interest. 
Our motivating goals are  
\begin{itemize}
\item[--] to organize these
different structures into a unifying theoretical framework, in
particular developing new combinatorial interpretations also in the nonrealizable case; 
\item[--] to understand the geometric side
of this theory, in particular in terms of an abstract class of posets (an 'arithmetic' analogue of geometric lattices).
\end{itemize}

To be more precise, let us consider a list $a_1,\ldots,a_n\in \mathbb
Z^d$ of integer vectors. Such a list gives rise to 
  an {\em arithmetic matroid} (d'Adderio--Moci \cite{dAM} and
  Br\"and\'en--Moci \cite{BM}) with an associated {\em arithmetic Tutte
    polynomial} \cite{Moc1}, and a {\em matroid over the ring $\mathbb
    Z$} (Fink--Moci \cite{FM}).
  Moreover, by interpreting the $a_i$ as characters of the
  torus $\operatorname{Hom}(\mathbb Z^d,\mathbb C^*)\simeq (\mathbb C^*)^d$ we obtain a {\em toric arrangement} in
  $(S^1)^d\subseteq (\mathbb C^*)^d$ defined by the kernels of the characters, 
  with an associated {\em poset of connected components} of intersections of these hypersurfaces.
  In this case, the arithmetic Tutte polynomial computes the
  characteristic polynomial of the arrangement's poset and the Poincar\'e
  polynomial of the arrangement's complement, as well as the Ehrhart
  polynomial of the zonotope spanned by the $a_i$ and the dimension
  of the associated Dahmen-Micchelli space \cite{Moc1}.  Other contexts of application of arithmetic matroids include the theory of spanning trees of simplicial complexes \cite{MaKl} and interpretations in graph theory \cite{DMG}.
After a first version of this paper was submitted, we learned about current work of Aguiar and Chan \cite{MAG} focussing on toric arrangements defined by graphs. Although they stay in the ``realizable'' realm, their interesting work refines some statistics related to arithmetic matroids and fits well into our setup.

  On an abstract level, arithmetic matroids offer an abstract theory
  supporting some notable properties of the arithmetic Tutte
  polynomial, while matroids over rings are a very general and
  strongly algebraic theory with different applications for suitable
  choices of the ``base ring'' (e.g., to
  tropical geometry for matroids over discrete valuation
  rings). However, outside the case of lists of integer
  vectors in abelian groups, the arithmetic Tutte polynomial and
  arithmetic matroids have few combinatorial interpretations.
For instance, the poset of connected components of intersections of a
toric arrangement -- which provides combinatorial interpretations for
many an evaluation of arithmetic Tutte polynomials -- has no
counterpart in the case of nonrealizable arithmetic matroids. 
Moreover, from a structural
  point of view it is striking (and unusual for matroidal objects)
  that there is no known cryptomorphism for arithmetic matroids, while
  for matroids over a ring a single one was recently presented
  \cite{FiPisa}. In addition, some conceptual relationships between arithmetic
  matroids (which come in different variants, see \cite{BM,dAM}) and
  matroids over rings are not yet cleared.

    In research unrelated to arithmetic matroids -- e.g.\ by Ehrenborg, Readdy and Slone \cite{ERS} and Lawrence \cite{Law} on enumeration on the torus, and by Kamiya, Takemura and Terao \cite{KTT,KTT2} on characteristic quasipolynomials of affine arrangements -- posets and `multiplicities' related to (but not satisfying the strict requirements of those arising with) arithmetic matroids were brought to light, calling for a systematic study of the abstract properties of ``periodic'' combinatorial structures. 

Further motivation comes from recent progress in the study of complements of arrangements on
products of elliptic curves \cite{Bibby} which, combinatorially and
topologically, can be seen as quotients of ``doubly periodic''
subspace arrangements.\medskip
\noindent{\bf Results.} We initiate the study of actions of groups
by automorphisms on semimatroids (for short ``$G$-semimatroids''). 
Helpful intuition comes, once
again, from the case of integer vectors, where the associated toric
arrangement is covered naturally by a periodic affine hyperplane
arrangement: here semimatroids, introduced by Ardila \cite{Ard} (independently by Kawahara
\cite{Kawa}), enter the picture as abstract combinatorial descriptions
of finite arrangements of affine hyperplanes.  
In particular, we obtain the following results (see also 
Table  \ref{table} for a quick overview).
\begin{itemize}
\item[--] 
An equivalence (a.k.a.\ {\em cryptomorphism}) between
  $G$-semimatroids, which are defined in terms of certain set systems, and group actions on geometric semilattices (in the  sense of Wachs and Walker \cite{WW}), based on a theorem extending
  Ardila's equivalence between semimatroids and geometric semilattices
  to the infinite case (Theorem \ref{thm:fsl}).
\item[--] Under appropriate conditions every $G$-semimatroid gives
  rise to an ``underlying'' finite (poly)matroid (Theorem \ref{thm:polymat}).
Additional conditions can be imposed so that orbit
  enumeration determines an arithmetic matroid, often nonrealizable. In fact, we see that the
  defining properties of arithmetic matroids arise in a natural
  `hierarchy' with stronger conditions on the action (Theorem \ref{thm:almost}
  and Theorem \ref{thm:arithm}).
\item[--] In particular, we obtain the first natural class of examples of nonrealizable
  arithmetic matroids.
\item[--] To every $G$-semimatroid is naturally associated a poset
  $\mathcal P$
  obtained as a quotient of the geometric semilattice of the
  semimatroid acted upon. In particular, this gives a natural
  abstract generalization of the poset of connected components of
  intersections of a toric arrangement.
\item[--] To every $G$-semimatroid is associated a two-variable polynomial
  which evaluates as the characteristic polynomial of $\mathcal P$
  (Theorem \ref{thm:CP})
  and, under mild conditions on the action, 
satisfies a natural deletion-contraction recursion (Theorem \ref{thm:MainCD}) and a generalization
  of Crapo's basis-activity decomposition (Theorem \ref{thm:craponew}). In particular, for every
  arithmetic matroid arising from group actions we have a new
  combinatorial interpretation of the coefficients of the arithmetic
  Tutte polynomial in terms of enumeration on $\mathcal P$ subsuming
  Br\"and\'en and Moci's interpretation \cite[Theorem 6.3]{BM} in the realizable case.
\item[--] A $G$-semimatroid satisfying appropriate algebraic properties gives rise to a matroid over $\mathbb Z$, and we discuss conditions under which the single modules have combinatorial interpretations  (Theorem \ref{thm:RAM}).
\end{itemize}

\noindent{\bf Structure of the paper.} First, in Section \ref{sec:AMMR}
we recall the definitions of semimatroids, arithmetic matroids and
matroids over a ring. Then we devote Section \ref{sec:MaEx} to explaining our
guiding example, namely the ``realizable'' case of a $\mathbb Z^d$--action by translations on an affine hyperplane arrangement.
Then, Section \ref{sec:defs} gives a panoramic run-through of the main
definitions and results, in order to establish the ``Leitfaden'' of our work.
Before delving into the technicalities of the proofs, in Section
\ref{sec:exs} we will discuss some specific examples (mostly arising
from actions on arrangements of pseudolines) in order to
illustrate and distinguish the different concepts we introduce. 
Then we will move towards proving the announced results. First,
in Section \ref{sec:FSGS} we prove the cryptomorphism between finitary
semimatroids and finitary geometric semilattices. Section
\ref{sec:orb} is devoted to the construction of the underlying
(poly)matroid and semimatroid of an action.  
Then, in Section \ref{sec:AAA} we will focus on {\em
  translative} actions (Definition 
\ref{def:prinzipal}), for which the orbit-counting function gives rise to a {\em
  pseudo-arithmetic semimatroid} over the action's underlying
semimatroid. Subsequently, in Section \ref{sec:almost}, we will
further (but mildly) restrict to {\em almost-arithmetic} actions, and recover
``most of'' the properties required in the definition of arithmetic
matroids. In Section \ref{sec:arithm} we will then discuss the much
more restrictive condition on the action which ensures that our
orbit-count function fully satisfies the definition of an arithmetic
matroid and, for actions of abelian groups, we will 
derive a characterization of realizable matroids over $\mathbb Z$.
The closing Section \ref{sec:tutte} is devoted to the study
of certain ``Tutte'' polynomials associated to $G$-semimatroids. \medskip

\noindent {\bf Acknowledgements.} We thank Alex Fink for multiple discussions at different stages of our work, Kolja Knauer, Joseph Kung, Matthias Lenz and an anonymous referee for useful feedback on the first versions of this paper, as well as Katharina Jochemko for stimulating discussions on integer-point enumeration.  Both authors have been partially
supported by  Swiss National Science Foundation Professorship grant PP00P2\_150552/1. Sonja Riedel
has also benefited from  support of the ``Studienstiftung des Deutschen Volkes''.

\addtocontents{toc}{\protect\setcounter{tocdepth}{2}}

\section{The main characters}\label{sec:AMMR}

We start by introducing some basic definitions and terminology, sometimes modified with respect to the standard literature in order to better fit our setting. The reader may, in a first lecture, skip the technical details; however, a quick look at the main examples we offer in this section might be illuminating and help the intuition later on.

\subsection{Finitary semimatroids}\label{ssec:FSM}

We start by recalling the definition of a semimatroid,
which we state without finiteness assumptions on the ground set. This
relaxation substantially impacts the theory developed by Ardila \cite{Ard}, much of which rests on
the fact that any finite semimatroid can be viewed as a certain substructure of a (finite) 
`ambient' matroid. Here we list the definition and some immediate observations, while Section \ref{sec:FSGS} will be devoted to
proving the cryptomorphism with geometric semilattices.  
We note that equivalent structures were
also introduced by Kawahara \cite{Kawa} under the name
quasi-matroids, with a view on studying the associated
Orlik-Solomon algebra.

The motivation for introducing these structures was, in both \cite{Ard} and \cite{Kawa}, the combinatorial study of
affine hyperplane arrangements. In particular, keeping an eye on Example \ref{ex:ur} below will help make the following definition plausible. For a pictorial representation of an instance of this definition that does not arise from hyperplane arrangements we point to Example \ref{ex:running1}, which we will also keep as a running example throughout the paper.

\begin{definition}[Compare {\cite[Definition 2.1]{Ard}}]\label{def:FS} A \textit{finitary semimatroid} is a
  triple $\SS=(S,\CC,\rc)$ consisting of a (possibly infinite) set
  $S$, a non-empty finite dimensional simplicial complex \thinspace$\CC$ on $S$ and a bounded function $\rc:\CC\rightarrow\nn$ satisfying the following conditions.
\begin{itemize}
  \item[(R1)] If $X\in\CC,$ then $0\leq \rc(X)\leq |X|.$
  \item[(R2)] If $X,Y\in\CC$ and $X\ssq Y,$ then $\rc(X)\leq\rc(Y).$
  \item[(R3)] If $X,Y\in\CC$ and $X\cup Y\in\CC,$ then $\rc(X)+\rc(Y)\geq\rc(X\cup Y)+\rc(X\cap Y).$
  \item[(CR1)] If $X,Y\in\CC$ and $\rc(X)=\rc(X\cap Y),$ then $X\cup Y\in\CC.$
  \item[(CR2)] If $X,Y\in\CC$ and $\rc(X)<\rc(Y),$ then $X\cup y\in\CC$ for some $y\in Y-X.$
\end{itemize}
\label{def:lrt}
If only (R1), (R2), (R3) are known to hold, we call $\SS$ a {\em locally ranked triple}. 

A {\em finite} semimatroid is a finitary semimatroid with a finite ground set. Finiteness of locally ranked triples is defined accordingly.

Here, and in the following, we often write $\rk$ instead of $\rc$ and omit braces when representing singleton sets, thus writing $\rk(x)$ for $\rk(\{x\})$ and $X\cup x$ for $X\cup \{x\}$, when no confusion can occur.\end{definition}
 
We call $S$ the \textit{ground set}, $\CC$ the \textit{collection of central sets} and $\rk$ the \textit{rank function} of the finitary semimatroid
$\SS=(S,\CC,\rk),$ respectively. The {\em rank} of the semimatroid is the maximum
value of $\rk$ on $\CC$ and we will denote it by $\rk(\SS)$. A set $X\in \CC$ is called {\em independent} if
$\vert X \vert = \rk(X)$. A {\em basis} of $\SS$ is an
inclusion-maximal independent set. 

\begin{remark}
  We adopt the convention that every $x\in S$ is a vertex of $\mathcal
  C$, i.e., $\{x\}\in \mathcal C$ for all $x\in S$. Although this is
  not required in \cite{Ard}, it will not affect our considerations
  while simplifying the formalism. See also Remark \ref{rem:vertex}.
\end{remark}

\begin{definition}\label{df:simple}
A finitary semimatroid $\SS=(S,\CC,\rk)$ is \textit{simple} if 
$\rk(x)=1$ for all $x\in S$ and  $\rk(x,y)=2$ for all $\{x,y\}\in\CC$ with $x\neq y.$

A {\em loop} of a locally ranked triple $\SS=(S,\CC,\rk)$ is any $s\in
S$ with $\rk(s)=0$. Two elements $s,t\in S$ that are not loops are called parallel if $\{s,t\}\in \underline{\CC}$ and $\rk(\{s,t\})= 1$. The triple $\SS$ is called {\em simple} if it has no loops and no parallel elements.
 An {\em isthmus} of $\SS$ is any $s\in S$ such that, for every  $X\in \CC$, $X\cup s \in \CC$ and $\rk(X\cup s)=\rk(X)+1$.
\end{definition}

\begin{remark}\label{rem:dualmat}
  A {\em matroid} is, by definition, a finite semimatroid where every subset is
  central. Equivalently (and more classically), a matroid is given by
  a finite ground set $S$ and a rank function $\rk:2^S \to \mathbb N$
  satisfying (R1), (R2), (R3). The {\em dual} to a matroid $(S,\rk)$ is $(S,\rk^*)$, where $\rk^*(X):= \rk(S\setminus X) - \vert X \vert -\rk(S)$ for all $X\subseteq S$.
\end{remark}

\begin{remark}\label{def:polymat}
A {\em polymatroid} is given by a finite ground set $S$ and a rank function $\rk:2^S \to \mathbb N$ satisfying (R2), (R3) and $\rk(\emptyset)=0$. Polymatroids will appear furtively but naturally in our considerations; we refer e.g.\ to \cite[\S 18.2]{Welsh} for background on these structures.
\end{remark}

\begin{example}[The representable case, see Proposition 2.2 in \cite{Ard}]\label{ex:ur}
Given a positive integer $d$ and a field $\kk$, an \textit{affine
  hyperplane} is an affine subspace of dimension $d-1$ in the vector
space $\kk^d.$ An \textit{arrangement of hyperplanes} in $\kk^d$ is a
collection $\AA$ of affine hyperplanes in $\kk^d$. The arrangement is
called \textit{locally finite} if every point in $\kk^d$ has a
neighbourhood that intersects only finitely many hyperplanes of $\AA$.
A subset $X\subseteq\AA$ is \textit{central} if 
$\cap X \neq \emptyset$.
Let $\CC_\AA$ denote the set of
central subsets of $\AA$ and define the rank function
$\rk_\AA:\CC_\AA\rightarrow\nn$ as $\rk_\AA(X)=d-\dim\cap X.$

Then, the triple $(\AA,\CC_\AA,\rk_\AA)$ is a finitary semimatroid. It is simple if all elements of $\AA$ are distinct, and it is a matroid if all elements of $\AA$ are linear subspaces (i.e.\ contain the origin of $\mathbb K^d$).
\end{example}

\begin{example}[Pseudoline arrangements] 
\label{ex:running1}
There are cases of nonrepresentable semimatroids in which we can still take advantage of a pictorial illustration --- one such instance is given by {\em arrangements of pseudolines} in the sense of Gr\"unbaum \cite{Gru}, i.e., sets of homeomorphic images of $\mathbb R$ in $\mathbb R^2$ (``pseudolines'') such that 
\begin{itemize}\item[(1)]
 every point of $\mathbb R^2$ has a neighborhood intersecting only finitely many pseudolines, 
 \item[(2)] any two pseudolines in the set intersect at most in one point (and if they intersect, they do so transversally). 
\end{itemize}

Figure \ref{fig:PseudoFirst} shows such an arrangement of pseudolines.
The definitions of Example \ref{ex:ur} can be carried over to this context. The triple associated $(S,\CC,\rk)$ associated to the pseudoline arrangement is given by 
$$S=\{a_i \mid i\in \mathbb Z\}\cup 
\{b_i \mid i\in \mathbb Z\}\cup
\{c_i \mid i\in \mathbb Z\}\cup
\{d_i \mid i\in \mathbb Z\}\cup
\{e_i \mid i\in \mathbb Z\},$$
\begin{align*}
\CC = &\{\emptyset\} \cup \{a_i\}_{i}\cup
\{b_i\}_{i}\cup
\{c_i\}_{i}\cup
\{d_i\}_{i}\cup 
\{e_i\}_{i}\cup 
\{a_i,b_j\}_{i,j}\cup
\{a_i,c_j\}_{i,j}\\
&\cup\{a_i,d_j\}_{i,j}\cup
\{a_i,e_j\}_{i,j}\cup
\{b_i,c_j\}_{i,j}\cup
\{b_i,d_j\}_{i,j}\cup
\{b_i,e_j\}_{i,j}\cup
\{c_i,d_j\}_{i,j}\\
&\cup\{d_i,e_j\}_{i,j}\cup
 \{a_{2i+k},b_{2i-k},c_k\}_{i,k}\cup
 \{a_{2i+k},b_{2i-k},d_k\}_{i,k}\cup
 \{a_k,b_{k-2i-1},e_i\}_{i,k} \\ 
 &\cup\{a_{2i+k},c_{k},d_{i}\}_{i,k}\cup\{b_{2i-k},c_{k},d_{i}\}_{i,k}
 \cup\{a_{2i+k},b_{2i-k},c_{k},d_{i}\}_{i,k},
\end{align*}
\begin{align*}
\rk(X) =& \operatorname{codim} (\cap X)\textrm{ for all }X\in \CC
\end{align*}
and one easily checks that this defines a finitary semimatroid.

For readability's sake, here and in all following examples we omit to specify that all indices run over $\mathbb Z$ and that the union is taken over sets of sets, thus using the shorthand notation $\{a_i,b_j\}_{i,j}$ for $\{\{a_i,b_j\} \mid i,j\in \mathbb Z\}$.

 Notice that this triple cannot be obtained from an arrangement of straight lines: such an arrangement is called {\em non-stretchable}. 
 \end{example}

\begin{figure}[h]
\scalebox{.7}{%
\begin{tikzpicture}
    \node at (0,0) (O) {};
    \node at (4,0) (E) {};
    \node at (4,4) (NE) {};
    \node at (0,4) (N) {};
    \node at (-4,0) (W) {};
    \node at (-4,-4) (SW) {};
    \node at (0,-4) (S) {};
    \node at (4,-4) (SE) {};
    \node at (-4,4) (NW) {};
    \node at (2.5,0) (Er) {};
    \node at (1.5,0) (El) {};
    \node at (-2.5,0) (Wl) {};
    \node at (-1.5,0) (Wr) {};    
    \node at (-2.5,4) (NWl) {};
    \node at (-1.5,4) (NWr) {};    
    \node at (-2.5,-4) (SWl) {};
    \node at (-1.5,-4) (SWr) {};    
    \node at (2.5,4) (NEr) {};
    \node at (1.5,4) (NEl) {};
    \node at (2.5,-4) (SEr) {};
    \node at (1.5,-4) (SEl) {};
    \draw [->] (-4.7,-4) -- (-4.7,0);
    \draw [->] (-4,-4.7) -- (0,-4.7);
    \node at (-5,-2) (e2) {$\epsilon_2$};
    \node at (-2,-5) (e1) {$\epsilon_1$};
    \draw [-,red] (W.center) -- (E.center);
    \draw [-,red] (SW.center) -- (SE.center);
    \draw [-,red] (NW.center) -- (NE.center);    
    \draw [-,green] (SW.center) -- (Wl.center);
    \draw [-,green] (Wl.center) -- (N.center);
    \draw [-,green] (SWl.center) -- (O.center) -- (NEl.center);
    \draw [-,green] (S.center) -- (El.center) -- (NE.center);
    \draw [-,green] (SW.center) -- (Wl.center) -- (N.center);
    \draw [-,green] (SEl.center) -- (E.center);    
    \draw [-,green] (W.center) -- (NWl.center);    
    \draw [-,blue] (N.center) -- (S.center);
    \draw [-,blue] (NE.center) -- (SE.center);
    \draw [-,blue] (NW.center) -- (SW.center);                
    \draw [-,orange] (NW.center) -- (Wr.center) -- (S.center);
    \draw [-,orange] (NWr.center) -- (O.center) -- (SEr.center);
    \draw [-,orange] (N.center) -- (Er.center) -- (SE.center);
    \draw [-,orange] (NEr.center) -- (E.center);
    \draw [-,orange] (W.center) -- (SWr.center);
    \draw [dashed,purple] (-4,-1.5) -- (4,-1.5);
    \draw [dashed,purple] (-4,2.5) -- (4,2.5);    
    \node[text=red] at (-3.3,-4.2) (c0) {$c_0$}; 
    \node[text=red] at (-3.3,-.2) (c1) {$c_1$}; 
    \node[text=red] at (-3.3,3.8) (c2) {$c_2$}; 
    \node[text=blue] at (-4.2,-1.2) (d0) {$d_0$};     
    \node[text=blue] at (-.2,-1.2) (d1) {$d_1$}; 
    \node[text=blue] at (3.8,-1.2) (d2) {$d_2$}; 
    \node[text=green] at (-1.3,1.3) (b0) {$b_0$}; 
    \node[text=green] at (-3.3,.7) (bm) {$b_{-1}$}; 
    \node[text=green] at (.8,1.3) (b1) {$b_1$}; 
    \node[text=green] at (2.7,1.3) (b2) {$b_2$}; 
    \node[text=green] at (2.7,-2.7) (b3) {$b_3$}; 
    \node[text=orange] at (-2.6,-2.6) (c0) {$a_1$}; 
    \node[text=orange] at (-.8,-2.6) (c0) {$a_2$}; 
    \node[text=orange] at (1.4,-2.6) (c0) {$a_3$}; 
    \node[text=orange] at (2.6,-1) (c0) {$a_4$}; 
    \node[text=orange] at (3.5,.5) (c0) {$a_5$};
    \node[text=purple] at (-2,2.7) (e1) {$e_1$};
    \node[text=purple] at (-2,-1.3) (e2) {$e_0$};     
\end{tikzpicture}
}
\caption{A non-stretchable pseudoline arrangement (it should be thought of as repeating and tiling the plane). 
}\label{fig:PseudoFirst}
\end{figure}
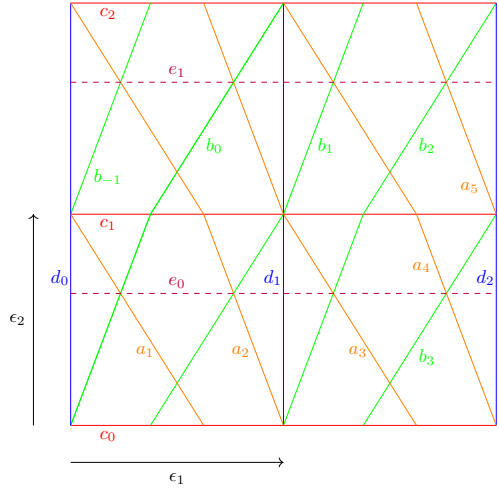

We now state some basic facts and definitions about semimatroids for later
reference. Except where otherwise specified, the proofs are parallel to those given in
\cite[Section 2]{Ard}.


\begin{definition}\label{def:latticeofflats} Let $\SS=(S,\CC,\rk)$ be a finitary semimatroid and
  $X\in\CC$. The \textit{closure of $X$ in $\CC$} is 
\begin{align*}
  \cl(X) :=\{x\in S\mid X\cup x\in\CC,\rk(X\cup x)=\rk(X)\}. 
\end{align*}

 A \textit{flat} of a finitary semimatroid $\SS$ is a set $X\in\CC$
 such that $\cl(X)=X.$ The set of flats of $\SS$ ordered by containment forms the \textit{poset of flats of} $\SS$, which we denote by $\lc.$ \label{defi:lc}
\end{definition}

\begin{remark}\label{rem:monotone}
For all $X\in \CC$ we have $\cl(X) = \max \{Y\supseteq X
  \mid X\in \CC,\, \rk(X) = \rk(Y)\}$, i.e., the closure of $X$ is the
  maximal central set containing $X$ and having same rank as $X$. In
  particular, we have a monotone function $\cl: \CC \to \CC$.
\end{remark}

\begin{remark}\label{rem:GL}
A poset is the poset of flats of a matroid if and only if it is a
  geometric lattice (see Definition \ref{df:GL}). 
  In Section \ref{sec:FSGS} we will prove a similar correspondence between finitary semimatroids and geometric
  semilattices (Theorem \ref{thm:fsl}).
\end{remark}

We now introduce the notions of deletion and contraction for locally ranked triples. Example \ref{ex:code} below will illustrate the case of pseudoline arrangements.

\begin{definition}\label{def:DeRedef}
  Let $\SS=(S,\CC,\rk)$ be a locally ranked triple. For every
  $T\subseteq S$ let $\CC_{\setminus T} := \CC\cap 2^{S\setminus T}$
  and define the {\em deletion} of $T$ from $\SS$ as
$$
\SS \setminus T :=(S\setminus T, \CC_{\setminus T}, \rk),
$$
where we slightly abuse notation and write $\rk$ for
$\rk\vert_{\CC_{\setminus T}}$.
Moreover, we will denote by $\SS[T]:=\SS\setminus (S\setminus T)$ the {\em restriction} to $T$.

Furthermore, for every central set $X\in \CC$ let 
$$\CC_{/ X}:=\{Y\in \CC_{\setminus
  X} \mid Y\cup X \in \CC\},\quad\quad S_{/X}:=\{s\in S \mid
\{s\}\in\CC_{/X}\}$$ and define the {\em contraction} of $X$ in $\SS$ as 
$$
\SS / X :=(S_{/X}, \CC_{/X}, \rk_{/X}),
$$
where, for every $Y\in \CC_{/X}$, $\rk_{/X}(Y):=\rc(Y\cup X)-\rc(X)$.
\end{definition}

\begin{remark}\label{rem:vertex}
This definition applies in particular to the case where $\SS$ is a semimatroid and, in this case, differs slightly from that given in \cite{Ard}: since we assume every element of the ground set of a semimatroid to be contained in a central set, we need to further constrain the ground set of the contraction.
\end{remark}

\begin{example}\label{ex:code}
Let $\SS=(S,\CC,\rk)$ be  the semimatroid of Example \ref{ex:running1} (see Figure \ref{fig:PseudoFirst}).
If $T:=\{e_i\}_{i\in \mathbb Z}$, then $$\CC_{\setminus T}=\CC\setminus 
(\{e_i\}_i\cup \{a_i,e_j\}_{i,j}\cup \{b_i,e_j\}_{i,j}\cup \{d_i,e_j\}_{i,j}\cup \{a_k,b_{k-2i-1},e_i\}_{i,k}),$$ and  $\SS \setminus T$ is the semimatroid associated to the arrangement on the left-hand side in Figure \ref{fig:DeCo}.

The contraction of $\SS$ to $e_0\in S$ has ground set
$S_{/\{e_0\}} = S\setminus (\{c_i\}_{i\in \mathbb Z}\cup \{e_i\}_{i\in \mathbb Z})$ and family of central sets
$\CC_{/\{e\}} = \{\emptyset\} \cup 
\{a_i\}_{i}\cup
\{b_i\}_{i}\cup
\{d_i\}_{i}\cup 
\{a_i,b_{i-1}\}_{i}$ with rank function $\rk_{/\{e_0\}}$ given by
\begin{align*}
&\rk_{/\{e_0\}}(\emptyset) = \rk(\{e_0\}) - \rk(\{e_0\})=0;\\ 
&\rk_{/\{e_0\}}(\{a_i\}) = \rk(\{a_i,e_0\}) - \rk(\{e_0\}) =1, \\
&\textrm{similarly }\rk_{/\{e_0\}}(\{b_i\})=\rk_{/\{e_0\}}(\{d_i\}) = 1; \\
&\rk_{/\{e_0\}}(\{a_i,b_{i-1}\}) = \rk(\{a_i,b_{i-1},e_0\}) -\rk(\{e_0\}) = 1;
\end{align*}
where $i$ ranges over the integers. This triple is represented by the arrangement of points depicted on  the right-hand side in Figure \ref{fig:DeCo}.
\end{example}

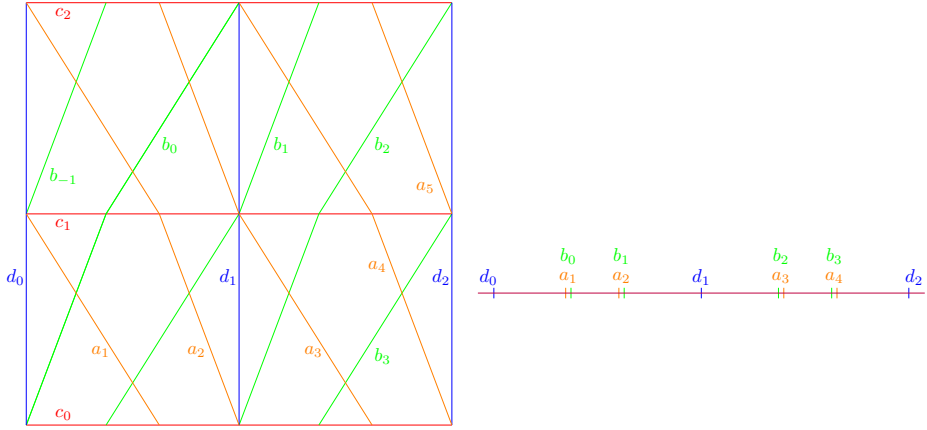
\begin{figure}
\scalebox{.7}{%
\begin{tikzpicture}
    \node at (0,0) (O) {};
    \node at (4,0) (E) {};
    \node at (4,4) (NE) {};
    \node at (0,4) (N) {};
    \node at (-4,0) (W) {};
    \node at (-4,-4) (SW) {};
    \node at (0,-4) (S) {};
    \node at (4,-4) (SE) {};
    \node at (-4,4) (NW) {};
    \node at (2.5,0) (Er) {};
    \node at (1.5,0) (El) {};
    \node at (-2.5,0) (Wl) {};
    \node at (-1.5,0) (Wr) {};    
    \node at (-2.5,4) (NWl) {};
    \node at (-1.5,4) (NWr) {};    
    \node at (-2.5,-4) (SWl) {};
    \node at (-1.5,-4) (SWr) {};    
    \node at (2.5,4) (NEr) {};
    \node at (1.5,4) (NEl) {};
    \node at (2.5,-4) (SEr) {};
    \node at (1.5,-4) (SEl) {};
    \draw [-,red] (W.center) -- (E.center);
    \draw [-,red] (SW.center) -- (SE.center);
    \draw [-,red] (NW.center) -- (NE.center);    
    \draw [-,green] (SW.center) -- (Wl.center);
    \draw [-,green] (Wl.center) -- (N.center);
    \draw [-,green] (SWl.center) -- (O.center) -- (NEl.center);
    \draw [-,green] (S.center) -- (El.center) -- (NE.center);
    \draw [-,green] (SW.center) -- (Wl.center) -- (N.center);
    \draw [-,green] (SEl.center) -- (E.center);    
    \draw [-,green] (W.center) -- (NWl.center);    
    \draw [-,blue] (N.center) -- (S.center);
    \draw [-,blue] (NE.center) -- (SE.center);
    \draw [-,blue] (NW.center) -- (SW.center);                
    \draw [-,orange] (NW.center) -- (Wr.center) -- (S.center);
    \draw [-,orange] (NWr.center) -- (O.center) -- (SEr.center);
    \draw [-,orange] (N.center) -- (Er.center) -- (SE.center);
    \draw [-,orange] (NEr.center) -- (E.center);
    \draw [-,orange] (W.center) -- (SWr.center);   
    \node[text=red] at (-3.3,-3.8) (c0) {$c_0$}; 
    \node[text=red] at (-3.3,-.2) (c1) {$c_1$}; 
    \node[text=red] at (-3.3,3.8) (c2) {$c_2$}; 
    \node[text=blue] at (-4.2,-1.2) (d0) {$d_0$};     
    \node[text=blue] at (-.2,-1.2) (d1) {$d_1$}; 
    \node[text=blue] at (3.8,-1.2) (d2) {$d_2$}; 
    \node[text=green] at (-1.3,1.3) (b0) {$b_0$}; 
    \node[text=green] at (-3.3,.7) (bm) {$b_{-1}$}; 
    \node[text=green] at (.8,1.3) (b1) {$b_1$}; 
    \node[text=green] at (2.7,1.3) (b2) {$b_2$}; 
    \node[text=green] at (2.7,-2.7) (b3) {$b_3$}; 
    \node[text=orange] at (-2.6,-2.6) (c0) {$a_1$}; 
    \node[text=orange] at (-.8,-2.6) (c0) {$a_2$}; 
    \node[text=orange] at (1.4,-2.6) (c0) {$a_3$}; 
    \node[text=orange] at (2.6,-1) (c0) {$a_4$}; 
    \node[text=orange] at (3.5,.5) (c0) {$a_5$};
\end{tikzpicture}
}
\scalebox{.7}{%
\begin{tikzpicture}
    \node at (0,5.5) (N) {};
    \node at (0,-2.5) (S) {};
    \draw [-,blue] (-3.9,-.1) -- (-3.9,.1);
    \draw [-,green] (-2.45,-.1) -- (-2.45,.1);
    \draw [-,orange] (-2.55,-.1) -- (-2.55,.1);    
    \draw [-,green] (-1.45,-.1) -- (-1.45,.1);
    \draw [-,orange] (-1.55,-.1) -- (-1.55,.1);    
    \draw [-,blue] (0,-.1) -- (0,.1);
    \draw [-,green] (1.45,-.1) -- (1.45,.1);
    \draw [-,orange] (1.55,-.1) -- (1.55,.1);        
    \draw [-,green] (2.45,-.1) -- (2.45,.1);
    \draw [-,orange] (2.55,-.1) -- (2.55,.1);     
    \draw [-,blue] (3.9,-.1) -- (3.9,.1);
    \draw [-,purple] (-4.2,0) -- (4.2,0);
    \node[text=blue] at (-4,.3) (d0) {$d_0$};     
    \node[text=blue] at (0,.3) (d1) {$d_1$}; 
    \node[text=blue] at (4,.3) (d2) {$d_2$}; 
    \node[text=green] at (-2.5,.7) (b0) {$b_0$};  
    \node[text=green] at (-1.5,.7) (b1) {$b_1$}; 
    \node[text=green] at (1.5,.7) (b2) {$b_2$}; 
    \node[text=green] at (2.5,.7) (b3) {$b_3$}; 
    \node[text=orange] at (-2.5,.3) (c0) {$a_1$}; 
    \node[text=orange] at (-1.5,.3) (c0) {$a_2$}; 
    \node[text=orange] at (1.5,.3) (c0) {$a_3$}; 
    \node[text=orange] at (2.5,.3) (c0) {$a_4$}; 
\end{tikzpicture}
}
\caption{Arrangements of pseudolines corresponding to the deletion $\SS\setminus\{e_i\}_{i}$ (l.h.s.), and  the contraction $\SS/\{e_0\}$ (r.h.s.), 
 where $\SS$ is the semimatroid of Example \ref{ex:running1}. Again, we show only local pieces of these infinite arrangements, and the pictures must be thought of as being repeated in order to fill the plane (resp. the line).}
\label{fig:DeCo}
\end{figure}

\begin{proposition}
 Let $\SS=(S,\rk,\CC)$ be a finitary semimatroid. For every $T\subset S$, $\SS\setminus T$ is a finitary semimatroid and, for every $X\in \CC$, $\SS/X$ is a finitary semimatroid.
\end{proposition}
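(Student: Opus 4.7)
My plan is to verify the five axioms (R1), (R2), (R3), (CR1), (CR2) for each of $\SS\setminus T$ and $\SS/X$, together with the prerequisite conditions (nonempty, finite-dimensional simplicial complex with bounded rank). Both cases work by reducing claims about the new structure to applications of the corresponding axiom in $\SS$; the difference is that for the deletion the reduction is essentially tautological, while for the contraction one must consistently translate back and forth along the map $Y \mapsto Y\cup X$.

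For the deletion $\SS\setminus T$, I would first observe that $\CC_{\setminus T} = \CC \cap 2^{S\setminus T}$ is automatically a simplicial complex (downward-closedness is inherited and $\emptyset \in \CC_{\setminus T}$), of dimension no larger than that of $\CC$, and that the restriction of $\rk$ is still bounded. Axioms (R1)--(R3) are immediate from the corresponding axioms in $\SS$. For (CR1) and (CR2), I apply the axiom in $\SS$ to the pair $X,Y \in \CC_{\setminus T}$; the union $X\cup Y$, respectively the element $y\in Y\setminus X$ produced by (CR2), is automatically contained in $S\setminus T$, so the conclusion already lives in $\CC_{\setminus T}$.

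For the contraction $\SS/X$, the key observation is that because $Y,Y'\subseteq S\setminus X$ one has the set-theoretic identities $(Y\cup X)\cap (Y'\cup X) = (Y\cap Y')\cup X$ and $(Y\cup X)\cup (Y'\cup X) = (Y\cup Y')\cup X$. Simplicity, finite-dimensionality and boundedness of $\rk_{/X}$ follow from the inclusion $\CC_{/X}\subseteq \CC$ and the bound $\rk_{/X}(Y)\le \rk(\SS)-\rk(X)$; downward-closedness of $\CC_{/X}$ uses that $Y'\subseteq Y$ and $Y\cup X\in\CC$ force $Y'\cup X\in\CC$. (R1) combines (R2) in $\SS$ (giving $\rk_{/X}(Y)\ge 0$) with (R3) applied to $X$ and $Y$, using $\rk(\emptyset)=0$, to get $\rk_{/X}(Y)\le \rk(Y)\le |Y|$. (R2) is direct from monotonicity of $\rk$. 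For (R3), applying (R3) in $\SS$ to $Y\cup X$ and $Y'\cup X$, together with the identities above and cancellation of the two $\rk(X)$ terms, delivers the inequality. For (CR1) and (CR2) I would likewise lift to $Y\cup X$ and $Y'\cup X$ and invoke the corresponding axiom in $\SS$; the element $y$ produced by (CR2) belongs to $(Y'\cup X)\setminus (Y\cup X)=Y'\setminus Y$, hence lies in $S\setminus X$, so $Y\cup y \in \CC_{/X}$.

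I do not expect a genuine obstacle here: the proof is essentially bookkeeping, and the only points that require momentary care are (i) making sure that the finite-dimensional and boundedness hypotheses transfer (both follow from the inclusion of the relevant complex into $\CC$), and (ii) the set-theoretic manipulation for contraction where the disjointness $Y\cap X = Y'\cap X = \emptyset$ is used to rewrite $(Y\cup X)\cap(Y'\cup X)$ as $(Y\cap Y')\cup X$. Once those are in place, each axiom in $\SS/X$ becomes a one-line consequence of the corresponding axiom in $\SS$.
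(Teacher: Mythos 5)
Your proof is correct and follows exactly the route that the paper intends: the paper's proof simply cites the finite-case argument of Ardila (Propositions 7.5 and 7.7 of \cite{Ard}) as "adapting straightforwardly," and what you have written is precisely that adaptation spelled out, verifying each axiom by transfer (trivially for deletion, via the lift $Y\mapsto Y\cup X$ and the disjointness identities for contraction), together with the new observations needed beyond the finite case that finite-dimensionality and boundedness of the rank are inherited via the inclusion of $\CC_{\setminus T}$, respectively $\CC_{/X}$, into $\CC$.
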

\begin{proof}
The proof of {\cite[Proposition 7.5 and 7.7]{Ard}} adapts straightforwardly.
\end{proof}

\begin{definition}
To every finite locally ranked triple $\SS=(S,\CC,\rk)$  we associate the following polynomial.
$$
T_{\SS} (x,y) := \sum_{X\in \CC} (x-1)^{\rk(\SS)-\rk(X)} (y-1)^{\vert X
  \vert - \rk(X)}
$$
\end{definition}

\begin{remark}
If $\SS$ is a finite semimatroid, this is exactly the {\em Tutte polynomial} of $\SS$ introduced and studied by Ardila \cite{Ard}. In particular, if $\SS$ is a matroid, this is the associated Tutte polynomial. 
\end{remark}

A celebrated result about Tutte polynomials of matroids is the following ``activities decomposition theorem'' due to Crapo (for terminology we refer to \cite{Oxl}).
\begin{proposition}[{\cite[Theorem 1]{Crapo}}]\label{prop:classicCrapo} Let  $\SS$ be a matroid with set of
  bases $\BB$ and fix a total ordering $<$ on $S$. Then,
  \begin{equation*}
    \label{eq:6}
    T_{\SS}(x,y) = \sum_{B\in \BB} x^{\vert I(B)\vert} y ^{\vert E(B) \vert},
  \end{equation*}
  where, for every $B\in\BB$, 
  \begin{enumerate}
  \item[$I(B)$] is the set of {\em internally active elements} of $B$, i.e., the set of all $b\in B$ which are $<$-minimal in some codependent subset of $S\setminus (B \setminus b)$.  
  \item[$E(B)$] is the set of {\em externally active elements} of $B$, i.e., the set of all $e\in S\setminus B$ that are $<$-minimal in some dependent subset of $B\cup e$.
  \end{enumerate}

\end{proposition}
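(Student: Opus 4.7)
The plan is to prove the identity by partitioning the Boolean lattice $2^S$ into intervals, each one indexed by a basis $B\in\BB$, and then carrying out the sum one interval at a time. Concretely, for each $B\in\BB$ I consider the interval
\[
[B\setminus I(B),\, B\cup E(B)] \;=\; \{X\subseteq S \mid B\setminus I(B)\subseteq X\subseteq B\cup E(B)\}
\]
inside $2^S=\CC$. The first and main task is to show that these intervals partition $2^S$; once that is in place, the formula drops out of a binomial computation.

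For the partition, I would associate to each $X\subseteq S$ a canonical basis $B(X)$ via a greedy procedure driven by the total order: scan the elements of $S$ in increasing order and, at each step, include the current element in a running independent set whenever doing so keeps independence in the appropriate "restriction–then–extension" sense (take the lex-max independent subset of $X$, then extend lex-minimally to a basis of $S$). The key lemma to prove is that $B=B(X)$ is the unique basis with $B\setminus X\subseteq I(B)$ and $X\setminus B\subseteq E(B)$. Both inclusions come from interpreting $I(B)$ and $E(B)$ through fundamental cocircuits and circuits: if some $b\in B\setminus X$ failed to be internally active, the $<$-smaller witness in its fundamental cocircuit would allow a basis exchange contradicting the extremality properties used in the greedy construction; symmetrically for $x\in X\setminus B$ and fundamental circuits. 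Uniqueness follows because any other basis lying in an interval containing $X$ would yield, via another basis exchange, a strictly "better" basis with respect to the lex-extremality conditions defining $B(X)$.

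Granting the partition, the computation is immediate. For any $X$ in the interval of $B$, write $X=(B\setminus C)\cup A$ with $C\subseteq I(B)$ and $A\subseteq E(B)$. Since elements of $E(B)$ lie in the closure of $B$, we have $\rk(X)=|B|-|C|=\rk(\SS)-|C|$; and $|X|-\rk(X)=|A|$. Therefore the contribution of the interval to $T_\SS$ is
\[
\sum_{C\subseteq I(B)}\sum_{A\subseteq E(B)}(x-1)^{|C|}(y-1)^{|A|}=\bigl(1+(x-1)\bigr)^{|I(B)|}\bigl(1+(y-1)\bigr)^{|E(B)|}=x^{|I(B)|}y^{|E(B)|},
\]
and summing over all $B\in\BB$ gives the claimed identity.

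The main obstacle is the partition step: one must check simultaneously that the intervals cover $2^S$ and are pairwise disjoint, and both facts ultimately rely on the interplay between the total order on $S$ and the basis-exchange axiom. A clean way to handle this is to show, by induction on the symmetric difference $|X\triangle B|$, that each $X$ belongs to the interval of $B(X)$, and then to verify that two distinct bases whose intervals meet would force a common element of $I(\cdot)$ or $E(\cdot)$ to simultaneously be the $<$-minimum and not the $<$-minimum of the same fundamental (co)circuit.
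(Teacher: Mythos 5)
The paper does not actually prove this proposition: it is quoted directly from Crapo with a citation, and the partition of $\CC$ into intervals $[B\setminus I(B),\, B\cup E(B)]$ that your argument hinges on is later invoked as Lemma~\ref{lemmardila}, again cited (to Ardila) rather than proved. So there is no ``paper proof'' to compare against line by line; what can be said is that your route is exactly the classical one and matches the decomposition the paper relies on.

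That said, your write-up has two soft spots. The larger one you acknowledge yourself: the partition step is only sketched. The greedy definition of $B(X)$ (take the $<$-lex-maximal independent subset of $X$, then extend $<$-lex-minimally to a basis) is the right object, but you do not actually carry out the proof that every $X$ lands in the interval of $B(X)$, nor the disjointness argument; as written this is an outline of a proof, not a proof. The smaller one is in the rank computation. You justify $\rk(X)=|B|-|C|$ for $X=(B\setminus C)\cup A$ by saying ``elements of $E(B)$ lie in the closure of $B$''; that is true but insufficient, since you are removing $C\subseteq I(B)$ from $B$ before adjoining $A$. What you actually need is $A\subseteq \cl(B\setminus I(B))$, i.e., that for each externally active $e$ the fundamental circuit $C(e,B)$ avoids $I(B)$. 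This does hold --- if $b\in C(e,B)\cap I(B)$ then $e\in C^*(b,B)$, and internal activity of $b$ forces $b<e$ while external activity of $e$ forces $e<b$, a contradiction --- but it is exactly the content of the ``molecule'' property $(R_B,I(B),E(B))$ from Definition~\ref{def:rho}, and it deserves to be stated rather than slid past; without it the displayed binomial identity has no justification.

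So: right approach (and the one the paper's later Theorem~\ref{thm:craponew} generalizes), but incomplete on the partition lemma, and the one place you do compute, the closure claim is under-argued in a way that matters.
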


\begin{remark}
Arithmetic Tutte polynomials satisfy an analogue to Crapo's theorem for realizable arithmetic matroids (see Remark \ref{rem:lifo}). One of our results is the generalization of this theorem to all centered translative $G$-semimatroids (Theorem \ref{thm:craponew}).
\end{remark}

\subsection{Arithmetic (semi)matroids and their Tutte polynomials}\label{ssec:AM}
We extend the definition of arithmetic matroids given in \cite{BM} and \cite{dAM} to include the case where the underlying structure is a finite semimatroid.

\begin{definition}[Compare Section 2 of \cite{BM}]\label{def:rho} Let $\SS=(S,\CC,\rk)$ be a locally ranked triple.
  A {\em molecule} of $\SS$ is any triple $(R,F,T)$  of disjoint sets with $R\cup
  F\cup T\in \CC$ and such that, for every $A$ with $R\subseteq A
  \subseteq R\cup F\cup T$, 
$$\rk(A) = \rk(R) + \vert A\cap F\vert.$$
\end{definition}


\begin{remark}
Once a total ordering of the ground set $S$ is fixed, the notion of basis activities for matroids briefly recapped in Proposition \ref{prop:classicCrapo} above allows us to associate to every basis $B$ a molecule $(B\setminus I(B), I(B), E(B))$. 
\end{remark}

\begin{definition}[Extending Moci and Br\"and\'en \cite{BM}] \label{def:AM}
  Let $\SS=(S,\CC,\rk)$ be a finite locally ranked triple
  and $m:\CC \to \mathbb R$ any
  function. 
  If $(R,F,T)$ is a molecule, define
$$\rho(R, R\cup F\cup T):=(-1)^{\vert T\vert}\sum_{
R\subseteq A
  \subseteq R\cup F\cup T
}
(-1)^{\vert R\cup F \cup T \vert - \vert A \vert} m(A).$$
We call the pair $(\SS,m)$ {\em arithmetic} if
  the following axioms are satisfied:
  \begin{itemize}
  \item[(P)] For every molecule $(R,F,T)$,
$$
\rho(R,R\cup F \cup T) \geq 0.
$$
\item[(A1)] For all $A\subseteq S$ and $e\in S$ with $A\cup e \in \CC$:
  \begin{itemize}
  \item[(A.1.1)] If $\rk(A\cup \{e\})=\rk(A)$ then $m(A\cup \{e\})$
    divides $m(A)$. 
  \item[(A.1.2)] If $\rk(A\cup \{e\})>\rk(A)$ then $m(A)$
    divides $m(A\cup \{e\})$. 
  \end{itemize}
\item[(A2)] For every molecule $(R,F,T)$
  $$m(R)m(R\cup F\cup T)=m(R\cup F) m(R\cup T).$$
  \end{itemize}
Following \cite{BM} we use the expression {\em pseudo-arithmetic} to denote the case where $m$ only satisfies (P).
An {\em arithmetic matroid} is an arithmetic pair $(\SS,m)$
where $\SS$ is a matroid. 
\end{definition}
\begin{remark}
Following \cite{dAM}, the {\em dual} to an arithmetic matroid $(\SS,m)$ is the pair $(\SS^*,m^*)$, where $\SS^*$ is the dual matroid to $\SS$ and $m^*(A):=m(S\setminus A)$.
\end{remark}

\begin{example}\label{ex:exarmat}
  To every set of integer vectors, say $a_1,\ldots,a_n \in \mathbb
  Z^d$ is associated a matroid on the ground set $[n]:=\{1,\ldots,n\}$ with
  rank function $$\rk(I):=\dim_{\mathbb
    Q}(\operatorname{span}(a_i)_{i\in I}),$$ and a multiplicity
  function $m(I)$ defined for every $I\subseteq [n]$ as the greatest
  common divisor of the minors of the matrix with columns $(a_i)_{i\in
  I}$. These determine an arithmetic matroid \cite{dAM}. We say that
the vectors $a_i$ {\em realize} this arithmetic matroid which we call
then {\em realizable}.
\end{example}

To every arithmetic pair $(\SS,m)$ we associate an arithmetic Tutte
polynomial as a straightforward extension of Moci's definition from
\cite{Moc1}.

\begin{definition}\label{eq:5}\label{caramai} Given an arithmetic pair $(\SS,m)$, set
$$  T_{(\SS,m)}(x,y):=\sum_{X\in \CC} m (X) (x-1)^{\rk(\SS)-\rk(X)}(y-1)^{\vert X \vert - \rk(X)}.$$
\end{definition}

\begin{remark}\label{rem:lifo}
  When $(\SS,m)$ is an arithmetic matroid, the polynomial
  $T_{(\SS,m)}(x,y)$ enjoys a rich structure theory, investigated for
  instance in \cite{BM,dAM}. When this arithmetic matroid is
  realizable, say by a set of vectors $a_1,\ldots,a_n\in \mathbb Z^d$,
  the arithmetic Tutte polynomial specializes e.g.\ to the
  characteristic polynomial of the associated toric arrangement (see
  Section \ref{sec:MaEx}) and to the Ehrhart polynomial of the
  zonotope obtained as the Minkowski sum of the $a_i$. Moreover,
  always in the realizable case, Crapo's decomposition theorem
  (Proposition \ref{prop:classicCrapo}) has an analogue \cite[Theorem
  6.3]{BM} which gives a combinatorial interpretation of the
  coefficients of the polynomial in terms of counting integer points
  of zonotopes and intersections in the associated toric arrangement.
\end{remark}

\subsection{Matroids over rings}

We give the general definition and some properties of matroids over
rings. Further explanations and proofs of statements can be found in \cite{FM}.

\begin{definition}[Fink and Moci \cite{FM}]\label{def:mator}
  Let $E$ be a finite set, 
  $R$ a commutative ring and $M:2^E \to
  R\operatorname{-mod}$ any function associating an $R$-module to each
  subset of $E$. This defines a {\em matroid
    over $R$} if
  \begin{itemize}
  \item[(R)]  for any $A\subset E$, $e_1,e_2\in E$ ,
    there is a pushout square
$$
\begin{CD}
  M(A) @>>> M(A\cup\{e_1\}) \\
  @VVV @VVV\\
 M(A\cup\{e_2\}) @>>> M(A\cup \{e_1,e_2\})
\end{CD}
$$
such that all morphisms are surjections with cyclic kernel.
  \end{itemize}
\end{definition}

\begin{remark}[{\cite[Section 6.1]{FM}}]\label{rem:uam}
  Every matroid over the ring $R=\mathbb Z$ induces an arithmetic
  matroid on the ground set $E$ with  rank function  satisfying $\rk(E)-\rk(A)=\operatorname{rank}_{\mathbb Z} M(A)$
  and $m(A)$ equal to the cardinality of the torsion part of $M(A)$, for all $A\subseteq E$. We call $(E,\rk)$ the {\em underlying matroid} to $M_{\GS}$ and $(E,\rk,m)$ the {\em underlying semimatroid} to $M_{\GS}$.
\end{remark}

\begin{remark}[See Definition 2.2 in \cite{FM}]
  A matroid $M$ over a ring $R$ is called {\em realizable} if there is a
  finitely generated $R$-module $N$ and a list $(x_e)_{e\in E}$ of
  elements of $N$ such that for all $A\subseteq E$ we have that $M(A)$
  is isomorphic to the quotient $N/(\sum_{e\in A} R
  x_e)$. Realizability is preserved under duality.
\end{remark}

        \addtocontents{toc}{\protect\setcounter{tocdepth}{1}}
\section{Geometric intuition: Periodic arrangements}\label{sec:MaEx}

As an introductory example  we describe the
arithmetic matroid and the matroid over $\mathbb Z$ associated to periodic hyperplane arrangements, highlighting the structures we
will encounter in the general theory later. 

Let $\kk$ stand for either $\mathbb R$ or $\mathbb C$ and recall that an {\em affine hyperplane arrangement} is a locally finite set $\AA$ of
hyperplanes in $\kk^d$. It is called {\em periodic} if it is
(globally) invariant under the action of a group acting on $\kk^d$ by
translations. 

\def\eee{\varepsilon}
For simplicity, we will consider the standard action of
$\mathbb Z^d$ on $\mathbb K^d$, with $k\in \mathbb Z^d$ acting as $t_k(x) = x + \sum_i k_i \eee_i$, where $\eee_1,\ldots, \eee_d$ is the standard basis of $\kk^d$, and we will suppose the arrangement $\AA$ being given by a finite list of integer vectors $a_1,\ldots,a_n \in \mathbb Z^d$ (which we think of as the columns of a $d\times n$ matrix $A$) together with a corresponding list $\alpha_1,\ldots,\alpha_n\in \kk$ of real numbers as follows.

For $X\subseteq [n]$ let $A[X]$ be the $d\times \vert X\vert$ matrix
obtained by restricting $A$ to the relevant columns. Moreover, given and $k\in \mathbb Z^X$ we define the subspace
  \begin{align*}
    H(X,k) &:=\{x\in \kk^d \mid
    \forall i\in X : 
    a_i^{T}x=
    \alpha_i + k_i \}.
  \end{align*}
Then, 
$$
\AA=\{H(\{i\},j) \mid i\in [n], j\in \mathbb Z\}.
$$

\begin{example}\label{ex:rev2}
The periodic arrangement given by 
$$
A:=\left(\begin{array}{ccc}
1 & 1 & 1 \\ 1 & -1 & 0
\end{array}\right)
\quad\quad
\alpha_1=\alpha_2=\alpha_3=0
$$
Is the set
\begin{align*}
\AA = & \quad\,\{{\color {red} H(\{1\},j)} = \{x\in \mathbb R^2 \mid x_1+x_2=j\} \mid j\in \mathbb Z\}\\
& \cup \{{\color {blue} H(\{2\},j)} = \{x\in \mathbb R^2 \mid x_1-x_2=j\} \mid j\in \mathbb Z \} \\
& \cup \{{\color {green} H(\{3\},j)} = \{x\in \mathbb R^2 \mid x_1=j\} \mid j\in \mathbb Z \}
\end{align*}

\begin{figure}[h]
\begin{tikzpicture}[every node/.style={font=\small}]
\node at (0,0) (pic) {\includegraphics[scale=.65]{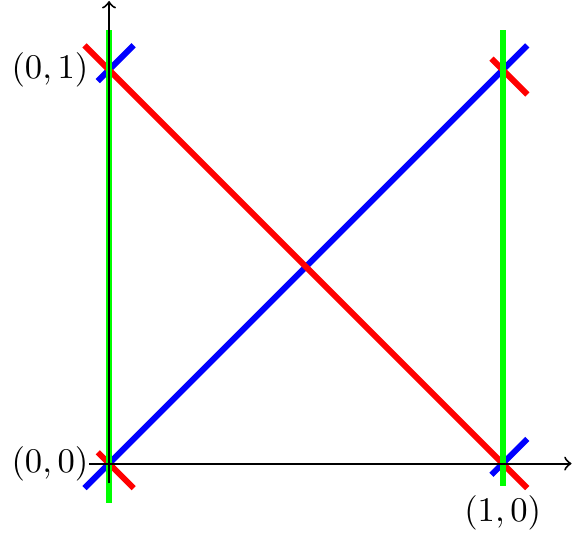}};
\node at (0.2,0) (z) {};
\node at (-1.05,-1.2) (o) {};
\node at (-1.15,-1.2) (o1) {};
\node [anchor = west] at (2.3,1.1) (121) {$(\frac{1}{2},\frac{1}{2})=H(\{1,2\}, (1,0))$};
\draw [<-,dashed] (z) -- (121.west);
\filldraw (0.13,0.03) circle (.2em);
\node [anchor = west] at (-.7,-2.2) (120) {$(0,0)=H(\{1,2\}, (0,0))=H(\{1,2,3\}, (0,0,0))$};
\filldraw (-1.17,-1.3) circle (.2em);
\draw [<-,dashed] (-1.13,-1.4) -- (-.6,-2.2);
\node [anchor = west] at (-3.5,-2) (30) {{\color {green} $H(\{3\}, 0)$}};
\draw [<-,green] (-1.16,-1.6) -- (-1.16,-2);
\draw [green] (-1.16,-2) -- (30.east);
\node [anchor = west] at (-3.5,-.5) (10) {{\color {red} $H(\{1\}, 0)$}};
\draw [<-,red] (o1) -- (10.east);
\node [anchor = west] at (-3.5,0.1) (11) {{\color {red} $H(\{1\}, 1)$}};
\draw [<-,red] (-.35,.4) -- (11.east);
\node [anchor = west] at (-3.5,0.7) (12) {{\color {red} $H(\{1\}, 2)$}};
\draw [<-,red] (1.3,1.4) -- (12.east);
\node [anchor = west] at (3,0) (20) {{\color {blue} $H(\{2\},0)$}};
\draw [<-,blue] (-.3,-.5) -- (20.west);
\node [anchor = west] at (3,-.6) (21) {{\color {blue} $H(\{2\}, 1)$}};
\draw [<-,blue] (1.6,-1.1) -- (21.west);
\end{tikzpicture}
\caption{A drawing of a ``piece'' (in fact, a neighborhood of a fundamental region) of the arrangement $\AA$ of Example \ref{ex:rev2}, with explicit labeling of some of the $H(X,k)$s.\newline Notice that $H(\{1,2,3\},(1,0,0))=\emptyset$, and that $H(\emptyset,0)=\mathbb R^2$.}
\label{fig:lex1}
\end{figure}

\end{example}

The poset of intersections of $\AA$ is the set
$$
\LL(\AA):=\{\cap \mathscr K \mid \mathscr K \subseteq \AA\}
\setminus\{\emptyset\}
$$
ordered by reverse inclusion (i.e., $x\leq y$ if $x\supseteq y$). This is a {\em geometric semilattice} in
the sense of Wachs and Walker \cite{WW}, see also Definition
\ref{df:GS}. 

A closer look at the definition will reveal that $\LL(\AA)$ is the poset of all nonempty $H(X,k)$, 
ordered by reverse inclusion.

\begin{remark}\label{rem:toric:first}
The {\em toric arrangement} associated to
  $\AA$ is the set
$$
\underline \AA:= \{ H/\mathbb Z^d \mid H\in \AA/\mathbb Z^d\}
$$
of quotients of orbits of the action on $\AA$. (Notice that $\mathbb Z^d$ acts on the set $\AA$ by permuting the hyperplanes and, for every $H_0\in \AA$, it acts on the space $H=\mathbb Z^d H_0$ by translations; in particular $H/\mathbb Z^d = H_0/\mathbb Z^d$ is a torus.)

The {\em poset of layers} of $\underline \AA$ is the set $\mathcal C(\AA)$ of connected components of the intersections of elements of $\underline
\AA$, ordered by reverse inclusion. This poset is an important feature
of toric arrangements: when $\kk = \mathbb C$, we have an
arrangement in the complex torus $\mathbb C^d / \mathbb Z^d$ (customarily given as a family of level sets of characters, see e.g.\ \cite[\S 2.1]{dAD2}) 
and
$\CC(\underline\AA)$  encodes much of the homological data about
the arrangement's complement (see e.g.\ \cite{dCP1,CaDe}). When $\kk =
\mathbb R$, this is the poset considered in \cite{ERS,Law}  pertaining to enumeration of the induced cell structure on the compact torus $\mathbb R^d / \mathbb Z^d \simeq (S^1)^d$ .

\end{remark}

\begin{figure}[h]
\begin{center}
\begin{tikzpicture}
\node at (-3.5,0) (TA) {\includegraphics[scale=.4]{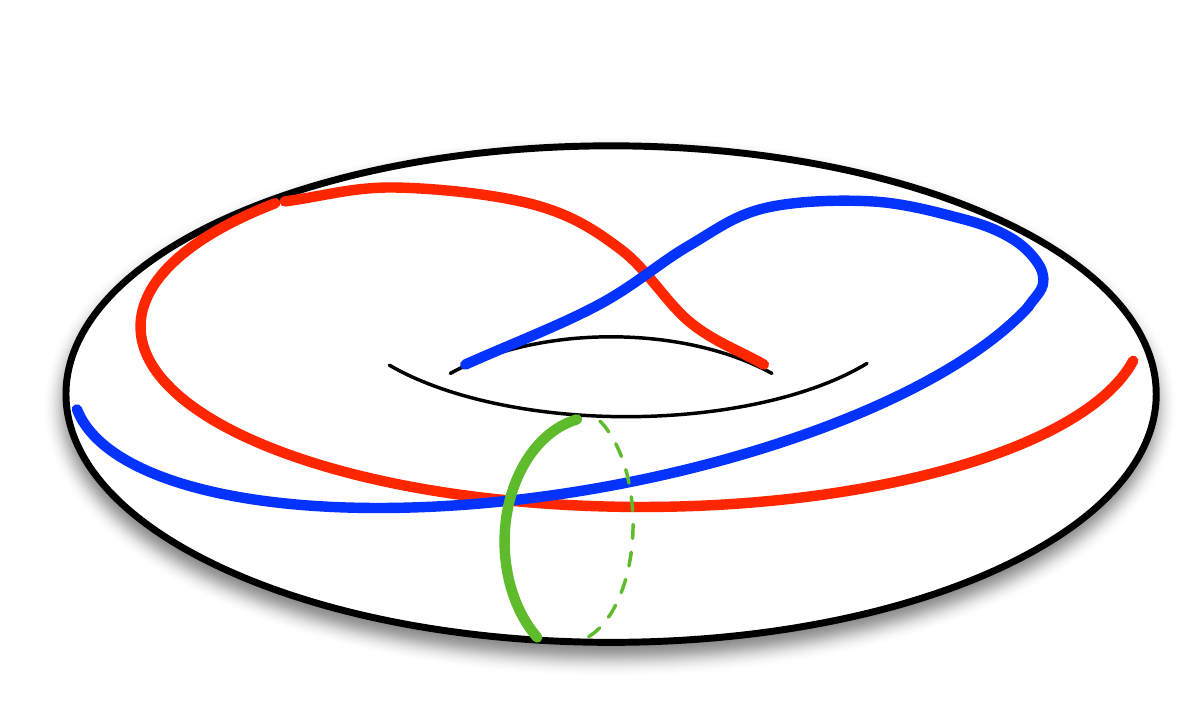}};
\node at (3.5,0) (PL) {\includegraphics[scale=.9]{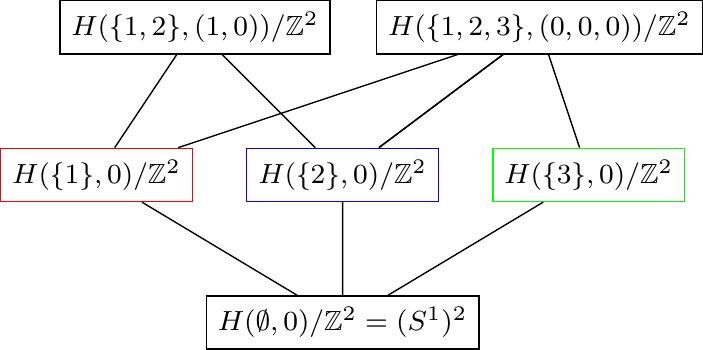}};
\end{tikzpicture}
\end{center}
\caption{A drawing of the toric arrangement $\underline{\AA}$ associated to the periodic line arrangement $\AA$ of Example \ref{ex:rev2} (left-hand side), and the poset of layers $\mathcal C (\underline{\AA})$ (right-hand side).}
\end{figure}

\begin{remark}\label{rem:nuovazione}
We see that $\mathcal C(\underline\AA)$ is the quotient (in the sense of Definition \ref{def:LS}) of the poset $\LL(\AA)$ under the induced action of $\mathbb Z^d$ (where  the element $\eee_l\in \mathbb Z^d$ maps $H(\{i\},j)$  to $H(\{i\}, j + \langle \eee_l\mid a_i\rangle )$).
\end{remark}

For $X\subseteq [n]$ and $k\in \mathbb Z^X$ define
  \begin{equation}\label{WXfirst}
    W(X):=\{k \in \mathbb Z^X \mid H(X,k)\neq \emptyset\}.
  \end{equation}

 We call $\AA$ {\em centered} if $\alpha_i=0$ for all $i=1,\ldots,n$  and assume this for simplicity throughout this section. Notice that the toric arrangements considered in \cite{Moc1} can be obtained from actions on centered arrangements.

\begin{remark}\label{rem:Wcenter} If $\AA$ is centered, then 
   $ W(X)= (A[X]^T \mathbb R^d) \cap \mathbb Z^X $ for all $X\subseteq [n]$, thus $W(X)$ is a pure subgroup (hence a direct summand) of  $\mathbb Z^X$. 
\end{remark}


\begin{remark}\label{rem:urbild}
  Notice that $H(X,k)$ is the preimage of $\alpha+k$ with respect to the
  linear function $\mathbb R^d \to \mathbb R^X,\, x\mapsto A[X]^Tx$, thus $H(X,k)$ is connected
  whenever nonempty . 
\end{remark}

\begin{lemma}\label{lem:fiX}
  If $\AA$ is centered, the map $$\varphi_X : k\mapsto H(X,k) = \bigcap_{i\in X} H(\{i\},k_i)$$
  is a bijection between $W(X)$ and the connected
  components of 
  $\bigcup_{k\in \mathbb Z^X} H(X,k)$.
\end{lemma}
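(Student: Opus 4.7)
The plan is to exploit the fact that the $H(X,k)$'s are the (non-empty) fibers of a continuous linear map, so that they are automatically pairwise disjoint and individually connected; the lemma then reduces to two simple checks.

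First I would observe that, since $\AA$ is centered, $H(X,k)$ is exactly the preimage $(A[X]^T)^{-1}(k)$ of the point $k\in \mathbb R^X$ under the linear map $A[X]^T\colon \mathbb R^d\to \mathbb R^X$. Two distinct points of $\mathbb R^X$ have disjoint preimages, so for $k\ne k'$ in $W(X)$ the sets $H(X,k)$ and $H(X,k')$ are disjoint and non-empty. In particular $\varphi_X$ is injective. Moreover, by Remark \ref{rem:urbild} each $H(X,k)$ with $k\in W(X)$ is connected.

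Next I would establish surjectivity. Write
\[
U := \bigcup_{k\in\mathbb Z^X} H(X,k) = \bigsqcup_{k\in W(X)} H(X,k),
\]
the last equality because the empty $H(X,k)$'s contribute nothing and the non-empty ones are pairwise disjoint as above. Let $C$ be a connected component of $U$. Since the $H(X,k)$ with $k\in W(X)$ are mutually disjoint, and each is both open and closed in $U$ (being the preimage of the isolated point $k$ of $\mathbb Z^X\subseteq \mathbb R^X$ under the restriction of a continuous map), $C$ must lie in a single $H(X,k)$; but that set is itself connected, hence $C=H(X,k)=\varphi_X(k)$.

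The two observations together give the claimed bijection. The only potential subtlety is verifying that each $H(X,k)$ is a clopen subset of $U$ (equivalently that the union $U$ is a topological coproduct of the $H(X,k)$), which is where I expect readers might pause; but this follows immediately from the fact that $\mathbb Z^X$ is a discrete subspace of $\mathbb R^X$ and $A[X]^T$ is continuous.
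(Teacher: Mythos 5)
Your proof is correct and follows essentially the same route as the paper's: injectivity from disjointness of preimages of distinct points under $A[X]^T$ (Remark \ref{rem:urbild}), and connectivity of each fiber. The paper's proof is terser, asserting well-definedness and surjectivity simply ``by definition of $W(X)$''; your explicit verification that each nonempty $H(X,k)$ is clopen in $U$ (as the preimage of an isolated point of the discrete set $\mathbb Z^X$) is the extra step that makes the surjectivity argument fully rigorous.
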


\begin{proof}
The map $\varphi_X$ is well-defined and surjective by definition of $W(X)$. It is injective by Remark \ref{rem:urbild}, as $A[X]^T$-preimages of distinct elements are disjoint.
\end{proof}

\begin{example}[Continued from Example \ref{ex:rev2}] \label{ex:rev22}
\begin{align*}
W(\{1,2,3\}) &= \{k \in \mathbb Z^3 \mid A^T x = k \textrm{ for some }x\in \mathbb R^2\}\\
&=\{ k \in \mathbb Z^3 \mid k_1+k_2 = 2k_3\}
\end{align*}
\begin{align*}
W(\{1,2\}) 
&=\{ k\in \mathbb Z^2 \mid x_1+x_2 = k_1,\, x_1-x_2 = k_2\textrm{ for some }x\in \mathbb R^2 \} = \mathbb Z^2
\end{align*}

\end{example}

\begin{remark}\label{rem:azione2}
   We say that $\mathbb
  Z^d$ acts on $\mathbb Z^{\{i\}}$ by $\eee_l(j)= j + \langle \eee_l\mid a_i
  \rangle$ and, by coordinatewise extension, we obtain an action of
  $\mathbb Z^d$ on $\mathbb Z^X$ for all $X\subseteq [n]$.
  This induces an action of $\mathbb Z^d$ on $W(X)$ which is the
  action on $W(X)$ of its subgroup $A[X]^T\mathbb Z^d$ by addition and
  coincides with the ''natural'' action described in Remark \ref{rem:nuovazione}. 
\end{remark}

\begin{definition}
  For $X\subseteq [n]$ let
  $I(X):=A[X]^T\mathbb Z^d$ and consider
$$
Z(X):= \mathbb Z^X / I(X).
$$
\end{definition}

\newcommand{\colvec}[3]{
    \begin{pmatrix}\ifx\relax#1\relax\else#1\\\fi#2\\#3\end{pmatrix}
}
\newcommand{\bivec}[2]{
    \begin{pmatrix}\ifx\relax#1\relax\else#1\\\fi#2\end{pmatrix}
}

\begin{example}[Continued from Example \ref{ex:rev22}]\label{ex:rev222} In the case $X=\{1,2,3\}$, 
we have $I(\{1,2,3\})= A^T\mathbb Z^2= \colvec{1}{1}{1}\mathbb Z + \colvec{1}{-1}{0}\mathbb Z = W(X)$.
Since $\colvec{1}{1}{1},\colvec{1}{-1}{0},\colvec{1}{0}{0}$ is a unimodular basis of $\mathbb Z^3$, 
$$
Z(\{1,2,3\}) = (\mathbb Z \eee_1 \oplus W(X) )/I(X) = \mathbb Z \eee_1 \simeq \mathbb Z.
$$

In the case $X=\{1,2\}$ we have $W(\{1,2\})=\mathbb Z^2$ and $I(\{1,2\})=\left(\begin{array}{rr} 1 & 1 \\ 1 & -1 \end{array}\right) \mathbb Z^2$.
Hence, here 
$$
Z(\{1,2\}) = W(\{1,2\})/I(\{1,2\}) = \left\{\bivec{0}{0}+I(\{1,2\}), \bivec{1}{0}+I(\{1,2\})\right\} \simeq \mathbb Z / 2\mathbb Z.
$$
\end{example}

In general, we have the following description.

\begin{lemma}\label{lem:etaprimo}
  There is a direct sum decomposition of abelian groups $$Z(X) \simeq
   \mathbb Z^\eta 
   \oplus W(X)/I(X),$$ where
  $\eta = \vert X \vert - \operatorname{rk}A[X]^T$, the {\em nullity}
  of $X$, is the rank of $Z(X)$ as a $\mathbb Z$-module.
\end{lemma}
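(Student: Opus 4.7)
The core idea is that $W(X)$ sits between $I(X)$ and $\mathbb{Z}^X$ as a \emph{pure} subgroup, so one can choose a free complement and then quotient in two steps.

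First, I would observe that $I(X)\subseteq W(X)$: indeed $I(X)=A[X]^T\mathbb{Z}^d$ is contained in both $A[X]^T\mathbb{R}^d$ and $\mathbb{Z}^X$, whose intersection is precisely $W(X)$ by Remark \ref{rem:Wcenter}. That same remark tells us $W(X)$ is a pure subgroup of $\mathbb{Z}^X$, hence a direct summand; choose a splitting
\[
\mathbb{Z}^X \;=\; W(X) \oplus C,
\]
with $C$ free abelian. Because $I(X)\subseteq W(X)$, dividing out by $I(X)$ only affects the first summand:
\[
Z(X) \;=\; \mathbb{Z}^X/I(X) \;\cong\; W(X)/I(X) \,\oplus\, C.
\]

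Next I would compute the rank of $C$. Since the splitting gives $C\cong \mathbb{Z}^X/W(X)$ as abelian groups, $\operatorname{rk}_{\mathbb{Z}} C = |X|-\operatorname{rk}_{\mathbb{Z}} W(X)$. The subgroup $W(X)=A[X]^T\mathbb{R}^d\cap \mathbb{Z}^X$ is a lattice of full rank in the real subspace $A[X]^T\mathbb{R}^d\subseteq \mathbb{R}^X$, so $\operatorname{rk}_{\mathbb{Z}} W(X)=\dim_{\mathbb{R}} A[X]^T\mathbb{R}^d = \operatorname{rk} A[X]^T$. Hence $\operatorname{rk}_{\mathbb{Z}} C = |X|-\operatorname{rk} A[X]^T = \eta$, and $C\cong \mathbb{Z}^\eta$.

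Finally, to identify $\eta$ as the $\mathbb{Z}$-rank of $Z(X)$, I would note that $W(X)/I(X)$ is torsion: both $W(X)$ and $I(X)$ have $\mathbb{Z}$-rank $\operatorname{rk} A[X]^T$ (for $I(X)$, this is the rank of the integer matrix $A[X]^T$, which equals its real rank). Consequently $\operatorname{rk}_{\mathbb{Z}} Z(X) = \operatorname{rk}_{\mathbb{Z}}\mathbb{Z}^\eta = \eta$, as claimed.

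There is no real obstacle here; the only subtle point is invoking purity of $W(X)$ (from Remark \ref{rem:Wcenter}) to secure the splitting $\mathbb{Z}^X = W(X)\oplus C$, without which the quotient $W(X)/I(X)$ could embed into $Z(X)$ only up to an extension rather than a direct sum. The centeredness hypothesis is used exactly through Remark \ref{rem:Wcenter}.
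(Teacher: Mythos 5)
Your proof is correct and follows essentially the same route as the paper: invoke purity of $W(X)$ in $\mathbb{Z}^X$ (Remark \ref{rem:Wcenter}) to obtain a free complement, observe $I(X)\subseteq W(X)$ so the quotient splits accordingly, and note that $W(X)/I(X)$ is torsion because $W(X)$ and $I(X)$ are both free of rank $\operatorname{rk} A[X]^T$. The only difference is that you spell out a few intermediate steps the paper leaves implicit.
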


\begin{proof} The decomposition $\mathbb Z^X \simeq \mathbb Z^{\eta} \oplus W(X)$ exists by Remark \ref{rem:Wcenter}, and $Z(X)$ decomposes as stated because $I(X)\subseteq W(X)$. For the claim on the rank, notice that both $W(X)$ and $I(X)$ are, by construction, free abelian groups of rank $\rk A[X]^T$, thus the quotient on the right hand side is pure torsion.
\end{proof}

\begin{remark}\label{rem:ram}
  Arithmetic matroids were introduced by d'Adderio and Moci in \cite{dAM}
 in order to study, in the centered case, the combinatorial properties of the rank and
  multiplicity functions on the subsets of $[n]$, where every $X$ has 
  $\rk(X):=\rk A[X]$ and  $m(X):= [\mathbb Z^d \cap A[X] \mathbb R^X : A[X]\mathbb
  Z^X]$.
  Since, by Remark \ref{rem:Wcenter} and Remark \ref{rem:azione2}, $$\vert W(X) / I(X) \vert = [W(X) : I(X)] = [ \mathbb Z^X
  \cap A[X]^T\mathbb R^d : A[X]^T\mathbb Z^d],$$  classical work of
  McMullen \cite{MM} shows that $m(X) = \vert W(X) / I(X)\vert$, and we
  recover in a geometric way the multiplicity function from \cite{dAM}.
\end{remark}

\begin{remark}\label{rem:pure}
  The function $\varphi_X$ of Lemma \ref{lem:fiX} induces a (natural)
  bijection  between the elements of $W(X)/ I(X)$ and the layers of
  $\left[\bigcup_{k\in \mathbb Z^X} H(X,k) \right]  / \mathbb Z^d$ in the toric arrangement $\underline\AA $.  This bijection exhibits the enumerative results proved in \cite{dAM}.
\end{remark}

\begin{example}[Continued from Example \ref{ex:rev222}]
Let us consider $X=\{1,2\}$. We have seen that the family $\left[\bigcup_{k\in \mathbb Z^X} H(X,k) \right] / \mathbb Z^2$ equals
$$
  \{H(X,\,(0,0) + A[X]^T\mathbb Z^2),\,\, H(X,\,(1,0) + A[X]^T\mathbb Z^2)\}.
$$ The map $\varphi_{X}$ is defined by 
$$
\varphi_{X}\left(\bivec{i}{j}\right)=H(X,(i,j))
$$ We have also previously seen that 
$$W(X)/I(X)=\left\{\bivec{0}{0}+I(X), \bivec{1}{0}+I(X)\right\}.$$
Hence we can easily compute
$$
\varphi_{X}\left(\bivec{0}{0}+I(X)\right) = H(X,(0,0)+ I(X))$$$$
\varphi_{X}\left(\bivec{1}{0}+I(X)\right) = H(X,(1,0)+ I(X))
$$
which, since by definition $I(X)=A[X]^T\mathbb Z^2$, is a bijection as stated.

\end{example}

\begin{remark}
  As proved in \cite{Moc1}, the arithmetic Tutte polynomial associated to this arithmetic
  matroid evaluates to many interesting invariants --- for instance to the
  characteristic polynomial of the poset $\mathcal C (\underline\AA)$. Thus, it counts the number of chambers of the associated toric
  arrangement in $(S^1)^d$. Moreover, the quotient of the induced action on the complexification
  of $\AA$ is an arrangement of subtori in
  $(\mathbb C^*)^d$, and the arithmetic Tutte polynomial specializes to
  the Poincar\'e polynomial of its complement.
\end{remark}

For $Y\subseteq X \subseteq [n]$ we consider $\mathbb Z^{X\setminus Y}\subseteq \mathbb Z^X$ as an intersection of coordinate subspaces and let $\pi_{X,Y}$ denote the coordinate projection of $\mathbb Z^X$ onto $\mathbb Z^{X\setminus Y}$. 
Since $I(X\setminus Y)=I(X) \cap \mathbb Z^{X\setminus Y} $, the map $\pi_{X,Y}$ restricts to a
surjection $I(X)\to I(X\setminus Y)$ and  
induces a map $\pi_{X,Y}: Z(X)\to Z(X\setminus Y)$ which, if $\vert Y \vert =1$, has cyclic kernel.

 \begin{lemma} For $X\subseteq [n]$, $i,j \in X$, the diagram
   $$\begin{CD}
     Z(X) @>\pi_{X,\,  i}>> Z(X\setminus i) \\
     @VV\pi_{X,\, j}V   @VV\pi_{X\setminus i, \,  j}V\\
     Z(X\setminus j) @>\pi_{X\setminus j ,\,  i}>> Z(X\setminus \{i,j\})
   \end{CD}
   $$
   is a pushout square of epimorphisms with cyclic kernels.
\end{lemma}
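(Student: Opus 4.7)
My plan is to verify the three ingredients (commutativity, cyclicity of the kernels, and the pushout property) by descending everything from the analogous diagram at the level of free $\mathbb{Z}$-modules, exploiting throughout that coordinate projection commutes with the formation of $I(\cdot)$. Commutativity of the square is immediate since it descends from the evidently commuting diagram of coordinate projections $\mathbb{Z}^X\to\mathbb{Z}^{X\setminus i}\to\mathbb{Z}^{X\setminus\{i,j\}}$ (and the analogous chain through $X\setminus j$).

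First I would check surjectivity and identify the kernels. For $i\in X$, the equality $\pi_{X,i}(A[X]^T z)=A[X\setminus i]^T z$ (valid for every $z\in\mathbb{Z}^d$) shows that $\pi_{X,i}(I(X))=I(X\setminus i)$. This both ensures that the induced map $\pi_{X,i}\colon Z(X)\to Z(X\setminus i)$ is well-defined and surjective, and enables the standard identification of the induced kernel:
$$\ker\bigl(\pi_{X,i}\colon Z(X)\to Z(X\setminus i)\bigr)\;=\;\pi_{X,i}^{-1}(I(X\setminus i))/I(X)\;=\;(\mathbb{Z}e_i+I(X))/I(X),$$
which is a quotient of $\mathbb{Z}e_i\cong\mathbb{Z}$, hence cyclic. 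The same argument applies verbatim to the other three arrows.

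For the pushout property, I would invoke the standard description of the pushout of two module surjections $A/K\leftarrow A\rightarrow A/L$ as $A/(K+L)$. Applied to $A=Z(X)$ with kernels $K=(\mathbb{Z}e_i+I(X))/I(X)$ and $L=(\mathbb{Z}e_j+I(X))/I(X)$, this gives
$$Z(X)/(K+L)\;=\;\mathbb{Z}^X\big/\bigl(\mathbb{Z}\{e_i,e_j\}+I(X)\bigr).$$
Applying the third isomorphism theorem---quotient first by $\mathbb{Z}\{e_i,e_j\}$ to obtain $\mathbb{Z}^{X\setminus\{i,j\}}$, then by the residual image of $I(X)$, which equals $I(X\setminus\{i,j\})$ by the same column-restriction argument as above---identifies this pushout canonically with $Z(X\setminus\{i,j\})$. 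A final check of the universal maps shows they coincide with $\pi_{X\setminus i,j}$ and $\pi_{X\setminus j,i}$, completing the verification.

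I do not anticipate any serious obstacle; the argument is essentially mechanical once one records the single fact that the coordinate projections intertwine with the construction of the sublattices $I(\cdot)$, which is itself transparent from the definition $I(X)=A[X]^T\mathbb{Z}^d$ and the observation that $A[X\setminus Y]$ is obtained from $A[X]$ by deleting the columns indexed by $Y$. The only bookkeeping worth doing carefully is tracking which lattice each coset lives in at each stage of the chain of quotients.
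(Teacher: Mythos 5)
Your proof is correct and carries out in full detail the ``direct verification'' that the paper's one-sentence proof names as one of two possible routes. The key computations are all sound: coordinate projection restricts to a surjection $I(X)\twoheadrightarrow I(X\setminus i)$ (since $\pi_{X,i}(A[X]^T z)=A[X\setminus i]^T z$), so the induced map on $Z(\cdot)$ is a well-defined epimorphism; its kernel $\pi_{X,i}^{-1}(I(X\setminus i))/I(X)=(\mathbb Z e_i+I(X))/I(X)$ is a quotient of $\mathbb Z e_i$, hence cyclic; and $Z(X)/(K+L)\cong\mathbb Z^X/(\mathbb Z\{e_i,e_j\}+I(X))\cong Z(X\setminus\{i,j\})$ by the third isomorphism theorem, giving the pushout. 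The paper's alternative route is via Lemma \ref{lem:backdoor}: view each $Z(\cdot)$ as the cokernel in $0\to I(\cdot)\to\mathbb Z^\cdot\to Z(\cdot)\to 0$ and push the pushout property of the trivial square of coordinate projections of free modules down the grid of short exact sequences. That argument is more structural and is exactly the mechanism re-used later for the abstract case in Lemma \ref{lem:Mpushout}; your computation is more elementary and self-contained, and both land in the same place.
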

\begin{proof}
This can be verified either directly, or by  applying Lemma \ref{lem:backdoor}, where the rows of the required diagram arise from short exact sequences of the type $0\to I(X) \to \mathbb Z^X \to Z(X) \to 0$, and the morphisms between the sequences are induced by the projections $\pi_{\ast,\ast}$.
\end{proof}

\begin{theorem}\label{thm:dualMR}
  The assignment $I\mapsto Z([n]\setminus I)$ defines a matroid over
  $\mathbb Z$ on the ground set $[n]$. The underlying arithmetic matroid is dual to that  
  associated to the list $X:=\{a_1,\ldots,a_n\} \subset \mathbb Z^d$
  in Example \ref{ex:exarmat}, see \cite{dAM}.
\end{theorem}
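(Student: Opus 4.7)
The plan is to split the statement into two parts: first, checking the matroid-over-$\mathbb Z$ axiom (R) for the assignment $M(I):=Z([n]\setminus I)$, and second, identifying the induced arithmetic matroid (in the sense of Remark \ref{rem:uam}) with the dual of the one in Example \ref{ex:exarmat}. The first part will be essentially immediate from the preceding lemma, so the real work lies in matching the rank and multiplicity functions, where the key identifications come from Lemma \ref{lem:etaprimo} and Remark \ref{rem:ram}.

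For axiom (R), given $A\subseteq [n]$ and $e_1,e_2\in [n]$, I set $X:=[n]\setminus A$ and read off $M(A)=Z(X)$, $M(A\cup\{e_k\})=Z(X\setminus e_k)$, and $M(A\cup\{e_1,e_2\})=Z(X\setminus\{e_1,e_2\})$. The required pushout diagram is then exactly the diagram of the preceding lemma with $i=e_1$, $j=e_2$, whose conclusion already asserts the surjectivity, the cyclicity of the kernels, and the pushout property. (If $e_1$ or $e_2$ happens to lie in $A$, the diagram degenerates to an obvious one and there is nothing to check.)

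Next I turn to the underlying arithmetic matroid. Writing $A^c:=[n]\setminus A$, Lemma \ref{lem:etaprimo} gives
\[
\operatorname{rank}_{\mathbb Z} M(A)=\operatorname{rank}_{\mathbb Z}Z(A^c)=\vert A^c\vert-\operatorname{rank} A[A^c],
\]
while the cardinality of the torsion part of $M(A)$ is $\vert W(A^c)/I(A^c)\vert$, which by Remark \ref{rem:ram} coincides with the multiplicity $m(A^c)$ of the arithmetic matroid from Example \ref{ex:exarmat}. Letting $\rk$ and $m$ denote the rank and multiplicity of that matroid, the prescription in Remark \ref{rem:uam} and a direct computation (taking $A=\emptyset$ to pin down $\rk_M([n])=n-\operatorname{rank}A$) yield
\[
\rk_M(A)=\vert A\vert+\operatorname{rank}A[A^c]-\operatorname{rank}A=\vert A\vert+\rk(A^c)-\rk([n]),
\]
which is precisely the rank function of the dual matroid (cf.\ Remark \ref{rem:dualmat}). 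Combined with $m_M(A)=m(A^c)=m^*(A)$, this identifies the underlying arithmetic matroid with the dual of the one from Example \ref{ex:exarmat}.

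The only subtle point I expect is bookkeeping with the dual rank formula and making sure the ``nullity'' coming from Lemma \ref{lem:etaprimo} is correctly converted into the rank of the underlying matroid via the convention $\rk_M(E)-\rk_M(A)=\operatorname{rank}_{\mathbb Z}M(A)$; everything else is a direct application of the lemmas already available in the excerpt.
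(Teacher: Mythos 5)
Your proof is correct and follows essentially the paper's own approach: the matroid-over-$\mathbb Z$ axiom is delegated to the preceding pushout lemma, and the dual relationship is established by reading off $\operatorname{rank}_{\mathbb Z} M(A)$ from Lemma \ref{lem:etaprimo} and the torsion from Remark \ref{rem:ram}, then matching against the dual rank formula. The only cosmetic difference is that you directly compute $\rk_M$ and show it equals $\rk^*$, whereas the paper shows $\rk_X$ is the dual of $\rk_Z$; these are the same statement read in opposite directions.
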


\begin{proof} The previous lemma shows that this in fact defines a
  matroid over $\mathbb Z$. For the duality claim let us write $\rk$ for matrix rank, $\rk_X$ for the rank function of the arithmetic matroid associated to the list $X$, and $\rk_Z$ for the rank function of the underlying arithmetic matroid, respectively. 
  Now, by Remark \ref{rem:uam} and Lemma \ref{lem:etaprimo} we have
  $$\rk_Z([n])-\rk_Z(I) = \operatorname{rank}_{\mathbb Z} Z(I\setminus [n])=\vert I^c \vert - \rk(A^T[I^c]),$$ where we write $I^c:=[n]\setminus I$. 
  Moreover, $\rk_X(I)=\rk A[X] = \rk A^T[I]$ (see  Example \ref{ex:exarmat}). Therefore we conclude
  $$
  \rk_X(I^c) = \vert I^c \vert + \rk_Z(I) - \rk_Z([n]),
  $$
  which is the very definition of $\rk_X$ being the rank function of the dual of the matroid defined by $\rk_Z$ (see \cite[Proposition 2.1.9]{Oxl}). 
  
  Similarly, let us write $m_X$ for the multiplicity function of the arithmetic matroid associated to the list $X$, and $m_Z$ for the rank function of the underlying arithmetic matroid, respectively.
Lemma \ref{lem:etaprimo} implies $m_Z(J)=\vert W(J^c)/I(J^c)\vert$ for all $J\subseteq [n]$. By remark \ref{rem:ram}, we conclude 
$m_Z(I) = m_X(I^c)$, corresponding to the relationship between multiplicity functions of dual arithmetic matroids in \cite{dAM}.
\end{proof}

\section{Overview: setup and main results}
\label{sec:defs}
Throughout, we fix a finitary semimatroid $\SS=(S,\CC,\rk)$ on the ground set
$S$ with set of central sets $\CC$, rank function $\rk: \CC \to
\mathbb N$ and semilattice
of flats $\LL$.

Let $G$ be a  group acting on $S$. Given $x\in S$  write $g(x)$ (or simply $gx$) for its image under $g\in G$, and $Gx$ for its orbit.
For every $X\subseteq S$ define
  $$\underline{X}:=\{Gx \mid x\in X\} \subseteq S/G$$ for the set of orbits met by $X$, and write
$$gX:=\{g(x) \mid x\in
X\},$$
thus obtaining an action of $G$ on the power set $2^S$.

\begin{remark} As a support for the intuition, the reader can think of the realizable case described in Example \ref{ex:ur}, namely that of a periodic arrangement of hyperplanes. As a tangible instance, consider Example \ref{ex:rev2}: there, the elements of the semimatroid are the hyperplanes $H(\{i\},j)$, and the action of $\mathbb Z^2$ is by standard translation, i.e., such that $k\in \mathbb Z^2$  sends $H(\{i\},j)$ to $H(\{i\},{j+\langle k \mid a_i \rangle})$ (compare Remark \ref{rem:nuovazione}). 
\end{remark}

\subsection{Group actions on semimatroids}

We now discuss group actions on a set $S$ that carries the structure of a semimatroid. In order to get a sense of the objects and notions introduced in the following definition the reader may already keep an eye on Example \ref{ex:running2} and Figure \ref{fig:Casistica}.

\begin{definition}[$G$-semimatroids]\label{def:prinzipal}
  An {\em action} of $G$ on a semimatroid $\SS:=(S,\CC,\rk)$ is an action of $G$ on the set $S$, whose induced action on $2^S$ preserves rank and centrality. 
  A $G$-semimatroid
$$\GS= G\circlearrowright (S,\CC,\rk)$$ is a semimatroid together with a
$G$-action. We define then
$$
\ES:= S/G ; \quad\quad \CC_{\GS}=\CC/G; \quad\quad 
\underline{\CC}:=\{\underline X \mid X\in \CC\};
$$
where we take quotients of sets, i.e., $\ES$ and $\CC_{\GS}$ are families of orbits.
  We call such an action
  \begin{itemize}
  \item[--] {\em centered} if there is an $X\in \CC$ with $\underline X = \ES$,
  \item[--] {\em \WT} if, for all $g\in G$ and all $x\in S$,  $\{x,g(x)\}\in \CC$ implies 
    $\rk(\{x,g(x)\})=\rk(\{x\}).$
  \item[--] {\em \ST} if, for all $g\in G$ and all $x\in S$, $\{x,g(x)\}\in \CC$ implies $g(x)=x$.
\end{itemize}
Moreover, for $A\subseteq \ES$ define
$$
\rkE(A):=\max \{\rk_{\mathcal C} (X) \mid \underline X \subseteq A\}
$$
and write $\rk(\GS):=\rkE(\ES)=\rk(\SS)$ for the rank of the $G$-semimatroid $\GS$.

\end{definition}

\begin{remark}
We call  a $G$-semimatroid $\GS$ {\em representable} if it arises from a periodic affine arrangement (see beginning of Section \ref{sec:MaEx}). In particular, $\SS$ is representable in the sense of Example \ref{ex:ur}.
\end{remark}

\begin{remark} Every \ST action is \WT. Moreover, every \WT action on a simple semimatroid is \ST.
\end{remark}

\begin{remark}\label{rem:CSposet}
We will sometimes find it useful to consider the set system $\CC_{\GS}$ as a poset, with the natural order defined by $GX \leq GY$ if $X\subseteq gY$ for some $g\in G$ (notice that this is well-defined: in fact, it is the poset-quotient of the poset of simplices of $\CC$ ordered by inclusion).
\end{remark}

\begin{defass} The action is called {\em cofinite} if the set $\CC_{\GS}$ is finite (in particular, $\ES$ is finite). We will assume this throughout without further mention.
\end{defass}

\begin{Mthm}\label{thm:polymat}
Every $G$-action on $\SS$ gives rise to a polymatroid on the ground
set $\ES$ with rank function $\rkE$ (see Remark \ref{def:polymat}).
This
polymatroid is a matroid if and only if the action is \WT: in
this case the triple 
$$\SS_\GS:=(\ES,\CS,\rkE)$$
is locally ranked and satisfies (CR2).
The triple $\SS_{\GS}$ is a matroid if and only if $\GS$ is centered. \end{Mthm}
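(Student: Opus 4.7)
The theorem breaks into four assertions: (i) the polymatroid property of $(\ES,\rkE)$; (ii) matroid $\Leftrightarrow$ weakly translative; (iii) $(\ES,\CS,\rkE)$ is locally ranked and satisfies (CR2) under weak translativity; and (iv) this triple is a matroid iff $\GS$ is centered. Items (i) and (iii) will rely on a common \emph{extension lemma}, item (ii) reduces to a single-orbit calculation, and (iv) is bookkeeping.

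For (i), monotonicity and $\rkE(\emptyset)=0$ are immediate from the definition of $\rkE$; the submodularity (R3) is the real work. The plan is to first establish the extension lemma: for every $X_0\in\CC$ and every $T\subseteq\ES$ with $\underline{X_0}\subseteq T$ there exists $Y\in\CC$ with $X_0\subseteq Y$, $\underline Y\subseteq T$ and $\rk(Y)=\rkE(T)$. The proof proceeds by applying (CR2) of $\SS$ to $\cl(X_0)$ and any $Z\in\CC$ achieving the maximum rank in $T$: the resulting $z\in Z\setminus\cl(X_0)$ satisfies $\cl(X_0)\cup z\in\CC$, hence $X_0\cup z\in\CC$ with $\rk(X_0\cup z)>\rk(X_0)$; one iterates until the rank matches $\rkE(T)$. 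Given this, for $A,B\subseteq\ES$ choose $X_0$ witnessing $\rkE(A\cap B)$ and extend it to $Y$ witnessing $\rkE(A\cup B)$. Partitioning $Y=Y_A\cup Y_B$ by which of $A,B$ contains each orbit, with $Y_A\cap Y_B\supseteq X_0$, I will apply (R3) of $\SS$ to obtain
\[
\rkE(A)+\rkE(B)\geq \rk(Y_A)+\rk(Y_B)\geq \rk(Y)+\rk(Y_A\cap Y_B)\geq \rkE(A\cup B)+\rkE(A\cap B).
\]

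For (ii), by subadditivity of $\rkE$ (which follows from (R3)), the bound $\rkE(A)\leq |A|$ is equivalent to $\rkE(\{Gx\})\leq 1$ for every $x\in S$. If the action is not weakly translative, some $\{x,gx\}\in\CC$ has rank $\geq 2$, violating this bound. Conversely, under weak translativity every central $X\subseteq Gx$ lies in $\cl(\{x\})$: each $gx\in X$ satisfies $\rk(\{x,gx\})=\rk(\{x\})$, so $gx\in\cl(\{x\})$, whence $\rk(X)\leq \rk(\cl(\{x\}))=\rk(\{x\})\leq 1$. For (iii), the complex $\CS$ is simplicial since for $A\subseteq \underline X\in\CS$ the set $\{x\in X:Gx\in A\}\subseteq X$ is central with underlining $A$; then (R1)--(R3) for $(\ES,\CS,\rkE)$ are inherited from (i) and (ii). For (CR2), given $\underline X,\underline{X'}\in\CS$ with $X,X'$ of maximal rank among representatives of their orbit sets and $\rkE(\underline X)<\rkE(\underline{X'})$, apply (CR2) in $\SS$ to $\cl(X)$ and $X'$ to obtain $x\in X'\setminus\cl(X)$ with $\cl(X)\cup x\in\CC$. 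The key claim is $Gx\notin\underline X$; otherwise some $gx\in X\subseteq \cl(X)$ would, by weak translativity, force $\{gx,x\}\in\CC$ to have rank $\rk(\{gx\})$, whence $x\in\cl(\{gx\})\subseteq\cl(X)$ via (CR1), (R3) and $\cl(\cl(X))=\cl(X)$, contradicting $x\notin\cl(X)$. Then $y:=Gx\in\underline{X'}\setminus\underline X$ and $\underline X\cup\{y\}=\underline{X\cup x}\in\CS$. Finally, (iv) is immediate: $\SS_\GS$ is a matroid iff $\CS=2^{\ES}$, and because $\CS$ is simplicial this is equivalent to $\ES\in\CS$, i.e.\ to the existence of $X\in\CC$ with $\underline X=\ES$.

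The principal obstacles are the extension lemma of (i), which requires combining (CR2) with the rank-monotonicity of closures to guarantee strict rank increases at each step, and the key claim in (iii)'s (CR2)-argument, which propagates the weak translativity hypothesis through $\cl(\cdot)$ using (CR1), (R3) and the idempotence of closure.
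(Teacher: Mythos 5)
Your proof plan is substantially correct, and for submodularity (R3) it takes a genuinely different route from the paper. The paper builds a maximal chain $B_0\subseteq B_0\cup B_1\subseteq B_0\cup B_1\cup B_2$ inside $J(\cdot)$ and applies (R3) of $\SS$ to $B_0\cup B_1$ and $B_0\cup B_2$, with an auxiliary argument ("$B_2'=\emptyset$") to certify that $B_0\cup B_1\cup B_2\in J_{\max}(A\cup A')$. Your extension lemma packages the iterated-(CR2') step once and for all, and (R3) then follows from a single application of (R3) in $\SS$ to the partition $Y=Y_A\cup Y_B$ of a full-rank extension $Y$ of a witness $X_0\in J_{\max}(A\cap B)$: the sets $Y_A$, $Y_B$ and $Y_A\cap Y_B$ are central because they are subsets of $Y$, and $\rk(Y_A)\leq\rkE(A)$, $\rk(Y_B)\leq\rkE(B)$, $\rk(Y)=\rkE(A\cup B)$, $\rk(Y_A\cap Y_B)=\rkE(A\cap B)$, so the chain of inequalities closes. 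This is a cleaner proof than the paper's. In (iii), your closure argument deriving $Gx'\notin\underline{X}$ from weak translativity and monotonicity of $\cl$ replaces the paper's appeal to Corollary~\ref{cor:rank}; both routes are valid.

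Three details must be filled in, however. In (ii) forward you must exclude loops explicitly, as the paper does: if $\rk(\{x\})=0$ then submodularity forces $\rk(\{x,g(x)\})=0$, so a genuinely violating pair has $\rk(\{x\})=1$, hence $\rk(\{x,g(x)\})\geq 2$, and only then does $\rkE(\{Gx\})\leq 1$ actually fail. In (ii) backward, the basepoint must be chosen inside $X$ (replace $x$ by some $x_0\in X$) so that $\{x_0,gx_0\}\subseteq X\in\CC$ is central; without $x\in X$ you cannot invoke weak translativity on $\{x,gx\}$. In (iii), the application of (CR2) of $\SS$ to $\cl(X)$ and $X'$ requires $\rk(X)<\rk(X')$. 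You obtain this from $\rkE(\underline{X})<\rkE(\underline{X'})$ only after knowing that a rank-maximal element of $\oC{\underline{X}}$ actually attains $\rkE(\underline{X})$; this is true (if not, take $Z\in J_{\max}(\underline{X})$ and use (CR2') to produce some $X\cup z\in\oC{\underline{X}}$ of strictly larger rank, a contradiction), but it is not automatic from the definition of $\rkE$ and should be stated. The paper instead cites Corollary~\ref{cor:rank}, which under weak translativity shows that \emph{every} element of $\oC{\underline{X}}$ has rank $\rkE(\underline{X})$.
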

\begin{proof}
  The first part of the claim is Proposition
  \ref{prop:unten}. The second part follows from Proposition \ref{prop:unten}
  and Proposition \ref{prop:quot}.
\end{proof}

\begin{example} \label{ex:running2}
As an illustration consider the semimatroid $\SS$ described in Example \ref{ex:running1} (and Figure \ref{fig:PseudoFirst}) with an action of the group $\mathbb Z^2$ given by
\begin{eqnarray*}
\eee_1(a_i) = a_{i+2},\,\, \eee_1(b_i)=b_{i+2},\,\, \eee_1(c_i)=c_i, \,\,\eee_1(d_i)=d_{i+1},\,\, \eee_1(e_i)=e_i \,\,\,\,\\
\eee_2(a_i) = a_{i+1},\,\, \eee_2(b_i)=b_{i-1},\,\, \eee_2(c_i)=c_{i+1},\,\, \eee_2(d_i)=d_{i},\,\, \eee_2(e_i)=e_{i+1}
\end{eqnarray*}
where, as above, $\eee_1, \eee_2$ is the standard basis of $\mathbb Z^2$.

This action gives rise to a well-defined $\mathbb Z^2$-semimatroid $\GS$, with
$$
\ES = \{a,b,c,d,e\},\,\,\, \underline{\CC}= 2^{\{a,b,c,d\}} \cup 2^{\{a,b,e\}} \cup 2^{\{e,d\}}
$$
and rank function defined  via $\rkE(\emptyset)=0$ and, for $A\subseteq \ES$,  $\rkE(A)=1$ if $\vert A \vert =1$, else $\rkE(A)=2$. A sketch of the fundamental region of this action is given in Figure \ref{fig:FR1}, and the associated $\CC_{\GS}$ is shown in Figure \ref{fig:CS}.

In this case, $\SS_{\GS}$ does not satisfy (CR1). For instance, with $X:=\{a,b,c\}$ and $Y:=\{a,b,e\}$, we have $X,Y \in \underline{\CC}$ with $\rkE(X\cap Y)=\rkE(\{a,b\})=2 = \rkE(X)$, but $X\cup Y=\{a,b,c,e\}\not\in \underline{\CC}$.
\end{example}

\begin{remark}
Notice that $\SS_{\GS}$ not being a semimatroid is not a consequence of $\GS$ not being representable. In fact, Figure \ref{fig:Casistica} shows that the properties of being representable, centered and $\SS_{\GS}$ being a semimatroid can appear in any combination not explicitly covered in Theorem \ref{thm:polymat}.
\end{remark}

\begin{figure}[h]
\scalebox{.7}{%
\begin{tikzpicture}[
extended line/.style={shorten >=-#1,shorten <=-#1},
extended line/.default=1.5em]
    \node at (0,0) (O) {};
    \node at (-4,0) (W) {};
    \node at (-4,-4) (SW) {};
    \node at (0,-4) (S) {};
    \node at (-2.5,0) (Wl) {};
    \node at (-1.5,0) (Wr) {};       
    \node at (-2.5,-4) (SWl) {};
    \node at (-1.5,-4) (SWr) {};    
    \draw [extended line,-,dashed,red] (W.center) -- (O.center);
    \draw [extended line,-,red] (SW.center) -- (S.center);
    \draw [extended line,-,green] (SW.center) -- (Wl.center);
    \draw [extended line,-,green] (SWl.center) -- (O.center);
    \draw [extended line,-,dashed,blue] (O.center) -- (S.center);
    \draw [extended line,-,blue] (W.center) -- (SW.center);                
    \draw [extended line,-,orange] (Wr.center) -- (S.center);
    \draw [extended line,-,orange] (W.center) -- (SWr.center);
    \draw [extended line,-,purple] (-4,-1.5) -- (0,-1.5);
    \node[text=red] at (-3.3,-4.2) (c0) {$c_0$}; 
    \node[text=blue] at (-4.2,-.7) (d0) {$d_0$};     
    \node[text=green] at (-1.3,-2.7) (b0) {$b_1$}; 
    \node[text=green] at (-3.2,-2.7) (b1) {$b_0$}; 
    \node[text=orange] at (-2.6,-2.6) (a1) {$a_1$}; 
    \node[text=orange] at (-.8,-2.6) (a2) {$a_2$}; 
    \node[text=orange] at (-4.4,-3.6) (a0) {$a_0$}; 
    \node[text=purple] at (-2,-1.7) (e0) {$e_0$}; 
    \draw [extended line,dashed,shorten >=2em,shorten <=-1em, orange] (-4,-4) -- (-3.99,-4.03);
\end{tikzpicture}
}
\caption{A picture of the fundamental region of the $\mathbb Z^2$-semimatroid of Example \ref{ex:running2}, obtained from the natural action by translations on the pseudoline arrangement of Figure \ref{fig:PseudoFirst}. }
\label{fig:FR1}
\end{figure}

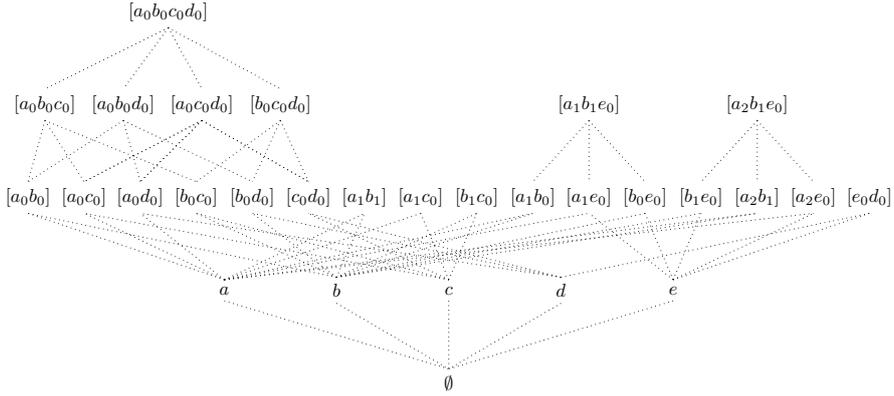
\begin{figure}[h]
\scalebox{.7}{%
\begin{tikzpicture}[x=3em, y=5em]
\node at (-4,0) (a) {$a$};
\node at (-2,0) (b) {$b$};
\node at (0,0) (c) {$c$};
\node at (2,0) (d) {$d$};
\node at (4,0) (e) {$e$};
\node at (-7.5,1) (a0b0) {$[a_0b_0]$};
\node at (-6.5,1) (a0c0) {$[a_0c_0]$};
\node at (-5.5,1) (a0d0) {$[a_0d_0]$};
\node at (-4.5,1) (b0c0) {$[b_0c_0]$};
\node at (-3.5,1) (b0d0) {$[b_0d_0]$};
\node at (-2.5,1) (c0d0) {$[c_0d_0]$};
\node at (-1.5,1) (a1b1) {$[a_1b_1]$};
\node at (-.5,1) (a1c0) {$[a_1c_0]$};
\node at (.5,1) (b1c0) {$[b_1c_0]$};
\node at (1.5,1) (a1b0) {$[a_1b_0]$};
\node at (2.5,1) (a1e0) {$[a_1e_0]$};
\node at (3.5,1) (b0e0) {$[b_0e_0]$};
\node at (4.5,1) (b1e0) {$[b_1e_0]$};
\node at (5.5,1) (a2b1) {$[a_2b_1]$};
\node at (6.5,1) (a2e0) {$[a_2e_0]$};
\node at (7.5,1) (e0d0) {$[e_0d_0]$};
\node at (-7.2,2) (a0b0c0) {$[a_0b_0c_0]$};
\node at (-5.8,2) (a0b0d0) {$[a_0b_0d_0]$};
\node at (-4.4,2) (a0c0d0) {$[a_0c_0d_0]$};
\node at (-3,2) (b0c0d0) {$[b_0c_0d_0]$};
\node at (2.5,2) (a1b0e0) {$[a_1b_1e_0]$};
\node at (5.5,2) (a2b1e0) {$[a_2b_1e_0]$};
\node at (-5,3) (a0b0c0d0) {$[a_0b_0c_0d_0]$};
    \tikzset{
    every path/.style={
        dotted
        }
    }
\draw (a.north) -- (a0b0.south);
\draw (a.north) -- (a0c0.south);
\draw (a.north) -- (a0d0.south);
\draw (a.north) -- (a1b1.south);
\draw (a.north) -- (a1c0.south);
\draw (a.north) -- (a1b0.south);
\draw (a.north) -- (a2b1.south);
\draw (a.north) -- (a2e0.south);
\draw (a.north) -- (a1e0.south);
\draw (b.north) -- (a0b0.south);
\draw (b.north) -- (b0c0.south);
\draw (b.north) -- (b0d0.south);
\draw (b.north) -- (a1b1.south);
\draw (b.north) -- (b1c0.south);
\draw (b.north) -- (a1b0.south);
\draw (b.north) -- (b0e0.south);
\draw (b.north) -- (b1e0.south);
\draw (b.north) -- (a2b1.south);
\draw (c.north) -- (a0c0.south);
\draw (c.north) -- (b0c0.south);
\draw (c.north) -- (c0d0.south);
\draw (c.north) -- (a1c0.south);
\draw (c.north) -- (b1c0.south);
\draw (d.north) -- (a0d0.south);
\draw (d.north) -- (b0d0.south);
\draw (d.north) -- (c0d0.south);
\draw (d.north) -- (e0d0.south);
\draw (e.north) -- (a1e0.south);
\draw (e.north) -- (b0e0.south);
\draw (e.north) -- (b1e0.south);
\draw (e.north) -- (a2e0.south);
\draw (e.north) -- (e0d0.south);
\draw (a0b0c0.south) -- (a0b0.north);
\draw (a0b0c0.south) -- (b0c0.north);
\draw (a0b0c0.south) -- (a0c0.north);
\draw (a0b0d0.south) -- (a0b0.north);
\draw (a0b0d0.south) -- (b0d0.north);
\draw (a0b0d0.south) -- (a0d0.north);
\draw (a0c0d0.south) -- (a0c0.north);
\draw (a0c0d0.south) -- (a0d0.north);
\draw (a0c0d0.south) -- (c0d0.north);
\draw (a0c0d0.south) -- (a0c0.north);
\draw (a0c0d0.south) -- (a0d0.north);
\draw (a0c0d0.south) -- (c0d0.north);
\draw (b0c0d0.south) -- (b0c0.north);
\draw (b0c0d0.south) -- (b0d0.north);
\draw (b0c0d0.south) -- (c0d0.north);
\draw (a1b0e0.south) -- (a1b0.north);
\draw (a1b0e0.south) -- (b0e0.north);
\draw (a1b0e0.south) -- (a1e0.north);
\draw (a2b1e0.south) -- (a2b1.north);
\draw (a2b1e0.south) -- (a2e0.north);
\draw (a2b1e0.south) -- (b1e0.north);
\draw (a0b0c0d0.south) -- (a0b0c0.north);
\draw (a0b0c0d0.south) -- (a0b0d0.north);
\draw (a0b0c0d0.south) -- (a0c0d0.north);
\draw (a0b0c0d0.south) -- (b0c0d0.north);
\node at (0,-1) (O) {$\emptyset$};
\draw (a.south) -- (O.north);
\draw (b.south) -- (O.north);
\draw (c.south) -- (O.north);
\draw (d.south) -- (O.north);
\draw (e.south) -- (O.north);
\end{tikzpicture}
}

\caption{The set system $\CC_{\GS}$, with dotted lines representing the Hasse diagram of the associated poset. We use shorthand notation, where we write, e.g., $[a_0b_0c_0]$ for the orbit $\mathbb Z^2\{a_0,b_0,c_0\}$.}
\label{fig:CS}
\end{figure}

\begin{figure}[h]
\scalebox{.8}{%
\begin{tikzpicture}
\draw (-5.6,1.6) -- (6,1.6) -- (6,-1.5) -- (-5.6,-1.5) -- (-5.6,1.6);
\draw (-5.8,1.8) -- (2,1.8) -- (2,-5.2) -- (-5.8,-5.2) -- (-5.8,1.8);
\draw (-5.7,1.7) -- (-2,1.7) -- (-2,-5) -- (-5.7,-5) -- (-5.7,1.7);
\node at (-4,-4.7) (C) {$\GS$ centered};
\node at (0,-4.7) (R) {$\GS$ is semimatroid};
\node at (4,-4.7) (R) {$\GS$ weakly translative};
\node at (4,1.3) (S) {$\SS_\GS$ representable};
    \node at (-4,-3.2) (C_NR_S) {
        \includegraphics[scale=.5]{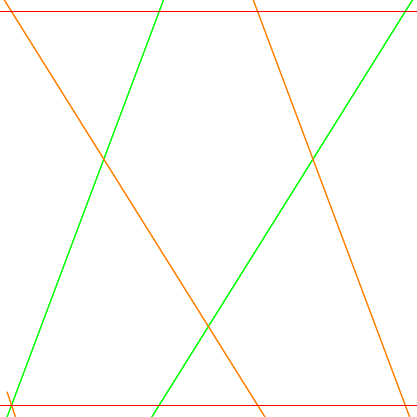}
    };

    \node at (-4,-.1) (C_R_S) {
        \includegraphics[scale=.5]{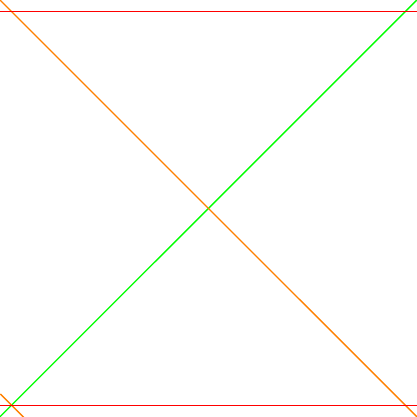}
    };

    \node at (4,-3.2) (NC_NR_NS) {
        \includegraphics[scale=.5]{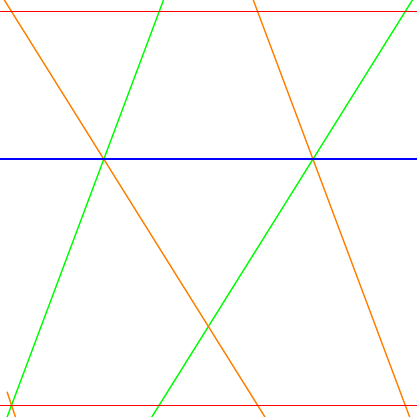}
    };

    \node at (4,-.1) (NC_R_NS) {
        \includegraphics[scale=.5]{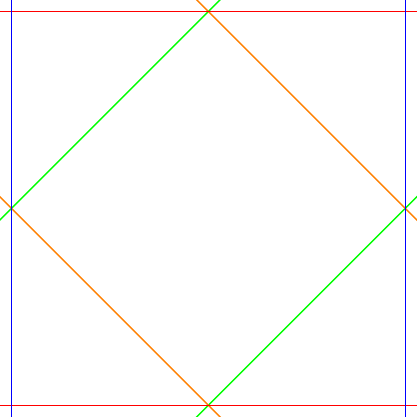}
    };

    \node at (0,-.1) (NC_R_S) {
        \includegraphics[scale=.5]{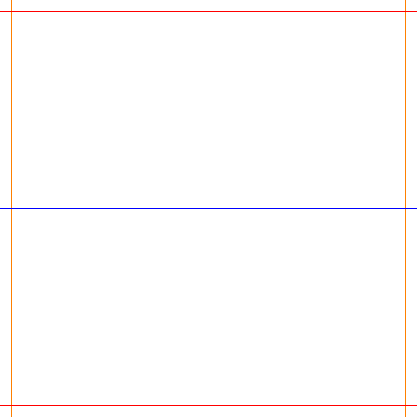}
    };

    \node at (0,-3.2) (NC_NR_S) {
        \includegraphics[scale=.5]{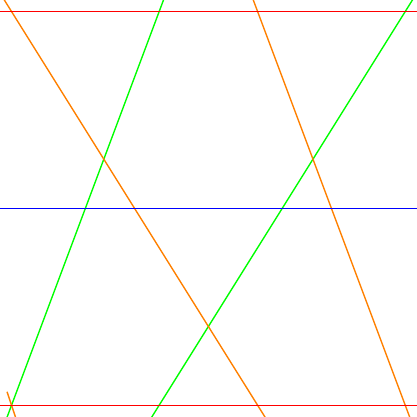}
    };
\end{tikzpicture}
}

\caption{This diagram depicts the fundamental regions of different cases of $\mathbb Z^2$-actions on arrangements of (PL-)pseudolines. 
The full arrangement can be recovered in each case by tiling the plane with copies of the respective picture, identifying the edges of adjacent squares, just as the arrangement of Figure \ref{fig:PseudoFirst} is obtained from the fundamental region of Figure \ref{fig:FR1}.\newline
 These examples realize every combination of centered, representable and ``$\SS_\GS$ is semimatroid'', within \WT actions (with the only constraint that centered actions always afford $\SS_\GS$ to be semimatroid - indeed in this case $\SS_\GS$ is a matroid).}
\label{fig:Casistica}
\end{figure}

We want to study the ``sets of orbits that give rise to central sets'': the following definition makes this sentence precise, and Example \ref{ex:running_precise} below illustrates it.

\begin{definition}\label{def:mm}
Let $\GS$ be a $G$-semimatroid. Given $A\subseteq \ES$ we define 
$$
\oC{A}:= \{X\in\CC\mid\underline X=A\}\subseteq \CC.
$$
For any given $A\subseteq \ES$, the set $\oC{A}$ carries a natural $G$-action, and we
will be concerned with the study of its orbit set, i.e., the set
$$
\oC{A} /G = \{\TT \in \CC_{\GS} \mid \lfloor \TT \rfloor = A\}
$$ 
where, for any orbit $\TT = G\{t_1,\ldots, t_k\}\in \CC_{\GS}$ we write
$$\lfloor \TT \rfloor := \{Gt_1,\ldots, Gt_k\},$$
so that $\lfloor \cdot \rfloor$ defines a map $\CGS \to \CS$. 
For every $A\subseteq \ES$, let then
$$
\mm(A) := \vert \oC{A} /G \vert.
$$
\end{definition}

\begin{remark}
We illustrate the relationships between the previous definitions by
fitting them into a diagram.
$$
\begin{tikzcd}[row sep=-0.1cm, column sep=huge]
  \CC \arrow{r}{/G} & 
  \CC_{\GS} \arrow{r}{\lfloor\cdot\rfloor}& 
  \CS \subseteq 2^{\ES} \\
  \rotatebox{90}{$\subseteq$} &   \rotatebox{90}{$\subseteq$} &   \rotatebox{90}{$\subseteq$}\\
    \oC{A} \arrow[mapsto, dashed]{r}{\textrm{preimage of}} & 
  \oC{A}/G \arrow[mapsto, dashed]{r}{\textrm{preimage of}}& A\\
    \rotatebox{90}{$\in$} &   \rotatebox{90}{$\in$} &   \rotatebox{90}{$\in$}\\
  X \arrow[mapsto]{r} & GX \arrow[mapsto]{r} & \underline X \\
\end{tikzcd}
$$
\end{remark}

The number $\mm(A)$ is nonzero if and only if $A\in \CS$. We will often tacitly consider the
restriction of $\mm$ to its support, which in the cofinite case 
defines a multiplicity function $\mm: \CS \to \mathbb N_{>0}.$

\begin{example}\label{ex:running_precise}
In our running example (the $\mathbb Z^2$-semimatroid $\GS$ of Example \ref{ex:running2}), we consider for instance the set $\{a,b\}\in\underline{\CC}$. Then,
$$
\oC{\{a,b\}} = \{\{a_i,b_j\} \mid i,j\in \mathbb Z\}
$$
and so
$$
\oC{\{a,b\}}/\mathbb Z^2 = \left\{\mathbb Z^2\{a_0,b_0\},
\mathbb Z^2\{a_1,b_0\},\mathbb Z^2\{a_1,b_1\},\mathbb Z^2\{a_2,b_1\}\right\},
$$
thus $\mm(\{a,b\})=4$. Repeating this procedure for all elements of $\underline{\CC}$ we obtain the multiplicities written as ``exponents'' next to the corresponding sets in Figure \ref{fig:Cmult}. 
\end{example}

\begin{figure}[h]
\scalebox{.8}{%
$\underline{\mathcal C}=$
\begin{tikzpicture}[baseline=(L.base),x=4em]
    \node at (0,1.9) (L) {};
    \node at (0,0) (0) {$\emptyset^{(1)}$};
    \node at (-4,1) (a) {$a^{(1)}$};
    \node at (-2,1) (b) {$b^{(1)}$};
    \node at (0,1) (c) {$c^{(1)}$};
    \node at (2,1) (d) {$d^{(1)}$};
    \node at (4,1) (e) {$e^{(1)}$};    
    \node at (-4,2) (ab) {$\{a,b\}^{(4)}$};
    \node at (-3,2) (ac) {$\{a,c\}^{(2)}$};
    \node at (-2,2)  (bc) {$\{b,c\}^{(2)}$};
    \node at (-1,2)  (ad) {$\{a,d\}^{(1)}$};
    \node at (0,2)  (bc) {$\{b,c\}^{(1)}$};
    \node at (1,2) (bd) {$\{b,d\}^{(1)}$};
    \node at (2,2) (ae) {$\{a,e\}^{(2)}$};
    \node at (3,2) (be) {$\{b,e\}^{(2)}$};
    \node at (4,2) (ed) {$\{e,d\}^{(1)}$};
    \node at (-3,3)  (abc) {$\{a,b,c\}^{(1)}$};
    \node at (-1.5,3)  (abc) {$\{a,b,d\}^{(1)}$};
    \node at (0,3)  (abd) {$\{a,c,d\}^{(1)}$};
    \node at (1.5,3)  (abd) {$\{b,c,d\}^{(1)}$};
    \node at (3,3)  (abe) {$\{a,b,e\}^{(2)}$};    
    \node at (0,4)  (abcd) {$\{a,b,c,d\}^{(1)}$}; 
    \draw [decoration={brace,amplitude=0.5em},decorate,ultra thick,gray]
 (-4.5,-.2) --  (-4.5,4.2);
 \draw [decoration={brace,amplitude=0.5em},decorate,ultra thick,gray]
 (4.5,4.2) --  (4.5,-.2);       
\end{tikzpicture}   
}

\caption{The set $\underline{\CC}$ for Example \ref{ex:running2}, with the multiplicity $\mm(A)$ written as a superscript of every set $A\in\underline{\CC}$.}
\label{fig:Cmult}
\end{figure}

\begin{definition}\label{def:normalA}
  We call the action of $G$
  \begin{itemize} 
  \item[-] {\em normal} if, for all $x\in S$, $\stab(x)$
    is a normal subgroup of $G$,
  \item[-] {\em almost arithmetic} if it is \ST and normal.
  \end{itemize}
\end{definition}

\begin{remark}
The two above-defined conditions are independent from each other and from the previous definitions. Indeed: 
the action of the symmetric group on its associated braid arrangement (see e.g. \cite[Example 1.9]{OT}) is neither normal nor \ST;
the permutation action of the symmetric group on $n$ distinct points in $\mathbb R$ is \ST but not normal;
the nontrivial action of $\mathbb Z_2$ on the uniform matroid of rank $1$ on two elements is normal but not \ST;
every realizable $G$-semimatroid is \ST.

\end{remark}

\begin{Mthm}\label{thm:almost}
  If $\GS$ is a $G$-semimatroid associated to an almost-arithmetic
  action, then the pair $(\SS_\GS,\mm)$ is
  pseudo-arithmetic (see Definition \ref{def:AM}).
  If $\SS_{\GS}$ is a semimatroid, $\mm$ defines a pseudo-arithmetic semimatroid whose arithmetic Tutte polynomial, which we will call  $T_{\GS}(x,y)$ (cf.\ Definition \ref{def:Gtutte}), satisfies
  an analogue of Crapo's decomposition formula  (Theorem
  \ref{thm:craponew}) generalizing the combinatorial interpretation of  \cite[Theorem 6.3]{BM}. 
\end{Mthm}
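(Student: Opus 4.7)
The plan is to reduce axiom (P) to an orbit-counting statement using the \STy and normality hypotheses, and then run the classical Crapo activities argument on top of it.

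First I would extract a usable formula for $\mm$. Since the action is \ST, for any central $X=\{x_1,\ldots,x_k\}\in\CC$ the setwise stabilizer satisfies $\stab(X)=\bigcap_{i=1}^k\stab(x_i)$: if $g\in\stab(X)$ then for each $x_i$ one has $\{x_i,gx_i\}\subseteq X\in\CC$, which by \STy forces $gx_i=x_i$. Normality of the $\stab(x_i)$ then allows one to form the quotient groups $G/\stab(x_i)$ and to parametrize $\oC{\underline X}$ by tuples in $\prod_i G/\stab(x_i)$ whose corresponding translates still form a central set. Modding out by the diagonal $G$-action yields a description of $\mm(\underline X)$ as an index inside a fixed abelian quotient of a stabilizer.

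Next I would verify axiom (P) on $(\SS_\GS,\mm)$. Let $(R,F,T)$ be a molecule in $\SS_\GS$ and fix a central representative $Y=R_0\sqcup F_0\sqcup T_0\in\CC$ with $\underline{R_0}=R$, $\underline{F_0}=F$, $\underline{T_0}=T$. By the molecule condition, ranks inside the interval $[R,R\cup F\cup T]$ behave as if $F$ were independent modulo $R$ while $T$ lies in the closure of $R\cup F$. Substituting the coset formula above into the inclusion-exclusion defining $\rho(R,R\cup F\cup T)$, the alternating sum telescopes: each $T$-subset contributes a cancelling term because no $T$-element increases rank, whereas the sum over $F$-subsets factors through an index of subgroups of the appropriate quotient. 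The net result is a nonnegative integer expressing a subgroup index, which is (P). When $\SS_\GS$ happens to satisfy (CR1) and (CR2) as well (Theorem \ref{thm:polymat}), the same computation yields a pseudo-arithmetic semimatroid and hence a well-defined polynomial $T_\GS(x,y)$ via Definition \ref{caramai}.

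Finally, for the Crapo decomposition, I would follow the classical activities strategy underlying Proposition \ref{prop:classicCrapo}. Fix a total order on $\ES$; each central set $X\in\CS$ gets assigned to the unique basis $B$ such that $(B\setminus I(B),I(B),E(B))$ is a molecule with $B\setminus I(B)\subseteq X\subseteq B\cup E(B)$, and this partitions $\CS$ over bases. Summing Definition \ref{caramai} over each block and applying the identity from the previous paragraph, the coefficient of $x^{|I(B)|}y^{|E(B)|}$ in $T_\GS(x,y)$ collapses to $\rho(B\setminus I(B),B\cup E(B))$, giving the announced analogue (Theorem \ref{thm:craponew}) of \cite[Theorem 6.3]{BM}. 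The main obstacle is the telescoping computation in the middle step: in the realizable setting there is a genuine geometric object, $W(X)/I(X)$, whose cardinality equals $\mm$ and whose subquotients visibly produce $\rho$, but in the abstract setting one must build a surrogate using only \STy and normality, and then verify that the inclusion-exclusion over $[R,R\cup F\cup T]$ really collapses to a subgroup index. Propagating the molecule condition through repeated applications of (CR1) and (CR2) while moving inside this interval is where most of the technical effort will lie.
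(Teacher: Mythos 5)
Your proposed route to axiom (P)---expressing $\mm(\underline X)$ directly as a subgroup index inside a fixed abelian quotient and then telescoping---runs into a genuine obstruction at the almost-arithmetic level of generality. For almost-arithmetic actions the sets $W(X)\subseteq\Gamma^X$ (Definition~\ref{not:puntino}) need not be subgroups, so the quantity $\vert W(X)/h_X(G)\vert$ is an orbit count but not an index; the index formula $\mm(A)=[W(X):h_X(G)]$ first appears in Lemma~\ref{lem:Wunabrep}, which requires the strictly stronger \emph{arithmetic} hypothesis (and Example~\ref{ex:noarithm} shows the gap between the two classes is real). What the paper does instead is entirely set-theoretic: Section~\ref{sec:AAA} builds injective maps $f^{\mol}_{(F',T')}$ between orbit sets over the Boolean poset $P[R,F,T]$, shows the images form a meet-semilattice (Lemma~\ref{lem:mocposet}), and recognizes $\rho(R,R\cup F\cup T)$ as a M\"obius inversion equal to $\vert\overline{Z}^{\mol}(F,\emptyset)\vert$ (Lemma~\ref{lem:PP})---nonnegative because it is a cardinality. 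This argument in fact proves (P) for \emph{all} translative actions (Proposition~\ref{lem:P}), using normality only afterwards for (A.1.2) and (A2) via Lemma~\ref{lem:quot} and Lemma~\ref{lem:A}. Normality there gives a subgroup-index description of \emph{ratios} $\mm(A\cup\cdots)/\mm(A)$ when rank increases, not of $\mm$ itself.

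Your telescoping step also asserts that "each $T$-subset contributes a cancelling term," but the injective-maps machinery is precisely what is needed to make any such cancellation precise: in the $T$-direction the maps are injective (rank fixed), in the $F$-direction they are surjective (rank increases), and the M\"obius inversion only becomes a cardinality because Lemma~\ref{lem:mocposet} shows images intersect along the poset meet. Finally, for the Crapo step, the shape of your conclusion is wrong: Theorem~\ref{thm:craponew} does \emph{not} produce the monomial $\rho\cdot x^{|I(B)|}y^{|E(B)|}$ per basis, but rather a product of two polynomial factors $\bigl(\sum_p x^{\iota(p)}\bigr)\bigl(\sum_c y^{\eta_{E(B)}(c)}\bigr)$, where the exponents track local data in $\PS$ and $\CC_\GS$ via the statistics of Definitions~\ref{def:eta} and~\ref{def:iota}. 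The bridge from \cite[Lemma~4.3]{BM} (which only needs (A2)) to that form is Propositions~\ref{cor:eta} and~\ref{prop:iota}, both of which rest on the same labeled-orbit machinery of Section~\ref{sec:laborb} that you correctly identify as the technical heart, but which you have not supplied a substitute for.
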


\begin{proof}
This is proved as Proposition \ref{prop:almost} and Theorem \ref{thm:craponew}.
\end{proof}

\begin{remark}
If, in addition to satisfying the conditions of Theorem \ref{thm:almost}, $\GS$ is also centered, then $\SS_\GS$ is a matroid and $\mm$ defines a pseudo-arithmetic matroid on $E_\GS$ in the sense of \cite{BM}. Notice that this way we can produce a natural class of nonrealizable arithmetic matroids, e.g., by the action associated to non-stretchable pseudoarrangements (see Figure \ref{fig:FR1}).
\end{remark}

\begin{definition}\label{not:puntino}
If the action of $G$ is \ST, for every $X\subseteq S$ we
have that $\stab(X)=\cap_{x\in X} \stab(x)$. If, moreover, the action
is normal, it follows that, for every $X\in \CC$, $\stab(X)$ is a normal subgroup of $G$. We can then
  define the group
$$
\Gamma(X) := G/ \stab (X)
$$
and, for $g\in G$, write $[g]_X:=g+ \stab(X) \in \Gamma(X)$.
For any
$X\subseteq S$ consider then the group 
$$ 
\Gamma^X:=\prod_{x\in X} \Gamma(x) 
$$ 
and the natural map  
$$h'_X:G\to \Gamma^X, \quad h_X(g)=([g]_x)_{x\in X}.$$
Given $\gamma\in \Gamma^X$, let 
$$
\gamma.X := \{\gamma_{x} (x) \mid x\in X\}
$$
and, for all $X\in \CC$,  define
$$
W(X) := \{ \gamma\in \Gamma^X \mid \gamma.X\in \CC \}. 
$$
Since $X\in \CC$ implies $\im(h'_X)\subseteq W(X)$, we can restrict $h'_X$ to $W(X)$ as follows.
$$h_X: G\to W(X), \quad h_X(g):=h'_X(g).$$
\end{definition}

\begin{remark}
In order to help the intuition, notice that this definition of $W(X)$ coincides, in the realizable case, with that given in Equation \eqref{WXfirst}.
\end{remark}

\begin{example}
In our running example (from Example \ref{ex:running1} and \ref{ex:running2}) we can illustrate the construction of $W(X)$ by taking, e.g., $X=\{a_0,b_0,c_0\}\in \CC$. We have
$$
\stab(a_0)= \mathbb Z 
\left(\begin{smallmatrix} -1 \\ 2 \end{smallmatrix}\right),\quad
\stab(b_0)= \mathbb Z 
\left(\begin{smallmatrix} 1 \\ 2 \end{smallmatrix}\right),\quad
\stab(c_0)= \mathbb Z 
\left(\begin{smallmatrix} 1 \\ 0 \end{smallmatrix}\right),
$$
hence
\begin{align*}
\Gamma(a_0) &=\mathbb Z^2 / \stab(a_0) = 
\{\left(\begin{smallmatrix} 0 \\ k  \end{smallmatrix}\right) + \stab(a_0) \mid k\in \mathbb Z\}\simeq \mathbb Z\\
\Gamma(b_0) &=\mathbb Z^2 / \stab(b_0) = 
\{\left(\begin{smallmatrix} 0 \\ -k  \end{smallmatrix}\right) + \stab(b_0) \mid k\in \mathbb Z\}\simeq \mathbb Z\\
\Gamma(c_0)&=\mathbb Z^2 / \stab(c_0) = 
\{\left(\begin{smallmatrix} 0 \\ k \end{smallmatrix}\right) + \stab(c_0) \mid k \in \mathbb Z\}\simeq \mathbb Z
\end{align*}
where we take the isomorphism with $\mathbb Z$ to send $k\in \mathbb Z$ to the element listed in the braces.

Then, $\Gamma^X = \Gamma (a_0) \times \Gamma(b_0) \times \Gamma(c_0) \simeq \mathbb Z^3$ and for $\gamma\in \Gamma^X$, say $\gamma=(i,j,l)\in \mathbb Z^3$, our choice of the isomorphisms with $\mathbb Z$ above implies that 
$$
\gamma.\{a_0,b_0,c_0\}=\{a_{i},b_j, c_l\}
$$
and thus we see that $\gamma.\{a_0,b_0,c_0\}\in \CC$ if and only if
$i-l=j+l$ is an even number
 (compare Example \ref{ex:running1}). Therefore
$$
W(X)=\{(2h+l,2h-l,l) \mid h,l\in \mathbb Z\}
$$
is clearly seen to be a subgroup of $\Gamma^X$. We leave it to the reader to check that this applies to every $X$, thus the $\mathbb Z^2$-semimatroid $\GS$ is arithmetic (though not centered, neither representable, and $\SS_{\GS}$ is not a semimatroid).
\end{example}

\begin{definition}\label{def:args}
  An almost-arithmetic action is called {\em arithmetic} if $W(X)$ is a
  subgroup of $\Gamma^X$ for all $X\in \CC$.
\end{definition}

\begin{Mthm}\label{thm:arithm}
  If $\GS$ is an arithmetic $G$-semimatroid, then the pair $(\SS_\GS,\mm)$ is arithmetic. If, moreover, $\GS$ is centered, then $(E_\GS,\rkE,\mm)$ is an arithmetic matroid.
\end{Mthm}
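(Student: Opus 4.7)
The plan is to build on Theorem \ref{thm:almost} (which already gives the pseudo-arithmetic conclusion, i.e.\ axiom (P)) and then verify the two remaining axioms (A1) and (A2) of Definition \ref{def:AM}, exploiting the key new hypothesis that $W(X)$ is a subgroup of $\Gamma^X$ for every $X\in\CC$. The centered assertion will follow immediately from Theorem \ref{thm:polymat} once pseudo-arithmetic is strengthened to arithmetic.

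First I would formalise the idea that the multiplicity function has a group-theoretic interpretation. Fix $A\in\CS$ and pick a lift $X\in\CC$ with $\underline X=A$. The map $\gamma\mapsto \gamma.X$ is a bijection from $W(X)$ onto $\oC{A}$ (this uses \STy so that coordinates in $\Gamma^X$ are meaningful representatives). Two elements $\gamma,\gamma'\in W(X)$ give $G$-equivalent central sets exactly when $\gamma^{-1}\gamma'\in \im(h_X)$; here I need the subgroup property of $W(X)$ to make sense of the coset structure, together with normality of stabilisers (to define $\Gamma(x)$) and \STy (to identify $h_X$ with the natural quotient). This yields the identity
\[
\mm(A)\;=\;[\,W(X):\im(h_X)\,].
\]

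Next I would verify (A1). Pick $A\subseteq \ES$ and $e\in\ES$ with $A\cup e\in\CS$, lift $A$ to $X\in\CC$ and choose $e'\in S$ over $e$ with $X\cup e'\in\CC$. The coordinate projection $\pi\colon\Gamma^{X\cup e'}\to\Gamma^X$ restricts to group homomorphisms $W(X\cup e')\to W(X)$ and $\im(h_{X\cup e'})\to\im(h_X)$, both well defined because all these are now subgroups. In the case $\rkE(A\cup e)=\rkE(A)$ (so $e$ is in the closure of $A$), the $e'$-coordinate of any $\gamma\in W(X\cup e')$ is essentially forced by the $X$-coordinates, so both restrictions of $\pi$ are injective; comparing indices of the two subgroups yields $\mm(A\cup e)\mid \mm(A)$, which is (A.1.1). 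In the case $\rkE(A\cup e)>\rkE(A)$, the projection on $W$ is surjective, and one uses the kernels $W(X\cup e')\cap \Gamma(e')$ and $\im(h_{X\cup e'})\cap\Gamma(e')$ to get the divisibility $\mm(A)\mid\mm(A\cup e)$, which is (A.1.2). In both cases the Snake/Third Isomorphism Theorem applied to the rows
\[
0\to K\to W(X\cup e')\to \pi(W(X\cup e'))\to 0,\quad 0\to K'\to \im(h_{X\cup e'})\to \pi(\im(h_{X\cup e'}))\to 0
\]
(with $K'\leq K$) produces the quotient of indices as a group index.

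For (A2), take a molecule $(R,F,T)$ of $\SS_\GS$ and fix lifts $R',F',T'\subseteq S$ with $R'\cup F'\cup T'\in\CC$. The defining property of a molecule means that every intermediate rank $\rkE(R\cup A)$ for $R\subseteq R\cup A\subseteq R\cup F\cup T$ is $\rkE(R)+|A\cap F|$. Translating this through the identification in Step~1, the coordinate projections between the groups $W(\cdot)$ and $\im h(\cdot)$ indexed by the four sets $R',R'\cup F',R'\cup T',R'\cup F'\cup T'$ fit into a commutative ``square'' in which the $F'$-coordinates split off freely (because adding $F$ strictly increases rank independently) while the $T'$-coordinates behave as in the closure case (because adding $T$ does not increase rank over $R\cup F$). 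Taking indices in this square yields the desired product relation
\[
\mm(R)\,\mm(R\cup F\cup T)\;=\;\mm(R\cup F)\,\mm(R\cup T).
\]
Finally, if $\GS$ is centered then Theorem \ref{thm:polymat} gives that $\SS_\GS=(\ES,\CS,\rkE)$ is a matroid, so the pair $(\SS_\GS,\mm)$ is then an arithmetic matroid in the sense of \cite{BM,dAM}.

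The main obstacle will be the verification of (A2): while (A1) only needs a comparison along a single extra element and can be handled by a straightforward snake-lemma bookkeeping, the molecule identity requires organizing the four relevant $W$-groups and their image subgroups into a coherent commutative diagram that is simultaneously compatible with the projections onto $R'$, $F'$, $T'$-coordinates and with the (normal) stabilizer quotients. The subtle point is that the molecule condition is exactly what makes the $F$-part contribute as a free summand and the $T$-part as a ``closure'' summand, so that the resulting index product telescopes; making this rigorous is where the subgroup hypothesis on $W$ is truly used.
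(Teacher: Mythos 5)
Your overall plan would reach the conclusion, and your core argument for (A.1.1)---encode the multiplicity as an index $\mm(A)=[W(X):\im h_X]$, compare along the coordinate projection $W(X\cup e')\hookrightarrow W(X)$, and invoke multiplicativity of the index in the chain $\im h_X\subseteq \pi(W(X\cup e'))\subseteq W(X)$---is exactly the content of the paper's Lemma \ref{lem:Wunabrep}.(ii) and Lemma \ref{lem:lari}. (When you assert that both restrictions of $\pi$ are injective in the closure case, the injectivity on $\im(h_{X\cup e'})$ relies on $\stab(X\cup e')=\stab(X)$, i.e.\ Lemma \ref{lem:stabs}.(b), which you should make explicit.)

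Where your proposal diverges from the paper's economy---and contains a genuine misconception---is in your assessment of (A.1.2) and (A2). You write that (A2) is ``where the subgroup hypothesis on $W$ is truly used.'' This is not the case. The paper's Proposition \ref{prop:almost} (which the proof of Theorem \ref{thm:arithm} cites directly, alongside Lemma \ref{lem:lari}) already establishes (P), (A.1.2) and (A2) for \emph{almost-arithmetic} actions, without ever requiring $W(X)$ to be a subgroup. In particular, (A.1.2) comes from Lemma \ref{lem:quot}, which expresses the ratio $\mm(R\cup F)/\mm(R)$ as an index in $\Gamma(X_R)\times\prod\Gamma(x_i)$ rather than in $W$, and (A2) follows from Lemma \ref{lem:A} by observing via Lemma \ref{lem:stabs}.(b) that $\stab(X_R)=\stab(X_{R\cup T})$, so the two index expressions in Lemma \ref{lem:quot} agree. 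The only axiom that genuinely needs the subgroup hypothesis is (A.1.1), which is why the paper introduces it under a separate name, ``arithmetic.'' Your sketch for (A2) via a commutative square of $W$-groups is therefore over-engineered and redundant in hypotheses; you would be re-deriving, with stronger assumptions, what already holds for almost-arithmetic actions. The real work in going from Theorem \ref{thm:almost} to Theorem \ref{thm:arithm} is entirely localized in (A.1.1).
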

\begin{proof} This is a combination of Proposition \ref{prop:almost} and Lemma \ref{lem:lari}.
\end{proof}

\subsection{Matroids over $\mathbb Z$} Under appropriate circumstances, the objects defined in Notation \ref{not:puntino} give rise to a matroid over $\mathbb Z$ 
%
 defined on the ground set $\ES$. In fact, for every arithmetic $G$-semimatroid, the groups $\Gamma(X)$ and $\Gamma^X$ from Definition \ref{not:puntino} do not depend on the choice of $X$ in $\oC{\underline{X}}$ (Lemma \ref{lem:unabrep}). Moreover, if we assume that all groups $\Gamma(x)$ are cyclic, then every group $\Gamma^X$ is abelian, and in particular all notions introduced in Definition \ref{not:puntino} above do not depend on the choice of $X$ inside $\oC{\underline{X}}$ (Lemma \ref{lem:Wunabrep}). So given $A\in \underline{\CC}$ it makes sense to write $\Gamma(A)$, $\Gamma^A$, $W(A)$ etc., see Section \ref{sec:arithm} for a more thorough discussion.

\begin{definition}\label{def:Emme}
Let $\GS$ denote an arithmetic and centered $G$-semimatroid such that, for all $a\in \ES$, the group $\Gamma(a)$ is cyclic. Given $A\subseteq E_\GS$ we will write $\Ac:=E_\GS\setminus A$ and, if $\Ac\in \underline{\CC}$, define
$$
  M_\GS(A):= \Gamma^{\Ac}/h'_{\Ac}(G).
  $$
\end{definition}

\begin{Mthm}\label{thm:RAM}
  Let $\GS$ denote an arithmetic and centered $G$-semimatroid such that, for all $a\in \ES$, the group $\Gamma(a)$ is cyclic. Then the abelian groups $M_\GS(A)$, where $A$ runs over all subsets of $\ES$, define a realizable matroid over $\mathbb Z$. 
  Moreover, if the groups $\Gamma(a)$ are infinite cyclic, the underlying matroid of $M_\GS$ is the dual to $(\ES,\rkE)$. If, additionally,  $W(A)$ is a pure subgroup of $\Gamma^A$ we have an isomorphism
$$
M_{\GS}(A) \simeq \mathbb Z^{\vert \Ac \vert - \rkE(\Ac)} \oplus W(\Ac)/h_{\Ac}(G)
$$
and the underlying arithmetic matroid is dual to $(\ES,\rkE,\mm)$.
\end{Mthm}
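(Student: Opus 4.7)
The plan is to exhibit $M_\GS$ as a \emph{realized} matroid over $\mathbb{Z}$ and then read off all remaining assertions from the realization data. Set $N:=\Gamma^{\ES}/h'_{\ES}(G)$, which is well-defined because centeredness gives $\ES\in\underline{\CC}$, and for each $e\in\ES$ let $x_e\in N$ be the class of a generator of the cyclic factor $\Gamma(e)$ included into $\Gamma^{\ES}$. The coordinate projection $\Gamma^{\ES}\to\Gamma^{\Ac}$ kills exactly the subgroup $\sum_{e\in A}\Gamma(e)$ and, by naturality of $h'$, carries $h'_{\ES}(G)$ onto $h'_{\Ac}(G)$; hence
\[
N\big/\textstyle\sum_{e\in A}\mathbb{Z}\,x_e \;\cong\; \Gamma^{\Ac}/h'_{\Ac}(G)\;=\;M_\GS(A),
\]
so $(N,(x_e)_{e\in\ES})$ realizes the assignment. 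This presentation automatically yields the pushout axiom (R) of Definition \ref{def:mator}: the map $M_\GS(A)\to M_\GS(A\cup e)$ is the further quotient by $\mathbb{Z}x_e$, hence surjective with cyclic kernel, and the four-term square for $\{e_1,e_2\}$ is the standard pushout of two such cyclic quotients.

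For the underlying matroid in the infinite-cyclic case, note that $\Gamma^{\Ac}\cong\mathbb{Z}^{|\Ac|}$ as an abelian group. By the structural analysis of arithmetic $G$-semimatroids in Section \ref{sec:arithm} (which transports the prototype of Remark \ref{rem:Wcenter}), $W(\Ac)$ is a subgroup of $\mathbb{Z}$-rank $\rkE(\Ac)$, and the image $h_{\Ac}(G)$ is of finite index in $W(\Ac)$; hence $h'_{\Ac}(G)$ has $\mathbb{Z}$-rank $\rkE(\Ac)$, whence $\operatorname{rank}_{\mathbb{Z}}M_\GS(A)=|\Ac|-\rkE(\Ac)$. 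Combined with Remark \ref{rem:uam}, this produces $\rk_{Z}(A)=|A|+\rkE(\Ac)-\rkE(\ES)$, which is precisely the rank function of the dual matroid $(\ES,\rkE)^{\ast}$.

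Under the additional purity hypothesis --- read, as in Lemma \ref{lem:etaprimo}, as $W(\Ac)$ being pure in $\Gamma^{\Ac}$ --- the quotient $\Gamma^{\Ac}/W(\Ac)$ is torsion-free, so the exact sequence $0\to W(\Ac)\to\Gamma^{\Ac}\to\Gamma^{\Ac}/W(\Ac)\to 0$ splits with free complement of rank $|\Ac|-\rkE(\Ac)$. Since $h_{\Ac}(G)\subseteq W(\Ac)$, quotienting preserves the splitting and yields
\[
M_\GS(A)\;\cong\;\mathbb{Z}^{|\Ac|-\rkE(\Ac)}\,\oplus\, W(\Ac)/h_{\Ac}(G).
\]
The torsion part recovers the multiplicity function of the underlying arithmetic matroid; invoking the abstract analogue of the bijection $\varphi_X$ of Lemma \ref{lem:fiX} (which identifies $W(\Ac)/h_{\Ac}(G)$ with $\oC{\Ac}/G$ in the arithmetic setting), we get $|W(\Ac)/h_{\Ac}(G)|=\mm(\Ac)$. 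This matches the multiplicity function of the dual arithmetic matroid $(\ES,\rkE,\mm)^{\ast}$ in the sense of d'Adderio--Moci.

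The main obstacle will be justifying the two transported identifications used above --- that $\operatorname{rank}_{\mathbb{Z}}W(\Ac)=\rkE(\Ac)$ and $[W(\Ac):h_{\Ac}(G)]=\mm(\Ac)$ --- in the abstract arithmetic setting. These are the statements that replace Remark \ref{rem:Wcenter} and Remark \ref{rem:ram} in the passage from the realizable model of Section \ref{sec:MaEx} to a general arithmetic $G$-semimatroid; granted them (as established in Sections \ref{sec:almost}--\ref{sec:arithm} via the machinery of Definition \ref{not:puntino}), the rest of the argument is routine diagram-chasing, so the crux of the proof lies in verifying that $W$, defined combinatorially via central orbits, retains its structural properties uniformly.
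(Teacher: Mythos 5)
Your proposal is correct, and for the first assertion it takes a genuinely different and arguably cleaner route than the paper. Where the paper establishes that $M_\GS$ is a matroid over $\mathbb Z$ by verifying the pushout axiom (R) directly, via a three-dimensional grid of short exact sequences and the nine-lemma/pushout-transfer argument (Lemma \ref{lem:gpwm}, Lemma \ref{lem:Mpushout}, Lemma \ref{lem:backdoor}), you simply \emph{exhibit} a realization: since centeredness and downward-closure of $\underline{\CC}$ give $\underline{\CC}=2^{\ES}$, the module $N=\Gamma^{\ES}/h'_{\ES}(G)$ and the images $x_e$ of the coordinate generators are all defined, and the isomorphism $N/\sum_{e\in A}\mathbb Z x_e\cong\Gamma^{\Ac}/h'_{\Ac}(G)$ follows from the third isomorphism theorem once one observes that the coordinate projection carries $h'_{\ES}(G)$ onto $h'_{\Ac}(G)$. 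This both proves axiom (R) for free and makes realizability transparent — which in the paper's Proposition \ref{prop:D1} is asserted but not visibly exhibited as a pair $(N,(x_e))$. For the underlying (arithmetic) matroid and the purity splitting your argument follows the same track as the paper's Corollaries \ref{cor:D2}, \ref{cor:D7}, \ref{cor:D8} and Proposition \ref{prop:D3}, resting on the two structural facts $\operatorname{rank}_{\mathbb Z}W(\Ac)=\rkE(\Ac)$ (the paper's Lemma \ref{lem:rextr}) and $\lvert W(\Ac)/h_{\Ac}(G)\rvert=\mm(\Ac)$ (Lemma \ref{lem:Wunabrep}.(ii)), which you correctly flag as the load-bearing imports from the earlier sections rather than re-deriving them. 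You also implicitly use $h'_{\Ac}(G)=h_{\Ac}(G)$ (the latter being the corestriction of the former, which already lands in $W(\Ac)$) and the finiteness of the index $[W(\Ac):h_{\Ac}(G)]$ from cofiniteness of the action; both are fine. One small caveat worth stating explicitly if you write this up: the direct-sum splitting $M_\GS(A)\cong\mathbb Z^{\lvert\Ac\rvert-\rkE(\Ac)}\oplus W(\Ac)/h_{\Ac}(G)$ uses that $h_{\Ac}(G)\subseteq W(\Ac)$ lies inside the chosen direct summand of $\Gamma^{\Ac}$, so the quotient respects the decomposition; you say this in passing (``quotienting preserves the splitting'') and it is correct, but it deserves a line of justification.
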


\begin{proof} This statement combines those of Proposition \ref{prop:D1}, Corollary \ref{cor:D2}, Corollary \ref{cor:D7}, Proposition \ref{prop:D3} and Corollary \ref{cor:D8}.
\end{proof}


\begin{remark}\label{rem:toric2}
  In general, a {toric arrangement} in $(\mathbb C^*)^d$ is given as a family of level sets of characters of $(\mathbb C^*)^d$ (see e.g.\ \cite[\S 2.1]{dAD2}). By lifting the toric arrangement to the universal covering space of the torus one recovers a periodic affine hyperplane arrangement $\AA$. If $\GS$ is the $\mathbb Z^d$-semimatroid associated to this action as in Section \ref{sec:MaEx}, then $M_\GS$ is dual to the matroid over $\mathbb Z$ associated to the characters defining the toric arrangement (see Theorem \ref{thm:dualMR}).
\end{remark}

\subsection{Group actions on finitary geometric semilattices}

The main tool allowing us to establish a poset-theoretic formulation of the theory
of $G$-semimatroids is the following cryptomorphism result between finitary semimatroids and finitary geometric semilattices. Its proof is the object of Section \ref{sec:FSGS}.

\begin{Mthm} A poset $\LL$ is a finitary geometric semilattice if and only if it is isomorphic to the poset of flats of a finitary semimatroid. Furthermore, each finitary geometric semilattice is the poset of flats of an unique simple\footnote{See Definition \ref{df:simple}.} finitary semimatroid (up to isomorphism).
\label{thm:fsl}\end{Mthm}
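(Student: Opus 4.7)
The plan is to extend Ardila's cryptomorphism between finite semimatroids and finite geometric semilattices to the finitary setting. Ardila's proof crucially uses the fact that every finite semimatroid can be viewed as a downward-closed substructure of a finite ambient matroid, and then applies the Wachs--Walker correspondence between matroids and geometric lattices by taking order filters. In the infinite case no such ambient matroid is available, so the arguments must be made intrinsic, working directly with the axioms.

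For the forward direction, starting from a finitary semimatroid $\SS=(S,\CC,\rk)$, I would first show that the closure operator of Definition \ref{def:latticeofflats} is a genuine closure operator on $\CC$ (monotone, extensive, idempotent), with (CR1) playing a key role in ensuring that $\cl(X)$ is itself central and has the same rank as $X$. I would then verify that $\LL(\SS)$, ordered by inclusion, is a meet-semilattice: two flats with a common lower bound admit an intersection which is central by (CR1) applied iteratively and gives a meet. Using (R3) and (CR2), each principal order ideal $[\hat 0, F]$ becomes a geometric lattice with induced rank as height. Finally, boundedness of $\rk$ and finite-dimensionality of $\CC$ translate directly into the finitary hypotheses needed for $\LL(\SS)$ to qualify as a finitary geometric semilattice.

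For the reverse direction, given a finitary geometric semilattice $\LL$, I would take $S$ to be the set of atoms of $\LL$, declare a finite $X\subseteq S$ to be central if $X$ admits an upper bound in $\LL$, and set $\rk(X)$ to be the height of $\bigvee X$. The axioms (R1)--(R3) and (CR1)--(CR2) are then checked by translating the corresponding structural properties of $\LL$: semimodularity of intervals yields (R1)--(R3); the observation that if $\cl_{\LL}(X)=\cl_{\LL}(X\cap Y)$, then joining with $Y$ still admits an upper bound yields (CR1); and the exchange/extension property for independent sets of atoms in a geometric semilattice yields (CR2). The poset of flats of the resulting $\SS$ is identified with $\LL$ via $X\mapsto \bigvee X$. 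Uniqueness of the simple realization is then automatic: from $\LL$ one recovers the ground set as its atom set, the central sets as the finite subsets of atoms with an upper bound, and the rank as the height of the join, so any two simple finitary semimatroids inducing the same $\LL$ must be isomorphic.

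The main obstacle is precisely the absence of a finite ambient matroid: structural facts such as existence of meets of flats, semimodularity of intervals, well-definedness of closure on central sets, and extendibility of independent sets must all be established using only the local and finitary content of the axioms. In particular, care is needed to match up the two finiteness conditions on each side, so that finite-dimensionality of $\CC$ and boundedness of $\rk$ correspond exactly to whatever finitariness hypothesis is imposed on $\LL$; otherwise one risks producing a bijection on the wrong classes of objects. I expect this bookkeeping, together with the intrinsic verification of the geometric-lattice structure of intervals $[\hat 0, F]$ in $\LL(\SS)$, to be the most delicate part of the argument.
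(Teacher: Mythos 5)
Your proposal takes essentially the same route as the paper's proof: in the forward direction you show $\LL(\SS)$ is a chain-finite meet-semilattice whose intervals are geometric lattices and verify (G3)/(G4), and in the reverse direction you define central sets as finite atom-sets admitting a join, derive (R1)--(R3) from the geometric-lattice structure of intervals and (CR1)/(CR2) from rankedness and (G4), recover $\LL$ via $X\mapsto\bigvee X$, and get uniqueness of the simple realization by atom-reconstruction --- all matching the paper. One small correction in the forward direction: the intersection $X\cap Y$ of two flats is central simply because $\CC$ is a simplicial complex (downward closed), not by ``(CR1) applied iteratively''; and the fact that intervals $[\hat 0, F]$ are geometric lattices comes from observing that $\rk$ restricted to $2^F$ satisfies (R1)--(R3) and hence defines a matroid on $F$, rather than from (R3) together with (CR2).
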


We now discuss some basics about group actions on
finitary geometric semilattices.

\begin{definition}\label{def:LS}
  An action of $G$ on a geometric semilattice $\LL$ is given by a
  group homomorphism of
  $G$ in the group of poset automorphisms of $\LL$. We define
$$
\PS:=\LL/G,
$$ 
the set of orbits of elements of $\LL$ partially ordered such that $GX \leq GY$
if there is $g$ with $X\leq gY$ (where as usual we identify a group element in $G$ with the automorphism to which it corresponds). 
\end{definition}

\begin{remark}
 The fact that automorphisms of $\LL$ preserve rank implies that the above binary relation on $\PS$ is indeed a partial order. For another appearance of this definition of a ``quotient poset" see, e.g., \cite{ThWe}.
\end{remark}

\begin{example}[Toric arrangements]
If $\GS$ arises from a periodic arrangement of hyperplanes as in Section \ref{sec:MaEx}, then $\PS$ is the poset of layers of the associated toric arrangement (cf.\ Remark \ref{rem:toric:first} and Remark \ref{rem:nuovazione}).
  \end{example}
  \begin{example}[Toric pseudoarrangements]
 If $\GS$ is the $\mathbb Z^2$-semimatroid associated to a periodic 
  arrangement of pseudolines (see, e.g., Example \ref{ex:running1}) then $\PS$ is the poset of layers of the associated pseudoarrangement on the torus. 
  
The higher-dimensional analogue of this construction needs a (combinatorial) notion of a ``periodic affine arrangement of pseudoplanes'' whose intersection poset is a geometric semilattice. A forthcoming paper \cite{DK} will provide such a notion by defining finitary affine oriented matroids and studying their topological representation. If $\GS$ arises from an appropriate $\mathbb Z^d$-action on a rank $d$ finitary affine oriented matroid, then $\PS$ is the poset of layers of the associated pseudoarrangement on the torus. 
\end{example}

\begin{remark}
 It is clear that every action on a semimatroid induces an action on
  its semilattice of flats, and every action on a geometric
  semilattice induces an action on the associated simple semimatroid.  It is an exercise to reformulate the
  requirements of the different kinds of actions in terms of the
  poset - where, however, the distinction between \WT and
  \ST does not show. In our proofs we will mostly use the semimatroid language,
  in order to treat the most general case, and will call an action on a geometric semilattice {\em cofinite, \WT, \ST, normal, arithmetic}, etc., if the corresponding $G$-semimatroid is.
\end{remark}

\begin{example} The poset $\PP_{\GS}$ for the $\mathbb Z^2$-semimatroid of Example \ref{ex:running2} can be read off the picture of the fundamental region in Figure \ref{fig:FR1}, and gives the poset depicted in Figure \ref{fig:PSrunning}.
\end{example}

\begin{figure}[h]
\scalebox{.7}{%
\begin{tikzpicture}[y=3em]
\node at (0,0) (E) {$\emptyset$};
\node at (-4,2) (a) {$a$};
\node at (-2,2) (b) {$b$};
\node at (0,2) (c) {$c$};
\node at (2,2) (d) {$d$};
\node at (4,2) (e) {$e$};
\node at (0,4) (O) {$[a_0b_0c_0d_0]$};
\node at (-4,4) (S) {$[b_1c_0]$};
\node at (-2,4) (T) {$[a_1c_0]$};
\node at (2,4) (P) {$[a_1b_0e_0]$};
\node at (4,4) (Q) {$[a_2b_1e_0]$};
\node at (-6,4) (R) {$[a_1b_1]$};
\node at (6,4) (U) {$[d_0e_0]$};
\draw (E.north) -- (a.south);
\draw (E.north) -- (b.south);
\draw (E.north) -- (c.south);
\draw (E.north) -- (d.south);
\draw (E.north) -- (e.south);
\draw (a.north) -- (O.south);
\draw (a.north) -- (T.south);
\draw (a.north) -- (P.south);
\draw (a.north) -- (Q.south);
\draw (a.north) -- (R.south);
\draw (b.north) -- (O.south);
\draw (b.north) -- (S.south);
\draw (b.north) -- (P.south);
\draw (b.north) -- (Q.south);
\draw (b.north) -- (R.south);
\draw (c.north) -- (O.south);
\draw (c.north) -- (S.south);
\draw (c.north) -- (T.south);
\draw (d.north) -- (O.south);
\draw (d.north) -- (U.south);
\draw (e.north) -- (U.south);
\draw (e.north) -- (P.south);
\draw (e.north) -- (Q.south);
\end{tikzpicture}
}
\caption{The poset $\PP_{\GS}$ for the (nonrepresentable) $\mathbb Z^2$-semimatroid $\GS$ of our running Example \ref{ex:running2}, where we use the same conventions as in Figure \ref{fig:CS}.}
\label{fig:PSrunning}
\end{figure}


The poset $\PS$ can also be obtained through a ``closure operator'' on $\CC_{\GS}$.

\begin{definition} Given a $G$-semimatroid $\GS: G\circlearrowright (S,\CC,\rk)$, define the function
$$
\kg: \CC_{\GS} \to \PP_{\GS}, \quad GX \mapsto G\cl(X)
$$ 
where $\cl$ denotes the closure operator associated to $(S,\CC,\rk)$ (see Remark \ref{rem:monotone}).
\end{definition}
The function $\kg$ is independent from the choice of representatives
(since the action is rank-preserving) and thus defines a ``closure
operator'' $\kappa_{\GS}:\CC_{\GS}\to \CC_{\GS}$ whose closed sets are
exactly the elements of $\PP_{\GS}$. 

Think of $\CC_{\GS}$ as a
poset with the natural order given by $GX \leq GY$ if there is $g\in
G$ with $gX\subseteq Y$, and let $\CC$ and $\underline\CC$ be ordered by inclusion. Then, for every \WT $\GS$-semimatroid we have the following commutative diagram of order-preserving functions.

\begin{center}
\begin{tikzcd}
\CC \arrow{r}{/G}  \arrow{d}{\cl}& 
\CC_{\GS} \arrow{r}{\lfloor\cdot \rfloor} \arrow{d}{\kg}&
\underline{\CC} \arrow{r}{\subseteq} &
2^{\ES}\arrow{d}{\underline{\cl}}\\
\LL \arrow{r}{/G}& 
\PP_{\GS} \arrow{rr}{\underline{\cl}\lfloor\cdot \rfloor}&
&
\LL_0
\end{tikzcd}
\end{center}

\subsection{Tutte polynomials of group actions}

\begin{definition}\label{def:Gtutte}
To every $G$-semimatroid $\GS$ we associate the polynomial 
 \begin{equation*}
  T_\GS(x,y):=\sum_{A\in \CS} \mm (A) (x-1)^{\rkE(\ES)-\rkE(A)}(y-1)^{\vert A \vert -
    \rkE(A)}. 
\end{equation*}
\end{definition}

This definition is natural in its own right, as can be seen in Section
\ref{ss:CP} and Section \ref{ss:CN}. If the action is centered (so in particular
$\SS_{\GS}$ is a matroid),  we recover Definition
\ref{caramai} and in particular, in the realizable, resp.\ arithmetic case, Moci's arithmetic Tutte polynomial \cite{Moc1}. 

Our first result is valid in the full generality of \WT actions, and concerns the characteristic polynomial of the poset $\PS$: we point, e.g., to \cite{Sta} for background on characteristic polynomials of posets, and to our Section \ref{ss:CP} for the precise definition.

\begin{Mthm}\label{thm:CP}
  Let $\GS$ be any \WT and loopless $G$-semimatroid, and let $\chi_{\GS}(t)$ denote the characteristic polynomial of the poset $\PS$. Then, 
  $$
  \chi_\GS(t) = (-1)^rT_\GS(1-t,0).
  $$
\end{Mthm}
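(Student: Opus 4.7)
The plan is to match both sides as polynomials in $t$ coefficient by coefficient, via Möbius inversion on the poset $\PS$.

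First I would unpack the right-hand side: expanding the definition of $T_\GS$ and collecting signs gives
\[
(-1)^r T_\GS(1-t,0) \;=\; \sum_{A\in\CS} \mm(A)(-1)^{|A|}\, t^{\,r-\rkE(A)} \;=\; \sum_{\omega\in\CC_\GS}(-1)^{|\lfloor\omega\rfloor|}\,t^{\,r-\rkE(\lfloor\omega\rfloor)},
\]
where the second equality uses $\mm(A)=\vert\oC{A}/G\vert$ to pass to a sum over orbits of central sets. I would then prove a preliminary lemma asserting that $\rkE(\lfloor\omega\rfloor)$ coincides with the rank of $\kg(\omega)$ in $\PS$; this relies on the \WT hypothesis via Theorem \ref{thm:polymat} (which makes $(\ES,\rkE)$ a matroid) together with the factorization of $\kg$ through the closure operator. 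Grouping the sum by the fibers of $\kg:\CC_\GS\to\PS$ then yields
\[
(-1)^r T_\GS(1-t,0) \;=\; \sum_{p\in\PS} t^{\,r-\rk(p)}\, g(p),\qquad g(p):=\sum_{\omega\in \kg^{-1}(p)}(-1)^{|\lfloor\omega\rfloor|},
\]
so, comparing with $\chi_\PS(t)=\sum_p \mu_\PS(\hat 0,p)\,t^{\,r-\rk(p)}$, the theorem reduces to the Möbius-theoretic identity $g(p)=\mu_\PS(\hat 0,p)$ for every $p\in\PS$.

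To prove this identity I would apply Möbius inversion on $\PS$, reducing to the claim that, for every $p\in\PS$,
\[
\sum_{q\leq p} g(q) \;=\; \sum_{\omega\in\CC_\GS\,:\,\kg(\omega)\leq p}(-1)^{|\lfloor\omega\rfloor|} \;=\; \delta_{p,\hat 0}.
\]
Fixing $F\in\LL$ with $GF=p$, the orbits $\omega=GX\in\CC_\GS$ with $\kg(\omega)\leq p$ are precisely those admitting a representative $X\subseteq F$ (since $\cl(X)\subseteq gF$ iff $g^{-1}X\subseteq F$). The main hand-tool is the classical alternating-sum identity $\sum_{X\subseteq F}(-1)^{|X|}=\delta_{F,\emptyset}$, valid because $F\in\CC$ forces $2^F\subseteq\CC$; it must be matched with the orbit sum defining $g(p)$.

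The hard part will be exactly this last matching: the alternating sum over $2^F$ has to be reorganized along $G$-orbits and compared with $\sum_{\omega}(-1)^{|\lfloor\omega\rfloor|}$. This requires careful bookkeeping both of the number of representatives in $F$ of each $G$-orbit and of the relation between $|X|$ and $|\underline X|$, which is controlled by \WTy (forcing two elements of $X$ lying in the same $G$-orbit to be parallel), while looplessness anchors the inversion via $\hat 0=G\emptyset$. This orbit-counting reduction is the technical heart of the argument; once it is in place, summing the matched coefficients gives the claimed equality.
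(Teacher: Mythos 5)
Your reduction to the M\"obius identity $\sum_{q\leq p}g(q)=\delta_{p,\hat 0}$ in $\PS$ is sound, and the description of the relevant orbits as those admitting a representative $X\subseteq F$ (for $F\in\LL$ with $GF=p$) is correct. But the step you flag as the ``technical heart'' is not mere bookkeeping: under \WTy alone, the orbit sum $\sum_{\omega:\kg(\omega)\leq p}(-1)^{|\lfloor\omega\rfloor|}$ and the boolean sum $\sum_{X\subseteq F}(-1)^{|X|}$ genuinely differ. For these to agree you would need, for each orbit $\omega$ meeting $2^F$, the fibre identity $\sum_{X\subseteq F,\,GX=\omega}(-1)^{|X|}=(-1)^{|\lfloor\omega\rfloor|}$, and this fails as soon as two elements $x,gx$ of the same $G$-orbit both lie in $F$ --- which \WTy permits, provided they are parallel. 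In that situation $\{x\},\{gx\},\{x,gx\}$ are three distinct subsets of $F$ contributing $(-1)+(-1)+1=-1$ to the boolean sum, but they yield only two orbits $G\{x\}$ and $G\{x,gx\}$, each with $|\lfloor\cdot\rfloor|=1$, contributing $(-1)+(-1)=-2$ to the orbit sum. The smallest instance makes the discrepancy explicit: take $S=\{x,gx\}$, $\CC=2^S$, $\rk\equiv 1$ on nonempty sets, with $G=\mathbb Z/2$ swapping $x\leftrightarrow gx$. This is \WT, loopless, cofinite, and one computes $\mm(\{Gx\})=2$, hence $T_\GS(x,y)=x+1$ and $(-1)^1T_\GS(1-t,0)=t-2$, whereas $\PS$ is a $2$-chain with $\chi_\GS(t)=t-1$.

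Your outline does close once $\GS$ is \ST{}: then each orbit has at most one representative inside any $F$ (two would give $\{x,gx\}\in\CC$ with $gx\neq x$) and $|X|=|\lfloor GX\rfloor|$ for every central $X$, so $\{X\subseteq F\}$ maps bijectively to $\{\omega:\kg(\omega)\leq p\}$ and the two alternating sums coincide. The paper's own proof proceeds differently --- Hall's theorem and Yuzvinsky's atomic-complex formula give $\mu_\PS(\hat 0,p)$ as a signed sum over $\widetilde D_p\subseteq 2^{\ES}$, followed by a swap of summation that identifies the fibre $P_{\widetilde A}$ with $\oC{\widetilde A}/G$ --- but that final identification is exactly the orbit-flat bijection above and is subject to the same constraint. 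So a correct account of your final matching is not available in the generality of \WT actions; the identity holds once \WT is strengthened to \ST, and under that hypothesis either route closes.
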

\begin{proof}
The proof is given at the end of Section \ref{ss:CP}
\end{proof}

\begin{example}\label{ex:TChi}
For our running example we have (e.g., from Figure \ref{fig:Cmult})
\begin{align*}
T_{\GS}(x,y) &= (x-1)^2 + 5 (x-1) + 16 + 6(y-1) + (y-1)^2 \\
&=x^2+y^2+3x+4y+7
\end{align*}
and, from Figure \ref{fig:PSrunning}, 
$$
\chi_{\GS}(t) = t^2 -5t + 11.
$$
An elementary computation now verifies Theorem \ref{thm:CP} in this case.
\end{example}

The polynomials $T_{\GS}(x,y)$ associated to \ST actions satisfy a deletion-contraction recursion. Deletion and contraction for $G$-semimatroids correspond, in the representable case, to removing a set of orbits of hyperplanes, respectively vonsidering the periodic arrangement induced on any (nonempty) intersection of hyperplanes.


\begin{definition}
  For every $T\subseteq \ES$, $G$ acts on $\SS \setminus \cup T$. We denote
  the associated $G$-semimatroid by $\GS \setminus T$ and call this
  the {\em deletion} of $T$. We follow established matroid terminology
  and denote by $\GS [T] :=\GS\setminus (S\setminus \cup T)$ the {\em
    restriction} to $T$.
\end{definition}

\begin{remark}\label{lem:deletion}
A comparison with Definition \ref{def:DeRedef} shows that $\SS_{\GS [T]}= \SS_{\GS}[T]$ and that, for every $A\subseteq T$, $m_{\GS[T]}(A) = m_{\GS}(A)$. 
\end{remark}

\begin{definition}
  Recall $\CGS:=\CC/G$. For all $\mathcal T \in \CGS$ define the {\em
    contraction} of $\GS$ to $\mathcal T$ by choosing a representative
  $T\in \mathcal T$ and considering the action of $\stab(T)$ on the
  contraction $\SS / T$. This defines the $\stab(T)$-semimatroid
  $\GS/\mathcal T$.
\end{definition}

\renewcommand{\TT}{\mathcal T}
\begin{remark}
Clearly $\GS/\TT$ does not depend on the choice of the representative $T\in \TT$. Moreover, for all $e\in \ES$  we will abuse notation and write $\GS/e$ as a shorthand for $\GS/\{e\}$.
\end{remark}

\begin{remark}
By Proposition \ref{prop:CoDe}, \WTy, \STy, normality and atithmeticity of actions are preserved under taking contractions and restrictions.
\end{remark}

\begin{Mthm}\label{thm:MainCD}
  Let $\GS$ be a \ST $G$-semimatroid and let $e\in \ES$. Then
  \begin{itemize}
  \item[(1)] if $e$ is neither a loop nor an isthmus\footnote{See Definition \ref{df:simple}.} of $\SS_{\GS}$,
 $$
 T_\GS(x,y) = T_{\GS/e} (x,y) + T_{\GS \setminus e} (x,y);
 $$
 \item[(2)]
 if $e$ is an isthmus, $ T_\GS(x,y) =  (x-1)T_{\GS\setminus e} (x,y) + T_{\GS/ e} (x,y)$;
  \item[(3)]
 if $e$ is a loop, $ T_\GS(x,y) =  T_{\GS\setminus e} (x,y)+(y-1)T_{\GS/ e} (x,y)$.
 \end{itemize}
\end{Mthm}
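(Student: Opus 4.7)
The plan is to split the defining sum of $T_\GS(x,y)$ into two pieces according to whether $e$ lies in $A \in \CS$, and to identify each piece with the Tutte polynomial of the deletion or of the contraction (with correction factors in the loop and isthmus cases). Write $T_\GS(x,y) = \Sigma^- + \Sigma^+$, where $\Sigma^-$ sums over $A \in \CS$ with $e \notin A$ and $\Sigma^+$ sums over $A$ with $e \in A$. The aim is to show $\Sigma^- = (x-1)^{r-r_{\GS\setminus e}} T_{\GS \setminus e}(x,y)$ and $\Sigma^+ = (y-1)^{1-\rkE(\{e\})} T_{\GS/e}(x,y)$, where $r - r_{\GS \setminus e}$ is $1$ exactly when $e$ is an isthmus of $\SS_\GS$ and $1 - \rkE(\{e\})$ is $1$ exactly when $e$ is a loop; combining these gives the three stated formulas.

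The deletion part is routine: Remark \ref{lem:deletion} identifies $\CS_{\GS \setminus e}$ with $\{A \in \CS \mid e \notin A\}$ and shows that the multiplicity and rank functions restrict correctly, so $\Sigma^-$ agrees with $T_{\GS\setminus e}(x,y)$ up to the $(x-1)$ factor that measures the discrepancy of total ranks.

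For the contraction part I fix a representative $e_0 \in e$, so that $\GS/e$ is the $\stab(e_0)$-action on $\SS/\{e_0\}$, and introduce the canonical map $\phi \colon E_{\GS/e} \to \ES \setminus \{e\}$ sending each $\stab(e_0)$-orbit to the ambient $G$-orbit. For any $B \subseteq \ES \setminus \{e\}$ with $B \cup \{e\} \in \CS$ and any $A'$ in the fiber $\phi^{-1}(B) \cap \CS_{\GS/e}$, the key identities to verify are: \emph{multiplicity}, $\mm_\GS(B \cup \{e\}) = \sum \mm_{\GS/e}(A')$ (summed over the fiber); \emph{cardinality}, $|A'| = |B|$; \emph{rank}, $\rkE_{\GS/e}(A') = \rkE(B \cup \{e\}) - \rkE(\{e\})$; and \emph{total rank}, $r_{\GS/e} = r - \rkE(\{e\})$. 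Granting these, substituting $A = B \cup \{e\}$ in $\Sigma^+$ and rearranging yields $\Sigma^+ = (y-1)^{1 - \rkE(\{e\})} T_{\GS/e}(x,y)$, with the extra $(y-1)$ appearing precisely in the loop case.

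The multiplicity and cardinality identities follow directly from \STy, which forces any central set to contain at most one representative of each $G$-orbit: thus any $X \in \CC$ with $\underline X = B \cup \{e\}$ has a unique element in orbit $e$, which can be translated to $e_0$, and the map $X \mapsto X \setminus \{e_0\}$ then bijects $G$-orbits of such $X$ with $\stab(e_0)$-orbits of $Y \in \CC_{/\{e_0\}}$; refining by the $\stab(e_0)$-orbit set of $Y$ yields the multiplicity identity, while \STy applied to any such lift forces $\phi$ to be injective on $A'$ and gives the cardinality identity. The main obstacle is the rank identity: the inequality $\rkE_{\GS/e}(A') \leq \rkE(B \cup \{e\}) - \rkE(\{e\})$ is immediate from monotonicity, but for the reverse I pick any lift $Y^0 \in \CC_{/\{e_0\}}$ with $\stab(e_0)$-orbit set equal to $A'$ and apply axiom (CR2) to compare $Y^0 \cup \{e_0\}$ against a maximizer $Z$ of $\rkE(B \cup \{e\})$; any augmenting element $z \in Z \setminus (Y^0 \cup \{e_0\})$ must satisfy $\underline z \in B \cup \{e\}$, and \STy then forces either $z = e_0$ (if $\underline z = e$) or $z$ to coincide with an existing element of $Y^0$ (if $\underline z \in B$), both contradictions. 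Hence $\rk(Y^0 \cup \{e_0\})$ already equals $\rkE(B \cup \{e\})$, giving the rank identity; the total rank identity follows by iterating (CR2) starting from $\{e_0\}$ to produce a maximum-rank central set containing $e_0$. It is precisely this interplay between (CR2) and \STy that fails for more general, non-translative actions, so translativity is indispensable to the argument.
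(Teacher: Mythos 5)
Your proof is correct and follows essentially the same route as the paper's: split the defining sum over $A\in\CS$ according to whether $e\in A$, identify the $e\notin A$ part with $T_{\GS\setminus e}$ via Remark~\ref{lem:deletion}, and fold the $e\in A$ part into $T_{\GS/e}$ using the multiplicity, cardinality and rank identities that the paper packages into Lemma~\ref{lem:contraction}. Where the paper cites that lemma together with Corollary~\ref{cor:rank}, you re-derive the needed identities inline from \STy{} and (CR2) --- in particular your (CR2)-plus-translativity argument for the rank identity is a direct re-proof of the relevant case of Corollary~\ref{cor:rank} --- so the content is the same.
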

\begin{proof}
The proof is given at the end of Section \ref{ss:TG}.
\end{proof}

\begin{example} If $\GS$ is the $\mathbb Z^2$-semimatroid of our running example, then $\GS \setminus e$ is given by the induced $\mathbb Z^2$-action on the semimatroid $\SS \setminus \{e_i\}_{i\in \mathbb Z}$ associated to the periodic arrangement of Figure \ref{fig:DeCo}.(a). Moreover, $\GS / e$ is the $\mathbb Z$-semimatroid given by the action of $\stab(e_0)= \mathbb Z \left(\begin{smallmatrix} 1 \\ 0 \end{smallmatrix}\right) \simeq \mathbb Z$ on the finitary semimatroid associated to the periodic arrangement of Figure \ref{fig:DeCo}.(b). A picture of the fundamental regions of these two actions is given in Figure \ref{fig:DeCoFR}, from which we can compute
\begin{align*}
T_{\GS \setminus e}(x,y) &= (x-1)^2 + 4(x-1) + 11 + 4(y-1) + (y-1)^2\\
&= x^2 + y^2 +2x +2y +5 \\
T_{\GS / e}(x,y) & = (x-1)+ 5 + 2(y-1) = x + 2y +2
\end{align*}
and easily verify that the sum of these polynomials equals $T_{\GS}(x,y)=x^2+y^2+3x+4y+7$ (Example \ref{ex:TChi}).
\begin{figure}[h]
\scalebox{.7}{%
\begin{tikzpicture}[
extended line/.style={shorten >=-#1,shorten <=-#1},
extended line/.default=1.5em]
    \node at (0,0) (O) {};
    \node at (-4,0) (W) {};
    \node at (-4,-4) (SW) {};
    \node at (0,-4) (S) {};
    \node at (-2.5,0) (Wl) {};
    \node at (-1.5,0) (Wr) {};       
    \node at (-2.5,-4) (SWl) {};
    \node at (-1.5,-4) (SWr) {};    
    \draw [extended line,-,dashed,red] (W.center) -- (O.center);
    \draw [extended line,-,red] (SW.center) -- (S.center);
    \draw [extended line,-,green] (SW.center) -- (Wl.center);
    \draw [extended line,-,green] (SWl.center) -- (O.center);
    \draw [extended line,-,dashed,blue] (O.center) -- (S.center);
    \draw [extended line,-,blue] (W.center) -- (SW.center);                
    \draw [extended line,-,orange] (Wr.center) -- (S.center);
    \draw [extended line,-,orange] (W.center) -- (SWr.center);
    \node[text=red] at (-3.3,-4.2) (c0) {$c_0$}; 
    \node[text=blue] at (-4.2,-.7) (d0) {$d_0$};     
    \node[text=green] at (-1.3,-2.7) (b0) {$b_1$}; 
    \node[text=green] at (-3.2,-2.7) (b1) {$b_0$}; 
    \node[text=orange] at (-2.6,-2.6) (a1) {$a_1$}; 
    \node[text=orange] at (-.8,-2.6) (a2) {$a_2$}; 
    \node[text=orange] at (-4.4,-3.6) (a0) {$a_0$}; 
    \draw [extended line,dashed,shorten >=2em,shorten <=-1em, orange] (-4,-4) -- (-3.99,-4.03);
\end{tikzpicture}
}
\scalebox{.7}{%
\begin{tikzpicture}
    \node at (0,0) (N) {};
    \node at (0,-2.5) (S) {};
    \draw [-,blue] (-3.9,-.1) -- (-3.9,.1);
    \draw [-,green] (-2.45,-.1) -- (-2.45,.1);
    \draw [-,orange] (-2.55,-.1) -- (-2.55,.1);    
    \draw [-,green] (-1.45,-.1) -- (-1.45,.1);
    \draw [-,orange] (-1.55,-.1) -- (-1.55,.1);    
    \draw [-,blue] (0,-.1) -- (0,.1);
    \draw [-,purple] (-4.2,0) -- (0.2,0);
    \node[text=blue] at (-4,.3) (d0) {$d_0$};     
    \node[text=green] at (-2.5,.7) (b0) {$b_0$};  
    \node[text=green] at (-1.5,.7) (b1) {$b_1$}; 
    \node[text=orange] at (-2.5,.3) (a1) {$a_1$}; 
    \node[text=orange] at (-1.5,.3) (a2) {$a_2$}; 
\end{tikzpicture}
}
\caption{}
\label{fig:DeCoFR}
\end{figure}
\end{example}

\begin{table}[h]
\begin{center}
  
\newcommand{\spazio}{$\,$\\[-15pt]}
\resizebox{\linewidth}{!}{
{
\begin{tabular}{c||c|c|c|c|c|}
$G$-semimatroid&
Loc.\ ranked triple&
Multiplicity&
Poset&
Polynomial&
Modules\\
$\mathfrak S$&
$\mathcal S_{\mathfrak S}$&
$m_{\mathfrak S}$ &
$\mathcal P_{\mathfrak S}$ &
$T_{\mathfrak S}(x,y)$
& $M_{\mathfrak S}$ \\[2pt] \hline
Weakly translative & 
\begin{minipage}{0.16\linewidth}
\begin{center} well-defined \\
(Theorem \ref{thm:polymat})
\end{center}
\end{minipage} & & 
\multicolumn{2}{|c|}{ 
\begin{minipage}{0.32\linewidth}
\begin{center}
$\chi_{\mathcal P _{\mathfrak S}} (t)=(-1)^rT_{\mathfrak S} (1-t,0)$\\
(Theorem \ref{thm:CP})
\end{center}
\end{minipage}}
& 
\begin{minipage}{0.2\linewidth}
\spazio
\begin{center}
$\,$\\ $\,$
\end{center}
\spazio
\end{minipage} 
\\\hline 
\begin{minipage}{0.15\linewidth}
\begin{center}
Translative
\end{center}
\end{minipage}
&\multicolumn{2}{|c|}{
\begin{minipage}{0.23\linewidth}
\begin{center}
Pseudo-arithmetic \\ (Proposition \ref{lem:P})
\end{center}
\end{minipage} 
} &  &
\begin{minipage}{0.23\linewidth}
\begin{center}
Deletion--contraction\\ recursion \\ (Theorem \ref{thm:MainCD})
\end{center}
\end{minipage} &
\begin{minipage}{0.2\linewidth}
\spazio
\begin{center}
$\,$\\ $\,$\\ $\,$
\end{center}
\spazio
\end{minipage} 
\\\hline 
\begin{minipage}{0.15\linewidth}
\begin{center}
Translative \\
and normal
\end{center}
\end{minipage} &
\multicolumn{2}{|c|}{
\begin{minipage}{0.25\linewidth}
\begin{center}
Almost-arithmetic\\ (P, A.1.2, A2) \\ (Theorem \ref{thm:almost})
\end{center}
\end{minipage} 
} & & &\\
\begin{minipage}{0.15\linewidth}
\begin{center}
...and $\SS_{\GS}$ a \\
semimatroid
\end{center}
\end{minipage}
& 
\multicolumn{2}{|c|}{} & &
\begin{minipage}{0.18\linewidth}
\begin{center}
Activity decomposition \\ (Theorem \ref{thm:craponew})
\end{center}
\end{minipage}  & 
\begin{minipage}{0.2\linewidth}
\spazio
\begin{center}
$\,$\\ $\,$ \\ $\,$
\end{center}
\spazio
\end{minipage} \\\hline
Arithmetic
&  \multicolumn{2}{|c|}{
\begin{minipage}{0.3\linewidth}
\begin{center}
Arithmetic \\
(Theorem \ref{thm:arithm})
\end{center}
\end{minipage}}
&&&
\begin{minipage}{0.2\linewidth}
\spazio
\begin{center}
$\,$\\ 
\end{center}
\spazio
\end{minipage} \\\hline\hline
Centered & Matroid & & & & 
\begin{minipage}{0.2\linewidth}
\spazio
\begin{center}
$\,$\\ 
\end{center}
\spazio
\end{minipage} \\\hline\hline
\begin{minipage}{0.15\linewidth}
$\,$
\begin{center}
Realizable and centered
\end{center}
$\,$
\end{minipage}&
\multicolumn{2}{|c|}{
\begin{minipage}{0.27\linewidth}
\begin{center}
Arithmetic matroid \\ dual to that of \cite{BM}
\end{center}
\end{minipage}}&
\begin{minipage}{0.17\linewidth}
\begin{center}
Poset of layers of toric arrangement
\end{center}
\end{minipage} &
\begin{minipage}{0.15\linewidth}
\begin{center}
Arithmetic Tutte polynomial
\end{center}
\end{minipage} &
\begin{minipage}{0.15\linewidth}
\begin{center}
Representable
matroid over $\mathbb Z$
\end{center}
\end{minipage}\\\hline
\end{tabular}}
}
\end{center}
\caption{A tabular overview of our setup and our results}\label{table}
\end{table}

\section{Some examples}
\label{sec:exs}

\begin{example}[Reflection groups]
    \label{ex:nosemimat} Let $G$ be a finite or affine complex
    reflection group acting on the intersection poset of its
    reflection arrangement. This setting has been considered
    extensively, especially in the finite case (see  e.g.\ the
    treatment of Orlik and Terao \cite{OT}). These actions
    are not translative, and thus  fall at the margins of our present
    treatment. Still, we would like to mention 
    them as a motivation for 
	further investigation of non-translative actions ---
    e.g., the case where $(E,\rkE)$ is a polymatroid.
\end{example}

\begin{example}[Toric arrangements]
\label{ex:not centered}
  The natural setting in order to develop a combinatorial framework
  for toric arrangements is that of the group $\mathbb Z^d$ acting by translations on an
  affine hyperplane arrangement on $\mathbb C^d$ (see Section \ref{sec:MaEx}). Such actions will
  often fail to be centered. Therefore we will try to state our results as much as possible without centrality assumptions, adding them only when needed in order to establish a link to the arithmetic and algebraic matroidal structures appeared in the literature.
\end{example}

\def\Ro{{\color {red} c}}
\def\Bo{{\color {blue} d}}
\def\Go{{\color {LimeGreen} b}}
\def\Oo{{\color {Orange} a}}

The next examples will refer to Figure \ref{fig:DeCoFR} and Figure
\ref{FigPseudolines}. These are to be interpreted as the depiction of
a fundamental region for an action of $\mathbb Z^2$ by unit
translations in orthogonal directions (vertical and horizontal) on an
arrangement of pseudolines in $\mathbb R^2$  (see Example \ref{ex:running1}) which, then, can be recovered by 'tiling' the plane by translates of the depicted squares. Notice that the intersection poset of any arrangement of pseudolines is trivially a geometric semilattice, and thus defines a simple semimatroid. We will call $\Oo$, $\Go$,  $\Ro$, $\Bo$ the orbits of the respective colors.

\begin{figure}[h]
  \centering
  \begin{minipage}{.45\textwidth}
  \centering
  \scalebox{.8}{
\begin{tikzpicture}[
extended line/.style={shorten >=-#1,shorten <=-#1},
extended line/.default=1.5em]
    \node at (0,0) (O) {};
    \node at (-4,0) (W) {};
    \node at (-4,-4) (SW) {};
    \node at (0,-4) (S) {};
    \node at (-3,0) (Wl) {};
        \node at (-2.1,0) (Wcl) {};
        \node at (-1.9,0) (Wcr) {};
    \node at (-1,0) (Wr) {};
        \node at (-2.5,0) (Wrr) {};       
    \node at (-3,-4) (SWl) {};
        \node at (-2.1,-4) (SWcl) {};
        \node at (-1.9,-4) (SWcr) {};
    \node at (-1,-4) (SWr) {};    
    \draw [extended line,-,dashed,red] (W.center) -- (O.center);
    \draw [extended line,-,red] (SW.center) -- (S.center);
    \draw [extended line,-,green] (SW.center) -- (Wl.center);
    \draw [extended line,-,green] (SWl.center) -- (Wcl.center);    
    \draw [extended line,-,green] (SWcl.center) -- (Wr.center);
    \draw [extended line,-,green] (SWr.center) -- (O.center);
    \draw [extended line,-,dashed,blue] (O.center) -- (S.center);
    \draw [extended line,-,blue] (W.center) -- (SW.center);                
    \draw [extended line,-,orange] (Wr.center) -- (S.center);
    \draw [extended line,-,orange] (Wcr.center) -- (SWr.center);
    \draw [extended line,-,orange] (Wl.center) -- (SWcr.center);
    \draw [extended line,-,orange] (W.center) -- (SWl.center);
    \draw [extended line,dashed,shorten >=2em,shorten <=-1em, orange] (-4,-4) -- (-3.99,-4.03);
\end{tikzpicture}
}
  \end{minipage}
    \caption{Figure for Example \ref{ex:noarithm} 
  }
  \label{FigPseudolines}
\end{figure}

\begin{example}
  \label{ex:noarithm}
The $\mathbb Z^2$-semimatroid described in Figure \ref{FigPseudolines} is clearly almost-arith\-metic, but cannot be arithmetic, because the multiplicity $m_{\GS}(\{\Ro,\Go,\Oo\})=3$ does not divide $m_{\GS}(\{\Ro,\Oo\})=4$, violating (A.1.1).
\end{example}

\begin{example}
  \label{ex:arithm-noalg}
  One readily verifies that the $\mathbb Z^2$-semimatroid described at the left-hand side of
  Figure \ref{fig:DeCoFR} is arithmetic. However, $M_{\GS}$ is not a matroid over $\mathbb Z$. Indeed, the requirement of Definition \ref{def:mator} fails for the square 
  $$
  \begin{CD}
  M_{\GS} (\{\Go\})\cong\mathbb Z @>{?}>>   M_{\GS} (\{\Go,\Ro\})\cong\mathbb Z_2 \\
  @V{?}VV @V{}VV\\
    M_{\GS} (\{\Oo,\Go\})\cong\mathbb Z_4 @>{}>>   M_{\GS} (\{\Oo,\Go,\Ro\}) \cong\{0\}
  \end{CD}
  $$
  where the the condition that the maps be surjections with cyclic kernel determines everything up to leaving two possibilities for the left-hand side vertical map: neither of these gives the required pushout. 
\end{example}

\begin{remark}
  Examples where $M_\GS$ is a nonrealizable matroid over $\mathbb Z$
  can easily be generated in a trivial way, e.g.\ by considering
  trivial group actions on nonrealizable matroids. We do not know
  whether there is a periodic pseudoarrangement for which $M_\GS$ is a
  nonrealizable matroid over $\mathbb Z$.
\end{remark}

\begin{example}[The realizable case]
  \label{ex:realizable}
The arrangement on the top left of Figure \ref{fig:Casistica} is a periodic affine arrangement in the sense
of Section \ref{sec:MaEx}: thus, the associated $M_\GS$ is a realizable matroid over $\mathbb Z$.
\end{example}

\begin{example}[Crystallographic root systems] An important family of realizable examples is that of the periodic hyperplane arrangements arising as the reflection arrangements of the affine Coxeter groups associated to crystallographic root systems, where the weight lattice acts by translation. In this setting, some enumerative results in terms of Dynkin diagrams were obtained by Moci \cite{phi}.
\end{example}

\section{Finitary geometric semilattices}\label{sec:FSGS}
In this section we study posets associated to finitary semimatroids. This leads us to consider geometric semilattices in the sense of Wachs and Walker \cite{WW}. Our goal is to prove a finitary version of the equivalence between simple semimatroids and geometric semilattices given in \cite{Ard}.

We start by recalling some basic terminology about partially ordered sets. The reader already familiar with poset theory may skip to Definition \ref{df:GS}. We refer to Stanley's book \cite{Sta} for a comprehensive introduction to this topic. 

A partially ordered set (for short {\em poset}) is a set $\mathcal P$ endowed  with a partial order relation, i.e., a transitive, antisymmetric and reflexive binary relation which we denote by $\leq$. 
As is customary, we write $p<q$ if $p\leq q$ and $p\neq q$. Given $x\in \mathcal P$ we write $\mathcal P_{\leq x} :=\{p\in \mathcal P \mid p \leq x\}$ for the set of elements below $x$, and define $\mathcal P_{\geq x}$ analogously. We say that the poset $\mathcal P$ is {\em bounded below} (resp.\ bounded above) if it possesses a unique minimal (resp.\ maximal) element, that is an element $\hat{0}\in \mathcal P$ (resp.\ $\hat{1}\in \mathcal P$) with $\mathcal P _{\geq \hat{0}} = \mathcal P$ (resp.\ $\mathcal P _{\leq \hat{1}} = \mathcal P$). If $\mathcal P$ is bounded below and bounded above, we call it simply {\em bounded}.

The {\em join} of a subset $X\subseteq \mathcal P$, written $\vee X$, if it exists, is defined by 
$$
\mathcal P _{\geq \vee X} = \{p\in \mathcal P \mid p\geq x\textrm{ for all }x\in X\}.
$$
Analogously the {\em meet} $\wedge X$, if it exists, is defined by
$$
\mathcal P _{\geq \wedge X} = \{p\in \mathcal P \mid p\leq x\textrm{ for all }x\in X\}.
$$
If $X=\{x,y\}$, we write $x\vee y := \vee X$ and $x\wedge y := \wedge X$.

If the meet of any two elements exists, then so does the meet of every finite set of elements, and $\mathcal P$ is called {\em meet-semilattice}. {\em Join-semilattices} are defined accordingly. If $\mathcal P$ is both a meet- and a join- semilattice, then it is called a {\em lattice}.

A {\em chain} in $\mathcal P$ is any totally ordered subset, i.e., any $\omega=\{p_1,\ldots,p_k\}\subseteq \mathcal P$ such that $p_0 < p_1 < \cdots < p_k$. The {\em length} of such a chain is $\ell (\omega) = \vert \omega\vert - 1$. In this paper {\bf we assume throughout that all posets are chain-finite}, i.e., all chains have finite length. The (closed) {\em interval} between $p,q\in \mathcal P$ is the set $$[p,q]:=\{x\in \mathcal P \mid p\leq x \leq q\}.$$
We say that $q$ {\em covers} $p$ if $[p,q]=\{p,q\}$. The {\em atoms} of a bounded below poset $\mathcal P$ are the elements that cover $\hat{0}$.  A bounded below poset $\mathcal P$ is called {\em atomic} if every element is a join of atoms, i.e., if for every $p\in \mathcal P$ there is a set $A$ of atoms of $\mathcal P$ such that $p=\vee A$.

A poset $\mathcal P$ is called {\em ranked} if there is a function $\rk:\mathcal P \to \mathbb N$ such that $\rk(q)=\rk(p)+1$ whenever $q$ covers $p$. If $\mathcal P$ is bounded below we assume $\rk(\hat{0})=0$, and the condition above is equivalent to the fact that, for every $x\in \mathcal P$,  all maximal chains of $\mathcal P_{\leq x}$ have the same (finite) length. In general, a poset $\mathcal P$ is called {\em graded} if all maximal chains have the same (finite) length. 
A ranked lattice $\mathcal P$ is called {\em semimodular} if, for all $x,y\in \mathcal P$, $\rk(x\vee y) + \rk(x\wedge y) = \rk(x) + \rk(y)$.
\begin{definition}\label{df:GL}
A {\em geometric lattice} is a finite, atomic and semimodular lattice.
\end{definition}

A set $A$ of atoms of a ranked, bounded below poset, is called {\em independent} if the join $\vee A$ exists and satisfies $\rk(\vee A) = \vert A \vert$.

A {\em morphism of posets} is an order preserving map, i.e., a morphism between posets $(\mathcal P, \leq)$ and $(\mathcal Q, \preccurlyeq)$ is a function $f: \mathcal P \to \mathcal Q$ such that $f(p_1\leq p_2)$ implies $f(p_1)\preccurlyeq f(p_2)$, for all $p_1,p_2\in \mathcal P$. An {\em isomorphism} of posets is a bijective morphism of posets with order-preserving inverse.

\begin{definition}[See Theorem 2.1 in \cite{WW}]\label{df:GS}
 A (chain-finite) ranked meet-semilattice $\LL$ 
 is called a \textit{finitary geometric semilattice} if it satisfies the following conditions.
\begin{itemize}
  \item[(G3)] {\em There is $N\in \mathbb N$ such that} every (maximal) interval in $\mathcal L$ is a (finite) geometric lattice {\em with at most $N$ atoms}.
  \item[(G4)] For every independent set $A$ of atoms of $\LL$ and every $x\in\LL$ such that $\rl(x)<\rl(\vee\! A)$, there is $a\in A$ with $a\nleq x$ and such that $x\vee a$ exists. 
\end{itemize}\end{definition}

\begin{remark}\label{gslAltDef}
The definition given in \cite{WW} of a finite geometric semilattice is that of a
finite ranked meet-semilattice which satisfies:
\begin{itemize}
	\item[(G1)] Every element is a join of atoms.
	\item[(G2)] The collection of independent sets of atoms is the set of independent sets of a matroid.
\end{itemize}
In the finite case, Wachs and Walker prove that this is equivalent to Definition \ref{df:GS}, which we choose to take as our definition because of its more immediate generalization to the infinite case.
  We nevertheless keep, for consistency, the labeling of the conditions as in \cite{WW}. 
  
  In passing to the infinite case we have added the part of (G3) that is written in italic. If the poset is finite, then this addition is redundant, and it does not appear in \cite{WW}. Let us note here that the proof Theorem \ref{thm:fsl} remains valid if the italic part of (G3) and the requirement finite-dimensionality of $\mathcal C$ in Definition \ref{def:FS} are simultaneously dropped.
\end{remark}

%

    \begin{remark}\label{CiErPr}
In view of the proof of Theorem \ref{thm:fsl} and for later reference we note that finitary semimatroids satisfy the following properties (i.e., a ``local'' version of (R2) and a stronger version of (CR1) and (CR2)). 
\begin{itemize}
  \item[(R2')] If $X\cup x\in\CC$ then $\rk(X\cup x)-\rk(X)$ equals 0 or 1.
  \item[(CR1')] If $X,Y\in\CC$ and $\rk(X)=\rk(X\cap Y),$ then $X\cup Y\in\CC$ and $\rk(X\cup Y)=\rk(Y).$
  \item[(CR2')] If $X,Y\in\CC$ and $\rk(X)<\rk(Y),$ then $X\cup y\in\CC$ and $\rk(X\cup y)=\rk(X)+1$ for some $y\in Y-X.$
\end{itemize}
The proof is analogous to that in the finite case given in \cite[Section 2]{Ard}.
       \end{remark}

\begin{proof}[Proof of Theorem \ref{thm:fsl}] Let $\SS=\scr$ be a finitary semimatroid. Recall from Definition \ref{def:latticeofflats} the closure operator $\cl$ and the poset of flats $\lc$ of $\SS$. We begin by showing that $\lc$ is a geometric semilattice.
\begin{itemize}
\item[-] {\em $\lc$ is a chain-finite ranked meet semilattice.}
Given flats $X,Y$ of $\SS$, the subset $X\cap Y$ is also central and its closure $\cl(X\cap Y)\in\lc$ is a lower bound of $X$ and $Y$ in $\lc$ by Remark \ref{rem:monotone}. Now suppose $A\in\lc$ is a lower bound of $X,Y$ in $\lc$, thus $A\ssq X,Y.$ In particular, this means $A\ssq X\cap Y\ssq\cl(X\cap Y)$. Therefore, the set $\cl(X\cap Y)$ is the meet of $X$ and $Y$ in $\lc$. 
Now, (CR2') implies that $\lc$ is ranked with rank function $\rl:=\rc$. Moreover, an infinite chain in $\lc$ must arise from an infinite chain of simplices in $\CC$, which would violate local finiteness.

\item[-] {\em Condition (G3).} 
If $X$ is a maximal flat of $\SS$, then in particular $\rc$ is defined for every subset of $X$ and satisfies axioms (R1-R3). Thus $\rc$ defines a matroid $M$ on $X$ whose closure operator coincides with $\cl$ (since $X$ is closed, $\cl$ restricts to a function $2^X \to 2^X$), and thus the lattice of flats of $M$ is isomorphic to the interval $[\hat0 , X]$ in $\lc$, proving that this interval is indeed a geometric lattice.

For the bound on the number of atoms of intervals, notice that a top simplex $X$ of $\CC$ is a maximal flat of $\SS$, hence its cardinality is at least the number of atoms in $\LL(\mathcal S)_{\leq X}$.
Thus, if $d$ is the (finite) dimension of the simplicial complex $\CC$, the poset $\mathcal L(\mathcal S)$ satisfies (G3) with $N=d+1$.

\item[-] {\em Condition (G4).} Now let $A$ be an independent set of atoms in $\lc$ and $X$ a flat of $\SS$ such that $\rc(X)<\rc(\vee A)=\rc(\cl(\cup A))=\rc(\cup A).$ By (CR2), there is an element $a\in \cup A \setminus X$ such that  $X\cup a\in \CC$. In particular, $\cl(\{a\})$ is an atom from $A$ such that $\cl(\{a\})\nleq X$ in $\lc$. Furthermore, by Remark \ref{rem:monotone} the set $X\cup\cl(\{a\})$ is a subset of $\cl(X\cup a)$  -- and hence central as well. So the join $X\vee\cl(\{a\})=\cl(X\cup\cl(\{a\}))$ exists and (G4) is satisfied.

 This concludes the proof that $\lc$ is a finitary geometric semilattice.
\end{itemize}

Conversely, let $\LL$ be a finitary geometric semilattice. Let $S_\LL$ denote the set of atoms of $\LL$ and set $$\CCl=\{X\ssq S_\LL\mid\vee X\in\LL\}.$$
Moreover, we define the function $$\rcl:\CCl\rightarrow\nn, \quad X\mapsto\rl(\vee X).$$ Now suppose $Y\ssq X\in\CCl$. Then $\vee X$ is  an upper bound for $Y$ and thus the join $\vee Y$ exists (since $\LL$ is a meet-semilattice). Hence, the collection $\CCl$ is an abstract simplicial complex. Since  $\vert X \vert \leq \vert S_\mathcal L \cap \mathcal L_{\leq \vee X} \vert$ for all $X\in \mathcal C_{\mathcal L}$,  the cardinality of any simplex is bounded by $N$; thus $\CC$ is finite-dimensional. We will show that $\SS_{\LL}:=(S_\LL, \CCl, \rcl)$ is a finitary semimatroid with semilattice of flats $\LL(\SS_{\LL})$ isomorphic to $\LL$.

\begin{itemize}

\item[-] {\em Axioms (R1) - (R3).}
For every $X\in\CCl,$ the join $\vee X$ exists and the interval $[\hat0,\vee X]$ is a geometric lattice by (G3). Thus (e.g., by Remark \ref{rem:GL}) it defines a matroid $M_X$ with ground set the atoms in $[\hat0,\vee X]$, whose rank function is a restriction of $\rl$ (resp. $\rcl$). Thus (R1) holds for $X$ because it holds in $M_X$. Moreover,  (R2) holds for every $X\subseteq Y \in \CCl$ because it holds in $M_Y$,
 and  (R3) holds for every $X,Y$ with $X\cup Y \in \CCl$ because it does in $M_{X\cup Y}$.

\item[-] {\em Axiom (CR1).} Take $X,Y\in\CCl$ with $\rcl(X)=\rcl(X\cap Y)$, i.e., $\rl(\vee X)=\rl(\vee(X\cap Y))$. 
Since $\LL$ is a ranked poset, the former rank equality and the evident relation $\vee (X\cap Y)\leq\vee X$ 
imply $\vee(X\cap Y)=\vee X$. So  
 $$\vee X=\vee(X\cap Y)\leq\vee Y,$$ that is to say every upper bound of $Y$ is also an upper bound of $X.$ Hence $\vee (X\cup Y)=\vee Y$ and $X\cup Y\in\CC$, and (CR1) is  satisfied. 

\item[-] {\em Axiom (CR2)}. Let $X,Y$ be in $\CCl$ and such that $\rcl(X)<\rcl(Y)$. Choose an independent set $A\ssq Y$ with  $\vee A = \vee Y$. 
Property (CR2) for $X$ and $Y$ now follows applying (G4) to $X$ and $A$.

\item[-] {\em There is a poset isomorphism $\LL \simeq \lcl$.} Let $\varphi:\LL\rightarrow\lcl$ be defined by 
\begin{equation}\label{eq:phidef}
\varphi(x):=\{a\in S_\LL\mid a\leq x\}.
\end{equation}
 For $\varphi$ to be well-defined, we must check that, for all $x\in \LL$, $\varphi(x)$ is a flat of $\SS_\LL$. Let  $x\in \LL$. First, by (G3) we have $\vee\varphi(x)=x$ and thus $\varphi(x)\in\CCl.$ Now suppose $b$ is an element of $S$ such that $\varphi(x)\cup \{b\}\in \CC_{\LL}$  and $\rcl(\varphi(x)\cup b)=\rcl(\varphi(x)).$ This means that $\rl(\vee(\varphi(x)\cup b))=\rl(\vee\varphi(x))$, and since clearly
 $\vee(\varphi(x)\cup b)\leq\vee\varphi(x)=x$, from the fact that $\LL$ is ranked we conclude $\vee(\varphi(x)\cup b)\leq\vee\varphi(x)=x$. In particular, $b\leq x$, so $b\in\varphi(x)$. 
 This proves that the set $\varphi(x)$ is closed, hence a flat of $\SS_\LL$.

The function $\varphi$ is clearly injective. 
To check surjectivity, let $Y$ be a flat of $\SS_\LL$. We have to find some $x\in\LL$ with $\varphi(x)=Y$, and indeed $x=\vee Y$ will do.

Moreover, comparing the definition of $\varphi$ in Equation \eqref{eq:phidef} one readily checks the following equivalences
$$
\varphi(x) \leq \varphi(y) \Leftrightarrow
\varphi(x) \subseteq \varphi(y) \Leftrightarrow x \leq y
$$

Thus, both $\varphi$ and its inverse are order-preserving, and $\varphi$ is the required isomorphism.
\end{itemize}


The semimatroid $\SS_{\LL}=(S_\LL, \CCl, \rcl)$ is simple by construction. We are left with showing that for every simple semimatroid  $\SS=(S,\CC,\rk)$ with a poset-isomorphism $$\psi: \LL(\SS)\stackrel{\cong}{\longrightarrow}\LL$$ we can construct an isomorphism between $\SS$ and $\SS_\LL$. 

Since $\SS$ is simple, for every $x\in S$ the set $\{x\}$ is closed. Thus, $\psi$ induces a natural bijection $$\psi_S: S\to S_\LL,\quad\quad \{\psi_S(x)\}=\psi(\{x\})$$ 

To see that $\psi_S$ induces a well-defined function $\CC \to \CC_{\LL}$, consider any $X=\{x_1,\ldots,x_k\}\in \mathcal C$. Then, using the definition of $\psi_S$ and the fact that $\psi$ is an isomorphism,
$$
\bigvee_{i=1}^k\{\psi_S(x_i)\} =
\bigvee_{i=1}^k\psi(\{x_i\}) = \psi(\bigvee_{i=1}^k\{x_i\}), $$
hence the right-hand side exists in $\mathcal L$, and thus  $\psi_S(X)\in \mathcal C_\LL$. 

 An analogous argument using $\psi_S^{-1}$ (together with the fact that $\psi_S$ is monotone by definition) shows that in fact $\psi_S$ induces an {\em isomorphism} of simplicial complexes $\mathcal C \cong \mathcal C_\LL$.

It remains to show that $\psi_S$ preserves ranks of central sets. 
For this consider any  $X=\{x_1,\ldots,x_k\}\in \mathcal C$ and compute
$$
\rc(X) = \rk_{\LL(\SS)} (\bigvee_i \{x_i\}) = \rk_{\LL} (\bigvee_i \psi(\{x_i\})) = \rk_{\CCl} (\psi_S(X)).
$$

\end{proof}

\section{The underlying matroid of a group action}
\label{sec:orb}
This section is devoted to the proof of Theorem \ref{thm:polymat}. Let $\GS$ be  a $G$-semimatroid associated to an action of $G$ on a
semimatroid $(S,\CC,\rk)$.
Recall from Section \ref{sec:defs} the set $\ES:=S/G$ of orbits of elements, the
family $\CS=\{\unten{X}\subseteq \ES \mid X\in \CC \}$, and that we
only consider actions for which $E_\GS$ is finite. 

For every
$A \subseteq \ES$ define
$$
J(A) :=\{X\in \CC \mid \underline X \subseteq A\}
$$
\def\max{\operatorname{max}}
and write $J_{\max} (A)$ for the set of inclusion-maximal elements of
$J(A)$.

\begin{lemma}\label{lem:rank}
  For every $X,Y\in J_{\max} (A)$, $\rk(X)=\rk(Y)$.
\end{lemma}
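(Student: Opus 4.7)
The plan is to derive the equality of ranks via a straightforward exchange argument using property (CR2'), which is guaranteed to hold for finitary semimatroids (see Remark \ref{CiErPr}). The point is that $J_{\max}(A)$ is defined by maximality under inclusion, but condition (CR2') is a statement about rank, so we only need to verify that the new element produced by (CR2') keeps the augmented set inside $J(A)$.

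Suppose for contradiction that $X, Y \in J_{\max}(A)$ with $\rk(X) < \rk(Y)$. Both $X$ and $Y$ lie in $\CC$, so by (CR2') there exists $y \in Y \setminus X$ with $X \cup y \in \CC$ and $\rk(X \cup y) = \rk(X) + 1$. First I would check that $X \cup y \in J(A)$: this is immediate, since $\underline{X \cup y} = \underline{X} \cup \{Gy\}$, and $\underline{X} \subseteq A$ by hypothesis while $Gy \in \underline{Y} \subseteq A$ because $y \in Y$. Hence $X \cup y \in J(A)$ strictly contains $X$, contradicting the inclusion-maximality of $X$ in $J(A)$. By symmetry we cannot have $\rk(Y) < \rk(X)$ either, so $\rk(X) = \rk(Y)$.

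I do not anticipate any real obstacle here: the only subtlety is keeping the two notions of ``maximal'' straight (maximal under inclusion in $J(A)$ vs.\ the rank-increase statement provided by (CR2')), and verifying that the augmentation step respects the orbit-membership constraint defining $J(A)$, which is automatic because $y$ is drawn from $Y$ and $\underline{Y} \subseteq A$.
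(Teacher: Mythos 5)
Your proof is correct and takes essentially the same route as the paper: the paper assumes (WLOG) $\rk(X)>\rk(Y)$, applies (CR2) to augment the lower-rank set $Y$ by some $x\in X\setminus Y$, observes the augmented set is still in $J(A)$, and contradicts maximality. You invoke (CR2') where (CR2) already suffices, but since (CR2') strengthens (CR2) this is harmless and the argument is the same.
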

\begin{proof}
  By way of contradiction assume $\rk(X) > \rk (Y)$. Then with (CR2)
  we can find $x\in X\setminus Y$ with $Y\cup x  \in \CC$
  and $\underline{Y\cup x} \subseteq A$, contradicting
  maximality of $Y$.
\end{proof}

\begin{definition}\label{def:rkE}
  For any $A\subseteq \ES$ choose $X\in J_{\max}(A)$ and let $\rkE(A) :=
  \rk(X)$, in agreement with Definition \ref{def:prinzipal}. Lemma \ref{lem:rank} shows that this is well-defined and independent on the choice of $X$.
\end{definition}

\begin{remark}
  For all $A\subseteq \ES$ we have $$\rkE(A)=\max\{\rkE(A') \mid
  A'\subseteq A, A'\in \CS\},$$
  because $A'\subseteq A$ implies $J(A')\subseteq J(A)$.
\end{remark}

\begin{proposition}\label{prop:unten}
  The pair $(\ES,\rkE)$ always satisfies (R2) and (R3), and thus defines
  a polymatroid on $\ES$.  
  Moreover, $(\ES,\rkE)$ satisfies (R1) if and only if the action is \WT.
\end{proposition}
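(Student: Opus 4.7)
The plan is as follows. First I will verify \textup{(R2)} quickly: if $A\subseteq B$ then $J(A)\subseteq J(B)$, so the maximum defining $\rkE$ can only grow, and furthermore $J(\emptyset)=\{\emptyset\}$ gives $\rkE(\emptyset)=0$. The substantive content lies in \textup{(R3)} and in the equivalence between \textup{(R1)} and weak translativity.

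For \textup{(R3)}, the naive approach of taking $Z\in J_{\max}(A\cup B)$ and partitioning $Z=Z_A\cup Z_B$ via orbits yields $\rk(Z_A)+\rk(Z_B)\geq \rkE(A\cup B)+\rk(Z_A\cap Z_B)$, but only $\rk(Z_A\cap Z_B)\leq\rkE(A\cap B)$, which is the wrong direction. My fix is to build the witness \emph{from the bottom up}: start with $Y_0\in J_{\max}(A\cap B)$ (so $\rk(Y_0)=\rkE(A\cap B)$); using \textup{(CR2')} extend $Y_0$ one element at a time, each time picking the new element from some fixed $Z\in J_{\max}(A)\subseteq S_A$, until we reach a set $Y_A$ of rank $\rkE(A)$ with $Y_0\subseteq Y_A\subseteq S_A$. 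Then extend $Y_A$ further, this time using elements from some $Z'\in J_{\max}(A\cup B)$, until the rank reaches $\rkE(A\cup B)$; call the result $X$. The critical observation is that any element added in this second phase must lie in $S_B\setminus S_A$, since an element of $S_A$ would produce a set in $J(A)$ of rank strictly greater than $\rkE(A)$. Hence $X_A:=X\cap S_A=Y_A$ has rank $\rkE(A)$, while $X_A\cap X_B=Y_A\cap S_{A\cap B}\supseteq Y_0$ has rank exactly $\rkE(A\cap B)$, and $X_A\cup X_B=X\in\CC$. Applying axiom \textup{(R3)} of $\SS$ to the pair $X_A,X_B$ and using $\rk(X_B)\leq\rkE(B)$ gives the desired inequality
\[
\rkE(A)+\rkE(B)\geq \rk(X_A)+\rk(X_B)\geq \rkE(A\cup B)+\rkE(A\cap B).
\]

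For the equivalence of \textup{(R1)} with weak translativity, the $(\Rightarrow)$ direction is immediate: applied with $A=\{Gx\}$ to the set $\{x,g(x)\}\in\CC$, \textup{(R1)} forces $\rk(\{x,g(x)\})\leq 1$, and then an easy case split on $\rk(\{x\})$ (using that the action preserves rank, together with \textup{(R2)} when $\rk(\{x\})=1$ and \textup{(R3)} when $\rk(\{x\})=0$) yields $\rk(\{x,g(x)\})=\rk(\{x\})$. For $(\Leftarrow)$, given $X\in\CC$ with $\underline{X}\subseteq A$, choose a transversal $X'\subseteq X$ of the partition of $X$ into $G$-orbits, so $|X'|=|\underline{X}|\leq|A|$. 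For each $x\in X\setminus X'$ there is $x'\in X'$ and $g\in G$ with $x=g(x')$; since $\{x,x'\}\subseteq X\in\CC$, weak translativity gives $\rk(\{x,x'\})=\rk(\{x'\})$, i.e.\ $x\in\cl(\{x'\})\subseteq\cl(X')$ by monotonicity of the closure (Remark~\ref{rem:monotone}). Thus $X\subseteq\cl(X')$, so $\rk(X)\leq\rk(\cl(X'))=\rk(X')\leq|X'|\leq|A|$, using \textup{(R1)} for $\SS$ at the last step.

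The main obstacle is \textup{(R3)}: one must resist the impulse to start from a witness for $A\cup B$ and instead orchestrate the two \textup{(CR2')}-extensions in the correct order (first inside $A$, then into $B\setminus A$) so that both $\rk(X_A)=\rkE(A)$ and $\rk(X_A\cap X_B)=\rkE(A\cap B)$ hold in the single set $X$ to which submodularity of $\rk$ is applied.
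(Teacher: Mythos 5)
Your proof is correct. For (R3) you take essentially the paper's route: build a nested chain of witnesses --- a maximal one for $A\cap B$, then one for $A$, then one for $A\cup B$ --- and apply submodularity of $\rk$ in $\SS$ to the induced decomposition; the paper's $B_0$, $B_0\cup B_1$, $B_0\cup B_1\cup B_2$ play exactly the roles of your $Y_0$, $Y_A$, $X$. The difference is in execution: you grow the sets one element at a time via (CR2') and control orbit membership as you go, whereas the paper simply picks inclusion-maximal elements of $J(\cdot)$ and then argues after the fact that $B_0\cup B_1\cup B_2\in J_{\max}(A\cup B)$. Your ``critical observation'' that every phase-two element lies outside $S_A$ is the step that replaces that maximality argument; you should make the reason explicit, since it is not purely set-theoretic: if some added $z$ lay in $S_A$, applying axiom (R3) of $\SS$ to the central sets $W_i$ and $Y_A\cup\{z\}$ (whose union is $W_{i+1}$ and whose intersection is $Y_A$) gives $\rk(Y_A\cup\{z\})\geq\rk(Y_A)+1>\rkE(A)$, contradicting $Y_A\cup\{z\}\in J(A)$. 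For the (R1) $\Leftarrow$ direction you do use a genuinely different tool: you pass to the closure $\cl(X')$ of a transversal $X'\subseteq X$ and invoke monotonicity of $\cl$, while the paper strips translates $g(x)$ one at a time from a maximal $X\in J(A)$ using a rank ``Claim'' proved directly from (R3) and weak translativity, and only then takes a transversal. Both arguments are clean; yours packages the same (R3)-computation into the single fact $\rk(\cl(X'))=\rk(X')$. Your (R1) $\Rightarrow$ direction is direct rather than contrapositive but contains the same substance.
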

\begin{proof} $\,$\\
\noindent $\bullet$ {\em $(\ES,\rkE)$ is a polymatroid.}
  Property (R2) is trivial, we check (R3). Consider $A,A'\subseteq \ES$, and
  choose $B_0\in J_{\max}(A\cap A')$. By Lemma \ref{lem:rank},
  \begin{equation}\label{eqn1}
    \rk(B_0)=\rkE(A\cap
  A').\tag{*}
  \end{equation}
  In particular,  $B_0\in J(A)$ and thus we can find $B_1\in J(A)$ such
  that 
  \begin{equation}B_0\cup B_1 \in J_{\max } (A)\tag{*}
  \end{equation}
 and a maximal $B_2\in J(A')$ such that
  $B_0\cup B_1 \cup B_2 $ is in $ J(A'\cup A)$. Then,
  
  \begin{equation}\label{eqn2}
   B_0\cup B_1 \cup B_2 \in J_{\max} (A'\cup A), \tag{*}
  \end{equation}
  because otherwise we could complete it with some $B_2'\in J(A)$ in
  order to get an element of $J_{\max}(A\cup A')$ -- but then,
  $B_0\cup B_1 \cup B_2' \supseteq B_0\cup B_1 \in J_{\max}(A)$, thus
  $B_2' =\emptyset$ by the choice of $B_1$. Using the identities \eqref{eqn1} and axiom (R3) for $(S,\CC,\rk)$ we obtain
  \begin{align*}
    \rkE(A\cap A') + \rkE(A\cup A') -\rkE(A) &= \rk(B_0) + \rk(B_0\cup
    B_1\cup B_2) - \rk(B_0\cup B_1)\\ &\leq \rk(B_0\cup B_2) \leq
    \rkE(A'),
  \end{align*}
where the last inequality follows from $\underline{B_0\cup B_2} \subseteq A'$. This proves that $\rkE$ satisfies (R3).

\noindent $\bullet$ {\em Weakly translative implies (R1)}

Suppose that the action is \WT. For (R1) we need to show that $0\leq
\rkE(A) \leq \vert A \vert$ for every $A\subseteq \ES$. The left hand
side inequality is trivial. Consider $A\subseteq \ES$ and choose $X\in
J_{\max}(A)$. 

\noindent {\bf Claim.} {\em For every $x\in X$ with $g(x)\in X$ we
have $\rk(X) = \rk(X\setminus g(x))$.}

\noindent {\em Proof of claim.} Using (R3) in
$(S,\CC,\rk)$ on the sets $X\setminus g(x)$ and $\{x,g(x)\}$, we obtain
$$\rk(X) + \rk(x)\leq \rk(X\setminus g(x)) + \rk(\{x,g(x)\}) =
\rk(X\setminus g(x)) + \rk(x)$$ where in the last equality we used
\WTy of the action. Thus we get $\rk(X)\leq \rk(X\setminus g(x))$
and, the other inequality being trivial from (R2), we have the claimed
equality. \hfill $\square$

Choose a system $X'$ of representatives
of the orbits in $\unten{X}$. We obtained the claimed inequality by computing
\begin{equation}
\rkE(A)=\rk(X)=\rk(X')\leq \vert X' \vert = \vert \unten{X} \vert \leq \vert A \vert
\label{eq:obliqua}
\end{equation}
where the second equality holds because of the claim above.

\noindent $\bullet$ {\em (R1) implies \WT.} By contraposition.
If the action is not \WT, choose $x\in X$ and $g\in G$
violating the \WTy condition and consider $A:= \{Gx\}$. First
notice that $x$ cannot be a loop, since if $\rk(x)=0$ then
$\rk(g(x))=0$ and $\rk(\{x,g(x)\})$ must equal $0$ (otherwise it would
contain an independent set of rank $1$), implying that $x$ is not a loop, thus 
$\rk(\{x,g(x)\})=\rk(x)$ in agreement with the \WTy
condition. Hence it must be $\rk(x)=1$, and we have $\rkE(A) \geq \rk(\{x,g(x)\}) > \rk(x) =1=\vert \{A\} \vert$, contradicting (R1). 
\end{proof}

\begin{corollary}\label{cor:rank}
If the action is \WT, for all $X\in\CC $ we have \linebreak$\rk(X) = \rkE(\underline{X})$.
\end{corollary}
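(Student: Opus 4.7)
The plan is to verify the two inequalities separately. The inequality $\rk(X)\leq\rkE(\underline{X})$ is immediate: since $X\in \CC$ with $\underline{X}\subseteq\underline{X}$ we have $X\in J(\underline{X})$, so $X$ is contained in some $X''\in J_{\max}(\underline{X})$, and (R2) together with Lemma \ref{lem:rank} gives $\rk(X)\leq \rk(X'')=\rkE(\underline{X})$.

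For the reverse inequality I argue by contradiction. Assume $\rkE(\underline{X}) > \rk(X)$, and pick $X'\in J_{\max}(\underline{X})$, so that $\rk(X')=\rkE(\underline{X})>\rk(X)$. Apply (CR2) from Definition \ref{def:FS} to the pair $X,X'\in \CC$: there is $y\in X'\setminus X$ with $X\cup y\in \CC$ and, by the local version (CR2') in Remark \ref{CiErPr}, $\rk(X\cup y)=\rk(X)+1$. Since $Gy\in \underline{X'}\subseteq \underline{X}$, we can write $y=g(x)$ for some $g\in G$ and some $x\in X$.

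The key step is now to invoke weak translativity. Because $X\cup y\in \CC$ is a simplicial complex, the subset $\{x,g(x)\}$ lies in $\CC$, hence by weak translativity $\rk(\{x,g(x)\})=\rk(\{x\})$. Since $y=g(x)\notin X$, we have $\{x,g(x)\}\cap X=\{x\}$, so applying (R3) to $X$ and $\{x,g(x)\}$ yields
\[
\rk(X)+\rk(\{x,g(x)\})\geq \rk(X\cup y)+\rk(\{x\}),
\]
which simplifies to $\rk(X)\geq \rk(X\cup y)$, contradicting $\rk(X\cup y)=\rk(X)+1$. This contradiction forces $\rkE(\underline{X})\leq \rk(X)$, completing the proof.

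The only delicate point is the combinatorial manipulation in the second half: identifying the correct pair of sets on which to apply (R3) and recognizing that weak translativity collapses the rank of $\{x,g(x)\}$ to $\rk(x)$, which is precisely what makes the inequality sharp. Everything else is a bookkeeping application of the semimatroid axioms together with the definition of $\rkE$ given in Definition \ref{def:rkE}.
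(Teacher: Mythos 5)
Your proof is correct. There is one small point to keep straight: when you apply (CR2') to the pair $X, X'$ you should note that the $y$ you obtain satisfies $y\neq x$ (since $y\notin X$ while $x\in X$), so the pair $\{x,g(x)\}$ really is a two-element set and the weak translativity hypothesis bites nontrivially; you do handle this implicitly when computing $\{x,g(x)\}\cap X=\{x\}$.

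The route you take differs slightly from the paper's. The paper's proof of Corollary \ref{cor:rank} simply points back to the claim established inside the proof of Proposition \ref{prop:unten}: for $X$ with $x, g(x)\in X$ one has $\rk(X)=\rk(X\setminus g(x))$. Iterating this deletes the ``orbit-duplicates'' from a maximal $X''\in J_{\max}(\underline X)$ containing $X$, peeling it down to $X$ while preserving rank. You instead argue by contradiction: assume strict inequality, invoke (CR2') to step $X$ up by one element $y\in X'$, and show the resulting rank increase is incompatible with (R3) plus weak translativity on $\{x,g(x)\}$. The algebraic core is identical in both cases — submodularity (R3) against the singleton $\{x\}$ buys the key inequality once weak translativity collapses $\rk(\{x,g(x)\})$ to $\rk(\{x\})$ — but your contradiction argument is self-contained and avoids the peeling induction, whereas the paper's version piggybacks on a claim it had already established. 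The two arguments are of comparable length, and the choice between ``peel down a maximal set'' and ``step up one element and derive a contradiction'' is a matter of taste.
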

\begin{proof}
This is a consequence of Equation \eqref{eq:obliqua} in the previous proof, and of the discussion preceding it.
\end{proof}

\begin{remark}
  The matroid $(\ES,\rkE)$ is, in some sense an `artificial'
  construct, although in some cases useful. For instance, when $(S,\CC,\rk)$
  is the semimatroid of a periodic arrangement of hyperplanes in
  real space associated to a toric arrangement $\AA$, then $(\ES,\rkE)$ is the matroid of the arrangement
  $\AA_0$ which plays a key role in the techniques used in \cite{CaDe,dAD2,dAD1}.
\end{remark}

\begin{proposition}\label{prop:quot}
  Let $\GS$ be \WT. 
Then 
  $\SS_{\GS}:=(\ES,\underline\CC, \rkE)$ is a locally ranked triple satisfying (CR2). 
\end{proposition}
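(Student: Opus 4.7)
The plan is to check that $\SS_\GS = (\ES, \underline{\CC}, \rkE)$ satisfies all of (R1), (R2), (R3), together with (CR2). I will proceed in three steps.

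First, I will verify that $\underline{\CC}$ is a non-empty, finite-dimensional simplicial complex on $\ES$. Non-emptiness is immediate, since $\emptyset \in \CC$ gives $\emptyset \in \underline{\CC}$. For closure under taking subsets, given $A = \underline{X} \in \underline{\CC}$ with $X \in \CC$ and $B \subseteq A$, I select one representative in $X$ from each orbit in $B$; this yields an $X' \subseteq X$ with $\underline{X'} = B$, and since $\CC$ is a simplicial complex $X' \in \CC$, so $B \in \underline{\CC}$. Finite-dimensionality follows from $\vert A \vert \leq \vert X\vert$ for any $X \in \CC$ with $\underline{X} = A$ (or simply from the cofiniteness assumption). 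The bounded rank-function axioms (R1)-(R3) for $\rkE$ restricted to $\underline{\CC}$ then follow from Proposition \ref{prop:unten}, under the weak translativity hypothesis: (R1), (R2) and (R3) already hold on all of $2^{\ES}$, hence a fortiori on $\underline{\CC}$, and boundedness of $\rkE$ follows from (R1) together with finiteness of $\ES$.

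The main step is (CR2). Given $A, B \in \underline{\CC}$ with $\rkE(A) < \rkE(B)$, I first upgrade to convenient representatives: pick $X_0 \in \CC$ with $\underline{X_0} = A$ and extend it to $X \in J_{\max}(A)$; then $\underline{X} = A$ (since $A = \underline{X_0} \subseteq \underline{X} \subseteq A$) and $\rk(X) = \rkE(A)$ by Definition \ref{def:rkE}. In the same way choose $Y \in J_{\max}(B)$ with $\underline{Y} = B$ and $\rk(Y) = \rkE(B)$. Since $\rk(X) < \rk(Y)$, the sharpened axiom (CR2') recalled in Remark \ref{CiErPr} yields some $y \in Y \setminus X$ such that $X \cup \{y\} \in \CC$ and $\rk(X \cup \{y\}) = \rk(X) + 1$.

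The crux is to show that $Gy \notin A$, so that $Gy$ is a genuinely new orbit. Suppose toward a contradiction that $Gy \in A = \underline{X}$. Then $\underline{X \cup \{y\}} = \underline{X} = A$, and Corollary \ref{cor:rank} gives
\[
\rk(X \cup \{y\}) = \rkE(\underline{X \cup \{y\}}) = \rkE(A) = \rk(X),
\]
contradicting the rank increase guaranteed by (CR2'). Therefore $Gy \in \underline{Y}\setminus A = B \setminus A$, and $A \cup \{Gy\} = \underline{X \cup \{y\}} \in \underline{\CC}$, establishing (CR2).

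I do not expect any real obstacle here; the conceptual heart of the argument is simply that Corollary \ref{cor:rank} (a consequence of weak translativity proved already) forces the new element produced by (CR2') in the ambient semimatroid to sit in an orbit not yet present in $A$, which is precisely what (CR2) for the quotient triple demands.
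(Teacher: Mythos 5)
Your proof is correct and follows essentially the same route as the paper's: apply Proposition \ref{prop:unten} for (R1)--(R3), then for (CR2) use the sharpened (CR2') in $\SS$ on representatives of $A$, $B$ and invoke Corollary \ref{cor:rank} to rule out the new element lying in an orbit already in $A$. The only (harmless) deviations are that you spell out the simplicial-complex structure of $\underline{\CC}$, which the paper leaves implicit, and that you secure $\rk(X)=\rkE(A)$ via a $J_{\max}$-choice together with Definition \ref{def:rkE} rather than by a second appeal to Corollary \ref{cor:rank}.
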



\begin{proof}
  Proposition \ref{prop:unten} implies that (R1), (R2), (R3) hold.
  
  For (CR2), let $A,B\in \CS$ with $\rkE(A)<\rkE(B)$ and
  choose $X\in \oC{A}$ 
  and $Y\in \oC{B}$. Then, by Corollary \ref{cor:rank},
 $\rk(X)<\rk(Y)$. Using (CR2') in $\SS$ (cf.\ Remark \ref{CiErPr}) we find
  $y\in Y\setminus X$ with $X\cup y \in \CC$ and $\rk(X\cup y)>\rk(X)$. Set $b:=\unten{y}$. Then, $A\cup b = \unten{X\cup  y}\in \CS$ and $b\in B\setminus A$ (otherwise $b\in A$, thus -- using Corollary \ref{cor:rank} -- $\rk(X\cup y)=\rkE(A\cup b)=\rkE(A) =\rk(X)$, a contradiction).  
\end{proof}

\section{Translative actions}\label{sec:AAA}
We now proceed towards establishing Theorem \ref{thm:almost}. 
The main idea in this section is to associate a diagram of finite sets and injective maps to every molecule of the quotient triple $\SS_{\GS}$ (see Example \ref{ex:diagram} below). 
In the realizable case, this structure specializes to the inclusion pattern of integer points in semiopen parallelepipeds as well as to that of layers of the associated toric arrangement. In general, these diagrams will allow us in later sections to extend to the general (nonrealizable, non-arithmetic) case some combinatorial decompositions given in \cite{dAM} for realizable arithmetic matroids, most notably Theorem \ref{thm:craponew}.


Recall the definitions in Section \ref{sec:defs}, and in particular that $\GS$ denotes a $G$-semi\-matroid corresponding to the
action of a group $G$ on a semimatroid $\SS=(S,\mathcal C, \rk)$. In
this section we suppose this action always to be cofinite and \ST. In particular, 
we can consider the associated locally ranked triple 
$\SS_\GS = (\ES,\underline{\mathcal C}, \rkE)$ with 
multiplicity function $\mm$. 

\subsection{Maps between sets of ``central orbits''}\label{sec:pt}

\begin{definition}\label{def:WA}
Given $A\in \underline \CC$ 
 and $a_0\in A$ define 
\begin{equation}
  \label{eq:1}
  \setmap{A}{a_0}: \oC{A} \to \oC{A\setminus a_0}, \quad X\mapsto
  X\setminus a_0,
\end{equation}
and notice that, since it is $G$-equivariant,
it induces a function
\begin{equation}
\setmapG{A}{a_0}: \oC{A}/G \to \oC{A\setminus a_0}/G.\label{eq:3}
\end{equation}
\end{definition}

\begin{remark}\label{rem:Winj}
  When
  $\setmap{A}{a_0}$ is injective then
  $\setmapG{A}{a_0}$ also is. This can be seen in many ways -- for instance, by noting that any injective map of $G$-sets is a split monomorphism (e.g., see \cite{ZIM}), and the splitting $G$-map induces a splitting of  $\setmapG{A}{a_0}$.
\end{remark}

\setlist[enumerate,1]{start=1}
\begin{lemma}\label{lem:inj-surj} Let $\GS$ be \ST. 
  \begin{enumerate}[label={(\alph*{})},ref={(\alph*{})}]
  \item If $x_0\in X\in \CC$ with $\rk(X)=\rk(X\setminus x_0)+1$, then $Y
    \cup g(x_0) \in \CC$ for all $g\in G$ and all $Y\in\CC$ with
    $\underline Y = \underline{X\setminus x_0}$. \label{lem:geo-loop}
  \item If $a_0\in A \in \underline\CC$ with $\rkE(A)=\rkE(A\setminus a_0)+1$, then $\setmap{A}{a_0}$ is
    surjective and, for any choice of $x_0\in a_0$,
    a right inverse of $\setmap{A}{a_0}$ is given by
      \begin{equation}
        \setmapS{A}{a_0}: \oC{A\setminus a_0} \to \oC{A},\,\, Y\mapsto
        Y\cup x_0.
        \label{eq:2}   
      \end{equation}
Moreover, $\setmapG{A}{a_0}$ is surjective. In particular,
      $\mm(A)\geq \mm(A\setminus a_0)$. \label{lem:surj}
  \item If $a_0 \in A\in \underline\CC$ with $\rkE(A)=\rkE(A\setminus a_0)$, then $\setmap{A}{a_0}$ is
    injective and thus  
    $\mm(A) \leq \mm(A\setminus a_0)$. \label{lem:inj}
  \end{enumerate}
\end{lemma}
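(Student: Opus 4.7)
The plan is to prove (a) using (CR2') together with \STy, then derive (b) from (a), and prove (c) directly from (CR1) and \STy.

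For (a), fix data as in the hypothesis together with $g\in G$; since $g^{-1}Y$ shares the orbit set with $Y$, I can reduce to showing $Y \cup x_0 \in \CC$. Corollary \ref{cor:rank} gives $\rk(Y)=\rk(X\setminus x_0)<\rk(X)$, so (CR2') from Remark \ref{CiErPr} produces some $z\in X\setminus Y$ with $Y\cup z\in\CC$ and $\rk(Y\cup z)=\rk(Y)+1$. The key step is to use \STy to rule out every $z\ne x_0$: if $z\in X\setminus x_0$, then $Gz\in \underline{X\setminus x_0}=\underline Y$, so some $y\in Y$ satisfies $\{y,z\}\subseteq Y\cup z\in \CC$, which by \STy would force $y=z$, contradicting $z\notin Y$. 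Hence $z=x_0$.

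For (b), the essence is to verify that $\setmapS{A}{a_0}$ actually lands in $\oC{A}$. Given $Y\in \oC{A\setminus a_0}$ and a chosen $x_0\in a_0$, I will pick any $X'\in \oC{A}$ and let $x_0'$ be its unique element in $a_0$ (unique by \STy). The rank hypothesis together with Corollary \ref{cor:rank} gives $\rk(X')=\rk(X'\setminus x_0')+1$, so part (a) applied to $X'$, $x_0'$, $Y$ and any $g\in G$ carrying $x_0'$ to $x_0$ yields $Y\cup x_0\in\CC$, and since $\underline{Y\cup x_0}=A$ this puts it in $\oC{A}$. Thus $\setmap{A}{a_0}$ is surjective with right inverse $\setmapS{A}{a_0}$; surjectivity of $\setmapG{A}{a_0}$ and the bound $\mm(A)\ge \mm(A\setminus a_0)$ then follow at once.

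For (c), suppose $X,Y\in\oC{A}$ share their image, i.e.\ $Z:=X\setminus a_0=Y\setminus a_0$, so that $X=Z\cup x_0$ and $Y=Z\cup y_0$ for some $x_0,y_0\in a_0$. Corollary \ref{cor:rank} and the rank hypothesis give $\rk(X)=\rk(Z)=\rk(X\cap Y)$, so (CR1) yields $X\cup Y=Z\cup \{x_0,y_0\}\in\CC$, hence $\{x_0,y_0\}\in\CC$; \STy (applied to $x_0$ and any $g$ with $gx_0=y_0$) forces $y_0=x_0$, so $X=Y$. Injectivity of $\setmapG{A}{a_0}$ then follows from Remark \ref{rem:Winj}, delivering the inequality $\mm(A)\le \mm(A\setminus a_0)$. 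I expect the main obstacle to be (a): the naive idea ``$Y$ is a translate of $X\setminus x_0$'' is false in general, since different elements of $Y$ can be shifted by different group elements; the workaround is to forgo any attempt to match $Y$ with $X\setminus x_0$ globally and instead let (CR2') propose a completion element and use \STy to squeeze it down to $x_0$.
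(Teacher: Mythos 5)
Your proof is correct, and on parts (b) and (c) it matches the paper's argument almost verbatim: (b) reduces to part (a) after observing via \STy and Corollary~\ref{cor:rank} that any $X'\in\oC{A}$ satisfies the hypotheses of (a), and (c) is the same (CR1)-plus-\STy squeeze.

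On part (a) you take a slightly shorter route. The paper applies (CR2') twice: first with $g(X)$ against $X\setminus x_0$ to establish that $(X\setminus x_0)\cup g(x_0)\in\CC$ for every $g\in G$, and then a second time with $(X\setminus x_0)\cup g(x_0)$ against $Y$. You instead observe that $Y\cup g(x_0)\in\CC$ iff $g^{-1}Y\cup x_0\in\CC$ and that $\underline{g^{-1}Y}=\underline Y$, which collapses the problem to the single case $g=e$; a lone application of (CR2') to $Y$ and $X$ then finishes, with \STy ruling out every candidate completion element except $x_0$. This is clean and buys you one fewer invocation of (CR2'). One micro-remark for completeness: for your final "hence $z=x_0$" you are implicitly using $x_0\notin Y$. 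This is fine — either $x_0\in Y$, in which case $Y\cup x_0=Y\in\CC$ trivially, or $x_0\notin Y$, in which case your case analysis on $z\in X-Y$ applies — but you might spell it out (in fact, \STy forces $Gx_0\notin\underline{X\setminus x_0}=\underline Y$, so $x_0\notin Y$ always).
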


\begin{proof}$\,$
  \begin{itemize}
  \item[\ref{lem:geo-loop}] Let $X,x_0$ be as in the claim. For all $g\in G$ consider
    the central set $g(X)$ of rank $\rk(g(X)) = \rk(X) >
    \rk(X\setminus x_0)$. By (CR2) there is some $y\in g(X) \setminus (X\setminus x_0)$ with
    $y\cup (X \setminus x_0) \in \CC$ and $\rk(y\cup (X \setminus x_0))=\rk(X)$. This $y$ must be $g(x_0)$ because every other
    element $y'\in g(X)\setminus (X\cup g(x_0))$ is of the form $y'=g(x')$ ($\not\in X$)
    for some $x'\in X$, thus $y'\cup (X \setminus x_0) \in \CC$ would imply $\{x',g(x')\}\in \CC$
which, since by construction $x'\neq g(x')$, is forbidden by the fact
that the action is \ST. Thus $(X\setminus x_0) \cup g(x_0) \in \CC$
for all $g\in G$.
    Now consider any $Y$ with $\underline Y = \underline{ X\setminus x_0}$ and notice that     with Lemma \ref{lem:rank} 
 we have the first equality in the following expression $$
 \rk(Y) = \rk(X\setminus x_0) < \rk(X) = \rk((X\setminus x_0) \cup
    g(x_0))$$ (where the inequality holds by assumption and the last equality derives from the choice of $y=g(x_0)$ above). Thus by (CR2) there must be
    $x\in (X\setminus x_0) \cup g(x_0)$ with $Y\cup x\in \CC$ and $\rk(Y\cup x)=\rk(Y)+1$. Since
    $Y$ consists of translates of elements of $X$, as
    above the fact that the action is \ST forces $x=g(x_0)$.
  \end{itemize}
  Towards \ref{lem:surj} and \ref{lem:inj}, choose any $X\in \oC{A}$ and let $x_0\in X$ be
    a representative of $a_0$. By the definition of $\rkE$ (Definition \ref{def:rkE}) and since \STy allows us to apply Corollary \ref{cor:rank}, we conclude that 
    $\rk(X\setminus x_0)=\rk(X)$ 
if and only if $\rkE(A\setminus a_0)=\rkE(A)$.
\begin{itemize}
    \item[\ref{lem:surj}] Suppose 
     $\rkE(A \setminus a_0)=\rkE(A)-1$.
      Part \ref{lem:geo-loop}
    ensures that the function $\setmapS{A}{a_0}$
      is well-defined. Clearly, it is injective and $ \setmap{A}{a_0}
      \circ\setmapS{A}{a_0} = \id$. In particular, $\setmap{A}{a_0}$ is
      surjective. Moreover, if we fix a representative $Y^{\mathcal O}$ of every element $\mathcal O \in \oC{A\setminus a_0}/G$ we see that the assignment 
      \begin{equation} \label{ssms}
      \oC{A\setminus a_0}/G \to \oC{A}/G, \quad
       \mathcal{O} \mapsto G\setmapS{A}{a_0}(Y^{\mathcal{O}})
      \end{equation}
       defines a (noncanonical) section of $\setmapG{A}{a_0}$. This proves surjectivity of  $\setmapG{A}{a_0}$, which implies the stated inequality.

    \item[\ref{lem:inj}] Suppose now $\rkE(A\setminus a_0)=\rkE(A)$ 
    	  and consider $X_1, X_2\in \oC{A}$. 
      Since the action is \ST the sets $X_1\cap
      a_0$ and $X_2\cap a_0$ each consist of a single element, say
      $x_{0,1}$ and $x_{0,2}$ respectively.  If moreover $\setmap{A}{a_0}$ maps both $X_1$, $X_2$
       to the same $Y=X_1\setminus a_0 = X_2\setminus a_0$, then
      $Y\cup x_{0,1}$ and $Y\cup x_{0,2}$ are both central and of the
      same rank, equal to the rank of $Y$. By (CR1) then $Y\cup
      \{x_{0,1},x_{0,2}\} \in \CC$, thus $\{x_{0,1},x_{0,2}\}\in \CC$
      and since the action is \ST we must have
      $x_{0,1}=x_{0,2}$, hence $X_1=X_2$. This proves that $\setmap{A}{a_0}$ is injective and, with Remark \ref{rem:Winj}, the stated inequality. 
  \end{itemize}
\end{proof}

\begin{remark}\label{lk}
More generally, for every $A\in \underline \CC$ and every $A'\subseteq A$ we can consider
$$
\setmap{A}{A'}: \oC{A} \to \oC{A\setminus A'}, \quad X \mapsto X\setminus {\cup A'}
$$
and the associated map $\setmapG{A}{A'}: \frac{\oC{A}}{G} \to \frac{\oC{A\setminus A'}}{G}$.

Notice that, given any enumeration $a'_1,\ldots,a'_l$ of $A'$, we have
$$
\setmap{A}{A'}=\setmap{A}{a'_1} \circ \cdots \circ \setmap{A}{a'_l}
,\quad
\setmapG{A}{A'}=\setmapG{A}{a'_1} \circ \cdots \circ \setmapG{A}{a'_l}.
$$
\end{remark}

\begin{corollary}\label{cor:winj}
For every molecule $(R,F,T)$ of $\SS_{\GS}$,
\begin{itemize}
\item[(a)] $\setmap{R\cup F\cup T}{T}$ and $\setmapG{R\cup F \cup T}{T}$ are injective,
 \item[(b)] $\setmap{R\cup F \cup T}{F}$ and $\setmapG{R\cup F \cup T}{F}$ are surjective.
 \end{itemize}
\end{corollary}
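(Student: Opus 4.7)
The plan is to reduce both statements to Lemma \ref{lem:inj-surj} via the factorization of Remark \ref{lk}. The defining property of a molecule,
\[
\rkE(A) = \rkE(R) + \vert A\cap F\vert \quad\text{for all } R\subseteq A\subseteq R\cup F\cup T,
\]
will provide exactly the rank-change hypothesis needed at each step.

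For part (a), I would enumerate $T = \{t_1, \ldots, t_k\}$ and decompose $\setmap{R\cup F\cup T}{T}$ as the composition of the single-element removals $\setmap{B_i}{t_i}$, where $B_i := R\cup F\cup \{t_1,\ldots,t_i\}$. Each $B_i$ lies in $\underline\CC$ (as a subset of $R\cup F\cup T\in\underline\CC$) and satisfies $R\subseteq B_i \subseteq R\cup F\cup T$, so the molecule condition gives
\[
\rkE(B_i) = \rkE(R) + \vert F\vert = \rkE(B_i\setminus t_i).
\]
Lemma \ref{lem:inj-surj}\ref{lem:inj} then shows that each $\setmap{B_i}{t_i}$ is injective, and by Remark \ref{rem:Winj} so is the induced map on orbits. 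Composing yields injectivity of $\setmap{R\cup F\cup T}{T}$ and $\setmapG{R\cup F\cup T}{T}$.

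For part (b), enumerate $F = \{f_1,\ldots,f_m\}$ and decompose $\setmap{R\cup F\cup T}{F}$ analogously as a composition of removals $\setmap{C_j}{f_j}$, where $C_j := R\cup T \cup \{f_j,\ldots,f_m\}$. Again each $C_j$ is in $\underline\CC$ and contains $R$, and the molecule condition yields
\[
\rkE(C_j) = \rkE(R) + (m - j + 1), \qquad \rkE(C_j\setminus f_j) = \rkE(R) + (m-j) = \rkE(C_j) - 1.
\]
Lemma \ref{lem:inj-surj}\ref{lem:surj} then gives surjectivity of each $\setmap{C_j}{f_j}$ and of the induced orbit map $\setmapG{C_j}{f_j}$, and the composition yields the conclusion.

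No genuine obstacle arises here; the content is entirely in verifying that the two conditions imposed by a molecule match exactly the rank hypotheses of Lemma \ref{lem:inj-surj}\ref{lem:inj} and \ref{lem:surj}, and that the factorization in Remark \ref{lk} allows both properties to propagate through the composition.
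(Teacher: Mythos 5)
Your proposal is correct and reconstructs exactly the argument the paper leaves implicit: the paper states Corollary \ref{cor:winj} without written proof, and the intended route is precisely the factorization of Remark \ref{lk} into single-element removals, with Lemma \ref{lem:inj-surj}\ref{lem:inj} (plus Remark \ref{rem:Winj} for the orbit maps) feeding part (a) and Lemma \ref{lem:inj-surj}\ref{lem:surj} feeding part (b), the rank hypotheses at each step being supplied by the molecule condition. Your indexing and rank computations are accurate, so no changes are needed.
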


\subsection{Labeling orbits}\label{sec:laborb} The purpose of this section is to provide the groundwork for proving that the objects that will be introduced in Section \ref{sec:orbcou} are well-defined. The reader wishing to acquire a general view of our setup without delving into technicalities may skip this section with no harm.

Our main task here will be to specify canonical representatives for orbits supported on a given molecule, in order for Equation \eqref{eq:2} to
induce a well-defined function between sets of orbits.

Again, we consider
throughout a $G$-semimatroid $\GS$ defined by an action on
$\SS=(S,\CC,\rk)$, and we assume \STy. 
\begin{AN}
For this section we fix a molecule $\mol:= (R,F,T)$ of $\SS_{\GS}$ 
a linear extension $\prec$ of the partial order defined by inclusion on $2^{F}$, the set of subsets of $F$.\footnote{E.g., 
represent the elements of $2^{F}$ as ordered zero-one-tuples and take the lexicographic order.}. In particular, $I\subseteq I'\subseteq F$ implies $I\preceq I'$.
\end{AN}

\newcommand{\ord}{\lhd}
\begin{definition} 

We choose representatives $X_R^{(1)}\!\!,\ldots,X_R^{(k_R)}$ of the orbits in $\oC{R}/G$ and extend $\prec$ to a total order on the index set  $\{(i,I) \mid i=1,\ldots, k_R, \, I\in 2^{F}\}$ via 
\begin{equation}\label{eq:ordiJ} 
(i,I)\prec (i',I') \Leftrightarrow\begin{cases} i<i',\\
\mbox{or } i=i' \text{ and } I \prec I'.\end{cases}
\end{equation}

Moreover, choose and fix an element $x_f\in f$ for every $f\in F$.
Then, for all $F'\subseteq F$ define $X_{F'}=\{x_f\mid f\in F'\}$.
\end{definition} 

We now can recursively define the blocks of an ordered partition of $\oC{R\cup F}/G$ as follows.
\begin{definition} \label{def:12}
Set $\mathscr Y^{(1,\emptyset)}:=\{G (X^{(1)}_R \cup X_F)\}$, and for each $(i,I) \succ (1,\emptyset)$ let
$$
\mathscr Y^{(i,I)}:=\left\{
\oo \in \frac{\oC{R\cup F}}{G} 
\left\vert
\begin{array}{ll}
\textrm{(i)} & \oo \not\in\bigcup_{(j,J)\prec (i,I)} \mathscr Y^{(j,J)} \\[5pt]
\textrm{(ii)} & X_R^{(i)}\cup X_{F\setminus I} \subseteq Y \textrm{ for some }Y\in \oo
\end{array}\right.
\right\}.
$$
Choose an enumeration
$$
\mathscr Y^{(i,I)}=\{\oo_1,\ldots, \oo_{h_{(i,I)}}\}
$$
thereby defining the numbers $h_{(i,I)}$ (and setting $h_{(i,I)}=0$ if $\mathscr Y^{(i,I)}=\emptyset$).
\end{definition}

\begin{remark}
The sets $\mathscr Y ^{(i,I)}$ do partition $\oC{R\cup F}/G$. First,  (i) ensures that they have trivial intersections. Moreover, for every $\mathcal O \in \oC{R\cup F}/G$ there is a unique $i$ with $X^{(i)}_R\subseteq Y$ for some $Y\in \mathcal O$. Now let $I$ be $\prec$-minimal such that the expression in part (ii) holds, and we have $\mathcal O \in \mathscr Y^{(i,I)}$. 
\end{remark}

\begin{remark}\label{rem:zeri}
If $X_{F\setminus J}\ssq Y$ for some $Y\in \oo\in\mathscr Y^{(i,I)}$, then $J\succeq I$. In particular,
$J\subsetneq I$ implies  $X_{F\setminus F_J}\not\subseteq Y$ for all
$Y\in \oo$.  
\end{remark}

Now we are ready to define representatives for orbits in $\oC{R\cup F}/G$.

\begin{definition}\label{def:Yrf}
Define the set 
$$\mathscr Z_{R,F}:=\{(i,I,j) \mid i=1,\ldots,k_R;\, I\in 2^{F};\, j=1,\ldots, h_{(i,I)}\}$$ 
and consider on it the total ordering $\lhd$ given by
$$(i,I,j)\ord (i',I',j')\Leftrightarrow\begin{cases} (i,I) \prec (i',I') \textrm{ or}\\
 (i,I)=(i',I') \textrm{ and } j < j'.
\end{cases}$$
For every $(i,I,j)\in \mathscr Z_{R,F}$ consider the corresponding orbit $\oo_j\in \mathscr Y^{(i,I)}$ and choose a representative $ Y_{R\cup F}^{(i,I,j)}$ of $\oo_j$ with 
\begin{equation}
X_R^{(i)} \cup X_{F\setminus I}\subseteq  Y_{R\cup F}^{(i,I,j)}
\in \oo_j\label{eq:yifr}
\end{equation}
(such a representative exists by requirement (2) of Definition
\ref{def:12}). 
\end{definition}

\begin{lemma}\label{lem:ipreciso}
  We have  $Y^{(i,I,j)}_{R\cup F} \cap X_F = X_{F\setminus I}$.
\end{lemma}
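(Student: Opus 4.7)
The plan is short: one inclusion is built into the construction, and the other follows at once from Remark \ref{rem:zeri}.

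First I would record the easy inclusion. By Definition \ref{def:Yrf}, the representative $Y_{R\cup F}^{(i,I,j)}$ is chosen so that $X_R^{(i)} \cup X_{F\setminus I} \subseteq Y_{R\cup F}^{(i,I,j)}$. Since $X_{F\setminus I} \subseteq X_F$, this already gives $X_{F\setminus I} \subseteq Y_{R\cup F}^{(i,I,j)} \cap X_F$.

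For the reverse inclusion, I would argue by contradiction. Take any $y \in Y_{R\cup F}^{(i,I,j)} \cap X_F$; by definition of $X_F$, there is some $f\in F$ with $y=x_f$. Suppose towards a contradiction that $f\in I$, and set $J := I\setminus\{f\}$. Then $F\setminus J = (F\setminus I)\cup\{f\}$, and since both $X_{F\setminus I}$ and $x_f$ lie in $Y_{R\cup F}^{(i,I,j)}$, we have $X_{F\setminus J} \subseteq Y_{R\cup F}^{(i,I,j)}$. The orbit $\mathcal O_j$ of $Y_{R\cup F}^{(i,I,j)}$ lies in $\mathscr Y^{(i,I)}$, so Remark \ref{rem:zeri} forces $J \succeq I$. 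But by construction $\prec$ extends the inclusion order on $2^F$ and $J \subsetneq I$, which gives $J\prec I$: a contradiction. Hence $f\in F\setminus I$ and $y=x_f\in X_{F\setminus I}$, giving $Y_{R\cup F}^{(i,I,j)} \cap X_F \subseteq X_{F\setminus I}$.

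The main (and only) conceptual step is the application of Remark \ref{rem:zeri}; the rest is bookkeeping with the chosen total order. I do not expect any genuine obstacle, provided one is careful that the $x_f$ are chosen once and for all inside each orbit $f\in F$ (which is part of the setup for this section), so that membership of $x_f$ in $Y_{R\cup F}^{(i,I,j)}$ really does translate into the containment $X_{\{f\}} \subseteq Y_{R\cup F}^{(i,I,j)}$ used above.
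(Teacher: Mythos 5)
Your proof is correct and rests on exactly the same key step as the paper's: applying Remark~\ref{rem:zeri} to a set $J\subsetneq I$ to obtain $J\succeq I$, contradicting $J\prec I$. The only cosmetic difference is that you argue elementwise (picking $f\in I$ and setting $J:=I\setminus\{f\}$) whereas the paper defines $J$ at once by $Y^{(i,I,j)}_{R\cup F}\cap X_F = X_{F\setminus J}$ and shows $J=I$; the substance is identical.
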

\begin{proof}
  Let $J$ be such that $Y^{(i,I,j)}_{R\cup F} \cap X_F = X_{F\setminus
    J}$. Then $J\subseteq I$ by Equation \eqref{eq:yifr}. Moreover, if
  $J\subsetneq I$ then $J\prec I$, a contradiction to Remark
  \ref{rem:zeri}. Hence $I=J$ as desired.
\end{proof}

For each $F'\subseteq F$ we now fix representatives of the orbits in $\oC{R\cup F'}/G$.

\begin{definition}\label{def:Yrff}
Given  $F'\subseteq F$, for every $\oo\in \oC{R\cup F'}/G$ let 
$$
z(\oo) :=\min_{\lhd} \{z\in \mathscr Z_{R,F} \mid \oo \leq GY_{R\cup F}^z \textrm{ in }\mathcal C_{\GS}\}
$$
and let $Y_{R\cup F'}^{\oo} \in \oo$ be the (unique) representative with 
$$
Y_{R\cup F'}^{\oo} \subseteq Y_{R\cup F}^{z(\oo)}.
$$
With these choices, let
\begin{equation}
\begin{array}{lcll}
\setmapSG{R\cup F}{F\setminus F'}: &
\oC{R\cup F'}/G &\to &\oC{R\cup F}/G \\
& \mathcal{O}
 & \mapsto & G(Y^{\oo}_{R\cup F'}\cup X_{F\setminus F'}).
\end{array}
\label{def:whatund}
\end{equation}
\end{definition}

\begin{lemma}\label{lem:whatcomp}
Let $F''\ssq F'\ssq F$. Then
\begin{itemize}
\item[(a)] for every $\oo \in \oC{R\cup F'}/G$
$$
Y^{z(\oo)}_{R\cup F} = Y^{\oo}_{R\cup F'} \cup X_{F\setminus F'};
$$
\item[(b)] for every $\oo \in \oC{R\cup F''}/G$ 
$$
Y_{R\cup F'}^{G(Y^{\oo}_{R \cup F''}\cup X_{F'\setminus F''})} = Y^{\oo}_{R\cup F''} \cup X_{F'\setminus F''}.
$$
\item[(c)] Furthermore, 
$$\setmapSG{R\cup F}{F\setminus F'}\circ\setmapSG{R\cup F'}{F'\setminus F''}=\setmapSG{R\cup F}{F\setminus F''}.$$
\end{itemize}
\label{ref:Comb}\end{lemma}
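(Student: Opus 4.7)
The three parts are interlocked: (a) is the crux, and (b) and (c) follow from it by direct manipulation.

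\medskip

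\noindent\textbf{Part (a).} Write $z(\oo)=(i,I,j)$. Since $Y^{\oo}_{R\cup F'}\subseteq Y^{z(\oo)}_{R\cup F}$ by definition and both sides of the claimed equality have cardinality $|R|+|F|$ (by \STy), it suffices to show $X_{F\setminus F'}\subseteq Y^{z(\oo)}_{R\cup F}$, which in view of Lemma \ref{lem:ipreciso} amounts to $I\subseteq F'$. Argue by contradiction: assume $f_0\in I\setminus F'$ and let $y_{f_0}$ denote the element of $Y^{z(\oo)}_{R\cup F}$ lying in the orbit $f_0$ (so $y_{f_0}\neq x_{f_0}$). The molecule property forces $\rk(Y^{z(\oo)}_{R\cup F})-\rk(Y^{z(\oo)}_{R\cup F}\setminus\{y_{f_0}\})=1$, so Lemma \ref{lem:inj-surj}(a), applied with $X=Y^{z(\oo)}_{R\cup F}$, $x_0=y_{f_0}$ and a group element $g$ satisfying $g(y_{f_0})=x_{f_0}$, produces the central set $Z:=(Y^{z(\oo)}_{R\cup F}\setminus\{y_{f_0}\})\cup\{x_{f_0}\}$. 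The $R$-part of $Z$ equals $X^{(i)}_R$ and $Z\cap X_F=X_{F\setminus(I\setminus\{f_0\})}$, hence $GZ\in\mathscr Y^{(i,I^*)}$ for some $I^*\preceq I\setminus\{f_0\}\prec I$, giving $GZ=GY^{z'}_{R\cup F}$ for some $z'\lhd z(\oo)$. Now choose $Y\in\oo$ with $Y\subseteq Y^{z(\oo)}_{R\cup F}$ (possible since $\oo\leq GY^{z(\oo)}_{R\cup F}$); as $Y$ has no element in the orbit $f_0\in F\setminus F'$, one has $Y\subseteq Y^{z(\oo)}_{R\cup F}\setminus\{y_{f_0}\}\subseteq Z$, so $\oo\leq GY^{z'}_{R\cup F}$, contradicting the minimality of $z(\oo)$.

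\medskip

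\noindent\textbf{Part (b).} Set $\oo':=G(Y^{\oo}_{R\cup F''}\cup X_{F'\setminus F''})\in\oC{R\cup F'}/G$, well-defined since by (a) applied to $\oo$ we have $Y^{\oo}_{R\cup F''}\cup X_{F'\setminus F''}\subseteq Y^{\oo}_{R\cup F''}\cup X_{F\setminus F''}=Y^{z(\oo)}_{R\cup F}\in\CC$. The key observation is $z(\oo)=z(\oo')$: on the one hand $\oo'\leq GY^{z(\oo)}_{R\cup F}$ gives $z(\oo')\unlhd z(\oo)$, while $\oo\leq\oo'\leq GY^{z(\oo')}_{R\cup F}$ gives $z(\oo)\unlhd z(\oo')$. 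Consequently $Y^{\oo'}_{R\cup F'}$ is the unique representative of $\oo'$ contained in $Y^{z(\oo)}_{R\cup F}$; and $Y^{\oo}_{R\cup F''}\cup X_{F'\setminus F''}$ is such a representative by construction, so the two coincide.

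\medskip

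\noindent\textbf{Part (c).} Combining (b) with the identity $X_{F'\setminus F''}\cup X_{F\setminus F'}=X_{F\setminus F''}$ (valid because $F''\subseteq F'\subseteq F$),
\[
\setmapSG{R\cup F}{F\setminus F'}\circ\setmapSG{R\cup F'}{F'\setminus F''}(\oo)=G\bigl(Y^{\oo'}_{R\cup F'}\cup X_{F\setminus F'}\bigr)=G\bigl(Y^{\oo}_{R\cup F''}\cup X_{F\setminus F''}\bigr)=\setmapSG{R\cup F}{F\setminus F''}(\oo).
\]
The main obstacle lies in (a): producing the central set $Z$ by an appropriate application of Lemma \ref{lem:inj-surj}(a) and then tracking its placement within the decomposition $\{\mathscr Y^{(i,I)}\}$ to locate the offending smaller index $z'\lhd z(\oo)$.
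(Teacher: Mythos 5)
Your proof is correct and follows essentially the same route as the paper's: in part (a) the paper replaces the entire $F\setminus F'$-block of $Y^{z(\oo)}_{R\cup F}$ at once to form its auxiliary set $Y'$, while you swap a single offending element $y_{f_0}$, but both use Lemma~\ref{lem:inj-surj}(a) for centrality, track the new orbit's index via Lemma~\ref{lem:ipreciso} and Remark~\ref{rem:zeri}, and contradict the $\lhd$-minimality of $z(\oo)$. Parts (b) and (c) coincide with the paper's argument.
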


\begin{proof} In this proof, given any $\oo \in \oC{R\cup F'}/G$ let us for brevity call $\mathscr Z(\oo)$ the set over which the minimum is taken in Definition \ref{def:Yrff} in order to define $z(\oo)$. 

  \begin{enumerate} 
  \item[(a)] It is enough to show that
    $X_{F\setminus F'}\ssq Y^{z(\oo)}_{R\cup F}$. In order to prove
    this, we consider 
$$
Y':= (Y^{z(\oo)}_{R\cup F}\setminus X')\cup X_{F\setminus F'}
$$
where $X' \in \oC{F\setminus F'}$ is defined by $X'\subseteq
Y^{z(\oo)}_{R\cup F}$ (notice that $\vert X' \vert = \vert X_F \vert $ since $Y^{z(\oo)}_{R\cup F} \in \oC{R\cup F}$ and the action is \ST). The set $Y'$ is central by Lemma
\ref{lem:inj-surj}.(a), because $\rk(Y^{z(\oo)}_{R\cup F'}) =
\rk(Y^{z(\oo)}_{R\cup F}\setminus X') + \vert X' \vert $. Moreover,
$GY' \geq \oo$ in $\CC_{\GS}$ since $Y^{\oo}_{R\cup F} \subseteq Y'$.

If $X_{F\setminus F'} \subseteq Y^{z(\oo)}_{R\cup F}$, then
$Y'=Y^{z(\oo)}_{R\cup F}$ and we are done. We will prove that if this is not the case,
then $z(\oo)\neq \min \mathscr Z (\oo)$, reaching a contradition.
Suppose then $X_{F\setminus F'} \not\subseteq Y^{z(\oo)}_{R\cup F}$, and
write $z(\oo)=(i,I,j)$.
By Lemma \ref{lem:ipreciso}, we have $I=\{f\mid x_{f}\notin Y_{R\cup F}^{z(\oo)}\}$. Hence, setting
$$I_{Y'}:=\{f \mid x_{f}\notin Y'\}$$
we have that $I_{Y'}=I\cap F'\subseteq I$, where  the last containment is strict (otherwise $Y'= Y^{z(\oo)}_{R\cup F}$, hence $X_{F\setminus F'} \subseteq Y^{z(\oo)}_{R\cup F}$, contrary to our assumption). By definition, $I_{Y'} \subsetneq I$ implies $I_{Y'} \precneq I$.
Moreover, for $z'=(i,I',j')$ defined by $GY'=\oo_{j'}\in\mathscr Y^{(i,I')}$
we have in fact by Remark \ref{rem:zeri} that $ I' \preceq
I_{Y'}$. Therefore, $I'\preceq I_{Y'}\precneq I$.
This implies that $z'=(i,I',j') \unlhd (i,I,j) =z(\oo)$ and $z'\neq
z(\oo)$. Thus, $GY^{z'}_{R\cup F} \in \mathscr Z (\oo)$ but $z'$
strictly precedes $z(\oo)$, and we reach the annouced contradiction.

\item[(b)] Let $\oo$ be as in the claim, and set $\mathcal U :=
G(Y^{\oo}_{R \cup F''}\cup X_{F'\setminus F''})$. Then $\oo \leq
\mathcal U$ in $\CC_{\GS}$, thus $\mathscr Z (\oo) \supseteq \mathscr
Z (\mathcal U)$ and therefore $z(\oo) \unlhd z(\mathcal U)$. Now,
since $Y^{z(\oo)}_{R\cup F}= Y^{\oo}_{R\cup F''} \cup
X_{F\setminus F''}$ by part (a), we see that $\mathcal U \leq
GY^{z(\oo)}_{R\cup F}$ in $\CGS$, thus $z(\mathcal U) \unlhd
z(\oo )$. In summary, $z(\mathcal U) = z(\oo )$ and, as a subset of
$Y^{\oo}_{R\cup F''} \cup X_{F\setminus F''}$, we see that
$Y^{\mathcal U}_{R\cup F'} =Y^{\oo}_{R\cup F''} \cup X_{F'\setminus
  F''}$ as claimed.

\item[(c)] For every $\oo \in \oC{R \cup F''}/G$ we compute
\begin{center}
 $ \setmapSG{R\cup F}{F\setminus F'}\circ\setmapSG{R\cup F'}{F'\setminus F''}(\oo)=\setmapSG{R\cup F}{F\setminus F'}(G(Y^{\oo}_{R\cup F''} \cup X_{F'\setminus F''} )) $
 $ =G(Y_{R\cup F'}^{G(Y^{\oo}_{R \cup F''}\cup X_{F'\setminus F''})}\cup
  X_{F\setminus F'})
  =G(Y^{\oo}_{R\cup F''}\cup X_{F\setminus F''})=\setmapSG{R\cup
    F}{F\setminus F''}(\oo)$,\end{center}
where in the third equality we used (b) and all other equalities hold by definition.
\end{enumerate}
\end{proof}

\begin{corollary}\label{cor:whatinj}
For every $F'\subseteq F$, the function $\setmapS{R\cup F}{F\setminus F'}$ is injective.
\end{corollary}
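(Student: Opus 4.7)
I interpret the displayed claim as asserting injectivity of the quotient-level map $\setmapSG{R\cup F}{F\setminus F'}\colon \oC{R\cup F'}/G \to \oC{R\cup F}/G$ introduced in Definition \ref{def:Yrff}; the strategy is to exhibit a left inverse.

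My candidate for the left inverse is the ``restriction'' map $\setmapG{R\cup F}{F\setminus F'}$ from Remark \ref{lk}. First I would verify that this descends to the appropriate quotient. By definition $\setmap{R\cup F}{F\setminus F'}$ deletes from $X\in\oC{R\cup F}$ exactly those elements whose $G$-orbit lies in $F\setminus F'$. Since $\CC$ is a simplicial complex the result is central, and every orbit in $R\cup F'$ remains represented afterwards (only elements with orbit in $F\setminus F'$ were removed), so the image lies in $\oC{R\cup F'}$. As $\setmap{R\cup F}{F\setminus F'}$ is manifestly $G$-equivariant, it induces a well-defined map $\setmapG{R\cup F}{F\setminus F'}\colon \oC{R\cup F}/G \to \oC{R\cup F'}/G$.

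Next I would compute the composition directly. For $\oo\in\oC{R\cup F'}/G$, the defining Equation \eqref{def:whatund} gives $\setmapSG{R\cup F}{F\setminus F'}(\oo)=G(Y^{\oo}_{R\cup F'}\cup X_{F\setminus F'})$. Since $Y^{\oo}_{R\cup F'}\in\oC{R\cup F'}$ consists of elements with orbits in $R\cup F'$, which is disjoint from $F\setminus F'$, those elements survive the deletion, while the elements of $X_{F\setminus F'}$ are precisely those that get removed. Hence $\setmapG{R\cup F}{F\setminus F'}\circ\setmapSG{R\cup F}{F\setminus F'}$ sends $\oo$ to $GY^{\oo}_{R\cup F'}=\oo$, i.e.\ it is the identity on $\oC{R\cup F'}/G$. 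This forces $\setmapSG{R\cup F}{F\setminus F'}$ to be injective.

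A more ``structural'' alternative is a short induction on $|F\setminus F'|$: enumerating $F\setminus F'=\{f_1,\ldots,f_k\}$, iterated use of Lemma \ref{ref:Comb}(c) writes $\setmapSG{R\cup F}{F\setminus F'}$ as a composition of single-orbit maps of the form $\setmapSG{\,\cdot\,}{f_i}$. Each of these is injective by the section argument used in Lemma \ref{lem:inj-surj}\ref{lem:surj}, because the molecule condition on $(R,F,T)$ guarantees $\rkE(A\cup f_i)=\rkE(A)+1$ at each intermediate stage. The only genuinely delicate point in either route is the well-definedness check above, namely that after restriction the orbit support drops to \emph{exactly} $R\cup F'$; the remainder is bookkeeping.
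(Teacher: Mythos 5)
Your first route (exhibiting $\setmapG{R\cup F}{F\setminus F'}$ as an explicit left inverse) is correct and is a genuinely different argument from the paper's. The paper instead writes $\setmapSG{R\cup F}{F\setminus F'}$ as a composition of single-orbit ``hat'' maps using Lemma \ref{lem:whatcomp}.(c), and observes that each factor is injective because it is the section of Equation \eqref{ssms}; this is essentially your second, ``structural'' route. Your left-inverse approach is more direct: once one grants that $\setmapSG{R\cup F}{F\setminus F'}$ is well defined (i.e.\ that $Y^{\oo}_{R\cup F'}\cup X_{F\setminus F'}$ lies in $\oC{R\cup F}$, which the paper establishes via Lemma \ref{lem:whatcomp}.(a) in the course of setting up Definition \ref{def:Yrff}), the identity $\setmapG{R\cup F}{F\setminus F'}\circ\setmapSG{R\cup F}{F\setminus F'}=\id$ is an immediate consequence of the disjointness of $R\cup F'$ and $F\setminus F'$, and needs no appeal to the compositional coherence of Lemma \ref{lem:whatcomp}.(c) at all. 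What the paper's decomposition route buys, by contrast, is that it reuses the injectivity of the one-step sections already in hand from Lemma \ref{lem:inj-surj}, which is the fact the surrounding sections of the paper lean on repeatedly; but for this corollary specifically your direct computation is cleaner. One small caution on your phrasing: the ``delicate point'' is less the well-definedness of the restriction map (Remark \ref{lk} gives that directly by $G$-equivariance) than the fact that $\setmapSG{R\cup F}{F\setminus F'}$ itself is well defined, i.e.\ that the designated representative $Y^{\oo}_{R\cup F'}$ together with $X_{F\setminus F'}$ is actually central; you implicitly assume this (as does the statement of the corollary), and it is exactly what Lemma \ref{lem:whatcomp}.(a) secures.
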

\begin{proof}
Let $f_1,\ldots,f_m$ be an enumeration of the elements of $F\setminus F'$ and for every $j=1,\ldots, m$ set $F_j:= F' \cup \{f_1,\ldots,f_j\}$. Then by Lemma \ref{lem:whatcomp}.(c) 
$$\setmapS{R\cup F}{ F \setminus F'} =
\setmapS{R\cup F}{ f_m} \circ \setmapS{R\cup F_{m-1}}{ f_{m-1}} \cdots \circ \setmapS{R\cup F_2}{ f_1} $$
and each of the functions on the right-hand side is injective because it is an instance of the function described in Equation \eqref{ssms}. The latter is 
used as a left-inverse to prove the surjectivity claim of Lemma \ref{lem:inj-surj}.(b) and, as such, is injective.
\end{proof}

\begin{definition}\label{def:Yrft}
Given $F'\ssq F$, $T'\ssq T$, as a representative of the orbit $\oo\in\oC{R\cup F'\cup T'}/G$ we choose
\begin{equation}\label{eq:repdef}
Y^\oo_{R\cup F' \cup T'}:= \setmap{R\cup F' \cup T'}{ T'}^{-1}(Y^{\setmapG{R\cup F'\cup T'}{T'}(\oo)}_{R\cup F'})
\end{equation}

as the representative of $\mathcal O$, and let 
$Y^{\mathcal O}_{T'}:= Y^\oo_{R\cup F' \cup T'} \setminus Y^{\setmapG{R\cup F'\cup T'}{T'}(\oo)}_{R\cup F'} $

%
%
%
\end{definition}

\begin{remark} In order to prove that $Y^{\mathcal O}_{R\cup F'\cup T'}$ is well defined, we have to show that the right-hand side of Equation \eqref{eq:repdef} is not empty; uniqueness will then follow from injectivity of $\setmap{R\cup F' \cup T'}{ T'}$ (see Corollary \ref{cor:winj}). To see this, it is enough to notice that the function $\setmap{R\cup F' \cup T'}{ T'}$  is onto when restricted to $\setmapG{R\cup F'\cup T'}{T'}(\oo)$, which by definition is $\setmap{R\cup F'\cup T'}{T'}(\oo)$, clearly part of the image of $\setmap{R\cup F'\cup T'}{T'}(\oo)$.
\end{remark}

\begin{example}\label{ex:choice} We go back to our running example (Example \ref{ex:running1}), for which we depict in Figure \ref{fig:OtherFund} a piece of the associated periodic arrangement,
and consider there the molecule $(\emptyset,F,\emptyset)$, where $F=\{f_a,f_b\}$ is the set of orbits of the orange and green lines. 

\begin{figure}[h]
\scalebox{.7}{%
\begin{tikzpicture}
    \node at (0,0) (O) {};
    \node at (4,0) (E) {};
    \node at (4,4) (NE) {};
    \node at (0,4) (N) {};
    \node at (-4,0) (W) {};
    \node at (-4,-4) (SW) {};
    \node at (0,-4) (S) {};
    \node at (4,-4) (SE) {};
    \node [label=left:{$Y_F^{(1,\emptyset,1)}$}] at (-4,4) (NW) {};
    \node at (2.5,0) (Er) {};
    \node at (1.5,0) (El) {};
    \node at (-2.5,0) (Wl) {};
    \node at (-1.5,0) (Wr) {};   
    \node at (-2.5,4) (NWl) {};
    \node at (-1.5,4) (NWr) {};    
    \node at (-2.5,-4) (SWl) {};
    \node at (-1.5,-4) (SWr) {};    
    \node at (2.5,4) (NEr) {};
    \node at (1.5,4) (NEl) {};
    \node at (2.5,-4) (SEr) {};
    \node at (1.5,-4) (SEl) {};
    \node at (-4,-1.5) (EW) {};
    \node at (4,-1.5) (EE) {};    
    \fill [fill=gray, fill opacity=0.1] (-3,6.5) -- (NWr.center)
    -- (O.center) -- (.95,-1.5) -- (S.center) -- (Wr.center) -- (NW.center) -- cycle;
    \draw [dashed,red] (W.center) -- (.5,0);
    \draw [dashed,red] (NW.center) -- (.5,4);    
    \draw [-,green] (Wl.center) -- (N.center) ;
    \draw [-,green] (-1.05,-1.68) -- (O.center) -- (.5,1.33);
    \draw [-,green] (S.center) -- (El.center); 
    \draw [-,green] (Wl.center) -- (N.center);
    \draw [green] (NW.center) -- (-3,6.5) ;
    \draw [-,green] (W.center) -- (NWl.center) -- (-1.5,5.5);    
    \draw [dashed,blue] (N.center) -- (0,-4);
    \draw [dashed,blue] (NW.center) -- (-4,-.7);                
    \draw [-,orange] (NW.center) -- (Wr.center) -- (S.center); 
    \draw [-,orange] (-3,6.5) -- (NWr.center) -- (O.center) -- (SEr.center);
    \draw [dashed,purple] (-3,-1.5) -- (1,-1.5);
    \draw [dashed,purple] (-4,2.5) -- (.4,2.5);    
    \node[text=red] at (-3.3,-.2) (c1) {$c_{-1}$}; 
    \node[text=red] at (-3.3,3.8) (c2) {$c_0$}; 
    \node[text=blue] at (-4.2,-.5) (d0) {$d_0$};     
    \node[text=blue] at (.3,-1.2) (d1) {$d_1$}; 
    \node[text=green] at (-.7,3.5) (b0) {$b_2$}; 
    \node[text=green] at (-1.4,5) (bm) {$b_{1}$}; 
    \node[text=green] at (.6,1) (b1) {$b_3$}; 
    \node[text=green] at (-4.2,5.2) (b2) {$x_b=b_0$}; 
    \node[text=orange] at (-1.9,-1) (c0) {$x_a=a_0$}; 
    \node[text=orange] at (-.4,1.7) (c0) {$a_3$}; 
    \node[text=purple] at (.3,2.7) (e1) {$e_1$};
    \node[text=purple] at (-3,-1.3) (e2) {$e_0$}; 
    \filldraw (NW) circle (3pt);
        \draw [thick] (-3.05,2.5) circle (3pt);
        \node [label=above right:{$Y_F^{(1,\{2\},1)}$}] at (-3.05,2.3) (O1) {};
        \draw [thick] (Wr) circle (3pt);
        \draw [thick] (-.95,-1.5) circle (3pt);
        \node [label=below left:{$Y_F^{(1,\{2\},3)}$}] at (-.95,-1.5) (O3) {};
        \node [label=above right:{$Y_F^{(1,\{2\},2)}$}] at (-1.7,-.05) (O2) {};    
\end{tikzpicture}
}
\caption{An illustration for Example \ref{ex:choice}.}
\label{fig:OtherFund}
\end{figure}
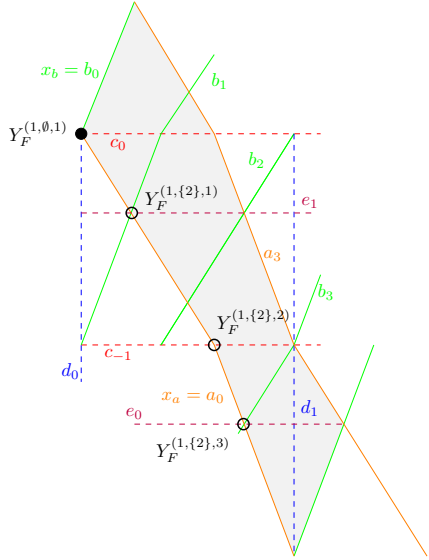

Choose representatives $x_{a}=a_0$ for the orange lines, $x_b=b_0$ for the green lines and denote their $(0,k)$-translate by $a_k$ (resp. $b_k$).

By Definitions \ref{def:12} and \ref{def:Yrf}, we get the following partition of $\oC{F}/G,$  
$$\mathscr Y^{(1,\emptyset)}=\{\oo_0\},\ \mathscr Y^{(1,\{2\})}=\{\oo_1,\oo_2,\oo_3\},\ \mathscr Y^{(1,\{1\})}=\mathscr Y^{(1,\{1,2\})}=\emptyset,$$ with representatives 
$$Y_F^{(1,\emptyset,1)}\mkern-4mu=\{a_0,b_0\}, 
Y_F^{(1,\{2\},1)}\mkern-4mu=\{a_0,b_{k_1}\}, 
Y_F^{(1,\{2\},2)}\mkern-4mu=\{a_0,b_{k_2}\},
Y_F^{(1,\{2\},3)}\mkern-4mu=\{a_0,b_{k_3}\},$$
where $k_1\neq 0\! \mod 4;$ $k_2\neq 0,k_1\!\mod 4;$ and $k_3\neq 0,k_1,k_2\!\mod 4.$
Without loss of generality, one could assume $k_1=1$, $k_2=2$, $k_3=3$, and we get the situation depicted in Figure \ref{fig:OtherFund}. 
Moreover, by Definition \ref{def:Yrff} we get $Y_a^{\oo_a}=a_0$ (where $\oC{f_a}/G=\{\oo_a\}$), $Y_b^{\oo_b}=b_0$ (where $\oC{f_b}/G=\{\oo_b\}$), and $Y_\emptyset^{\emptyset}=\emptyset$ where $\oC{\emptyset}/G=\{\emptyset\}.$

Thus, $$\setmapSG{F}{F}(\emptyset)=\setmapSG{F}{f_b}(\setmapSG{f_a}{f_a}(\emptyset))=\setmapSG{F}{f_a}(\setmapSG{f_b}{f_b}(\emptyset))=G(a_0b_0)=\oo_0.$$

Notice that an accurate choice of representatives is of the
essence. For example, choosing $Y^{\oo_a}_{a}=a_0$ and $Y^{\oo_b}_b=b_{1}$
as representatives of $\oo_a$, resp.\ $\oo_b$, $$\im\setmapSG{F}{f_b}=G(a_0x_b)=G(a_0b_0)\neq G(a_0b_1)= G(x_{a}b_{1})=\im\setmapSG{F}{f_a}.$$
\end{example}

\begin{lemma}\label{lem:comm}
For every $F'\subseteq F$ and $T'\subseteq T$, 

$$
\setmapSG{R\cup F}{F\setminus F'} \circ \setmapG{R\cup F' \cup T'}{T'}
= \setmapG{R\cup F \cup T'}{T'}
\circ \setmapSG{R\cup F \cup T'}{F\setminus F'}.
$$


\end{lemma}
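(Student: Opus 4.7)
The plan is to chase the element through both compositions and use the key structural identity from Definition \ref{def:Yrft}. Fix $\mathcal O \in \oC{R\cup F'\cup T'}/G$ and set
$$
\mathcal O' := \setmapG{R\cup F'\cup T'}{T'}(\mathcal O) \in \oC{R\cup F'}/G.
$$
Applying the definitions directly (Equation \eqref{def:whatund}), the left-hand side evaluates as
$$
\setmapSG{R\cup F}{F\setminus F'}(\mathcal O') = G\bigl(Y^{\mathcal O'}_{R\cup F'}\cup X_{F\setminus F'}\bigr).
$$

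For the right-hand side, I first apply $\setmapSG{R\cup F\cup T'}{F\setminus F'}$ to get
$G(Y^{\mathcal O}_{R\cup F'\cup T'}\cup X_{F\setminus F'})$, and then apply $\setmapG{R\cup F\cup T'}{T'}$. The crucial observation is that by Definition \ref{def:Yrft} we have the splitting
$$
Y^{\mathcal O}_{R\cup F'\cup T'} = Y^{\mathcal O'}_{R\cup F'} \cup Y^{\mathcal O}_{T'},
$$
where, by construction, $Y^{\mathcal O}_{T'}$ is supported entirely on orbits in $T'$, while $Y^{\mathcal O'}_{R\cup F'}$ is supported on $R\cup F'$. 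Since in a molecule the sets $R,F,T$ are pairwise disjoint, the added set $X_{F\setminus F'}$ is supported on orbits in $F\setminus F' \subseteq F$, which is disjoint from $T'\subseteq T$. Therefore, removing from $Y^{\mathcal O}_{R\cup F'\cup T'}\cup X_{F\setminus F'}$ all elements belonging to orbits of $T'$ strips off exactly $Y^{\mathcal O}_{T'}$ and leaves
$$
\bigl(Y^{\mathcal O}_{R\cup F'\cup T'}\cup X_{F\setminus F'}\bigr)\setminus {\textstyle\bigcup T'}
= Y^{\mathcal O'}_{R\cup F'}\cup X_{F\setminus F'},
$$
whose $G$-orbit coincides with the LHS above.

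The only potential obstacle is that applying $\setmap{\cdot}{T'}$ could, a priori, produce a different $G$-orbit representative than the canonical one used to define $\setmapSG{R\cup F}{F\setminus F'}\circ\setmapG{R\cup F'\cup T'}{T'}$. However, since the ``what'' maps depend only on $G$-orbits of their arguments, and both computations produce the same unordered $G$-orbit $G(Y^{\mathcal O'}_{R\cup F'}\cup X_{F\setminus F'})$ as output, the equality follows. This last point uses, in particular, that Definition \ref{def:Yrft} fixes the representative of $\mathcal O$ precisely so that $Y^{\mathcal O}_{R\cup F'\cup T'}\setminus \bigcup T'$ is the canonical representative $Y^{\mathcal O'}_{R\cup F'}$ rather than merely \emph{some} representative of $\mathcal O'$.
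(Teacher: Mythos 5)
Your proof is correct and follows essentially the same route as the paper's: both chase an orbit $\mathcal O$ through each composition, reduce to the identity $Y^{\mathcal O}_{R\cup F'\cup T'}\setminus\bigcup T' = Y^{\setmapG{R\cup F'\cup T'}{T'}(\mathcal O)}_{R\cup F'}$ guaranteed by Definition \ref{def:Yrft}, and conclude that both sides yield $G\bigl(Y^{\mathcal O'}_{R\cup F'}\cup X_{F\setminus F'}\bigr)$. Your extra remarks on disjointness of $F$ and $T$ (from the molecule structure) and on why canonical representatives matter are sound and simply make explicit what the paper leaves implicit.
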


\begin{proof}
We check equality on every $\oo\in\oC{R\cup F' \cup T'}$. On the right-hand side, using the definitions, we find
$$
\setmapG{R\cup F \cup T'}{T'}(\setmapSG{R\cup F \cup T'}{F\setminus F'}(\oo))
$$
$$=\setmapG{R\cup F \cup T'}{T'} (G(Y^\oo_{R\cup F'\cup T'} \cup X_{F\setminus F'})) = G((Y^\oo_{R\cup F'\cup T'}\setminus \cup T' ) \cup X_{F\setminus F'})
$$
while on the left-hand side we compute
$$
\setmapSG{R\cup F}{F\setminus F'} ( \setmapG{R\cup F' \cup T'}{T'} (\oo)) $$
$$= G (Y_{R\cup F'}^{\setmapG{R\cup F' \cup T'}{T'} (\oo)}\cup X_{F\setminus F'}) = G((Y_{R\cup F' \cup T'}^{\oo}\setminus \cup T')\cup X_{F\setminus F'})
$$
where the last equality uses Definition \ref{def:Yrft}.
\end{proof}

\subsection{Orbit count for molecules}\label{sec:diagrams}\label{sec:orbcou}
\begin{definition}
  Given a molecule $(R,F,T)$ of a ranked triple, define the following boolean poset
\begin{align*}
P[R,F,T]&:=\{(F',T') \mid F'\subseteq F,\, T'\subseteq T\} \text{ with order}\\ (F',T')&\leq(F'',T'') \Leftrightarrow F' \subseteq F'', \, T'\supseteq T''.
\end{align*}
Thus, the maximal element is $(F,\emptyset)$ and the minimal element $(\emptyset, T)$.
\end{definition}


 \begin{definition} Let $\GS$ be a \ST $G-$semimatroid and ${\mol}:=(R,F,T)$ be a molecule of $\SS_{\GS}$. By composing the above functions we obtain, for every $(F',T')\in P[R,F,T]$, a function 
\begin{equation}
f^{\mol}_{(F',T')}:= \widehat{\underline{w}}_{R\cup F, F'} \circ \underline{w}_{R\cup F'\cup T', T'}
\label{eq:fctf}\end{equation}
\label{def:fctf}
\end{definition}

\begin{remark}
Explicitly,
$$\begin{array}{lcll}
 f^{\mol}_{(F',T')}
: & \oC{R\cup F' \cup T'}/G &\to &\oC{R\cup F}/G, \\
 & \oo & \mapsto & G((Y^{\oo}_{R \cup F'\cup T'}\setminus\cup T')\cup X_{F\setminus F'}).
\end{array}$$

\end{remark}

\begin{remark}\label{rem:BDMF} 
The functions $f^{\mol}_{(F',T')}$ are
  injective by  Corollary \ref{cor:winj} and Corollary \ref{cor:whatinj}.
  In particular, with $A:=R\cup F'\cup T'$,
\begin{center}
$\displaystyle{\mm(A)= \left\vert\frac{\oC{R\cup F' \cup T'}}{G} \right\vert = \vert \im f^{\mol}_{(F',T')}\vert}$.
\end{center}
\end{remark}

\begin{example}\label{ex:diagram} In the context of our running example, Example \ref{ex:running1}, we have that $\mol:=(\emptyset,\{a,b\}, \emptyset)$ is a molecule of $\SS_{\GS}$. Figure \ref{fig:Pmaps} depicts the associated poset and maps.
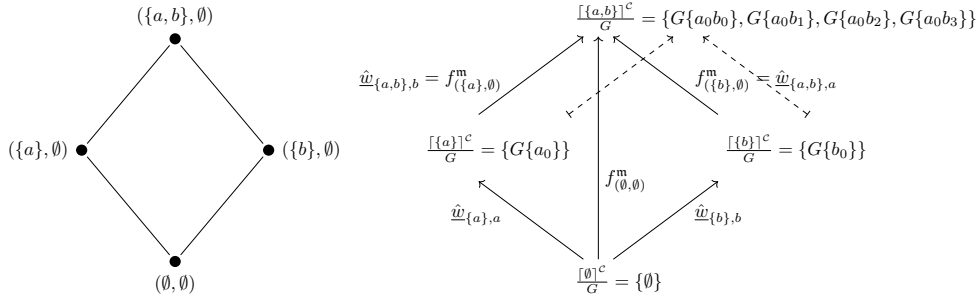
\begin{figure}[h]
\scalebox{.7}{
\begin{tikzpicture}[x=5em,y=6em]
    \node at (0,-1) (S) {};
    \node at (1,0) (E) {};
    \node at (-1,0) (W) {};
    \node at (0,1) (N) {};
    \fill (N) circle (3pt);
    \fill (S) circle (3pt);    
    \fill (E) circle (3pt);
    \fill (W) circle (3pt);
    \draw [-] (N) -- (W) -- (S) -- (E) -- (N);
    \node [label=below:{$(\emptyset,\emptyset)$}] at (S) {};
    \node [label=left:{$(\{a\},\emptyset)$}] at (W) {};
    \node [label=right:{$(\{b\},\emptyset)$}] at (E) {};
    \node [label=above:{$(\{a,b\},\emptyset)$}] at (N) {};
\end{tikzpicture}
\begin{tikzpicture}[x=8em,y=6em]
    \node [anchor=south west] at (0,1) (N) {$\frac{\oC{\{a,b\}}}{G}=\{G\{a_0b_0\},G\{a_0b_1\},G\{a_0b_2\},G\{a_0b_3\}\}$};
    \node [anchor=west] at (1,0) (E) {$\frac{\oC{\{b\}}}{G}=\{G\{b_0\}\}$};
    \node [anchor=west] at (-1,0) (W) {$\frac{\oC{\{a\}}}{G}=\{G\{a_0\}\}$};
    \node [anchor=north west] at (0,-1) (S) {$\frac{\oC{\emptyset}}{G}=\{\emptyset\}$};
    \draw [->] (-.6,.3) -- (.1,1);
    \draw [|->, dashed] (0,.3) -- (.7,1);
    \draw [->] (1,.3) -- (.3,1);
    \draw [|->, dashed] (1.6,.3) -- (.9,1);
    \draw [->]  (.1,-1) -- (-.6,-.3);    
    \draw [->]  (.3,-1) -- (1,-.3);
    \draw [->]  (.2,-1) -- (.2,1);
    \node [anchor=east] at (-.4,.6) (fab) {$\hat{\underline{w}}_{\{a,b\},b}=f^{\mol}_{(\{a\},\emptyset)}$};
    \node [anchor=west] at (.8,.6) (fba) {$f^{\mol}_{(\{b\},\emptyset)}=\hat{\underline{w}}_{\{a,b\},a}$};
    \node [anchor=east] at (-.4,-.6) (fa) {$\hat{\underline{w}}_{\{a\},a}$};
    \node [anchor=west] at (.8,-.6) (fb) {$\hat{\underline{w}}_{\{b\},b}$};
    \node [anchor=west] at (.2,-.3) (fo) {$f^{\mol}_{(\emptyset,\emptyset)}$};
\end{tikzpicture}
}
\caption{The Hasse diagram of the poset $P[\emptyset, \{a,b\}, \emptyset]$ in the context of Example \ref{ex:running1} and, on the right-hand side, the associated diagram of sets.}
\label{fig:Pmaps}
\end{figure}
\end{example}

\begin{lemma}\label{lem:mocposet} Let $\GS$ be \ST and consider a
  molecule $\mol:=(R,F,T)$ of $\SS_{\GS}$.
\begin{enumerate}[label={(\alph*{})},ref={(\alph*{})}]
\item  For $(F',T'),(F'',T'')\in P[R,F,T]$ we have 
\begin{align*}
\im(f^{\mol}_{(F',T')\wedge(F'',T'')})&=\im(f^{\mol}_{(F'\cap F'',T'\cup T'')})\\
&=\im f^{\mol}_{(F',T')}\cap\im f^{\mol}_{(F'',T'')}.
\end{align*}\label{lem:bildf}\vspace{-.5cm}
\item In particular, $$\im f^{\mol}_{(F',T')}\ssq\im f^{\mol}_{(F'',T'')}\text{ if }(F',T')\leq (F'',T'').$$\label{lem:bildf2}\vspace{-.3cm}
\item The function
$$m_{\GS} : P[R,F,T] \to \mathbb N,\quad (F',T') \mapsto m_{\GS}(R\cup F'\cup T')$$
is (weakly) increasing.\label{lem:increasing}
\end{enumerate}
\end{lemma}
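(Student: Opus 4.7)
My plan is to prove (b) via a direct factorization, derive (c) as a corollary, and handle (a) in two halves. For (b), assume $(F',T')\leq(F'',T'')$ in $P[R,F,T]$, i.e.\ $F'\subseteq F''$ and $T''\subseteq T'$, and introduce the auxiliary map
\[
\alpha \;:=\; \setmapG{R\cup F''\cup T'}{T'\setminus T''}\,\circ\,\setmapSG{R\cup F''\cup T'}{F''\setminus F'} \;:\; \frac{\oC{R\cup F'\cup T'}}{G}\;\longrightarrow\;\frac{\oC{R\cup F''\cup T''}}{G}.
\]
The key identity to verify is $f^{\mol}_{(F'',T'')}\circ\alpha = f^{\mol}_{(F',T')}$, from which $\im f^{\mol}_{(F',T')}\subseteq\im f^{\mol}_{(F'',T'')}$ follows at once. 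This identity is an orchestrated rewriting: first, Remark~\ref{lk} merges $\setmapG{R\cup F''\cup T''}{T''}\circ\setmapG{R\cup F''\cup T'}{T'\setminus T''}$ into $\setmapG{R\cup F''\cup T'}{T'}$; next, Lemma~\ref{lem:comm}, applied with the role of the molecule played by $(R,F'',T')$, swaps this past $\setmapSG{R\cup F''\cup T'}{F''\setminus F'}$ to produce $\setmapSG{R\cup F''}{F''\setminus F'}\circ\setmapG{R\cup F'\cup T'}{T'}$; finally Lemma~\ref{ref:Comb}(c) applied along $F'\subseteq F''\subseteq F$ collapses $\setmapSG{R\cup F}{F\setminus F''}\circ\setmapSG{R\cup F''}{F''\setminus F'}$ into $\setmapSG{R\cup F}{F\setminus F'}$, yielding $f^{\mol}_{(F',T')}$.

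Part~(c) then comes for free: Remark~\ref{rem:BDMF} identifies $\mm(R\cup F'\cup T')$ with $|\im f^{\mol}_{(F',T')}|$ (by injectivity of $f^{\mol}_{(F',T')}$), and (b) gives the monotonicity of these cardinalities along $P[R,F,T]$. In~(a), the first equality is the definition of meet in $P[R,F,T]$, and the inclusion $\supseteq$ of the second equality is (b) applied to the comparisons $(F'\cap F'',T'\cup T'')\leq(F',T')$ and $(F'\cap F'',T'\cup T'')\leq(F'',T'')$.

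The main obstacle is the reverse inclusion $\subseteq$ in the second equality of~(a). My approach is to exploit the canonical representatives from Definitions~\ref{def:Yrf}--\ref{def:Yrft}. Given $\mathcal U\in\im f^{\mol}_{(F',T')}\cap\im f^{\mol}_{(F'',T'')}$, Lemma~\ref{lem:whatcomp}(a) applied to each of the factorizations through $\setmapSG{R\cup F}{F\setminus F'}$ and $\setmapSG{R\cup F}{F\setminus F''}$ forces the unique chosen representative $Y^{\mathcal U}_{R\cup F}$ to contain both $X_{F\setminus F'}$ and $X_{F\setminus F''}$, hence $X_{F\setminus(F'\cap F'')}$. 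Setting $\tilde Y_0 := Y^{\mathcal U}_{R\cup F}\setminus X_{F\setminus(F'\cap F'')}\in\oC{R\cup(F'\cap F'')}$ then provides a natural candidate building block for the required preimage of $\mathcal U$ under $f^{\mol}_{(F'\cap F'',T'\cup T'')}$. The delicate step is to upgrade $G\tilde Y_0$ into an orbit $\oo_3\in\oC{R\cup(F'\cap F'')\cup(T'\cup T'')}/G$ mapping to $\mathcal U$: I would extract from $\mathcal U\in\im f^{\mol}_{(F',T')}$ a central extension of a $G$-translate of $\tilde Y_0$ by elements of $T'$-orbits, and symmetrically from $\mathcal U\in\im f^{\mol}_{(F'',T'')}$ a $T''$-extension, and then fuse them into a single central set of composition $R\cup(F'\cap F'')\cup(T'\cup T'')$. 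The fusion should rest on the molecule property of $(R,F,T)$ (which makes $T$-orbits behave as loops over $R\cup F$ in $\SS_{\GS}$), \STy of the action (forcing uniqueness of orbit representatives in any central set and hence alignment on the $T'\cap T''$-overlap), and Lemma~\ref{lem:inj-surj}(a) (used to extend centrality along rank-increasing additions); these three ingredients together should guarantee that the two partial extensions combine into the required orbit $\oo_3$ with $f^{\mol}_{(F'\cap F'',T'\cup T'')}(\oo_3)=\mathcal U$.
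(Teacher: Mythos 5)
Your proof of part~(b) by direct factorization ($f^{\mol}_{(F'',T'')}\circ\alpha = f^{\mol}_{(F',T')}$ using Remark~\ref{lk}, Lemma~\ref{lem:comm} for the sub-molecule $(R,F'',T')$, and Lemma~\ref{ref:Comb}(c)) is correct and is a genuine alternative to the paper's route, which instead derives~(b) as an immediate consequence of the second equality in~(a). Part~(c) from~(b) via Remark~\ref{rem:BDMF} is as in the paper, and the inclusion $\supseteq$ in~(a) from~(b) is fine.

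The gap is in the $\subseteq$ direction of~(a), which is the crux of the whole lemma. Your plan correctly identifies $\tilde Y_0 = Y^{\mathcal U}_{R\cup F}\setminus X_{F\setminus(F'\cap F'')}$ via Lemma~\ref{lem:whatcomp}(a), and correctly identifies that one must produce a central set of composition $R\cup(F'\cap F'')\cup(T'\cup T'')$ by fusing a $T'$-extension and a $T''$-extension of $\tilde Y_0$. But your three listed ingredients do not suffice to carry out the fusion, and one of them is simply the wrong tool. The $T'$- and $T''$-extensions you extract from $Y^{\oo'}$ and $Y^{\oo''}$ are \emph{rank-preserving} over $\tilde Y_0$ (that is precisely what the molecule condition on $T$ gives you), whereas Lemma~\ref{lem:inj-surj}(a) is specifically a statement about \emph{rank-increasing} additions and says nothing here. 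The ingredient you are missing is the semimatroid axiom (CR1): applied to $X=\tilde Y_0\cup Y^{\oo''}_{T''}$ and $Y=\tilde Y_0\cup Y^{\oo'}_{T'}$ (each central as a subset of $Y^{\oo''}$, resp.~$Y^{\oo'}$), one has $\tilde Y_0\subseteq X\cap Y$ and $\rk(X)=\rk(\tilde Y_0)=\rk(X\cap Y)$, so (CR1) makes $X\cup Y$ central. That is exactly the step the paper uses, and without it your "fusion should rest on..." clause is not a proof: the molecule property only gives rank stability, translativity only controls coincidence of representatives on the $T'\cap T''$ overlap, and neither, alone or together with Lemma~\ref{lem:inj-surj}(a), yields centrality of the union $\tilde Y_0\cup Y^{\oo'}_{T'}\cup Y^{\oo''}_{T''}$.
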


\begin{proof} 

    Part \ref{lem:bildf2} is an immediate consequence of \ref{lem:bildf} and by Remark \ref{rem:BDMF} it implies \ref{lem:increasing}. Thus it is enough to prove part (a), where the first equality is the definition of greatest lower bound in $P[R,F,T]$. We turn then to the second equality and consider the following diagram, where we write $F^*:= F'\cap F''$ and $T^*:= T'\cup T''$.
    \begin{center}
    \def\stile{}
    \def\picc{\scriptstyle}
\begin{tikzpicture}
\node (NW) at (-4,1) {$\stile{\frac{\oC{R\cup F^* \cup T^*}}{G}}$};
\node (NE) at (4,1) {$\stile{\frac{\oC{R\cup F' \cup T'}}{G}}$};
\node (SW) at (-4,-1) {$\stile{\frac{\oC{R \cup F'' \cup T''}}{G}}$};
\node (SE) at (4,-1) {$\stile{\frac{\oC{R\cup F}}{G}}$};
\draw [->] (NW) -- (NE);
\node[anchor=south] (N) at (0.4,1) {$\picc{\setmapSG{R\cup F'}{F'\setminus F''}\circ
\setmapG{R\cup F^* \cup T^*}{T''\setminus T'}}$};
\draw [->] (NE) -- (SE);
\node[anchor=west] (E) at (4,0) {$\picc{f^{\mol}_{(F',T')}}$};
\draw [->] (NW) -- (SW);
\node[anchor=east] (W) at (-4,0) {\begin{minipage}{.22\textwidth}$\picc{\setmapSG{R\cup F'' }{F''\setminus F'}} $\\
\hspace*{1em}$\picc{\circ \setmapG{R\cup F^*\cup T^*}{T'\setminus T''}}$\end{minipage}};
\draw [->] (SW) -- (SE); 
\node[anchor=south] (S) at (0,-1) {$\picc{f^{\mol}_{(F'',T'')}}$};
\draw [->] (NW) -- (SE); 
\node[anchor=west] (M) at (0,0.2) {$\picc{f^{\mol}_{(F'\cap F'',T' \cup T'')}}$};
\end{tikzpicture}
\end{center}
This diagram is commutative by Lemma \ref{lem:comm} and Remark \ref{lk}, and we conclude that $\im(f^{\mol}_{(F'\cap F'',T'\cup T'')})\subseteq
\im f^{\mol}_{(F',T')}\cap\im f^{\mol}_{(F'',T'')}$. For the reverse inclusion 
let $\oo\in \im f^{\mol}_{(F',T')}\cap\im f^{\mol}_{(F'',T'')}$ and choose $\oo'\in \frac{\oC{R \cup F' \cup T'}}{G}$, $\oo''\in \frac{\oC{R \cup F'' \cup T''}}{G}$ such that 
 $\oo=f^{\mol}_{(F',T')} (\oo') = f^{\mol}_{(F'',T'')} (\oo'')$. In particular,
 $$
 Y^\oo = Y^{\oo'}\setminus Y^{\oo'}_{T'} \cup X_{F\setminus F'} 
 = Y^{\oo''}\setminus Y^{\oo''}_{T''} \cup X_{F\setminus F''}. 
 $$
 Thus, the set 
 $$\widehat{Y}:= (Y^{\oo'}\setminus Y^{\oo'}_{T'}) \cap (Y^{\oo''} \setminus Y^{\oo''}_{T''})$$
 is a subset of $Y^{\oo}$, hence it is a central set and generates an orbit
 $$
 \widehat{\oo} := G\widehat{Y} \in \frac{\oC{R\cup (F'\cap F'')}}{G}.
 $$
 Since $Y^\oo = \widehat{Y} \cup X_{F\setminus (F'\cap F'')}$, Lemma \ref{lem:whatcomp}.(a) implies $z(\oo)=z(\widehat{\oo'})$, hence $\widehat{Y}=Y^{\widehat{\oo}}$ by definition of preferred representatives. 
 
 Now notice that $Y^{\widehat{\oo}} \cup Y_{T'}^{\oo'}$ is central because it is a subset of $Y^{\oo'}$. Similarly, also $Y^{\widehat{\oo}} \cup Y_{T''}^{\oo''}\subseteq Y^{\oo''}$ is central. Moreover, $\rk(Y^{\widehat{\oo}} \cup Y_{T''}^{\oo'})=\rk(Y^{\widehat{\oo}})=\rk(Y^{\widehat{\oo}} \cup Y_{T'}^{\oo'})$ and thus, by (CR1) (see Definition \ref{def:FS}),
%
$Y^{\widehat{\oo}} \cup Y^{\oo'}_{T'}\cup Y^{\oo''}_{T''}$ is central, and we can compute $$
 f^{\mol}_{(F'\cap F''), (T'\cup T'')}( G( Y^{\widehat{\oo}} \cup Y^{\oo'}_{T'}\cup Y^{\oo''}_{T''} )) = G(Y^{\widehat{\oo}}\cup X_{F\setminus (F'\cap F'')}) = GY^\oo = \oo
 $$
 proving $\oo\in \im f^{\mol}_{(F'\cap F''), (T'\cup T'')}$, as was to be shown.
 \end{proof}

\begin{definition}\label{def:lezete}
  Let $\mol :=(R,F,T)$ be a molecule of $\SS_{\GS}$. For every $(F',T')\in P[R,F,T]$ define the
  sets
  $$
  Z^{\mathfrak m} (F',T'):= \im f^{\mathfrak m}_{(F',T')},\quad \overline{Z}^\mol (F',T'):=
  Z^\mol (F',T')\setminus \bigcup_{(F'',T'')<(F',T')} Z^\mol (F'',T''),
  $$
  and let $$\nm(F',T'):= \vert \overline{Z}^\mol (F',T')\vert.$$
\end{definition}

The following equality holds then by Lemma \ref{lem:mocposet}.(a).
\begin{equation}
\mm(R\cup T'\cup F') = \vert \im f^\mol _{(F',T')}\vert = \sum_{p\leq (F',T')} \nm (p)\label{eq:MI}
\end{equation}

\begin{lemma}\label{lem:PP}
  If $\GS$ is \ST,  
  then for every molecule $\mol:= (R,F,T)$  in $\SS_\GS$ we have
  $$
  \rho(R,R\cup F\cup T) = \nm (F,\emptyset).
  $$
\end{lemma}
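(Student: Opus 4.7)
The plan is to recognize that the identity is exactly a Möbius-inversion statement on the boolean poset $P[R,F,T]$, once one rewrites both sides in terms of the numbers $\nm(F',T')$.

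More precisely, I would start from the defining formula
\[
\rho(R,R\cup F\cup T)=(-1)^{|T|}\sum_{R\subseteq A\subseteq R\cup F\cup T}(-1)^{|R\cup F\cup T|-|A|}\mm(A).
\]
Using the bijection $A\leftrightarrow(F',T')\in P[R,F,T]$ given by $A=R\cup F'\cup T'$, and the fact that $|R\cup F\cup T|-|A|=(|F|-|F'|)+(|T|-|T'|)$, rewrite this as
\[
\rho=(-1)^{|T|}\!\!\sum_{(F',T')\in P[R,F,T]}\!\!(-1)^{|F|-|F'|+|T|-|T'|}\,\mm(R\cup F'\cup T').
\]
Next, substitute the expression from Equation~\eqref{eq:MI}, namely $\mm(R\cup F'\cup T')=\sum_{(F'',T'')\leq(F',T')}\nm(F'',T'')$, and swap the order of summation so that
\[
\rho=(-1)^{|T|}\sum_{(F'',T'')}\nm(F'',T'')\cdot K(F'',T''),
\]
where $K(F'',T''):=\sum_{F'':F'\subseteq F'\subseteq F}(-1)^{|F|-|F'|}\cdot\sum_{T'\subseteq T''}(-1)^{|T|-|T'|}$.

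The key routine step is the evaluation of $K$: the first factor factors as $\sum_{S\subseteq F\setminus F''}(-1)^{|F\setminus F''|-|S|}=(1-1)^{|F\setminus F''|}$, which vanishes unless $F''=F$, in which case it equals $1$; analogously, the second factor equals $(-1)^{|T|}(1-1)^{|T''|}$, which vanishes unless $T''=\emptyset$, in which case it equals $(-1)^{|T|}$. Therefore the only surviving term is $(F'',T'')=(F,\emptyset)$, and one gets
\[
\rho(R,R\cup F\cup T)=(-1)^{|T|}\cdot\nm(F,\emptyset)\cdot(-1)^{|T|}=\nm(F,\emptyset),
\]
as required.

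I do not anticipate a genuine obstacle here: the argument is essentially formal once Equation~\eqref{eq:MI} is available, and Equation~\eqref{eq:MI} was already established via Lemma~\ref{lem:mocposet}\ref{lem:bildf} (which is where the weak translativity hypothesis really does the work, by ensuring that the images of the maps $f^{\mol}_{(F',T')}$ intersect as dictated by the poset $P[R,F,T]$). The only care needed is in tracking signs through the change of variables $A\leftrightarrow(F',T')$ and in the reversal of the order on the $T$-component of $P[R,F,T]$ (since $(F'',T'')\leq(F',T')$ means $F''\subseteq F'$ but $T''\supseteq T'$).
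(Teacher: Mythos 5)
Your proof is correct and follows essentially the same route as the paper: both rewrite $\rho(R,R\cup F\cup T)$ as a signed sum over the boolean poset $P[R,F,T]$, plug in Equation~\eqref{eq:MI}, and collapse the inner sum by M\"obius inversion (the paper quotes $\mu_P$ directly; you compute it by hand, which is the same thing). One small slip in your closing remark: the hypothesis doing the work in Lemma~\ref{lem:mocposet}\ref{lem:bildf}, and in the present lemma, is \emph{translativity}, not \emph{weak} translativity.
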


\begin{proof}
  Let $(R,F,T)$ be a molecule in $\SS_\GS$ and in this proof let us write $P$ for $P[R,F,T]$. We start by rewriting Definition
  \ref{def:rho} as a sum over elements of $P$ as follows.
\begin{align*}
\rho(R,R\cup F\cup T)&:= (-1)^{\vert T \vert}
\sum_{R\subseteq A\subseteq R_1} (-1)^{\vert R\cup F\cup T\vert -
  \vert A \vert} \mm(A) \\
&=
\sum_{F'\subseteq F}\sum_{ T'\subseteq T}
(-1)^{\vert F\setminus F' \vert + \vert T' \vert} \mm(R\cup F' \cup T')
\end{align*}

Then the poset $P$ has
rank function
$\rk(F',T')=\vert F' \vert + \vert T\setminus T'\vert$, and by
M\"obius inversion (where we call $\mu_P$ the M\"obius function of $P$) we can write explicitly the value of
$\nm (F,\emptyset)$ from Equation \eqref{eq:MI}. 

\begin{align*}
\nm (F,\emptyset)&=
\sum_{(F',T')\in P} \mu_{P}((F',T'), (F,\emptyset)) \mm(R\cup T'\cup
F')\\
&=\sum_{A\in [R,R\cup F\cup T]} 
(-1)^{\vert F \vert + \vert T \vert - \vert F'\vert - \vert T\setminus
T'\vert} \mm(A)\\
&=\sum_{A\in [R,R\cup F\cup T]} 
(-1)^{\vert F \setminus F' \vert + \vert T' \vert} \mm(A) \\
&=\rho(R,R\cup F\cup T)
\end{align*}

\end{proof}

Since the function $\nm$ is - by definition - never negative,
as an easy corollary we obtain the following.

\begin{proposition}\label{lem:P}
  If $\GS$ is \ST,  
  then the pair $(\SS_\GS,\mm)$
  satisfies property (P) of
  Definition \ref{def:AM} (and is thus called ``pseudo-arithmetic'').
\end{proposition}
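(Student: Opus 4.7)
The plan is essentially a one-line corollary of Lemma \ref{lem:PP}, which has just been established. That lemma shows that, for any molecule $\mol = (R, F, T)$ of $\SS_{\GS}$ under the translativity hypothesis, the quantity $\rho(R, R\cup F\cup T)$ from Definition \ref{def:AM} is precisely $\nm(F, \emptyset)$.

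Now, by Definition \ref{def:lezete}, the value $\nm(F, \emptyset)$ is defined as the cardinality of the set
$$
\overline{Z}^{\mol}(F, \emptyset) = Z^{\mol}(F, \emptyset) \setminus \bigcup_{(F'',T'') < (F,\emptyset)} Z^{\mol}(F'',T''),
$$
which, being a cardinality of a (finite) set, is a non-negative integer. Hence $\rho(R, R\cup F\cup T) \geq 0$, which is exactly axiom (P) of Definition \ref{def:AM}.

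In other words, the only actual work required is invoking Lemma \ref{lem:PP}, and observing that the expression $\rho(R, R\cup F\cup T)$ has been reinterpreted as the count of elements of $\frac{\oC{R\cup F}}{G}$ which lie in $\im f^{\mol}_{(F,\emptyset)}$ but in none of the smaller images $\im f^{\mol}_{(F',T')}$ with $(F',T')<(F,\emptyset)$. So there is no genuine obstacle here — the work has been done upstream, in establishing the cryptomorphism between $\rho$ and $\nm$ via Möbius inversion over the Boolean poset $P[R,F,T]$. All translativity hypotheses needed to make the maps $f^{\mol}_{(F',T')}$ well-defined and injective (Remark \ref{rem:BDMF}) are already in force, matching the hypothesis of the proposition.
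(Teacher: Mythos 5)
Your proposal is correct and is precisely the paper's own argument: the paper states Proposition \ref{lem:P} as an "easy corollary" of Lemma \ref{lem:PP}, noting that $\nm$ is by definition a cardinality and hence non-negative. You have identified the right lemma, the right interpretation of $\rho$ as $\nm(F,\emptyset)$, and the right (trivial) final step.
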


\begin{definition}\label{def:eta}
  Fix $A\subseteq \ES$, recall the poset $\PS$ from Definition \ref{def:LS} and define the function 
$$
\eta_A: \CC_{\GS} \to \mathbb N,\quad \eta_A(\oo):=\vert \{a\in A \mid a \leq_{\PS}\kg(\oo)\}\vert.
$$
\end{definition}

\begin{proposition}\label{cor:eta}
  Let $(R,\emptyset,T)$ be a molecule. 
Then, 
$$
\sum_{L\subseteq T} \rho(R\cup L, R\cup T)x^{\vert L \vert} =
\sum_{\oo \in\oC{R}/G} x^{\eta_T(\oo)}.
$$
\end{proposition}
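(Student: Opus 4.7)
The plan is to interpret both sides as sums over $\oo \in \oC{R}/G$, matched by the set $T_\oo := \{t \in T \mid t \leq_\PS \kg(\oo)\}$, so that $\eta_T(\oo) = |T_\oo|$.

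First, I would apply Lemma \ref{lem:PP} to rewrite each term on the left as $\rho(R \cup L, R \cup T) = n_{\mol_L}(\emptyset,\emptyset)$, where $\mol_L := (R \cup L, \emptyset, T \setminus L)$ is a molecule of $\SS_\GS$ inherited from $(R,\emptyset,T)$. Unpacking Definition \ref{def:lezete}, the elements strictly below $(\emptyset,\emptyset)$ in $P[R\cup L,\emptyset,T\setminus L]$ are precisely the pairs $(\emptyset, T'')$ with $\emptyset \neq T'' \subseteq T\setminus L$, and by the monotonicity of Lemma \ref{lem:mocposet}(b) their images $\im f^{\mol_L}_{(\emptyset,T'')}$ are contained in those indexed by singletons $(\emptyset,\{t'\})$. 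Hence $n_{\mol_L}(\emptyset,\emptyset)$ counts those $\widetilde\oo \in \oC{R \cup L}/G$ that lie outside $\im \setmapG{R \cup L \cup t'}{t'}$ for every $t' \in T \setminus L$. Because the molecule condition forces every subset of $R \cup T$ containing $R$ to have rank $\rk(R)$, membership of $\widetilde\oo$ in $\im \setmapG{R \cup L \cup t'}{t'}$ is equivalent to the existence of $\widetilde X \in \widetilde\oo$ and $y \in t'$ with $y \in \cl(\widetilde X)$, which is exactly the poset-theoretic condition $t' \leq_\PS \kg(\widetilde\oo)$.

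Next I would analyse the projection $\pi_L := \setmapG{R \cup L}{L} : \oC{R \cup L}/G \to \oC{R}/G$. The rank equality $\rkE(R \cup L) = \rkE(R)$ together with Lemma \ref{lem:inj-surj}(c) (iterated via Remark \ref{lk}) shows $\pi_L$ is injective. Its image equals $\{\oo \in \oC{R}/G : L \subseteq T_\oo\}$: given such an $\oo$ and any $X \in \oo$, choose for each $t \in L$ an element $x_t \in t \cap \cl(X)$, and induct on $|L|$ by applying (CR1) to $X \cup \{x_t\}_{t \in L'}$ and $X \cup \{x_{t^*}\}$ (their intersection contains $X$ with matching rank) to conclude that $\widetilde X := X \cup \{x_t\}_{t \in L}$ is central, giving the lift $\widetilde\oo := G\widetilde X$. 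Since every $x_t$ lies in $\cl(X)$ we have $\cl(\widetilde X) = \cl(X)$, hence $\kg(\widetilde\oo) = \kg(\oo)$ and consequently $T_{\widetilde\oo} = T_\oo$.

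Combining the two steps, a $\widetilde\oo$ contributing to $n_{\mol_L}(\emptyset,\emptyset)$ corresponds uniquely to $\oo := \pi_L(\widetilde\oo)$ satisfying both $L \subseteq T_\oo$ (to lie in the image of $\pi_L$) and $T_\oo \cap (T \setminus L) = \emptyset$ (to avoid every $\im \setmapG{R \cup L \cup t'}{t'}$); together these constraints force $T_\oo = L$. Therefore $\rho(R \cup L, R \cup T) = \#\{\oo \in \oC{R}/G : T_\oo = L\}$, and summing over $L \subseteq T$ partitions $\oC{R}/G$ by the value of $T_\oo$ to yield $\sum_\oo x^{|T_\oo|} = \sum_\oo x^{\eta_T(\oo)}$, as required. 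The main obstacle will be the two-way translation in the second paragraph between set-theoretic extendability of $\widetilde\oo$ and the poset condition $t' \leq_\PS \kg(\widetilde\oo)$; once the molecule's uniform rank is invoked this reduces to unpacking the definitions of $\cl$ and $\kg$, after which the (CR1)-induction lifting elements one at a time and the injectivity of $\pi_L$ are routine.
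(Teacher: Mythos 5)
Your proof is correct and takes essentially the same route as the paper's: both start from Lemma~\ref{lem:PP} to rewrite $\rho(R\cup L, R\cup T)$ as the count of orbits in $\overline{Z}^{\mol}(\emptyset,L)$, then identify these as exactly the $\oo\in\oC{R}/G$ with $\{t\in T\mid t\leq_{\PS}\kg(\oo)\}=L$, using the equivalence between being in the image of $\setmapG{R\cup L\cup t'}{t'}$ and the closure condition $t'\leq_{\PS}\kg(\oo)$. The only real difference is organizational: where the paper invokes Lemma~\ref{lem:mocposet}(a) to express $\im f^{\mol}_{(\emptyset,L)}$ as an intersection of singleton images, you redo the computation of the image of $\pi_L$ directly with a (CR1)-induction, which yields the same partition of $\oC{R}/G$.
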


\begin{remark}
  Notice that, in terms of the poset $\PS$, 
$$
\eta_T(\oo) = \vert\{t\in T \mid \kg(t) \leq_{\PS}\kg(\oo)\}\vert.
$$
Thus, in the realizable case we recover  the number defined in
\cite[Section 6]{BM}.
\end{remark}

\begin{proof}[Proof of Proposition \ref{cor:eta}]
  First notice that, for every $L\subseteq T$, $\mL:=(R\cup L, \emptyset,
  T\setminus L)$ is also a molecule, and that by Equation \eqref{eq:fctf}
  we have immediately $$f^{\mol}_{(\emptyset,L\cup L')} =
  f^{\mol}_{(\emptyset, L)} \circ f^{\mL}_{(\emptyset,L')}$$
  for every $L'\subseteq T\setminus L$. Therefore, with Lemma \ref{lem:mocposet}.(b) and Lemma \ref{lem:PP}
  we can write the following
\begin{align*}
\rho(R\cup L , R\cup T) &= \left\vert \oC{R\cup L}/G \,\,\,\bigg\backslash 
\bigcup_{t\in T\setminus L} \im f^{\mL}_{(\emptyset, \{t\})}\right\vert\\
&= \left\vert \im f^{\mol}_{(\emptyset, L)} \bigg\backslash 
\bigcup_{t\in T\setminus L} \im f^{\mol}_{(\emptyset, L\cup \{t\})}\right\vert =
\vert \overline{Z}^{\mol}(\emptyset, L)\vert
\end{align*}
where the second equality follows from injectivity of the functions $f^{\mol}$ and $f^{\mL}$.

\begin{itemize}
\item[{\em Claim.}]
For all $\oo\in \oC{R}/G$, if
 $\oo \in \overline{Z}^{\mol}(\emptyset, L)$ then
$$\{t\in T\mid t \leq_{\CC_\GS} \kg(\oo)\}=L.$$ 
In particular, we have that $\eta_{T}(\oo) = \vert L \vert$. 
\item[{\em Proof.}]
Let $\oo \in \overline{Z}^{\mol}(\emptyset, L)$ then for every $t\in T$
we have 
$\oo \in \im f^{\mol}_{(\emptyset,t)}$ if and only if there is a
representative $X_R$ of $\oo$ and some $x_t\in t$ such that
$X_R \cup x_t \in \mathcal C$. Since we know that $\rkE(R\cup t) =
\rkE(R)$, the latter is equivalent to saying that $x_t\in
\cl_{\CC}(X_R)$, i.e., $t\leq\kg(\oo)$ in $\CC_\GS$. 
Now, by Lemma \ref{lem:mocposet}.\ref{lem:bildf} we have
$$ \im f^{\mol}_{(\emptyset,L)} = \bigcap_{t\in L} \im f^{\mol}_{(\emptyset,t)}$$
and thus we see that $t\leq \kappa_{\GS}(\oo)$ if and only if $t\in L$. 
$\square$
\end{itemize}

We can now return to the statement to be proved and write
\begin{align*}
\sum_{L\subseteq T} \rho(R\cup L, R\cup T)x^{\vert L \vert}
&=\sum_{L\subseteq T} \vert \overline{Z}^{\mol}(\emptyset,L)\vert x^{\vert L \vert}
= \sum_{L\subseteq T} \sum_{\oo\in \overline{Z}^{\mol}(\emptyset,L)
} x^{\vert L \vert}\\
&= \sum_{\oo \in \oC{R}/G} x^{\eta_T(\oo)}
\end{align*}
where the last equality uses the Claim we just proved.
\end{proof}

\section{Almost-arithmetic actions}\label{sec:almost}
We now turn to what we call ``almost-arithmetic'' actions (see Definition \ref{def:normalA}). The name is reminiscent of the fact that one additional condition on top of \STy (i.e., normality) already ensures that the multiplicity function satisfies ``almost all'' of the requirements for arithmetic matroids (see Definition \ref{def:AM}): this is the gist of the main result of this section (Proposition \ref{prop:almost}).

We keep the notation $\GS$ to signify a $G$-semimatroid arising from an action on a semimatroid $\SS=(S,\CC,\rk)$. 

\begin{lemma}\label{lem:stabs}
  Let $\GS$ be almost-arithmetic
and let $X\in \CC$. Then 
  \begin{itemize}
  \item[(a)] for all $X'\in \oC{\underline X}$ we have
    $\stab(X)=\stab(X')$, 
  \item[(b)] if $x_0\in X$ and $\rk(X\setminus x_0)=\rk(X)$, then
    $\stab(X)=\stab(X\setminus x_0)$.
  \end{itemize}
\end{lemma}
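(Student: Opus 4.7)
The plan is to reduce both statements to the pointwise identity $\stab(X)=\bigcap_{x\in X}\stab(x)$, which is valid for translative actions and was observed in Notation \ref{not:puntino}. Once we have this, part (a) becomes a conjugation argument and part (b) a combinatorial argument using (CR1) and translativity.

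For (a), let $X'\in \oC{\underline X}$. Since the action is translative and $\underline{X'}=\underline{X}$, we can write $X'=\{h_x(x)\mid x\in X\}$ for a choice of elements $h_x\in G$ (one for each orbit representative $x\in X$). The standard identity $\stab(h_x(x))=h_x\stab(x)h_x^{-1}$, combined with the normality hypothesis, gives $\stab(h_x(x))=\stab(x)$ for every $x\in X$. Intersecting over $x\in X$ yields
\[
\stab(X')=\bigcap_{x\in X}\stab(h_x(x))=\bigcap_{x\in X}\stab(x)=\stab(X).
\]

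For (b), the inclusion $\stab(X)\subseteq\stab(X\setminus x_0)$ is immediate, so I would focus on the reverse inclusion. Take $g\in \stab(X\setminus x_0)$; I want to show $g(x_0)=x_0$. Observe that $g(X)=(X\setminus x_0)\cup\{g(x_0)\}$ lies in $\CC$, and that $X\cap g(X)\supseteq X\setminus x_0$, so
\[
\rk(X)\;=\;\rk(X\setminus x_0)\;\leq\;\rk(X\cap g(X))\;\leq\;\rk(X).
\]
Hence $\rk(X)=\rk(X\cap g(X))$, and (CR1) applied to the pair $X,\,g(X)$ yields $X\cup g(X)\in\CC$. In particular $\{x_0,g(x_0)\}\subseteq X\cup g(X)$ is central, so translativity forces $g(x_0)=x_0$. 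This gives $\stab(X\setminus x_0)\subseteq\stab(x_0)$ and together with $\stab(X)=\stab(x_0)\cap\stab(X\setminus x_0)$ (from the translative pointwise description) we conclude $\stab(X\setminus x_0)\subseteq\stab(X)$.

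I do not anticipate a real obstacle here: (a) is essentially bookkeeping once normality is used, and the crux of (b) is spotting the right application of (CR1). The only mildly delicate point is making sure that translativity is invoked not to $X\cup g(X)$ itself but to the two-element subset $\{x_0,g(x_0)\}$, and for this one uses simplicial closure of $\CC$. No additional hypothesis beyond \STy and normality is needed, which is consistent with the status of these statements as an ``infrastructure'' lemma to be exploited throughout the almost-arithmetic setting.
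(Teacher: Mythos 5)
Your proof is correct and follows essentially the same route as the paper: part (a) is a direct use of normality (you flesh out the one-line remark the paper makes with an explicit pointwise decomposition, which is fine), and part (b) is word-for-word the paper's argument — observe $X\setminus x_0\subseteq X\cap g(X)\subseteq X$, deduce $\rk(X\cap g(X))=\rk(X)$, apply (CR1) to get $X\cup g(X)\in\CC$ hence $\{x_0,g(x_0)\}\in\CC$, and invoke translativity.
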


\begin{proof}
  Item (a) is an immediate consequence of normality. In the claim of item (b), the inclusion $\stab(X) \subseteq \stab(X\setminus x_0)$ is evident. For the reverse inclusion, consider $g\in \stab(X\setminus x_0)$. Then
     $X\setminus x_0 \subseteq gX \cap X $, which justifies the first inequality in 
 \begin{equation}\label{eqed}
 \rk(X\setminus x_0) \leq \rk(gX\cap X) \leq \rk(X),
\tag{$\ast$} 
 \end{equation}
 where the second inequality holds by (R2). Since by assumption $\rk(X) = \rk(X\setminus x_0)$, equality must hold throughout \eqref{eqed} above, proving that  $\rk(X)=\rk(gX \cap
    X)$. By (CR1), the latter implies $X\cup g(X) \in \CC$ and, in
    particular, $\{x_0,g(x_0)\} \in \CC$. Translativity of the action
    then ensures $g\in \stab(x_0)$ and thus $g\in \stab (X)$.

\end{proof}

\begin{definition}\label{def:fA}
  Given $X_1,\ldots,X_k \in \CC$ define
 $$
 \theta_{X_1,\ldots,X_k}:G\to \prod_{i=1}^k \Gamma(X_i),\quad g\mapsto ([g]_{X_1},\ldots,[g]_{X_k}).
 $$
\end{definition}
\begin{remark}\label{rem:idx}
By Lemma \ref{lem:stabs}.(a), in an almost-arithmetic $G$-semimatroid
this map does not depend on the choice of the $X_i$ in
$\oC{\underline{X_i}}$ for $i=1,\ldots,k$.
\end{remark}

\begin{lemma}\label{lem:quot}
Given an almost-arithmetic $G$-semimatroid $\GS$, consider  $ A\subseteq \ES$ and $a_1,\ldots,a_k\in \ES$ with $\rkE(A\cup\{ a_1,\ldots, a_k\} ) = \rkE(A) + k$. For every choice of $X\in\oC{A}$ and of $x_i\in a_i$, $i=1,\ldots,k$,
$$
\frac{\mm(A\cup\{ a_1,\ldots, a_k\})}{\mm(A)} = 
[\Gamma(X) \times \prod_{i=1}^k\Gamma(x_i):
\theta_{X,x_1,\ldots, x_k}(G)]
$$
\end{lemma}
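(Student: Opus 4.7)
The plan is to compute the fibers of the orbit-level map $\setmapG{A\cup\{a_1,\ldots,a_k\}}{\{a_1,\ldots,a_k\}}$ explicitly, and then interpret the result as the index in the statement. First, since $\rkE(A\cup\{a_1,\ldots,a_k\})=\rkE(A)+k$, removing the elements $a_1,\ldots,a_k$ one at a time from $A\cup\{a_1,\ldots,a_k\}$ lowers the rank by exactly $1$ at each step, so iterating Lemma \ref{lem:inj-surj}\ref{lem:surj} produces a surjection
\[
\setmapG{A\cup\{a_1,\ldots,a_k\}}{\{a_1,\ldots,a_k\}}\colon \oC{A\cup\{a_1,\ldots,a_k\}}/G\longrightarrow \oC{A}/G.
\]
Thus it suffices to show that every fiber has cardinality $[\Gamma(X)\times\prod_i\Gamma(x_i):\theta_{X,x_1,\ldots,x_k}(G)]$.

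Next I would fix $X\in\oC{A}$ and parameterize the set-level fiber $\setmap{A\cup\{a_1,\ldots,a_k\}}{\{a_1,\ldots,a_k\}}^{-1}(X)$. Iterated application of Lemma \ref{lem:inj-surj}\ref{lem:geo-loop} shows that $X\cup\{g_1(x_1),\ldots,g_k(x_k)\}\in\CC$ for every choice of $(g_1,\ldots,g_k)\in G^k$, and since the $a_i$ are pairwise distinct orbits not meeting $A$, distinct tuples $([g_i]_{x_i})_i\in\prod_i\Gamma(x_i)$ yield distinct such sets. This produces a bijection
\[
\bar\Phi\colon \prod_{i=1}^k\Gamma(x_i)\stackrel{\cong}{\longrightarrow}\setmap{A\cup\{a_1,\ldots,a_k\}}{\{a_1,\ldots,a_k\}}^{-1}(X).
\]

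The key step is then to determine when two such sets are $G$-equivalent. Given $Y=X\cup\{g_i(x_i)\}_i$ and $Y'=X\cup\{h_i(x_i)\}_i$, suppose $g(Y)=Y'$. For each $y\in X$ both $y$ and $g(y)$ lie in $Y'\in\CC$, so $\{y,g(y)\}\in\CC$; by \STy this forces $g(y)=y$, hence $g\in\stab(X)$. Comparing the remaining elements orbit-by-orbit (using that the $a_i$ are distinct) then gives $g\,g_i(x_i)=h_i(x_i)$ for each $i$, i.e.\ $\psi(g)\cdot([g_i]_{x_i})_i=([h_i]_{x_i})_i$, where $\psi\colon\stab(X)\to\prod_i\Gamma(x_i)$ sends $g\mapsto([g]_{x_i})_i$ (well-defined because normality makes each $\Gamma(x_i)$ a group). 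Thus the fiber over $GX$ is identified with the set of right cosets $\prod_i\Gamma(x_i)/\psi(\stab(X))$, which has cardinality $[\prod_i\Gamma(x_i):\psi(\stab(X))]$. The main obstacle here is exactly this orbit-identification, and carrying it out cleanly requires using \STy to pin $g$ down inside $\stab(X)$ before unravelling the remaining coordinates.

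Finally I would match this with the claimed index. Applying the projection $\pi_X\colon\Gamma(X)\times\prod_i\Gamma(x_i)\to\Gamma(X)$ to $\theta_{X,x_1,\ldots,x_k}(G)$ gives a surjection onto $\Gamma(X)$ whose kernel equals $\{0\}\times\psi(\stab(X))$, so a standard index computation yields
\[
[\Gamma(X)\times\textstyle\prod_i\Gamma(x_i):\theta_{X,x_1,\ldots,x_k}(G)]=[\textstyle\prod_i\Gamma(x_i):\psi(\stab(X))].
\]
By Lemma \ref{lem:stabs}\emph{(a)} the subgroup $\stab(X)$ depends only on $\underline{X}=A$, so the fiber cardinality is the same for every $GX\in\oC{A}/G$. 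Summing over all $\mm(A)$ fibers gives the stated identity.
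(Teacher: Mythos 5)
Your proof is correct and takes essentially the same approach as the paper: both identify $\oC{A'}$ with $\oC{A}\times\prod_i\oC{a_i}$ via Lemma~\ref{lem:inj-surj}\ref{lem:geo-loop} and compute the orbit-count in the fiber of the equivariant projection to $\oC{A}$. The only difference is that you fix an actual representative $X$ and mod out the set-level fiber $\prod_i\Gamma(x_i)$ by $\psi(\stab(X))$, then recover the claimed index $[\Gamma(X)\times\prod_i\Gamma(x_i):\theta_{X,x_1,\ldots,x_k}(G)]$ by a short coset juggle, whereas the paper mods the fiber over the whole orbit $GX\cong\Gamma(X)\times\prod_i\Gamma(x_i)$ directly by the diagonal image $\theta_{X,x_1,\ldots,x_k}(G)$ and lands immediately on the stated formula; these two coset spaces coincide by the usual $\pi^{-1}(Gw)/G\cong\pi^{-1}(w)/\stab(w)$ identity, so the arguments are interchangeable.
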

\begin{proof}
  Let $A$ and $a_1,\ldots,a_k$ be as in the statement and, in this proof, let us write  $A':=A\cup \{a_1,\ldots, a_k\}$. Since the
  action is \ST, with Lemma
  \ref{lem:inj-surj}.\ref{lem:geo-loop} we obtain the following equality of sets.
  $$\oC{A' } = \oC{A}\times \prod_{i=1}^k\oC{a_i}$$

 The projection 
  $$p_A : \oC{A'} \to \oC{A}, \quad Y \mapsto Y\setminus \bigcup_{i=1}^k a_i $$ 
maps each of the $\mm(A')$ orbits of the action of $G$ on $\oC{A'}$ to one of the $\mm(A)$ orbits of the action on $\oC{A}$.  Thus, it is enough to prove that the number of $\oC{A'}$-orbits mapped to a fixed $\oC{A}$-orbit equals the right-hand side of the equation in the claim.

To this end, choose $X\in \oC{A}$ and consider the set of orbits in $\oC{A'}$ which project to $GX$,
  i.e., the orbits of
  the action of $G$ on $$p_A^{-1}(GX)=\{(g(X),x_1,\ldots,x_k) \mid
  g\in G,\, \forall i=1,\ldots,k: x_i\in
  \oC{a_i}\}.$$

  Notice that for every $a\in \ES$ and every $x\in a$ we have an equality
  $a=Gx=\oC{a}$ and a natural bijection of this set with $\Gamma(x)$. In
  fact, any choice of $x_i\in \oC{a_i}$ for $i=1,\ldots,k$ and $X\in
  \oC{A}$ fixes a bijection $p_A^{-1}(GX) \to
  \Gamma(X)\times \prod_{i=1}^k\Gamma(x_i)$, and under this bijection the action of
  $G$ on the right-hand side is  given by composition with elements of the
  subgroup $\theta_{X,x_1,\ldots,x_k}(G)$ defined above. Therefore we
have a bijection $$p_A^{-1}(GX)/G \to (\Gamma(X)\times
\prod_{i=1}^k\Gamma(x_i))/\theta_{X,x_1,\ldots,x_k}(G).$$
By Lemma \ref{lem:stabs}.(a) and Remark \ref{rem:idx}, the group on the right hand side does not
depend on the choice of $X\in\oC{A}$ and $x_i\in a_i$. This concludes the proof.
\end{proof}

\begin{lemma}\label{lem:A}
  The multiplicity function associated to an almost-arithmetic
  $G$-semi\-matroid $\GS$ satisfies
$$\mm(R)\mm(R\cup F \cup T)=\mm(R\cup T) \mm(R\cup F)$$
for every molecule $(R,F,T)$ of $\SS_{\GS}$.
\end{lemma}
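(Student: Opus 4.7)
The idea is to reduce the claim to two applications of Lemma \ref{lem:quot} and then to identify the resulting group indices via Lemma \ref{lem:stabs}.

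First I would enumerate $F=\{f_1,\ldots,f_k\}$ and fix, for each $i$, a representative $x_i\in f_i$; I would also choose $X_R\in\lceil R\rceil^{\CC}$ and $X_{R\cup T}\in\lceil R\cup T\rceil^{\CC}$. Since $(R,F,T)$ is a molecule, the rank function on the interval $[R,R\cup F\cup T]$ behaves additively on $F$, so that in particular
\[
\rkE(R\cup F)=\rkE(R)+k,\qquad \rkE(R\cup T\cup F)=\rkE(R\cup T)+k,\qquad \rkE(R\cup T)=\rkE(R).
\]
Applying Lemma \ref{lem:quot} with $A=R$ and $a_i=f_i$, and then with $A=R\cup T$ and $a_i=f_i$, gives
\[
\frac{\mm(R\cup F)}{\mm(R)}=\bigl[\Gamma(X_R)\times\textstyle\prod_{i=1}^{k}\Gamma(x_i):\theta_{X_R,x_1,\ldots,x_k}(G)\bigr],
\]
\[
\frac{\mm(R\cup F\cup T)}{\mm(R\cup T)}=\bigl[\Gamma(X_{R\cup T})\times\textstyle\prod_{i=1}^{k}\Gamma(x_i):\theta_{X_{R\cup T},x_1,\ldots,x_k}(G)\bigr].
\]
The claim will follow immediately once I show that these two indices coincide, so the crux is to identify the two groups $\Gamma(X_R)$ and $\Gamma(X_{R\cup T})$ (equivalently, the stabilizers $\stab(X_R)$ and $\stab(X_{R\cup T})$) and to recognize that under this identification the two homomorphisms $\theta_{X_R,x_1,\ldots,x_k}$ and $\theta_{X_{R\cup T},x_1,\ldots,x_k}$ become literally the same map, since the last $k$ coordinates depend only on the $x_i$.

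The key step (and the only real work) is the stabilizer identification. By Lemma \ref{lem:stabs}(a) I may assume $X_R\subseteq X_{R\cup T}$: indeed, pick any $\widetilde X\in \lceil R\cup T\rceil^{\CC}$, set $X_R:=\widetilde X\setminus\bigcup T$ (which lies in $\lceil R\rceil^{\CC}$ by Corollary \ref{cor:winj}) and $X_{R\cup T}:=\widetilde X$. Enumerating the elements of $X_{R\cup T}\setminus X_R$ as $y_1,\ldots,y_\ell$ and writing $X_j:=X_R\cup\{y_1,\ldots,y_j\}$, the molecule hypothesis forces $\rk(X_j)=\rk(X_{j-1})=\rk(R)$ at every step, so Lemma \ref{lem:stabs}(b) yields $\stab(X_j)=\stab(X_{j-1})$; iterating gives $\stab(X_{R\cup T})=\stab(X_R)$ and hence $\Gamma(X_R)=\Gamma(X_{R\cup T})$ as quotients of $G$.

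Once this is established, the two displayed indices are equal and we obtain
\[
\frac{\mm(R\cup F)}{\mm(R)}=\frac{\mm(R\cup F\cup T)}{\mm(R\cup T)},
\]
which rearranges to the desired identity $\mm(R)\mm(R\cup F\cup T)=\mm(R\cup T)\mm(R\cup F)$. The main obstacle is genuinely the stabilizer identification: without normality (i.e., outside the almost-arithmetic setting) Lemma \ref{lem:stabs}(b) is not available and the two relevant group indices need not agree.
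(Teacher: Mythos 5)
Your proof is correct and follows essentially the same route as the paper: choose $X_{R\cup T}\in\lceil R\cup T\rceil^{\CC}$ with $X_R=X_{R\cup T}\setminus\bigcup T$, apply Lemma \ref{lem:quot} twice, and identify the two indices by showing $\stab(X_R)=\stab(X_{R\cup T})$ via Lemma \ref{lem:stabs}(b). The only (cosmetic) difference is that you spell out the one-element-at-a-time iteration of Lemma \ref{lem:stabs}(b), which the paper leaves implicit; your reference to Corollary \ref{cor:winj} to justify $X_R\in\lceil R\rceil^{\CC}$ is unnecessary, as this is immediate from the simplicial-complex property of $\CC$.
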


\begin{proof} We choose $X_{R\cup T} \in \oC{R\cup T}$ and let 
  $X_R:= X_{R\cup T}\setminus \cup T$, so that $X_R\in \oC{R}$. Moreover, write
  $F=\{f_1,\ldots,f_k\}$ and choose $x_i\in f_i$ for all
  $i=1,\ldots,k$. From Lemma \ref{lem:quot} we obtain the following equalities.
$$
\frac{m(R\cup F)}{m(R)} = 
\Big[\Gamma(X_R) \times \prod_{i=1}^k
  \Gamma(x_i) : \theta_{X_R,x_1,\ldots,x_k}(G)\Big]
$$
$$
\frac{m(R\cup T \cup F)}{m(R\cup T)} = 
\Big[\Gamma(X_{R\cup T}) \times \prod_{i=1}^k \Gamma(x_i)
  : \theta_{X_{R\cup T}, x_1,\ldots,x_k}(G) \Big] 
$$
Since $\rkE(R\cup T) = \rkE(R)$, by Lemma \ref{lem:stabs}.(b) we have $\stab(X_R)=\stab(X_{R\cup T})$, so (e.g., by inspection of Definition \ref{not:puntino}) the right-hand sides are equal.  
\end{proof}

\begin{proposition}\label{prop:almost}
  If $\GS$ is an almost-arithmetic action on a semimatroid, then $\mm$ satisfies properties (P), (A.1.2)
  and (A2) with respect to $\SS_\GS$.
\end{proposition}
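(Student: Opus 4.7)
The three properties to verify have essentially already been handled by the machinery built up in the preceding sections, so the proof will be a short citation-style argument. Specifically, property (P) follows directly from Proposition \ref{lem:P}, which establishes (P) under the weaker hypothesis that $\GS$ is merely \ST; and every almost-arithmetic action is by definition \ST. Similarly, property (A2) is exactly the content of Lemma \ref{lem:A}, which was proved under the almost-arithmetic hypothesis.

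The only property left is (A.1.2). Fix $A \subseteq \ES$ and $e \in \ES$ with $A \cup e \in \underline{\CC}$ and $\rkE(A \cup e) > \rkE(A)$. Since $\rkE$ is monotone and changes by at most $1$ upon adding a single element (a consequence of (CR2) applied to $\SS_\GS$, see Proposition \ref{prop:quot} and Remark \ref{CiErPr}), we have $\rkE(A \cup e) = \rkE(A) + 1$. I would then apply Lemma \ref{lem:quot} with $k = 1$, $a_1 = e$, which yields
\[
\frac{\mm(A \cup e)}{\mm(A)} = \bigl[\Gamma(X) \times \Gamma(x_1) : \theta_{X, x_1}(G)\bigr]
\]
for any choices of $X \in \oC{A}$ and $x_1 \in e$. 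The right-hand side is the index of a subgroup in a group, hence a positive integer (note that finiteness is guaranteed by cofiniteness of the action, since otherwise the index would be infinite but equals $\mm(A\cup e)/\mm(A)$, a quotient of finite positive integers). Therefore $\mm(A)$ divides $\mm(A\cup e)$, proving (A.1.2).

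The main conceptual work, namely the construction of the groups $\Gamma(X)$, the maps $\theta$, and the index formula of Lemma \ref{lem:quot}, has already been carried out; the proof here is simply the observation that all three axioms (P), (A.1.2), (A2) are now immediate consequences. I do not expect any obstacle: the only subtle point is that (A.1.2) requires the ranks to differ by exactly one, which is automatic from local rankedness of $\SS_\GS$.
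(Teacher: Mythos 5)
Your proposal is correct and takes essentially the same route as the paper: the paper's proof is exactly the one-line observation that the result follows by combining Proposition \ref{lem:P} for (P), Lemma \ref{lem:quot} for (A.1.2), and Lemma \ref{lem:A} for (A2), and you have correctly spelled out the $k=1$ instance of Lemma \ref{lem:quot} (including the rank-jump-by-one and divisibility-of-index observations) to establish (A.1.2).
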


\begin{proof}
This follows combining Lemma \ref{lem:P}, Lemma \ref{lem:quot} and Lemma \ref{lem:A}.
\end{proof}

We close the section on almost-arithmetic actions with a proposition
about molecules of the form $(R,F,\emptyset)$, as a counterpart to
Proposition \ref{cor:eta} above.

\begin{definition}\label{def:iota}
  Let $\GS$ be an almost-arithmetic $G$-semimatroid. Given a molecule
  $\mol:=(R,F,\emptyset)$ of $\SS_{\GS}$, choose an orbit $\oo\in
  \oC{R}/G$ and fix a representative $X_R\in \oo$. For every
  $F'\subseteq F$ let 
  $\mathcal X(F')\subseteq \oC{R\cup F'}/G$ denote the subset consisting of
  orbits of the form $GY$ with $X_R\subseteq gY$ for some $g\in
  G$, i.e., 
  $$\mathcal X(F') := (\oC{R\cup F'}/G)_{\geq \oo} \subseteq \CC_{\GS}.$$
    Fix a numbering of the elements of $F$ and, recalling Definition \ref{def:lezete}, let $$\widetilde{Z}_F^{\mol}(F'):= \overline{Z}^{\mol}(F',\emptyset) \cap
  \mathcal X(F).$$ The sets $\{\widetilde{Z}_F^{\mol}(F')\}_{F'\subseteq F}$
  partition $\mathcal X(F)$. Thus, for every $\oo \in \mathcal
  X(F)$ we can consider the unique $F'\subseteq F$ for which $\oo \in \widetilde{Z}_F^{\mol}(F')$ and define the number
$$
\iota(\oo) := \vert F \vert - \vert F' \vert.
$$ 
\end{definition}

\begin{lemma}\label{lem:Zr} Let $\GS$ be an almost-arithmetic $G$-semimatroid and let ${\mol}:=(R,F,\emptyset)$ be a molecule of $\SS_{\GS}$. Then for all $F'\subseteq F$ we have
$$
\vert \widetilde{Z}^{\mol} _F (F') \vert = \frac{\rho(R,R\cup F')}{m_{\GS}(R)}.
$$
  In particular, this cardinality does not depend on the choice of the representative $X_R$ and of the numbering of the elements of $F$.
\end{lemma}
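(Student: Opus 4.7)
The plan is to transfer the count of $\widetilde Z^{\mol}_F(F')$ from $\oC{R\cup F}/G$ down to $\oC{R\cup F'}/G$ via the injection $f^{\mol}_{(F',\emptyset)}$ (Remark~\ref{rem:BDMF}), and then compute the resulting cardinality by inclusion--exclusion inside $\mathcal X(F')$, recognizing the alternating sum as $\rho(R,R\cup F')/\mm(R)$ via Lemma~\ref{lem:quot}.

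Setting $\mol':=(R,F',\emptyset)$, Lemma~\ref{lem:whatcomp}(c) applied to the chain $F''\subseteq F'\subseteq F$ gives $f^{\mol}_{(F'',\emptyset)}=f^{\mol}_{(F',\emptyset)}\circ f^{\mol'}_{(F'',\emptyset)}$ for every $F''\subseteq F'$. Injectivity of $f^{\mol}_{(F',\emptyset)}$ then yields $(f^{\mol}_{(F',\emptyset)})^{-1}\bigl(\im f^{\mol}_{(F'',\emptyset)}\bigr)=\im f^{\mol'}_{(F'',\emptyset)}$, and taking complements term by term in Definition~\ref{def:lezete} gives
$$(f^{\mol}_{(F',\emptyset)})^{-1}\bigl(\overline Z^{\mol}(F',\emptyset)\bigr)=\overline Z^{\mol'}(F',\emptyset).$$

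Next I verify that $f^{\mol}_{(F',\emptyset)}$ restricts to a bijection $\mathcal X(F')\to\mathcal X(F)\cap\im f^{\mol}_{(F',\emptyset)}$. If $\mathcal O'\in\mathcal X(F')$ has a representative $Y\supseteq X_R$ and $g\in G$ satisfies $gY=Y^{\mathcal O'}_{R\cup F'}$, then $g^{-1}(Y^{\mathcal O'}_{R\cup F'}\cup X_{F\setminus F'})\supseteq X_R$, so the image orbit lies in $\mathcal X(F)$. Conversely, if $h(Y^{\mathcal O'}_{R\cup F'}\cup X_{F\setminus F'})\supseteq X_R$ for some $h\in G$, then disjointness of the orbit-labels $R$ and $F\setminus F'$ forces $X_R\cap hX_{F\setminus F'}=\emptyset$, so $X_R\subseteq hY^{\mathcal O'}_{R\cup F'}$ and $\mathcal O'\in\mathcal X(F')$. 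Combined with the preceding paragraph, this yields
$$|\widetilde Z^{\mol}_F(F')|=|\overline Z^{\mol'}(F',\emptyset)\cap\mathcal X(F')|.$$

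I then compute the right-hand side by inclusion--exclusion on the family $\{\im f^{\mol'}_{(F'\setminus f,\emptyset)}\cap\mathcal X(F')\}_{f\in F'}$ inside $\mathcal X(F')$. Lemma~\ref{lem:mocposet}(a) gives $\bigcap_{f\in S}\im f^{\mol'}_{(F'\setminus f,\emptyset)}=\im f^{\mol'}_{(F'\setminus S,\emptyset)}$ for $S\subseteq F'$, and reapplying the bijection of the previous paragraph to $f^{\mol'}_{(F'\setminus S,\emptyset)}$ yields $|\im f^{\mol'}_{(F'\setminus S,\emptyset)}\cap\mathcal X(F')|=|\mathcal X(F'\setminus S)|$. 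Since $(R,F,\emptyset)$ is a molecule, $\rk(R\cup F'')=\rk(R)+|F''|$ for every $F''\subseteq F'$, so Lemma~\ref{lem:quot} (and its proof) identifies $|\mathcal X(F'')|$ with $\mm(R\cup F'')/\mm(R)$ independently of $\oo$. Inclusion--exclusion then gives
$$|\widetilde Z^{\mol}_F(F')|=\sum_{F''\subseteq F'}(-1)^{|F'\setminus F''|}\frac{\mm(R\cup F'')}{\mm(R)}=\frac{\rho(R,R\cup F')}{\mm(R)},$$
where the last equality is Definition~\ref{def:rho} specialized to $T=\emptyset$. The asserted independence from $X_R$ and the ordering of $F$ is then automatic, since the right-hand side does not involve those choices. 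The main obstacle I anticipate is the bookkeeping in the bijection above, where one must reconcile the rigid preferred representatives $Y^{\mathcal O}_{R\cup F'}$ of Definition~\ref{def:Yrff} with the symmetric ``contains $X_R$'' condition defining $\mathcal X(F')$, using $G$-equivariance of the construction together with the disjointness of the orbit labels.
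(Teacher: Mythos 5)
Your proof is correct and follows essentially the same route as the paper's: both arguments partition over $F''\subseteq F'$, apply M\"obius inversion (or, equivalently, inclusion--exclusion) on the boolean lattice, and identify the term for $F''$ as $\mm(R\cup F'')/\mm(R)$ via Lemma~\ref{lem:quot}, after which Definition~\ref{def:rho} with $T=\emptyset$ gives $\rho(R,R\cup F')/\mm(R)$. Your transfer step and the explicit bijection $\mathcal X(F'')\to Z^{\mol}(F'',\emptyset)\cap\mathcal X(F)$ merely unpack what the paper invokes implicitly when it writes $|Z^{\mol}(F'',\emptyset)\cap\mathcal X(F)| = \mm(R\cup F'')/\mm(R)$ ``by Lemma~\ref{lem:quot}.''
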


\begin{proof}
By construction, $\vert Z^{\mol}(F',\emptyset)\cap \mathcal X(F)\vert = \sum_{(F'',\emptyset)\leq  (F',\emptyset)} \vert \widetilde{Z}^{\mol}_F (F'') \vert$.
Hence (following the notation of \cite{Sta}, to which we refer for basics about  M\"obius transforms),
 $\vert \widetilde{Z}^{\mol}_F (F') \vert = (\mu  \Psi)(F',\emptyset)$, the evaluation at $(F',\emptyset)$ of the M\"obius transform of the function 
 $$\Psi : P[R,F',\emptyset] \to \mathbb Z, \quad (F'',\emptyset) \mapsto \vert Z^{\mol}(F'',\emptyset)\cap \mathcal X(F)\vert  = m(R\cup F'')/m(R)$$ (where the equality holds by   Lemma \ref{lem:quot}).
 By the same computation as in the proof of Lemma \ref{lem:PP}, the M\"obius transform $(\mu \Psi)$ then satisfies 
 $$(\mu \Psi)(F',\emptyset) = \rho(R,R\cup F')/m(R)$$
whence the claim.
\end{proof}

\begin{proposition}\label{prop:iota}
  Let $\GS$ be almost-arithmetic and let $\mol:=(R,F,\emptyset)$
  be a molecule of $\SS_{\GS}$. Then, with the notations of Definition \ref{def:iota},
$$
\sum_{F'\subseteq F} \frac{\rho(R,R\cup F')}{m_{\GS}(R)} x^{\vert
  F\setminus F'
  \vert}
= \sum_{\oo\in \mathcal{X} (F)} x^{\iota (\oo)}.
$$
\end{proposition}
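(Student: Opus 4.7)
The plan is to recognize that this statement follows almost directly from Lemma \ref{lem:Zr} together with the partition structure built into Definition \ref{def:iota}. I would first recall that the family $\{\widetilde{Z}_F^{\mol}(F')\}_{F'\subseteq F}$ is, by construction, a partition of $\mathcal{X}(F)$, and that the function $\iota$ was defined precisely so as to be constant on each block: if $\oo \in \widetilde{Z}_F^{\mol}(F')$, then $\iota(\oo) = |F\setminus F'|$.

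The second step is a straightforward rearrangement of the right-hand side of the claimed identity. Using the partition above together with the constancy of $\iota$ on each block, I would write
\begin{equation*}
\sum_{\oo\in \mathcal{X}(F)} x^{\iota(\oo)}
\;=\; \sum_{F'\subseteq F} \sum_{\oo\in \widetilde{Z}_F^{\mol}(F')} x^{|F\setminus F'|}
\;=\; \sum_{F'\subseteq F} \bigl|\widetilde{Z}_F^{\mol}(F')\bigr|\, x^{|F\setminus F'|}.
\end{equation*}

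The third and final step is to substitute the cardinality of $\widetilde{Z}_F^{\mol}(F')$ computed in Lemma \ref{lem:Zr}, namely $|\widetilde{Z}_F^{\mol}(F')| = \rho(R,R\cup F')/m_{\GS}(R)$, obtaining the left-hand side of the proposition. There is essentially no obstacle here: all the combinatorial work has been absorbed into Definition \ref{def:iota} (setting up the partition and the statistic $\iota$) and into Lemma \ref{lem:Zr} (the M\"obius-inversion identification of block sizes with values of $\rho$). The proposition itself is then a one-line consequence, serving as the "enumeration on $\mathcal{P}$" counterpart to Proposition \ref{cor:eta}, which handled molecules of the form $(R,\emptyset,T)$.
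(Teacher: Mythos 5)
Your proposal is correct and is essentially identical to the paper's own proof: both use the partition $\{\widetilde{Z}_F^{\mol}(F')\}_{F'\subseteq F}$ of $\mathcal{X}(F)$ from Definition \ref{def:iota}, the fact that $\iota$ takes the value $|F\setminus F'|$ on each block, and the cardinality formula from Lemma \ref{lem:Zr}. The paper merely writes the chain of equalities starting from the left-hand side rather than the right, but the argument is the same.
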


\begin{proof} The proof reduces to the following direct computation, where the first equality is Lemma \ref{lem:Zr}.
\begin{align*}
\sum_{F'\subseteq F} \frac{\rho(R,R\cup F')}{m_{\GS}(R)} x^{\vert
  F\setminus F'
  \vert}
&=\sum_{F'\subseteq F} \vert \widetilde{Z}^{\mol}_F (F') \vert x^{\vert
  F\setminus F'
  \vert}\\
&=\sum_{F'\subseteq F} \sum_{\oo\in \widetilde{Z}^{\mol}_F (F')} x^{\vert
  F\setminus F'
  \vert}
= \sum_{\oo\in \mathcal{X} (F)} x^{\iota (\oo)}
\end{align*}
\end{proof}

\section{Arithmetic actions}\label{sec:arithm}
In this section we assume that the actions under consideration are
arithmetic. A glance back at Definition \ref{def:args} will remind the reader that this assumption is much more restrictive (and more algebraic in nature) than
almost-arithmetic. 
\begin{lemma}\label{lem:unabrep}
 Let $\GS$ be an arithmetic $G$-semimatroid and consider $A\subseteq \ES$. Then, for any two $X,Y\in \oC{A}$, 
 \begin{center}
 (i) $\Gamma(X)=\Gamma(Y),\quad$ (ii) $\Gamma^X= \Gamma^Y$.
 \end{center}
\end{lemma}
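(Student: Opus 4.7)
The plan is to prove both statements by reducing to the observation that, under a normal \ST action, the stabilizer of a point depends only on its orbit. Note that throughout the proof only almost-arithmeticity of $\GS$ is needed (arithmeticity implies this by Definition \ref{def:normalA}).

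For part (i), the statement $\Gamma(X) = \Gamma(Y)$ is equivalent to $\stab(X) = \stab(Y)$ by the very definition of $\Gamma(X) := G/\stab(X)$. But $X, Y \in \oC{A}$ means precisely that $\underline{X} = \underline{Y} = A$, so $Y \in \oC{\underline{X}}$, and Lemma \ref{lem:stabs}.(a) (which applies since almost-arithmetic actions are normal) gives $\stab(X) = \stab(Y)$ directly. Quotienting by this common subgroup yields $\Gamma(X) = \Gamma(Y)$.

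For part (ii), I will first show the auxiliary fact that for a single element $s \in S$, the stabilizer $\stab(s)$ depends only on the orbit $Gs$. Indeed, for any $g \in G$, we have $\stab(gs) = g\,\stab(s)\,g^{-1}$, and normality of $\stab(s)$ in $G$ forces $g\,\stab(s)\,g^{-1} = \stab(s)$. Consequently $\Gamma(s) = \Gamma(s')$ whenever $Gs = Gs'$, and we may unambiguously write $\Gamma(a)$ for the common value as $a$ ranges over $\ES$. Next, since the action is \ST, each central set meets any given orbit in at most one point; combined with $\underline{X} = \underline{Y} = A$, this yields canonical bijections $X \leftrightarrow A \leftrightarrow Y$, say $X = \{x_a\}_{a \in A}$ and $Y = \{y_a\}_{a \in A}$ with $Gx_a = Gy_a = a$. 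The computation
\[
\Gamma^X = \prod_{x \in X} \Gamma(x) = \prod_{a \in A} \Gamma(x_a) = \prod_{a \in A} \Gamma(a) = \prod_{a \in A} \Gamma(y_a) = \prod_{y \in Y} \Gamma(y) = \Gamma^Y
\]
then concludes the proof.

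There is no real obstacle here: both claims are essentially bookkeeping consequences of Lemma \ref{lem:stabs}.(a) and the normality assumption built into almost-arithmeticity. The subtlest point is the identification of indexing sets in (ii), which is handled by the \ST property as just described.
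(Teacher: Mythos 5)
Your proof is correct and follows essentially the same route as the paper: part (i) is Lemma \ref{lem:stabs}.(a) applied directly, and part (ii) uses \STy to index both $X$ and $Y$ canonically by $A$ and then compares factorwise. The only cosmetic difference is that where you re-derive $\stab(gs)=g\stab(s)g^{-1}=\stab(s)$ from normality to get $\Gamma(x_a)=\Gamma(y_a)$, the paper simply cites part (i) applied to the singletons $\{x_a\},\{y_a\}\in\oC{\{a\}}$ — the same fact, obtained in one line.
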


\newcommand{\eqdef}{\overset{\mathrm{def}}{=\joinrel=}}

\begin{proof}
  Fix two sets $X,Y\in \oC{A}$ as in the claim. By Lemma \ref{lem:stabs}.(a), $\stab(X)=\stab(Y)$, hence immediately $\Gamma(X)=\Gamma(Y)$. Moreover, since every arithmetic action is \ST,  $X$ and $Y$ contain exactly one element $x_a$ resp.\ $y_a$ of every
  orbit in $A$: in fact, $X=\{a\in A \mid x_a\}$, $Y=\{a\in A \mid y_a \}$.
  In order to prove (ii), we recall Definition \ref{not:puntino} and compute 
  \begin{center}
    $\Gamma^X\eqdef \prod_{a\in A} \Gamma(x_a)=\prod_{a\in A} \Gamma(y_a)\eqdef\Gamma^Y$, 
  \end{center}
where the equality in the middle is part (i) applied to $X=\{x_a\}$, $Y=\{y_b\}$.
\end{proof}

In particular, for arithmetic actions we can simplify Definition \ref{not:puntino} as follows.

 \begin{definition}
  Given $A\in
  \underline \CC$, choose $X\in \oC{A}$ and write 
  $$\Gamma^A:=\Gamma^X,\quad\Gamma(A):=\Gamma(X),$$
    By Lemma \ref{lem:unabrep}, these are well-defined and independent from the choice of $X$.
  \end{definition}

\begin{lemma}\label{lem:Wunabrep}
 Let $\GS$ be an arithmetic $G$-semimatroid, consider $A\in \underline{\CC}$ and pick any two $X,Y\in\oC{A}$. Then,
 \begin{itemize}
 \item[(i)]  $W(X)$ and $W(Y)$ are conjugated subgroups of $\Gamma^A$.
 \item[(ii)] $\mm(A) = [W(X):h_X(G)]$
 \end{itemize}
 
\end{lemma}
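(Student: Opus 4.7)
The plan is to exploit the fact that any two representatives $X,Y\in\oC{A}$ differ by a ``translation'' inside $\Gamma^A$, and then compare $W(X)$ and $W(Y)$ through this translation.

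First I would pick, for each $a\in A$, the unique element $x_a\in X\cap a$ and $y_a\in Y\cap a$ (unique by translativity), choose $g_a\in G$ with $y_a=g_a x_a$, and assemble $\alpha:=([g_a]_a)_{a\in A}\in\Gamma^A$. By construction $\alpha.X=Y$, and since $Y$ is central this already yields $\alpha\in W(X)$. The key computational step is then to verify that for every $\delta\in\Gamma^A$,
\[
\delta.Y=(\delta\alpha).X,
\]
which reduces coordinatewise to the identity $\delta_a(g_a x_a)=(\delta_a[g_a]_a).x_a$ (here normality of $\stab(x_a)$ is used to ensure the $\Gamma(a)$-action on the full orbit $a$ is well-defined). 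Consequently
\[
W(Y)=\{\delta\in\Gamma^A\mid \delta\alpha\in W(X)\}=W(X)\alpha^{-1}.
\]
Since $\GS$ is arithmetic, $W(X)$ is a subgroup of $\Gamma^A$; combined with $\alpha\in W(X)$ this forces $W(Y)=W(X)$, which proves (i) (with conjugation being trivial).

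For (ii), I would introduce the comparison map
\[
\Phi: W(X)\longrightarrow \oC{A},\qquad \gamma\mapsto\gamma.X,
\]
and show it is a bijection. Surjectivity is exactly the construction of $\alpha$ from (i) applied to an arbitrary $Y\in\oC{A}$. Injectivity follows from the fact that each $\Gamma(a)$ acts freely on $a$, a direct consequence of translativity together with normality: if $\gamma_a.x_a=\gamma'_a.x_a$ then $\gamma_a(\gamma'_a)^{-1}\in\stab(x_a)/\stab(x_a)$ is trivial in $\Gamma(a)$. I would then transport the $G$-action on $\oC{A}$ through $\Phi$: the short computation $g(\gamma_a.x_a)=([g]_a\gamma_a).x_a$ shows that this transported action is left multiplication by $h_X(g)$ on $W(X)$. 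Hence the $G$-orbits on $\oC{A}$ correspond bijectively to the left cosets of $h_X(G)$ in $W(X)$, giving
\[
\mm(A)=\vert \oC{A}/G\vert =[W(X):h_X(G)].
\]

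I expect no real difficulty beyond careful bookkeeping. The only subtle points are verifying that the action of each $\Gamma(a)$ on the whole orbit $a$ is well-defined and free (which is exactly where normality and translativity enter) and recognizing that the identity $W(Y)=W(X)\alpha^{-1}$ collapses to $W(Y)=W(X)$ only because the arithmetic hypothesis makes $W(X)$ a subgroup and $\alpha\in W(X)$. Once these are in place the rest is a direct computation and a standard orbit--stabilizer argument.
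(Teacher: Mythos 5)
Your proof is correct and follows essentially the same route as the paper's: introduce the ``translation'' $\alpha\in\Gamma^A$ with $\alpha.X=Y$, observe that $\alpha\in W(X)$ and use the group structure of $W(X)$, and for (ii) exhibit the $G$-equivariant bijection between $W(X)$ and $\oC{A}$ (your $\Phi$ is precisely the inverse of the paper's map $b_X$). Your packaging of (i) via the right-coset identity $W(Y)=W(X)\alpha^{-1}$ is marginally cleaner and even yields the sharper conclusion $W(Y)=W(X)$, of which the conjugacy claimed in the statement is a trivial consequence.
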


\begin{proof}$\,$

\begin{itemize}
\item[(i)]
 For every $a\in A$ choose $g_a\in G$ with $x_a=g_a(y_a)$; with this, define the $A$-tuple $\gamma_{YX}:=([g_a])_{a\in A}\in \Gamma^A$. We have immediately 
  \begin{center}
  ($\ast$) $X=\gamma_{YX}.Y$, hence ($\ast\ast$) $\gamma_{YX}\in W(Y)$.
  \end{center}
   \begin{itemize}
   \item[{\em Claim.}] $W(X)=\gamma_{YX}W(Y)\gamma_{YX}^{-1}$, hence $W(X)$ and $W(Y)$ are conjugate in $\Gamma^A$.
   \item[{\em Proof.}]
        By symmetry, it is enough to show $\gamma_{YX}W(Y)\gamma_{YX}^{-1}\subseteq W(X)$. Let, thus, $\gamma\in W(Y)$. For arithmetic actions multiplication is well defined in the group $W(Y)$, thus ($\ast\ast$) implies $\gamma_{YX}\gamma\in W(Y)$. With this, 
    $$
    (\gamma_{YX}\gamma{\gamma_{YX}}^{-1}).X 
    \overset{\mathrm{(}\ast\mathrm{)}}{=} 
    (\gamma_{YX}\gamma).Y \in \CC
    $$ and therefore $(\gamma_{YX}\gamma{\gamma_{YX}}^{-1})\in W(X)$. \qed
   \end{itemize}
\item[(ii)] The choice of $X$ fixes a function
\begin{equation}\label{eqqdefb}
b_X:\oC{A} \to W(X),\quad \{g_x x\mid x\in X\} \mapsto ([g_x]_x)_{x\in
  X}
\end{equation}
which is easily seen to be bijective. Moreover, for every $g\in G$ and $Y\in \oC{A}$,
\begin{equation}\label{eqdefb}
b_X(gY) = h_X(g)b_X(Y).
\end{equation}
Thus $b_X$ induces a bijection of sets 
$\oC{A}/G \to W(X)/h_X(G)$ mapping an orbit $GY$ to the coset $h_X(G)b_X(Y)$. We now compute
$$
\mm(A) = \vert \oC{A}/G \vert = \vert W(X)/h_X(G) \vert = [W(X):h_X(G)].
$$

   \end{itemize}

\end{proof}

\begin{definition}\label{def:wneu}
Let $\GS$ be an arithmetic $G$-semimatroid, and consider $A\in \underline{\CC}$. Choose $X\in\oC{A}$ and $x_0$. The projection $\Gamma^X \to \Gamma^{X\setminus x_0}$ induces a group homomorphism 
$$
w_{X,x_0}: W(X) \to W(X\setminus x_0), \quad ([g_x]_x)_{x\in X} \mapsto ([g_x]_x)_{x\in X\setminus x_0}
$$
\end{definition}

\begin{remark} Let $\GS$ be arithmetic. Consider $A\in \underline{\CC}$ and $a_0\in A$, choose $X\in \oC{A}$, and let $x_0\in a_0\cap X$. 
The following diagram is commutative
\begin{equation}\label{diag}
\begin{CD}
   \oC{A} @>{b_{X}}>> W(X) @<{h_X}<< G \\          
   @V{w_{A,a_0}}VV     @V{\setmap{X}{x_0}}VV @|\\
  \oC{A\setminus a_0} @>{b_{X\setminus x_0}}>> W(X\setminus x_0) @<{h_{X\setminus x_0}}<< G \\
\end{CD}
\end{equation}
where the maps $b_{\ast}$ are defined in Equation \eqref{eqqdefb}.\end{remark}

\subsection{Arithmetic matroids} Theorem \nolinebreak\ref{thm:arithm} follows easily from the next lemma,
which proves that arithmetic actions induce the last of the defining
properties of arithmetic matroids which was not fulfilled by
almost-arithmetic actions (Example \ref{ex:noarithm} shows that this difference
is nontrivial).

\begin{lemma}\label{lem:lari}
  Let $\GS$ be a $G$-semimatroid associated to an arithmetic
  action. Then $\mm$ satisfies property (A.1.1) of Definition \ref{def:AM}.
\end{lemma}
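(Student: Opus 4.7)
The plan is to translate the divisibility $\mm(A\cup e)\mid\mm(A)$ into a statement about indices of subgroups of the $W(\cdot)$ groups via Lemma \ref{lem:Wunabrep}(ii), and then invoke Lagrange's theorem.

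First I would set $A':=A\cup e$ and pick $X\in\oC{A'}$, $x_0\in e\cap X$, and $Y:=X\setminus x_0\in\oC{A}$. The hypothesis $\rkE(A')=\rkE(A)$ combined with Corollary \ref{cor:rank} gives $\rk(X)=\rk(Y)$, so Lemma \ref{lem:inj-surj}(c) tells me that the set map $w_{A',e}:\oC{A'}\to\oC{A}$ is injective. Using the commutative diagram \eqref{diag} and the bijectivity of the horizontal maps $b_X,b_Y$, this injectivity transfers to the group homomorphism $w_{X,x_0}:W(X)\to W(Y)$ of Definition \ref{def:wneu}, which exists precisely because $\GS$ is arithmetic.

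The key observation I would exploit is that commutativity of the right-hand square of \eqref{diag} forces $w_{X,x_0}\circ h_X=h_Y$, so $w_{X,x_0}(h_X(G))=h_Y(G)$; together with the injectivity of $w_{X,x_0}$ this yields $w_{X,x_0}^{-1}(h_Y(G))=h_X(G)$. Identifying $W(X)$ with its image in $W(Y)$, I obtain the subgroup chain $h_Y(G)\leq w_{X,x_0}(W(X))\leq W(Y)$, and Lagrange's theorem gives
$$
[W(Y):h_Y(G)]=[W(Y):w_{X,x_0}(W(X))]\cdot[W(X):h_X(G)].
$$
Lemma \ref{lem:Wunabrep}(ii) now rewrites the outer indices as $\mm(A)$ and $\mm(A')$ respectively, so $\mm(A)=[W(Y):w_{X,x_0}(W(X))]\cdot\mm(A')$, which is the desired divisibility.

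I do not expect a substantive obstacle here: once the group-theoretic reformulation is in place, the argument is essentially a direct application of Lagrange. The only point worth flagging is that the arithmetic hypothesis is genuinely used twice, namely to form the groups $W(\cdot)$ and the homomorphism $w_{X,x_0}$, and then to apply Lemma \ref{lem:Wunabrep}(ii) to both $A$ and $A'$.
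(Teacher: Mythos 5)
Your argument is correct and is essentially the same as the paper's: both prove injectivity of $w_{X,x_0}$ from the rank hypothesis via Lemma \ref{lem:inj-surj} and diagram \eqref{diag}, identify the multiplicities as indices via Lemma \ref{lem:Wunabrep}(ii), and conclude by multiplicativity of the index in the chain $h_{X\setminus x_0}(G)\subseteq \im w_{X,x_0}\subseteq W(X\setminus x_0)$. The only cosmetic difference is a relabeling of the sets ($A,A'$ versus the paper's $A\setminus a_0,A$).
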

  
\begin{proof}
Consider $A\in \underline \CC $ and $a_0\in A$ such that $\rkE(A\setminus a_0)
= \rkE(A)$. Choose $X\in \oC{A}$ and $x_0\in a_0\cap X$.
Using Lemma \ref{lem:Wunabrep}.(ii) we have $\mm(A\setminus a_0) = [W(X\setminus x_0):h_{X\setminus x_0}(G)]$.

 By Lemma \ref{lem:inj-surj}, the condition on the ranks implies that $w_{A,a_0}$ is injective. Commutativity of the left-hand side square in Diagram \eqref{diag} implies that $\setmap{X}{x0}$ is injective. Therefore (using again Lemma \ref{lem:Wunabrep})  we can write
  $$\mm(A)=[W(X):h_{X}(G)]=[\im(\setmap{X}{x_0}): h_{X\setminus x_0}(G)].$$
    Now the claim follows
  from multiplicativity of the index in the chain of subgroups $h_{X\setminus x_0} (g) \subseteq \im(\setmap{X}{x_0}) \subseteq W(X) $, which allows us to write
  
  $$\mm(A\setminus a_0)=[W(X\setminus
  x_0): \im(\setmap{X}{x_0})]\mm(A)$$
  proving in particular that $\mm(A)$ divides $\mm(A\setminus a_0)$, as claimed. 
\end{proof}

%
%
%
%

\subsection{Matroids over rings}
\label{sec:algebraic}

We now outline a link to the theory of matroids over rings. 
We focus mainly on matroids over the ring $\mathbb Z$ both for conciseness' sake and because this is the case most strongly related to arithmetic matroids (see \cite[Section 6.1]{FM}). Our goal is to give a direct combinatorial interpretation of some matroids over $\mathbb Z$ arising from group actions on semimatroids (and, in particular, from toric arrangements).

\def\cyc{Cyc}

With this in mind, from now we will let 
$\GS$ 
denote an arithmetic $G$-semimatroid and consider the following condition.
\begin{itemize}
\item[(\cyc)] For every $e\in \ES $, $\Gamma^{\{e\}}$ is a cyclic group.
\end{itemize}
\begin{remark}\label{rem:WurA}
An immediate consequence of (\cyc) is that, for every $A\subset \ES$, the group $\Gamma^A$ is abelian. In particular, Lemma \ref{lem:Wunabrep}.(i) implies $W(X)=W(Y)$ and $h_X=h_Y$ for all $X,Y\in \oC{A}$. 
\end{remark}

\begin{definition}\label{def:allcan}
Let $\GS$ denote an arithmetic $G$-semimatroid. Then, for every $A \subseteq \ES$ and every $a_0\in A$ we have the following canonical group homomorphisms.
\begin{itemize}
\item[(i)] $g_{A,a_0}: \Gamma(A) \to \Gamma(A\setminus a_0)$, induced by the inclusion $\stab(A) \subseteq \stab(A\setminus a_0)$.
\item[(ii)] $\pi_{A,a_0}: \Gamma^{A} \to \Gamma^{A\setminus a_0}$, the canonical projection along the $a_0$-coordinate.
\end{itemize}
When (\cyc) holds and if $A\in\underline{\CC}$, with Remark \ref{rem:WurA} we can set $W(A):=W(X)$ and $h_A:=h_X$ (see Definition \ref{not:puntino}), where $X$ is any element $X\in \oC{A}$. We then have more canonical homomorphisms.
\begin{itemize}
\item[(iii)] $w_{A,a_0}: W(A) \to W(A\setminus a_0)$, induced by $\pi_{A,a_0}$ and equal to the map of Definition \ref{def:wneu} (see Remark \ref{an}.(a) below). 
\item[(iv)] $j'_A: \Gamma(A) \to W(A)$, induced by $h'_A$, and $j_A: \Gamma(A) \to \Gamma^A$, induced by $h_A$ (see Remark \ref{an}.(b) below). 
\end{itemize}

\end{definition}
\begin{remark}\label{an}$\,$
\begin{itemize}
\item[(a)] The maps $w_{A,a_0}$ defined in (iii) above should be regarded as the natural "enriched" version of their namesakes from Definition \ref{def:WA}. In fact, as maps of sets, the two correspond via the natural bijections $b_A: \oC{A} \to W(A)$ (cf.\ Equation \eqref{eqqdefb}). More precisely the following diagram (of sets) commutes.

\begin{center}
\begin{tikzcd}
\oC{A} 
\arrow{rrr}{\setmap{A}{a_0}}[swap]{\textrm{Definition \ref{def:WA}}} 
\dar{b_A}
&&&
\oC{A\setminus a_0} 
\dar{b_{A\setminus a_0}} \\
W(A) 
\arrow{rrr}{w_{A,a_0}}[swap]{\textrm{Definition \ref{def:allcan}.(iii)}}
&&& 
W(A\setminus a_0)
\end{tikzcd}
\end{center}
\item[(b)] The homomorphisms $j_A$ and $j'_A$ of Definition \ref{def:allcan}.(iv) are well defined and injective. In fact, since $\ker h_A = \ker h'_A = \stab(A)$, both $h_A$ and $h'_A$ factor uniquely by injective maps through the quotient $q: G\to \Gamma(A)$. We summarize with the following diagram.

\begin{center}
\begin{tikzcd}
G \arrow{rr}{{h}_A}\arrow{dd}[swap]{q}\arrow[ddrr, pos=.3, outer sep=-2pt, "h'_A"] 
&& W(A) \arrow[dd, hook, "\iota"] \\
&&\\
\Gamma(A)\arrow[rr, swap, tail, "j_A"] 
\arrow[uurr, tail, crossing over, pos=0.3, outer sep=-2pt, swap, "j'_A"]
 && \Gamma^A 
\end{tikzcd}
\end{center}
\end{itemize}
\end{remark}

\begin{definition}\label{def:muA}
Given an arithmetic $G$-semimatroid $\GS$ satisfying (\cyc) define, for every $A\subseteq \ES$ such that $\Ac\in \underline{\CC}$, an abelian group
$$
M_\GS (A) := \Gamma^{A^c} / \im (h'_{A^c}).
$$
Moreover, for every $a_0\in \ES$ let $\mu_{A,a_0}: M_\GS(A) \to M_{\GS}(A\cup a_0)$ be the unique group homomorphism that makes the following diagram of short exact sequences commute.
\begin{equation}\label{diag:defM}
\begin{tikzcd}
0 \rar & 
    \Gamma(A^c) \arrow{rr}{j_{A^c}} \dar{g_{A^c,a_0}} && 
    \Gamma^{A^c} \rar \dar{\pi_{A,a_0}} &
    M_\GS(A) \rar \dar[dashed]{\mu_{A,a_0}} \rar & 0\\
0 \rar & 
    \Gamma(A^c\setminus a_0) \arrow{rr}{j_{A^c\setminus a_0}} & &
    \Gamma^{A^c\setminus a_0} \rar  &
    M_\GS(A\cup a_0) \rar  \rar & 0\\    
\end{tikzcd}
\end{equation}

\end{definition}

\begin{lemma}\label{lem:gpwm}
Let $\GS$ be arithmetic, suppose that (\cyc) holds, and recall Definition \ref{def:allcan}. Then, for every $A\subseteq \ES$ and every $a_0\in \ES$, \begin{itemize}
 \item[(i)] $g_{A,a_0}$ is surjective with cyclic kernel;
 \item[(ii)] $\pi_{A,a_0}$ is surjective with cyclic kernel;
 \item[(iii)] $w_{A,a_0}$ is surjective with cyclic kernel;
 \item[(iv)] $\mu_{A,a_0}$ is surjective with cyclic kernel.   
 \end{itemize}
\end{lemma}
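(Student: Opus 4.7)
The plan is to prove the four parts in order, with (i) and (ii) being straightforward manipulations with stabilizers and direct products of cyclic groups, (iv) following from (i) and (ii) by a snake-lemma argument, and (iii) requiring additional care for the surjectivity claim. For (i), I would first record that almost-arithmeticity (implied by arithmeticity) supplies two facts: \STy gives $\stab(A)=\bigcap_{a\in A}\stab(a)=\stab(a_0)\cap\stab(A\setminus a_0)$, and normality makes every stabilizer normal in $G$. Hence $g_{A,a_0}$ is a quotient map of $G$ between nested normal subgroups and is therefore surjective; its kernel is $\stab(A\setminus a_0)/\stab(A)=\stab(A\setminus a_0)/\bigl(\stab(a_0)\cap\stab(A\setminus a_0)\bigr)$, which the second isomorphism theorem identifies with the subgroup $\stab(a_0)\stab(A\setminus a_0)/\stab(a_0)$ of $\Gamma(a_0)=G/\stab(a_0)$, hence cyclic by $(\cyc)$.

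For (ii), the map $\pi_{A,a_0}$ is literally the coordinate projection $\prod_{a\in A}\Gamma(a)\twoheadrightarrow\prod_{a\in A\setminus a_0}\Gamma(a)$, which is surjective with kernel $\Gamma(a_0)$, cyclic by $(\cyc)$. For (iii), $w_{A,a_0}$ is the restriction of $\pi_{A,a_0}$ to $W(A)$, so its kernel is $W(A)\cap\Gamma(a_0)$, a subgroup of the cyclic group $\Gamma(a_0)$ and therefore cyclic. For surjectivity I would invoke the commutative diagram of Remark~\ref{an}(a), in which the bijections $b_A,b_{A\setminus a_0}$ of Equation~\eqref{eqqdefb} identify $w_{A,a_0}$ with the set-theoretic map $\setmap{A}{a_0}$; Lemma~\ref{lem:inj-surj}\ref{lem:surj} then yields surjectivity whenever $\rkE(A)=\rkE(A\setminus a_0)+1$.

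For (iv), the plan is to apply the snake lemma to the commutative diagram~\eqref{diag:defM}, which consists of two horizontal short exact sequences joined by the vertical maps $g_{A^c,a_0}$, $\pi_{A,a_0}$, and $\mu_{A,a_0}$; by parts (i) and (ii) the first two verticals are surjective with cyclic kernel, so $\operatorname{coker} g_{A^c,a_0}=\operatorname{coker}\pi_{A,a_0}=0$. The snake lemma then produces the six-term exact sequence
\begin{equation*}
0\to\ker g_{A^c,a_0}\to\ker\pi_{A,a_0}\to\ker\mu_{A,a_0}\to\operatorname{coker} g_{A^c,a_0}\to\operatorname{coker}\pi_{A,a_0}\to\operatorname{coker}\mu_{A,a_0}\to 0,
\end{equation*}
which forces $\operatorname{coker}\mu_{A,a_0}=0$ (so $\mu_{A,a_0}$ is surjective) and exhibits $\ker\mu_{A,a_0}$ as a quotient of the cyclic group $\ker\pi_{A,a_0}$, hence cyclic.

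The hard part will be the surjectivity claim in (iii) in the rank-preserving case $\rkE(A)=\rkE(A\setminus a_0)$: there Lemma~\ref{lem:inj-surj}\ref{lem:inj} only yields injectivity of $\setmap{A}{a_0}$, so the reduction via Remark~\ref{an}(a) does not immediately conclude. Carrying out the surjectivity in this regime should require the group-theoretic content of arithmeticity (that $W(A)$ is a subgroup of $\Gamma^A$) combined with the rank-preservation hypothesis, in order to lift an arbitrary $\gamma\in W(A\setminus a_0)$ to an element of $W(A)$; this is the step I expect to carry most of the technical weight.
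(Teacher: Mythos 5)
Your handling of parts (i), (ii) and (iv) is correct and, in spirit, matches the paper. The paper packages (i) and (iv) together into a single nine-lemma argument: it adjoins kernels and cokernels to the morphism of short exact sequences in Definition~\ref{def:muA} to get a $3\times 3$ diagram, checks exactness of the bottom cokernel row to deduce $\operatorname{coker}\mu_{A,a_0}=0$, and then reads cyclicity of $\ker g_{A^c,a_0}$ and $\ker\mu_{A,a_0}$ off the exactness of the resulting top kernel row. Your version replaces this by a direct second-isomorphism-theorem identification of $\ker g_{A,a_0}$ with a subgroup of $\Gamma(a_0)$, and a snake-lemma argument for (iv); the content is the same and both are valid.

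Your hesitation about part (iii) is well placed, and should be promoted from a doubt to an objection: in the rank-preserving case $\rkE(A)=\rkE(A\setminus a_0)$, surjectivity of $w_{A,a_0}$ is in general \emph{false}, and the ``technical weight'' you anticipated cannot be carried. Via the bijections $b_\ast$ of Remark~\ref{an}(a), surjectivity of $w_{A,a_0}$ is equivalent to surjectivity of $\setmap{A}{a_0}$, which in the rank-preserving case would force $\mm(A)=\mm(A\setminus a_0)$; yet axiom (A.1.1) only grants divisibility, and this divisibility is often strict. In the paper's own running example one has $\rkE(\{a,b,c\})=\rkE(\{a,b\})=2$ while $\mm(\{a,b,c\})=1$ and $\mm(\{a,b\})=4$, so $w_{\{a,b,c\},c}$ is injective with image of index $4$ in $W(\{a,b\})$, not surjective; the same discrepancy is exactly what the index $[W(X\setminus x_0):\operatorname{im}(\setmap{X}{x_0})]$ quantifies in the proof of Lemma~\ref{lem:lari}. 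The paper's one-line justification of (iii), namely that $w_{A,a_0}$ ``is the restriction of $\pi_{A,a_0}$,'' only yields that its kernel is a subgroup of the cyclic group $\ker\pi_{A,a_0}$ and hence cyclic; it does not give surjectivity onto $W(A\setminus a_0)$. Reassuringly, this overstatement does not propagate: Lemma~\ref{lem:Mpushout} and Proposition~\ref{prop:D1} rely only on parts (i), (ii) and (iv), and those you have established. If one insists on a version of (iii), it should either be restricted to the rank-dropping case $\rkE(A\setminus a_0)=\rkE(A)-1$ (where your appeal to Lemma~\ref{lem:inj-surj}\ref{lem:surj} via Remark~\ref{an}(a) does work), or the surjectivity claim should be dropped, retaining only cyclicity of the kernel.
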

\begin{proof}
Part (ii) is clear from (\cyc), and implies Part (iii) since $w_{A,a_0}$ is the restriction of $\pi_{A,a_0}$ to $W(A)$. Surjectivity of $g_{A,a_0}$ is also evident from the definition. With these preliminary remarks we can complete the diagram in Definition \ref{def:muA} with the kernels and cokernels of the vertical maps, obtaining the diagram in Figure \ref{fig:GGG}. 
\begin{figure}[ht]
\begin{center}
\begin{tikzcd}
& 0 \dar & 0 \dar & 0 \dar & \\
  0\rar   & \ker(g_{A^c,a_0}) \dar\rar & \ker(\pi_{A^c,a_0}) \dar\rar  & \ker (\mu_{A,a_0})\dar\rar & 0 \\
0 \rar & 
    \Gamma(A^c) \arrow{r} \dar{g_{A^c,a_0}} &
    \Gamma^{A^c} \rar \dar{\pi_{A,a_0}} &
    M_\GS(A) \rar \dar{\mu_{A,a_0}} \rar & 0\\
0 \rar & 
    \Gamma(A^c\setminus a_0) \arrow{r}{}\dar & 
    \Gamma^{A^c\setminus a_0} \rar\dar  &
    M_\GS(A\cup a_0) \rar  \rar\dar & 0\\    
&  0\rar[dashed]  & 0\rar[dashed] & \operatorname{coker} (\mu_{A,a_0})\rar[dashed] & 0
\end{tikzcd}
\end{center}
\caption{Diagram for the proof of Lemma \ref{lem:gpwm}.}
\label{fig:GGG}
\end{figure}
We first check that the bottom row (dashed) is exact and thus we obtain $\operatorname{coker} (\mu_{A,a_0}) = 0$. Then, the nine lemma implies that the top row is exact: since we know that $\ker(\pi_{A^c,a_0})$ is cyclic, we can thus deduce ciclicity of $\ker(g_{A^c,a_0})$ and $\ker(\pi_{A,a_0})$. This concludes the proof of (i) and (iv).
\end{proof}

\begin{lemma}\label{lem:Mpushout}
Let $\GS$ be arithmetic and suppose that (\cyc) holds. Then, for every $A\subseteq \ES$ such that $A^c\in \underline{\CC}$ and every $a_0,b_0\in \ES$, the following is a pushout square.
\begin{center}
\begin{tikzcd}
    M_\GS(A) \arrow{rr}{\mu_{A,a_0}}\dar{\mu_{A,b_0}} 
        && M_\GS (A\cup \{a_0\})\dar{\mu_{A\cup \{a_0\}, b_0}} \\
    M_\GS(A\cup \{b_0\}) \arrow{rr}{\mu_{A\cup \{b_0\},a_0}} 
        && M_\GS (A\cup \{a_0,b_0\})
\end{tikzcd}
\end{center}
\end{lemma}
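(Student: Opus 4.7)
The plan is to exhibit all four corners of the square as quotients of the single abelian group $\Gamma^{A^c}$, reducing the claim to the elementary fact that for any abelian group $P$ with subgroups $M,N,H$, the square obtained from the four quotients $P/H$, $P/(M+H)$, $P/(N+H)$, $P/(M+N+H)$ is a pushout in the category of abelian groups. The technical heart of the reduction is to recognize that the ``upward'' maps $\mu_{\cdot,\cdot}$ are nothing but further quotients induced by enlarging the subgroup we mod out by.

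Concretely, I would first reduce to the case $a_0, b_0 \in A^c$ (if either lies in $A$, the relevant horizontal or vertical map is the identity, so the square is a trivial pushout). Set $H := \operatorname{im}(h'_{A^c}) \subseteq \Gamma^{A^c}$, $K_a := \ker(\pi_{A^c, a_0})$ and $K_b := \ker(\pi_{A^c, b_0})$ — note $K_a$ and $K_b$ are the natural ``coordinate'' copies of $\Gamma(a_0)$ and $\Gamma(b_0)$ inside $\Gamma^{A^c}$ under the product decomposition afforded by arithmeticity. Since $h'_{A^c \setminus a_0} = \pi_{A^c, a_0} \circ h'_{A^c}$, the surjectivity of $\pi_{A^c, a_0}$ yields
\[
  \operatorname{im}(h'_{A^c \setminus a_0}) = \pi_{A^c, a_0}(H),
\]
from which the first isomorphism theorem gives a canonical identification
\[
  M_\GS(A \cup a_0) \;=\; \Gamma^{A^c \setminus a_0}/\operatorname{im}(h'_{A^c \setminus a_0}) \;\cong\; \Gamma^{A^c}/(K_a + H),
\]
and analogously for $M_\GS(A \cup b_0)$ and $M_\GS(A \cup \{a_0,b_0\})$. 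Under these identifications each $\mu_{\cdot,\cdot}$ is simply the quotient map coming from enlarging the denominator, so commutativity of the square is automatic.

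For the pushout property, suppose $Q$ is an abelian group receiving maps $f_a: \Gamma^{A^c}/(K_a+H) \to Q$ and $f_b: \Gamma^{A^c}/(K_b+H) \to Q$ that agree after pullback to $\Gamma^{A^c}/H$. Composing with the projection $\Gamma^{A^c} \to \Gamma^{A^c}/H$ yields a single homomorphism $\Gamma^{A^c} \to Q$ whose kernel contains both $K_a+H$ and $K_b+H$, hence contains $K_a+K_b+H$, so it factors uniquely through $\Gamma^{A^c}/(K_a+K_b+H) = M_\GS(A\cup\{a_0,b_0\})$; uniqueness of the factorization into the pushout diagram then follows from the surjectivity (Lemma \ref{lem:gpwm}) of the structural maps.

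The main obstacle I anticipate is purely bookkeeping: making sure the identifications of the $M_\GS(\cdot)$'s with quotients of the common group $\Gamma^{A^c}$ are canonical and compatible with the maps $\mu_{\cdot,\cdot}$ as defined through the ladder of short exact sequences in Diagram \eqref{diag:defM}. No new structural input beyond what is already encoded in Definition \ref{def:muA} and Lemma \ref{lem:gpwm} should be required; once the identifications are in place the pushout follows from an elementary universal-property argument in abelian groups.
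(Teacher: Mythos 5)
Your argument is correct, but it takes a genuinely different route from the paper's. The paper fits the defining ladder of short exact sequences from Definition \ref{def:muA} into a cube of short exact sequences and then invokes the auxiliary Lemma \ref{lem:backdoor} of the appendix (a general result about squares of short exact sequences), which transfers the pushout property from the square of ambient groups $\Gamma^{\overline{A}}$ to the square of quotients $M_\GS(\cdot)$. You instead bypass that machinery entirely by identifying all four corners with quotients of the single group $\Gamma^{A^c}$ — using that $K_a=\ker\pi_{A^c,a_0}$ and $K_b=\ker\pi_{A^c,b_0}$ are complementary coordinate factors, so $M_\GS(A\cup a_0)\cong\Gamma^{A^c}/(K_a+H)$ and so on — after which the square collapses to four canonical projections and the pushout property is the elementary universal-property fact about nested quotients. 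Both approaches ultimately rest on the same structural observation (the pushout lives at the $\Gamma^{\cdot}$-level and descends after modding out by $\operatorname{im}(h'_{\cdot})$), but yours is more self-contained and avoids the two-page diagram chase in the appendix; the paper's has the advantage that Lemma \ref{lem:backdoor} abstracts the descent step into a reusable form. Minor remark: your appeal to Lemma \ref{lem:gpwm} for uniqueness at the end is harmless but unnecessary — surjectivity of a quotient map $\Gamma^{A^c}/(K_a+H)\twoheadrightarrow\Gamma^{A^c}/(K_a+K_b+H)$ is automatic once the identification is made.
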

\begin{proof} The morphism of short exact sequences defining the maps $\mu_{\ast,\ast}$ described in Diagram \eqref{diag:defM} can be fit together to a square of short exact sequences as follows, where for simplicity we write $A':=A\cup \{a_0\}$, $A'':=A\cup \{b_0\}$, $A''':=A\cup \{a_0, b_0\}$, so that the right-hand side square is indeed the square appearing in the claim.

\begin{center}
\begin{tikzpicture}
\node (A) at (0,0) {};
\node[right = 0cm of A, anchor=center] (ZZA) {$0$};
\node[right = 2.5cm of A, anchor=center] (GaA) {$\Gamma(\overline{A})$};
\node[right = 5cm of A, anchor=center] (GA) {$\Gamma^{\overline{A}}$};
\node[right = 7.5cm of A, anchor=center] (MA) {$M_\GS(A)$};
\node[right = 10cm of A, anchor=center] (ZA) {$0$};
\draw[->, dashed] (ZZA) -- (GaA);
\draw[->, dashed] (GaA) -- (GA);
\draw[->] (GA) -- (MA);
\draw[->] (MA) -- (ZA);
\node (B) at (-1,-1) {};
\node[right = 0cm of B, anchor=center] (ZZB) {$0$};
\node[right = 2.5cm of B, anchor=center] (GaB) {$\Gamma(\overline{A'})$};
\node[right = 5cm of B, anchor=center] (GB) {$\Gamma^{\overline{A'}}$};
\node[right = 7.5cm of B, anchor=center] (MB) {$M_\GS({A'})$};
\node[right = 10cm of B, anchor=center] (ZB) {$0$};
\draw[->, dashed] (ZZB) -- (GaB);
\node (C) at (1,-2.5) {};
\node[right = 0cm of C, anchor=center] (ZZC) {$0$};
\node[right = 2.5cm of C, anchor=center] (GaC) {$\Gamma(\overline{A''})$};
\node[right = 5cm of C, anchor=center] (GC) {$\Gamma^{\overline{A''}}$};
\node[right = 7.5cm of C, anchor=center] (MC) {$M_\GS({A''})$};
\node[right = 10cm of C, anchor=center] (ZC) {$0$};
\draw[->, dashed] (ZZC) -- (GaC);
\draw[->] (GaC) -- (GC);
\draw[->] (GC) -- (MC);
\draw[->] (MC) -- (ZC);
\node (D) at (0,-3.5) {};
\node[right = 0cm of D, anchor=center] (ZZD) {$0$};
\node[right = 2.5cm of D, anchor=center] (GaD) {$\Gamma(\overline{A'''})$};
\node[right = 5cm of D, anchor=center] (GD) {$\Gamma^{\overline{A'''}}$};
\node[right = 7.5cm of D, anchor=center] (MD) {$M_\GS({A'''})$};
\node[right = 10cm of D, anchor=center] (ZD) {$0$};
\draw[->, dashed] (ZZD) -- (GaD);
\draw[->] (GaD) -- (GD);
\draw[->] (GD) -- (MD);
\draw[->] (MD) -- (ZD);
\draw[->, dashed] (GaA) -- (GaB); 
\draw[draw=white,solid,line width=2mm,fill=white] (GaB) -- (GaD);
\draw[->, dashed] (GaB) -- (GaD);
\draw[->, dashed] (GaA) -- (GaC);
\draw[->] (GaC) -- (GaD);
\draw[->] (GA) -- (GB);
\draw[draw=white,solid,line width=2mm,fill=white] (GB) -- (GD); 
\draw[->] (GB) -- (GD);
\draw[->] (GA) -- (GC);
\draw[->] (GC) -- (GD);
\draw[->] (MA) -- (MB);
\draw[draw=white,solid,line width=2mm,fill=white] (MB) -- (MD); 
\draw[->] (MB) -- (MD);
\draw[->] (MA) -- (MC);
\draw[->] (MC) -- (MD);
\draw[draw=white,solid,line width=2mm,fill=white] (GaB) -- (GB);
\draw[draw=white,solid,line width=2mm,fill=white] (GB) -- (MB);
\draw[draw=white,solid,line width=2mm,fill=white] (MB) -- (ZB);
\draw[->, dashed] (GaB) -- (GB);
\draw[->] (GB) -- (MB);
\draw[->] (MB) -- (ZB);
\end{tikzpicture}
\end{center}
By part (i) and (ii) of Lemma \ref{lem:gpwm}, by exactness of the rows  and with Definition \ref{def:muA}, the part of the diagram drawn with solid arrows satisfies the assumptions of Lemma \ref{lem:backdoor}, which allows us to conclude that the right-hand side square is a pushout square, as was to be shown.
\end{proof}

\begin{proposition}\label{prop:D1}
$M_{\GS}$ is a representable matroid over $\mathbb Z$.
\end{proposition}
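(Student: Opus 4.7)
Before addressing either claim I would record a preliminary observation: under the standing hypothesis that $\GS$ is centered, $\ES \in \underline{\CC}$; since $\underline{\CC}$ is closed under taking subsets (being the image of the simplicial complex $\CC$ under the orbit-collection map $X \mapsto \underline{X}$), this forces $\underline{\CC} = 2^{\ES}$. Consequently $M_\GS(A)$ is defined for every $A \subseteq \ES$ (Definition \ref{def:Emme}), and the hypotheses of Lemma \ref{lem:gpwm}(iv) and Lemma \ref{lem:Mpushout} are satisfied for all choices of $A$.

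The first half of the claim---that $M_\GS$ is a matroid over $\mathbb Z$---would then follow immediately: for every $A \subseteq \ES$ and every pair of distinct elements $e_1,e_2 \in \ES \setminus A$, the four morphisms $\mu_{A,e_1}$, $\mu_{A,e_2}$, $\mu_{A \cup \{e_1\}, e_2}$, $\mu_{A \cup \{e_2\}, e_1}$ of Definition \ref{def:muA} are surjections with cyclic kernels by Lemma \ref{lem:gpwm}(iv), and they assemble into a pushout square by Lemma \ref{lem:Mpushout}. This verifies axiom (R) of Definition \ref{def:mator}.

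For representability, my plan is to set $N := M_\GS(\emptyset) = \Gamma^{\ES}/\im(h'_{\ES})$, which is finitely generated because $\ES$ is finite and every $\Gamma(e)$ is cyclic by hypothesis (\cyc). For each $e \in \ES$ I would fix a generator $\gamma_e$ of $\Gamma(e)$, write $\iota_e : \Gamma(e) \hookrightarrow \Gamma^{\ES}$ for the inclusion in the $e$-th coordinate, and set $x_e \in N$ to be the class of $\iota_e(\gamma_e)$. The claim to be verified is then that, for every $A \subseteq \ES$,
$$M_\GS(A) \;\cong\; N \big/ \sum_{e \in A} \mathbb Z\, x_e.$$

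To verify this I would use the coordinate projection $\pi_A : \Gamma^{\ES} \twoheadrightarrow \Gamma^{\Ac}$, which sits in a short exact sequence $0 \to \Gamma^A \to \Gamma^{\ES} \to \Gamma^{\Ac} \to 0$ with $\Gamma^A = \bigoplus_{a \in A} \iota_a(\Gamma(a))$. By the very definition of $h'_{(\cdot)}$ one has the tautological identity $\pi_A \circ h'_{\ES} = h'_{\Ac}$, so $\pi_A$ descends to a surjection $\bar{\pi}_A : N \to M_\GS(A)$ with kernel $(\Gamma^A + \im h'_{\ES})/\im h'_{\ES}$. Since $\Gamma^A$ is generated as an abelian group by the elements $\{\iota_a(\gamma_a) \mid a \in A\}$, this kernel coincides with $\sum_{a \in A} \mathbb Z\, x_a$, producing the desired isomorphism and proving that $(N,(x_e)_{e\in \ES})$ realizes $M_\GS$. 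The calculation is essentially routine; the only step that needs attention is the identity $\pi_A(\im h'_{\ES}) = \im h'_{\Ac}$, which is immediate because both maps send $g \in G$ to $([g]_a)_{a \in \Ac}$.
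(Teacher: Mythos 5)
Your proof is correct and, for the key axiom (R) of Definition~\ref{def:mator}, uses precisely the two lemmas the paper invokes (Lemma~\ref{lem:gpwm}(iv) for surjectivity and cyclic kernels, Lemma~\ref{lem:Mpushout} for the pushout condition). The paper's stated proof stops there and leaves the representability part implicit; your explicit realization --- setting $N:=M_\GS(\emptyset)=\Gamma^{\ES}/\im h'_{\ES}$, taking $x_e$ to be the class of a chosen generator of the $e$-th coordinate subgroup, and descending the coordinate projection $\pi_A$ modulo $\im h'_{\ES}$ to identify $M_\GS(A)\cong N/\sum_{a\in A}\mathbb Z x_a$ --- is the natural argument, and your key step $\pi_A(\im h'_{\ES})=\im h'_{\Ac}$ is indeed immediate from the definition of $h'$. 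You are also right to flag the centeredness hypothesis: Proposition~\ref{prop:D1} is written without it, but Definition~\ref{def:Emme} and Theorem~\ref{thm:RAM} carry it, and it is exactly what forces $\underline{\CC}=2^{\ES}$ so that $M_\GS(A)$ is defined for every subset $A$ --- without that, $M_\GS$ would be only a partially defined family of modules and would not satisfy the definition of a matroid over a ring.
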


\begin{proof}
This follows combining Lemma \ref{lem:gpwm}.(iv) and Lemma \ref{lem:Mpushout}.
\end{proof}

\begin{lemma}\label{lem:rextr}
Let $\GS$ be an arithmetic $G$-semimatroid such that all groups $\Gamma^a$ are infinite cyclic. Then for all $A\in \underline{\CC}$ the rank of $W(A)$ as a $\mathbb Z$-module is
$$
\operatorname{rank}_{\mathbb Z}(W(A)) = \rkE(A)
$$
\end{lemma}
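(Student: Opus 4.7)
The plan is to proceed by induction on $|A|$, using the surjection with cyclic kernel $w_{A,a_0}\colon W(A)\to W(A\setminus a_0)$ furnished by Lemma \ref{lem:gpwm}.(iii). Since $W(A)$ is a subgroup of $\Gamma^A\cong\mathbb{Z}^{|A|}$, it is free of some finite rank, and the short exact sequence
\[
0\to\ker(w_{A,a_0})\to W(A)\to W(A\setminus a_0)\to 0
\]
reduces the computation of $\operatorname{rank}_{\mathbb{Z}} W(A)$ to $\operatorname{rank}_{\mathbb{Z}} W(A\setminus a_0)$ and the rank of the kernel. Since $\underline{\CC}$ is a simplicial complex (cf.\ Definition \ref{def:FS}), $A\setminus a_0\in\underline{\CC}$, so the inductive hypothesis applies. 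The base case $A=\emptyset$ is immediate, as $\Gamma^{\emptyset}$ is trivial and $\rkE(\emptyset)=0$.

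For the inductive step, pick any $a_0\in A$ and split into two cases according to whether $\rkE(A)=\rkE(A\setminus a_0)$ or $\rkE(A)=\rkE(A\setminus a_0)+1$. In the first case, Lemma \ref{lem:inj-surj}.(c) says that $\setmap{A}{a_0}$ is injective; the commutative diagram of Remark \ref{an}.(a) (relating $w_{A,a_0}$ to $\setmap{A}{a_0}$ via the bijections $b_A$, $b_{A\setminus a_0}$) then forces $w_{A,a_0}$ itself to be injective, and hence (being already surjective) an isomorphism. Thus $\operatorname{rank}_{\mathbb{Z}} W(A)=\operatorname{rank}_{\mathbb{Z}} W(A\setminus a_0)=\rkE(A\setminus a_0)=\rkE(A)$.

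The main obstacle is the second case: I must show that $\ker(w_{A,a_0})$ is not merely cyclic, but in fact all of $\Gamma^{a_0}\cong\mathbb{Z}$. Choose $X\in\oC{A}$ and let $x_0\in a_0\cap X$ (a single element, by \STy). An element of $\ker(w_{A,a_0})$ is a tuple $\gamma=(1,\dots,1,\gamma_{a_0},1,\dots,1)\in\Gamma^A$ (trivial in the $A\setminus a_0$ coordinates) subject to $\gamma.X\in\CC$, and this last condition unfolds to $(X\setminus x_0)\cup(g(x_0))\in\CC$ for any lift $g\in G$ of $\gamma_{a_0}$. Since $\rkE(A)=\rkE(A\setminus a_0)+1$ implies $\rk(X)=\rk(X\setminus x_0)+1$ (via Corollary \ref{cor:rank}), Lemma \ref{lem:inj-surj}.(a) guarantees this containment for \emph{every} $g\in G$. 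Thus $\ker(w_{A,a_0})\cong\Gamma^{a_0}\cong\mathbb{Z}$, so $\operatorname{rank}_{\mathbb{Z}} W(A)=1+\operatorname{rank}_{\mathbb{Z}} W(A\setminus a_0)=1+\rkE(A\setminus a_0)=\rkE(A)$, completing the induction.
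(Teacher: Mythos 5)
Your argument takes a genuinely different route from the paper's: you induct on $\vert A\vert$ via the one-coordinate projections $w_{A,a_0}$, whereas the paper compares $W(A)$ in a single step with $W(F)=\Gamma^F\cong\mathbb Z^{\rkE(A)}$ for a maximal independent $F\subseteq A$, and shows the cokernel of the injection $W(A)\hookrightarrow W(F)$ is finite because $h'_F(\Gamma(F))\subseteq w_{A,A\setminus F}(W(A))$ already has finite index $\mm(F)$ in $W(F)$. The inductive version is a clean alternative, and your identification of $\ker(w_{A,a_0})$ in Case~2 via Lemma~\ref{lem:inj-surj}.(a) is correct.

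However, Case~1 ($\rkE(A)=\rkE(A\setminus a_0)$) contains a gap: you conclude that $w_{A,a_0}$ is an isomorphism by invoking the surjectivity asserted in Lemma~\ref{lem:gpwm}.(iii), but that surjectivity does not actually hold when the ranks agree. Via the left square of Diagram~\eqref{diag}, surjectivity of $w_{A,a_0}$ is equivalent to surjectivity of the set map $\setmap{A}{a_0}$ of Definition~\ref{def:WA}, and Lemma~\ref{lem:inj-surj} establishes the latter only when $\rkE(A)=\rkE(A\setminus a_0)+1$; the one-line proof offered for Lemma~\ref{lem:gpwm}.(iii) (``$w_{A,a_0}$ is the restriction of the surjection $\pi_{A,a_0}$'') does not fill this in. Indeed it fails: for the centered arithmetic $\mathbb Z^2$-semimatroid given by $a_1=(1,1)$, $a_2=(1,-1)$, $a_3=(1,0)$, with $A=\ES$ and $a_0$ the orbit of the third family, $W(A)=\{\gamma\in\mathbb Z^3:\gamma_1+\gamma_2=2\gamma_3\}$ projects into $W(A\setminus a_0)=\mathbb Z^2$ with image of index~$2$. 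The fix is short and is precisely the ingredient of the paper's proof: by the right square of Diagram~\eqref{diag}, the image of $w_{A,a_0}$ contains $h_{A\setminus a_0}(G)$, which has finite index $\mm(A\setminus a_0)$ in $W(A\setminus a_0)$ by Lemma~\ref{lem:Wunabrep}.(ii), so the cokernel of the injection $w_{A,a_0}$ is finite and the ranks of $W(A)$ and $W(A\setminus a_0)$ coincide; in Case~2 Lemma~\ref{lem:inj-surj}.(a) gives surjectivity directly, so Lemma~\ref{lem:gpwm}.(iii) need not be cited anywhere.
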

\begin{proof}
Let $F\subseteq A$ be a maximal independent set in $A$, i.e., a subset with $\vert F \vert = \rkE(A)$. In particular, such an $F$ satisfies $\vert F \vert = \rkE(F)$ and thus, by Lemma \ref{lem:inj-surj} and Definition \ref{not:puntino} 
\begin{equation}\label{eq:aww}
W(F) = \Gamma^F \simeq \mathbb Z^{\vert F \vert} .  
\end{equation}
Moreover, since $\rkE(F)=\rkE(A)$, by Lemma \ref{lem:inj-surj} and Remark \ref{an}.(a), the group homomorphism $w_{A,A\setminus F}: W(A) \to W(F)$ is injective and, by the additivity theorem for ranks, we have
\begin{equation}\label{eq:awww}
\operatorname{rank}_{\mathbb Z}(W(F)) = 
\operatorname{rank}_{\mathbb Z}(W(A)) + 
\operatorname{rank}_{\mathbb Z}\left(\frac{W(F)}{w_{A,A\setminus F}(W(A))}\right)
\end{equation}
\begin{itemize}
\item[{\em Claim.}] $\operatorname{rank}_{\mathbb Z}\left(\frac{W(F)}{w_{A,A\setminus F}(W(A))}\right)=0$.
\item[{\em Proof.}] The subgroup $h_F(\Gamma(F)) \subseteq W(F)$ is contained in $w_{A,A\setminus F}(W(A))$. By the ``third isomorphism theorem'' for groups we have an isomorphism
$$
\frac{W(F)}{w_{A,A\setminus F}(W(A))} \simeq \frac{W(F)/j'_F(\Gamma(F))}{w_{A,A\setminus F}(W(A))/j'_F(\Gamma(F))}.
$$
The cardinality of $W(F)/j'_F(\Gamma(F))$ equals $\mm(F)$ and is, in particular, finite. Thus both groups above are finite and have rank zero as $\mathbb Z$-modules. \qed
\end{itemize}
With the claim we can conclude with the following computation (where we use Equation \eqref{eq:aww}, Equation \eqref{eq:awww} and the definition of $F$).
$$
\operatorname{rank}_{\mathbb Z}(W(A)) =
\operatorname{rank}_{\mathbb Z}(W(F)) =
\vert F \vert = \rkE(A)
$$

\end{proof}

\begin{corollary}\label{cor:D2}
Let $\GS$ be a centered arithmetic $G$-semimatroid such that all groups $\Gamma^a$ are infinite cyclic. Then for every $A\in \underline{\CC}$ the rank of $M_\GS(A^c)$ as a $\mathbb Z$-module is
$$
\operatorname{rank}_{\mathbb Z}(M_\GS(A^c)) = \vert A\vert - \rkE(A)
$$
\end{corollary}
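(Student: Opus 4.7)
The plan is to apply the rank-additivity formula for $\mathbb{Z}$-modules to the short exact sequence underlying the definition of $M_\GS$, and then to combine the previous lemma with the finite-index inclusion $h'_A(G)\subseteq W(A)$.

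First I would unpack what needs to be shown: by Definition \ref{def:muA} we have $M_\GS(A^c)=\Gamma^{A}/\im(h'_{A})$, which by Remark \ref{an}.(b) sits in a short exact sequence
\begin{equation*}
0\longrightarrow \Gamma(A)\xrightarrow{\;j_{A}\;}\Gamma^{A}\longrightarrow M_\GS(A^c)\longrightarrow 0,
\end{equation*}
the injectivity of $j_A$ being exactly the content of Remark \ref{an}.(b). Since the functor $\operatorname{rank}_\mathbb Z$ is additive on short exact sequences of abelian groups, the identity
$\operatorname{rank}_\mathbb Z M_\GS(A^c)=\operatorname{rank}_\mathbb Z\Gamma^{A}-\operatorname{rank}_\mathbb Z\Gamma(A)$
reduces the problem to computing these two ranks. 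The first is immediate from the hypothesis that every $\Gamma(a)$ is infinite cyclic, which gives $\Gamma^{A}=\prod_{a\in A}\Gamma(a)\simeq \mathbb Z^{|A|}$ and hence $\operatorname{rank}_\mathbb Z\Gamma^{A}=|A|$ (note that centeredness of $\GS$ guarantees $A\in\underline{\CC}$ and likewise $A^c\in\underline{\CC}$, so all the groups in question are defined).

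For the second rank, I would use that $j_A$ factors as $\Gamma(A)\xrightarrow{j'_A}W(A)\hookrightarrow \Gamma^{A}$ (Remark \ref{an}.(b)), so $\Gamma(A)\cong j'_A(\Gamma(A))=h'_A(G)\subseteq W(A)$. By Lemma \ref{lem:Wunabrep}.(ii) together with cofiniteness, $[W(A):h'_A(G)]=\mm(A)$ is finite, so this subgroup has the same $\mathbb{Z}$-rank as $W(A)$. Invoking Lemma \ref{lem:rextr} then yields $\operatorname{rank}_\mathbb Z\Gamma(A)=\operatorname{rank}_\mathbb Z W(A)=\rkE(A)$, and plugging this back into the additivity formula gives the desired equality $\operatorname{rank}_\mathbb Z M_\GS(A^c)=|A|-\rkE(A)$.

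There is no genuine obstacle here: the statement is a direct rank computation once the correct short exact sequence is identified. The only subtlety worth double-checking is the interplay between centeredness and the domain of definition of $M_\GS$—i.e., verifying that for a centered $\GS$ every subset of $\ES$ lies in $\underline{\CC}$ (which follows at once from the fact that $\CC$ is a simplicial complex and $\underline X=\ES$ for some $X\in\CC$), so that $M_\GS(A^c)$ is well-defined for every $A\in\underline{\CC}$.
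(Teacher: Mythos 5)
Your proof is correct and takes essentially the same route as the paper: both arguments rest on rank-additivity for the short exact sequence $0\to\Gamma(A)\xrightarrow{j_A}\Gamma^A\to M_\GS(A^c)\to 0$, then compute $\operatorname{rank}_{\mathbb Z}\Gamma(A)=\operatorname{rank}_{\mathbb Z}W(A)$ via the finite index $[W(A):h'_A(G)]=\mm(A)$, and finish with Lemma \ref{lem:rextr}.
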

\begin{proof} First, notice that Remark \ref{an}.(b) implies exactness of the sequence
\begin{center}
\begin{tikzcd}
0 \rar & \Gamma (A) \rar{j'_A} & W(A) \rar & W(A)/j'_A(\Gamma(A)) \rar & 0
\end{tikzcd}
\end{center}
Since the group $W(A)/h_A(\Gamma(A))$ has finite cardinality (equal to $\mm(A)$), the additivity theorem for ranks of abelian groups implies
$$
\operatorname{rank}_{\mathbb Z}(W(A)) = 
\operatorname{rank}_{\mathbb Z}(j'_A(\Gamma(A))).
$$
In particular, using the definitions and Lemma \ref{lem:rextr}, we conclude
$$
\operatorname{rank}_{\mathbb Z}(M_{\GS}(A^c))=
\operatorname{rank}_{\mathbb Z}(\Gamma^A/j_A(\Gamma(A)))
$$
$$
=
\operatorname{rank}_{\mathbb Z}(\Gamma^A)-
\operatorname{rank}_{\mathbb Z}(j_A(\Gamma(A))) =
\vert A \vert - \rkE(A).
$$

\end{proof}

%

\begin{corollary}\label{cor:D7}
Let $\GS$ be a centered arithmetic $G$-semimatroid such that all groups $\Gamma^a$ are infinite cyclic. Then the underlying matroid of $M_\GS$ is the dual to $(\ES,\rkE)$.\end{corollary}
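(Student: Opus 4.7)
The plan is to deduce the statement directly from Corollary \ref{cor:D2} by computing the rank function of the underlying matroid of $M_\GS$ explicitly and matching it with the standard formula for the dual rank function.

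First I would note that, since $\GS$ is centered, we have $\ES\in\underline\CC$, and because $\CC$ is a simplicial complex, $\underline\CC$ is closed under taking subsets; hence every $A\subseteq\ES$ satisfies $A^c\in\underline\CC$, so $M_\GS(A)$ is defined for all $A\subseteq\ES$. Applying Corollary \ref{cor:D2} with the roles of $A$ and $A^c$ exchanged yields
\[
\operatorname{rank}_{\mathbb Z}(M_\GS(A)) \;=\; |A^c| - \rkE(A^c) \qquad\text{for all } A\subseteq\ES.
\]

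Next, let $r$ denote the rank function of the underlying matroid of $M_\GS$. By Remark \ref{rem:uam}, $r(\ES)-r(A)=\operatorname{rank}_{\mathbb Z} M_\GS(A)$. Setting $A=\emptyset$ gives $r(\ES)=|\ES|-\rkE(\ES)$, and hence for every $A\subseteq\ES$,
\[
r(A) \;=\; r(\ES) - \bigl(|A^c|-\rkE(A^c)\bigr) \;=\; |A| + \rkE(A^c) - \rkE(\ES).
\]

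Finally, I would compare this with the rank function of the dual matroid to $(\ES,\rkE)$, which (following the standard convention; cf.\ Remark \ref{rem:dualmat}) is given by $\rkE^*(A)=|A|+\rkE(\ES\setminus A)-\rkE(\ES)$. The two expressions coincide, so the underlying matroid of $M_\GS$ equals the dual of $(\ES,\rkE)$.

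There is essentially no obstacle here: the entire argument is a bookkeeping computation once Corollary \ref{cor:D2} is in hand. The only points worth stating carefully are that centeredness guarantees $(\ES,\rkE)$ is actually a matroid (Theorem \ref{thm:polymat}) and that $M_\GS(A)$ is defined on the whole of $2^\ES$.
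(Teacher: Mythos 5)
Your proof is correct and takes essentially the same route as the paper: both deduce the claim from Corollary~\ref{cor:D2} via Remark~\ref{rem:uam} and then match the resulting expression for $r(A)$ against the standard dual rank formula $\rk^*(A)=\vert A\vert+\rk(\ES\setminus A)-\rk(\ES)$ (as in \cite[Proposition 2.1.9]{Oxl}). Your version is just a touch more explicit about why $M_\GS(A)$ is defined for every $A\subseteq\ES$ (centeredness plus $\underline\CC$ closed under subsets), a point the paper leaves implicit.
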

\begin{proof}
By Remark \ref{rem:uam} and Corollary \ref{cor:D2} the rank function $\rk$ of the underlying matroid satisfies
$$
\rk(\ES) - \rk(A)= \operatorname{rank}_{\mathbb Z} (M_{\GS} (A)) = \vert A^c \vert - \rkE(A^c)
$$
For all $A\subseteq \ES$. After an elementary manipulation we recover $\rkE(A^c)= \rk(A) - \vert A^c \vert - \rkE(\ES)$, proving that $(\ES,\rkE)$ and $(\ES,\rk)$ are dual (see, e.g., \cite[Proposition 2.1.9]{Oxl}). 
\end{proof}

We end by describing a situation where 
the torsion elements of the modules $M_\GS$ can be interpreted combinatorially.

\begin{proposition}\label{prop:D3}
Let $\GS$ be a centered arithmetic $G$-semimatroid such that all groups $\Gamma^a$ are infinite cyclic and consider $A\subseteq \ES$. If $W(A)$ is a pure subgroup of $\Gamma^A$, then
$$
M_\GS(A)\simeq \mathbb Z^{\vert A^c \vert - \rkE(A)} \oplus W(A)/h_A(G)
$$
\end{proposition}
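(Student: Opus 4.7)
The plan is to exploit purity to split a free direct summand off $\Gamma^A$ and then push the relevant quotient through that splitting.

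First I would assemble the necessary facts. Since every $\Gamma^a$ is infinite cyclic, $\Gamma^A=\prod_{a\in A}\Gamma^a$ is free abelian of rank $|A|$. By Lemma \ref{lem:rextr}, $\operatorname{rank}_{\mathbb Z}W(A)=\rkE(A)$. By Remark \ref{an}(b), the homomorphism $h'_A$ factors through the injection $j'_A:\Gamma(A)\hookrightarrow W(A)$ followed by the inclusion $W(A)\hookrightarrow\Gamma^A$, so $h'_A(G)=j'_A(\Gamma(A))$ sits inside $W(A)$ as a subgroup of $\Gamma^A$.

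The crucial step is the direct-sum splitting. Here I would invoke the classical fact that a pure subgroup of a finitely generated abelian group is a direct summand: the quotient $\Gamma^A/W(A)$ is torsion-free by purity, hence free abelian (being finitely generated), and so the short exact sequence $0\to W(A)\to\Gamma^A\to\Gamma^A/W(A)\to 0$ splits. This yields an isomorphism $\Gamma^A\cong W(A)\oplus F$ with $F$ free abelian of rank $|A|-\rkE(A)$, i.e., $F\cong\mathbb Z^{|A|-\rkE(A)}$.

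Finally I would pass to the quotient by $h'_A(G)$. Since this subgroup lies entirely in the first summand, the decomposition descends to $\Gamma^A/h'_A(G)\cong\bigl(W(A)/h'_A(G)\bigr)\oplus F$. As $h_A$ is simply the restriction of $h'_A$ to $W(A)$, one has $h_A(G)=h'_A(G)$ as subgroups of $W(A)$, giving
\[
M_\GS(A^c)=\Gamma^A/h'_A(G)\;\cong\;\bigl(W(A)/h_A(G)\bigr)\oplus\mathbb Z^{|A|-\rkE(A)},
\]
which is the claimed isomorphism (up to the usual $A\leftrightarrow A^c$ renaming relating the formulation here to Theorem \ref{thm:RAM}). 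The only substantive point is the purity-to-direct-summand step, which ultimately reduces to the fact that finitely generated torsion-free abelian groups are free; everything else is routine manipulation of quotients.
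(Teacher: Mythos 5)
Your proof is correct, and the substance — purity of $W(A)$ in the free abelian group $\Gamma^A$ forces the quotient $L(A):=\Gamma^A/W(A)$ to be free, and the subgroup $h'_A(G)=j'_A(\Gamma(A))$ lives entirely inside $W(A)$ — is exactly what drives the paper's argument too. The organization differs, though: the paper forms the quotient $\Gamma^A/j_A(\Gamma(A))$ first and then runs a snake-lemma diagram chase to extract the short exact sequence
\[
0\to W(A)/j'_A(\Gamma(A))\to M_\GS\to L(A)\to 0,
\]
which it then shows splits because $L(A)$ is free. You instead split $\Gamma^A\cong W(A)\oplus F$ \emph{before} quotienting, observe $h'_A(G)\subseteq W(A)$, and let the splitting descend to the quotient. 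Your route is a bit more elementary (no snake lemma needed — in effect you replace it with the third isomorphism theorem packaged into "split then quotient"), and arguably makes the hypothesis do more visible work. Both arguments rely on the same two inputs: Lemma \ref{lem:rextr} for $\operatorname{rank}_{\mathbb Z}W(A)=\rkE(A)$ and Remark \ref{an}(b) to identify $j'_A(\Gamma(A))=h_A(G)$. You also correctly flagged the $A\leftrightarrow A^c$ bookkeeping: Definition \ref{def:Emme} sets $M_\GS(A)=\Gamma^{A^c}/h'_{A^c}(G)$, so what the proof of Proposition \ref{prop:D3} actually manipulates is $\Gamma^A/h'_A(G)=M_\GS(A^c)$, and the formulation in Theorem \ref{thm:RAM} carries the indices consistently — the proposition as printed has a small notational slip, which your remark implicitly corrects.
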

\begin{proof} Consider the following diagram.
\begin{center}
\begin{tikzcd}
 & 0\dar & 0\dar & \ker (\varphi) \dar & \\
0\rar & \Gamma(A)\dar{j'_A}\rar{j_A} & \Gamma^A\dar{=}\rar & M_\GS(A)\rar\dar{\varphi}&0\\
0\rar & W(A) \rar[hook]\dar &\Gamma^A\dar \rar{\epsilon} &L(A)\dar \rar& 0\\
 & C(A) \rar & 0 \rar & \operatorname{coker}(\varphi)\rar & 0 \\
\end{tikzcd}
\end{center}
By the snake lemma we have an isomorphism $\ker(\varphi)\simeq W(A)/j'_A(\Gamma(A))$. Moreover, exactness of the second row at $L(A)$ implies that the last row is exact at $\operatorname{coker}(\varphi)$, and the latter is thus trivial. Summarizing, we have the following exact sequence.
\begin{center}
\begin{tikzcd}
0\rar & W(A)/j'_A(\Gamma(A)) \rar & M_\GS(A) \rar &  L(A) \rar & 0
\end{tikzcd}
\end{center}
The purity assumption on $W(A)$ means that $L(A)$ is a free abelian group and implies that this sequence splits. Remark \ref{an}.(b) then shows $j'_A(\Gamma(A))=h_A(G)$, proving the claimed isomorphism.

\end{proof}

\begin{corollary}\label{cor:D8}
With the assumptions of Proposition \ref{prop:D3}, the underlying arithmetic matroid of $M_\GS$ is the dual to $(\ES,\rkE,\mm)$.
\end{corollary}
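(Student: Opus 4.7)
The plan is to identify the underlying arithmetic matroid of $M_\GS$ by computing its rank function and its multiplicity function separately, and check that each coincides with the corresponding dual of $\rkE$ and of $\mm$. Recall from Remark \ref{rem:uam} that the underlying arithmetic matroid $(\ES,\rk_Z,m_Z)$ of a matroid $M$ over $\mathbb Z$ is determined by $\rk_Z(\ES)-\rk_Z(A)=\operatorname{rank}_{\mathbb Z}M(A)$ and $m_Z(A)=\vert \operatorname{tor}(M(A))\vert$ for all $A\subseteq \ES$, where $\operatorname{tor}(-)$ denotes the torsion part.

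The rank-function side is already taken care of by Corollary \ref{cor:D7}, which asserts that the underlying matroid $(\ES,\rk_Z)$ of $M_\GS$ is the dual of $(\ES,\rkE)$. Hence what remains is to verify that the multiplicity $m_Z$ coincides with $\mm^*$, i.e., that $m_Z(A)=\mm(A^c)$ for every $A\subseteq \ES$ (cf.\ the duality formula $m^*(A):=m(S\setminus A)$ from the remark after Definition \ref{def:AM}).

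For the multiplicity, I will invoke Proposition \ref{prop:D3}, which (under the purity assumption) provides the splitting
\[
M_\GS(A)\;\simeq\; \mathbb Z^{\vert A^c\vert -\rkE(A^c)}\;\oplus\; W(A^c)/h_{A^c}(G).
\]
The first summand is free and thus accounts entirely for the free part of $M_\GS(A)$; the second summand is finite, by Lemma \ref{lem:Wunabrep}(ii), which says $\vert W(A^c)/h_{A^c}(G)\vert=\mm(A^c)$. Consequently the torsion part of $M_\GS(A)$ is isomorphic to $W(A^c)/h_{A^c}(G)$, and
\[
m_Z(A)\;=\;\vert \operatorname{tor}(M_\GS(A))\vert\;=\;\mm(A^c)\;=\;\mm^*(A).
\]
Putting this together with Corollary \ref{cor:D7} yields $(\ES,\rk_Z,m_Z)=(\ES,\rkE^*,\mm^*)$, which is the statement of the corollary.

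The argument is essentially an assembly of prior results; the only substantive verification is recognizing that the two summands supplied by Proposition \ref{prop:D3} are exactly the free and the torsion parts (and not merely a direct-sum decomposition into other types of pieces), which is immediate from the finiteness of $W(A^c)/h_{A^c}(G)$ and the obvious freeness of $\mathbb Z^{\vert A^c\vert -\rkE(A^c)}$. I do not foresee any real obstacle beyond keeping the $A \leftrightarrow A^c$ bookkeeping consistent throughout.
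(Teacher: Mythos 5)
Your proof is correct and follows the same route as the paper: Corollary~\ref{cor:D7} handles the rank function, and the torsion count is read off from the splitting in Proposition~\ref{prop:D3} together with Lemma~\ref{lem:Wunabrep}.(ii). You are somewhat more explicit than the paper's one-line justification, but the underlying argument is identical.
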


\begin{proof}
After Corollary \ref{cor:D7} we only have to show that $\mm(A)$ equals the cardinality of the torsion part of $M_{\GS}(\ES\setminus A)$, which is a direct consequence of Lemma \ref{lem:Wunabrep}.(ii).
\end{proof}

\begin{remark}
The map $b_X$ of Equation \eqref{eqqdefb} induces a bijection between $\oC{A}/G$ and $C(A)$. The (natural) group structure of $C(A)$ can be seen as additional data that can be extracted from $\GS$. Recent results in the topology of toric arrangements \cite[Example 7.3.2]{CaDe} show that this additional data has an algebraic-topological significance. 
\end{remark}

\section{Tutte polynomials of group actions}\label{sec:tutte}
In this section we study the Tutte polynomial associated to a group action on a semimatroid and, as an application, we extend to the generality of group actions on semimatroids (in particular, beyond the realizable case)
two important combinatorial interpretations of Tutte polynomials of toric arrangements.


Recall our standard setup, e.g., from Section \ref{sec:defs}. We let $\GS$ denote the action of a group $G$ on a finitary semimatroid $\SS=(S,\CC,\rk)$. Write $\LL=\LL(\SS)$ for the geometric semilattice of flats of $\SS$, and let $\mathcal P _{\GS}$ denote the quotient poset of $\LL$ (see Definition \ref{def:LS}). Moreover, recall the set $\CC_{\GS}$ of orbits of the action on $\CC$ and the ``underlying'' locally ranked triple $\SS_{\GS}=(\ES,\underline{\CC},\underline{\rk})$
 
We will make use of  standard terminology about posets (see Section \ref{sec:FSGS} for a review).

\subsection{The characteristic polynomial of $\PS$}\label{ss:CP}
\begin{remark}
Since $G$ acts on $\LL$ 
by rank-preserving maps, the poset $\PS$ is ranked. With slight abuse of notation we will call $\rkE$ the rank function on $\mathcal P_{\GS}$, given by 
$$\rkE(p) := \rkE(x_p)\quad\quad\textrm{if } p=Gx_p.$$

\end{remark}

 We can thus define
the {\em characteristic polynomial of $\PS$} (e.g., following \cite[\S 3.10]{Sta}) as
$$
\chi_{\GS}(t):= \sum_{p\in \PS} \mu_{\PS}(\hat 0, p)t^{r - \rkE(p)}, 
$$
where $r$ is the rank of $\SS_\GS$ and $\mu_{\GS}$ is the M\"obius function of $\PS$ (notice that $\PS$
has a unique minimal element corresponding to the empty subset of
$\ES$).

\begin{lemma}\label{lem:GeLa}
  Let $\GS$ be \WT. Then, for every $x \in \LL$, the intervals
  $[\hat 0, Gx]$ in $\PS$ and $[\hat 0,x]$ in $\LL$ are
  poset-isomorphic. In particular, intervals in $\PS$ are geometric lattices.
\end{lemma}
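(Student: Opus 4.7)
The plan is to exhibit the natural projection $\pi \colon [\hat 0, x] \to [\hat 0, Gx]$, $y \mapsto Gy$, and show that it is a poset isomorphism. It is order-preserving by the very definition of the partial order on $\PS$, and it lands in $[\hat 0, Gx]$ because $y \leq x$ implies $Gy \leq Gx$. Surjectivity is cheap: if $Gy \leq Gx$, pick $g \in G$ with $y \leq gx$; then $g^{-1}y \leq x$ and $\pi(g^{-1}y) = Gy$.

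The heart of the argument, and the step I expect to be the only real obstacle, is injectivity. Suppose $y_1, y_2 \in [\hat 0, x]$ with $y_2 = gy_1$ for some $g \in G$. I would translate to the semimatroid side via Theorem~\ref{thm:fsl}, writing $Y_1, Y_2, X \in \CC$ for the corresponding flats, so that $Y_2 = gY_1$ and $Y_1, Y_2 \subseteq X$. For every $y \in Y_1$, both $y$ and $gy$ belong to $X$ (since $gy \in gY_1 = Y_2 \subseteq X$), hence $\{y, gy\}$ is central as a subsimplex of $X$. Weak translativity forces $\rk(\{y, gy\}) = \rk(\{y\})$, which by the definition of $\cl$ gives $gy \in \cl(\{y\}) \subseteq \cl(Y_1) = Y_1$. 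Thus $gY_1 \subseteq Y_1$, and the symmetric argument applied to $g^{-1}$ on $Y_2$ yields $g^{-1}Y_2 \subseteq Y_2$, i.e.\ $Y_1 \subseteq Y_2$. Consequently $Y_1 = Y_2$, so $y_1 = y_2$.

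Order-reflection then follows formally from injectivity: if $Gy_1 \leq Gy_2$ with $y_1, y_2 \leq x$, pick $g$ with $y_1 \leq gy_2$; then $g^{-1}y_1 \leq y_2 \leq x$, so $y_1$ and $g^{-1}y_1$ both lie in $[\hat 0, x]$ and have the same image under $\pi$, whence $y_1 = g^{-1}y_1 \leq y_2$.

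For the ``in particular'' assertion, axiom (G3) for the geometric semilattice $\LL$ guarantees that $[\hat 0, x]$ is a (finite) geometric lattice, and the isomorphism just established transports this property to $[\hat 0, Gx]$. An arbitrary interval $[p, q] \subseteq \PS$ is contained in $[\hat 0, q]$, which is a geometric lattice, and intervals of geometric lattices are themselves geometric lattices, completing the proof.
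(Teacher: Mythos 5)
Your proof is correct, and its core step (injectivity of the orbit map on the interval) is obtained by essentially the same mechanism the paper uses: weak translativity forces $\{y,gy\}$ to have rank~$1$, so $g$ cannot move elements out of a common flat. The execution differs in a small but pleasant way. The paper works in the poset language: it fixes a representative $x_q\le x_p$ in each orbit $q\le Gx_p$, shows that any other representative $gx_q\le x_p$ must fix every atom of $\LL$ below $x_q$, and then appeals to atomicity of the geometric lattice $[\hat 0,x_p]$ to conclude $gx_q=x_q$. You instead descend to the ground set of the semimatroid: treating the flats $Y_1\subseteq X$ as subsets of $S$, you show directly that $g$ maps each element of $Y_1$ into $\cl(\{y\})\subseteq\cl(Y_1)=Y_1$ (using monotonicity of $\cl$ from Remark~\ref{rem:monotone}), so $gY_1\subseteq Y_1$, and the symmetric inclusion gives equality. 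This avoids the detour through atoms and atomicity and is, if anything, slightly more self-contained; the paper's variant is marginally more in the spirit of a purely poset-theoretic statement. Both are sound, and the surjectivity, order-reflection, and the ``in particular'' clause (via~(G3)) are handled the same way in each.
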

\begin{proof}
Choose $x_p\in \LL$, set $p:=Gx_p\in \PS$ and consider any $q\in [\hat{0},p]$. Since $q\leq_{\PS} p$, by definition there is $x_q\in q$ with
  $x_q\leq_{\LL} x_p$. 
  
  Every other $x'_q \in q$ with $x'_{q}\leq_{\LL} x_p$ has the form $x'_q=gx_q$ for some $g\in G$. Thus, for every atom $x_a$ of $\LL$
  with $x_a\leq_\LL x_q\leq_\LL x_p$,  $gx_a \leq_\LL x_p$. In particular, for
  every  $s\in x_a$, $\{s,gs\}\in
  \mathcal C$ and  by \WTy $\rk \{s,gs\} =1$. Thus $gx_a
  \subseteq \cl_{\CC} x_a = x_a$ and, by symmetry, $x_a=gx_a$. This is true
  for all atoms $x_a \leq_{\LL} x_p$ and hence, because the interval $[\hat 0,x_p]$ is
  atomic, we have $x_q=x_q'$.
  

  Therefore the mapping
  $$
  [\hat 0 , p]_{\PS} \to [\hat 0, x_p]_\LL, \,\, q\mapsto x_q
  $$
  is well-defined and order preserving. So is clearly its inverse
  $$
   [\hat 0, x_p]_\LL\to[\hat 0 , p]_{\PS},\,\, x\mapsto Gx
  $$
  and thus the two intervals are poset-isomorphic.
\end{proof}

\begin{proof}[Proof of Theorem \ref{thm:CP}] 
Let us first consider some $p\in \PS$ with $p>\hat 0$.
By Hall's theorem \cite[Proposition 3.8.5]{Sta} 
 the number $\mu_{\PS}(\hat
  0, p)$ is the reduced Euler characteristics of the ``open interval'' $[\hat
  0, p]\setminus \{\hat 0 , p\}$. 
  
By Lemma \ref{lem:GeLa}, the interval 
$[\hat 0,p]$ 
is a geometric lattice with set of atoms $A(p)$, and thus it induces a matroid structure on the set $\cup A(p) \subseteq \ES$ (with rank function $\underline{\rk}$). Let $\cl_p$ denote the associated closure operator.

  Following \cite{Yuz}, the reduced Euler characteristics of $[\hat 0,p]$  can be
  computed by means of the {\em atomic complex}: this is the
  simplicial complex on the vertex set $A(p)$ and with set of simplices
$\Delta_p=\{B\subseteq A(p) \mid \vee B < p\}$. We obtain
$$
\mu_{\PS}(\hat 0, p)= \sum_{A\in \Delta_p} (-1)^{\vert A \vert -1}= \sum_{A\in D_p} (-1)^{\vert A \vert},
$$
where $D_p:=\{A\subseteq A(p) \mid \vee A=p\}$
and the second equality is derived from the
boolean identity $\sum_{A\subseteq A(p)} (-1)^{\vert A \vert} = 0$. 
Moreover, setting $$\widetilde{D}_p:=\{\widetilde{A} \subseteq \ES \mid \cl_{p}(\widetilde{A}) = p\}$$ 
and using the fact that $\SS$ loopless implies $\SS_{\GS}$ loopless, we can compute
\def\WD{\widetilde{D}}
\def\WA{\widetilde{A}}
\begin{align*}
\sum_{\widetilde{A}\in \widetilde{D}_p} (-1)^{\vert \widetilde{A} \vert} &=
\sum_{A\in D_p} \sum_{
\substack{\WA=\coprod_{a\in A} X_a \\ {\cl_p}(X_a)=a}} 
(-1)^{\vert \WA \vert} \\
&=\sum_{A\in D_p}\prod_{a\in A} \left[\sum_{\emptyset\neq X_a\subseteq a} (-1)^{\vert X_a\vert}\right] =
\sum_{A\in D_p} (-1)^{\vert A \vert} = \mu_{\PS}(\hat 0,p).
\end{align*}
Notice that the equality $\sum_{\widetilde{A}\in \widetilde{D}_p} (-1)^{\vert \widetilde{A} \vert}=\mu_{\PS}(\hat 0,p)$, which we just proved for $p>\hat 0$, holds trivially for $p=\hat 0$. Moreover, 
$\WA\in \WD_p$ implies in particular $\rkE(\WA)=\rkE(p)$. We can rewrite
\begin{align*}
\chi_{\GS}(t)&=\sum_{p\in \PS}\mu_{\PS}(\hat 0, p)t^{r-\rkE(p)}=\sum_{p\in \PS}\sum_{\WA\in \WD_p} (-1)^{\vert \WA \vert} t^{r- \rkE(p)}\\
&= \sum_{\WA\in \CS}(-1)^{\vert \WA \vert}\sum_{p\in P_{\WA}} t^{r-\rkE(\WA)} 
\end{align*}
where for every $\WA\in \CS$ we let 
$$P_{\WA}:=\{p\in \PS \mid \WA\in \WD_p\}=\oC{\WA}/G,$$ 
which is a set with exactly $\mm(\WA)$
elements (see Definition \ref{def:mm}). 
Thus,
\begin{align*}
\chi_{\GS}(t)&= \sum_{A\in \CS}
(-1)^{\vert A \vert} \mm(A) t^{r-\rkE(A)}\\
&= (-1)^r \sum_{A\in \CS}
\mm(A)(-1)^{\vert A \vert - \rk(A)} (-t)^{r-\rkE(A)}\\
&= (-1)^r T_{\GS}(1-t,0)
\end{align*}
where, as above, $r$ denotes the rank of $\SS_{\GS}$.
\end{proof}

\subsection{The corank-nullity polynomial of $\CC_{\GS}$}\label{ss:CN}
 The corank-nullity polynomial of the poset $\CC_{\GS}$ is
$$
s(\CC_\GS; u,v) = \sum_{GX \in \CC_{\GS}} u^{(r-\rk(X))} v^{(\vert X \vert - \rk(X))}.
$$

\begin{proposition}
If $\GS$ is \ST, 
$$
T_{\GS}(x,y) = s(\CC_{\GS}; x-1, y-1). 
$$
\end{proposition}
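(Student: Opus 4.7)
The plan is to compare the two sums term by term, partitioning the set $\CC_{\GS}$ according to the map $\lfloor\cdot\rfloor:\CC_{\GS}\to\underline{\CC}$ defined in Definition \ref{def:mm}.

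First I would observe that for every orbit $GX\in\CC_{\GS}$ the exponents appearing on the right-hand side depend only on $\rk(X)$ and $|X|$, and both of these quantities are constant on the orbit $GX$ (since the action preserves rank and sends $X$ bijectively to $gX$). Thus I can group the sum defining $s(\CC_{\GS};x-1,y-1)$ according to $A:=\underline{X}\in\underline{\CC}$: for each such $A$, the number of orbits $GX\in\CC_{\GS}$ with $\lfloor GX\rfloor = A$ is precisely $m_{\GS}(A)$ by Definition \ref{def:mm}.

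Next I would identify the exponents. Using that $\GS$ is \ST, the condition $\{x,g(x)\}\in\CC \Rightarrow g(x)=x$ forces any two distinct elements of a central set $X$ to lie in distinct $G$-orbits, whence $|X|=|\underline X|=|A|$. Moreover, since \STy implies \WTy, Corollary \ref{cor:rank} gives $\rk(X)=\rkE(\underline X)=\rkE(A)$ for all $X\in\CC$. Therefore, for every $GX\in\CC_{\GS}$ with $\lfloor GX\rfloor=A$,
$$
(x-1)^{r-\rk(X)}(y-1)^{|X|-\rk(X)} = (x-1)^{r-\rkE(A)}(y-1)^{|A|-\rkE(A)}.
$$

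Putting these observations together gives the desired identity by a direct manipulation:
\begin{align*}
s(\CC_{\GS};x-1,y-1) &= \sum_{A\in\underline{\CC}}\ \sum_{\substack{GX\in\CC_{\GS}\\ \lfloor GX\rfloor=A}} (x-1)^{r-\rk(X)}(y-1)^{|X|-\rk(X)}\\
&=\sum_{A\in\underline{\CC}} m_{\GS}(A)(x-1)^{r-\rkE(A)}(y-1)^{|A|-\rkE(A)} = T_{\GS}(x,y).
\end{align*}
There is no real obstacle here: the statement is essentially a bookkeeping consequence of the \STy hypothesis, which is exactly what is needed to guarantee that orbits of central sets biject with their ``orbit shadows'' and that the rank function descends compatibly to $\SS_{\GS}$.
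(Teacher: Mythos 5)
Your proof is correct and follows essentially the same route as the paper's: regroup the sum defining $s(\CC_{\GS};u,v)$ by the fiber $\lfloor GX\rfloor = A$, use translativity to get $|X|=|A|$ and Corollary~\ref{cor:rank} to get $\rk(X)=\rkE(A)$, and note that each fiber contributes $m_{\GS}(A)$ identical terms.
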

\begin{proof}
When $\GS$ is \ST, for every $X\in \CC$ we have $\vert X \vert = \vert \underline{X} \vert$. Moreover, by Corollary \ref{cor:rank}, $\rk(X)=\rkE(\underline{X})$. Then,
$$
s(\CC_\GS; u,v) = \sum_{GX \in \CC_{\GS}} u^{(r-\rk(X))} v^{(\vert X \vert - \rk(X))}
=\sum_{A\in \underline{\CC}} \,\,\sum_{\substack{GX\in \CC_{\GS}\\ \underline{X}=A}}
u^{(r-\rkE(A))} v^{(\vert A \vert - \rkE(A))}
$$
and the claim follows by setting $u= x-1$ and $v=y-1$.
\end{proof}

\subsection{Activities} We now turn to a generalization and new
combinatorial interpretation of the basis-activity decomposition of
arithmetic Tutte polynomials as defined in \cite{BM}. 

\begin{remark}
Since we will not need details here, but only the statement of the next lemma, we refer to Ardila \cite{Ard} for the definition of internal and
external activity of bases of a finite semimatroid.
\end{remark}

\begin{lemma}[Proposition 9.11 of \cite{Ard}]\label{lemmardila}
Let $\SS=(S,\CC,\rk)$ is a finite semimatroid with set of bases $\BB$ and let a total ordering of $S$ be fixed. For every basis $B\in \BB$ let $E(B)$, resp.\ $I(B)$, denote the set of externally, resp.\ intenally active elements with respect to $B$  and write $R_B:=B\setminus I(B)$. Then, 
$(R_B, I(B),E(B))$ is a molecule, and 
  $$
  \CC = \biguplus_{B\in \BB} [R_B, B\cup E(B)]
  $$
\end{lemma}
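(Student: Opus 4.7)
The plan is to emulate Crapo's original argument in the matroidal setting and adapt it via the fact that every finite semimatroid is a restriction of an ambient matroid to its family of central sets (Ardila, \cite[Prop.~7.4]{Ard}). First I would verify that $(R_B,I(B),E(B))$ is indeed a molecule in the sense of Definition \ref{def:rho}. That $B\cup E(B)\in\CC$ is immediate from the definitions: each $e\in E(B)$ is the $<$-minimum of a dependent central subset of $B\cup e$, so in particular $B\cup e\in\CC$ and $\rk(B\cup e)=\rk(B)$, hence $e\in\cl_{\CC}(B)$. Thus $B\cup E(B)\subseteq\cl_{\CC}(B)\in\CC$. Next, for every $A$ with $R_B\subseteq A\subseteq B\cup E(B)$, write $A=R_B\cup F\cup T$ with $F=A\cap I(B)$ and $T=A\cap E(B)$. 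The inequality $\rk(A)\geq\rk(R_B)+|F|$ follows because $R_B\cup F\subseteq B$ is independent. For the reverse inequality one must argue $T\subseteq\cl_{\CC}(R_B\cup F)$; this is the standard consequence of external activity, obtained by inspecting the fundamental circuit of each $e\in T$ in $B\cup e$, using that $e$ is $<$-minimal in it and applying (CR1) to clear away elements of $I(B)\setminus F$.

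Next I would establish the partition. The key ingredient is a \emph{lookup map} $\beta\colon\CC\to\BB$ produced by a greedy basis-selection procedure: given $X\in\CC$, build a basis $B$ of $\cl_{\CC}(X)$ by scanning $S$ in $<$-order, adding each element that preserves independence and prioritising elements of $X$ over elements outside $X$. One then checks, following Crapo's classical argument, that $B$ satisfies $R_B\subseteq X\subseteq B\cup E(B)$ and that $B$ is the unique basis with this property. Disjointness of the intervals $[R_B,B\cup E(B)]$ falls out of uniqueness: if $X\in[R_B,B\cup E(B)]\cap[R_{B'},B'\cup E(B')]$, running the greedy procedure on $X$ must return both bases, forcing $B=B'$.

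The hardest part is the combined verification that the intervals are pairwise disjoint and collectively exhaust $\CC$, because at each step of the greedy procedure one must certify that the partial sets remain central. This is where working inside an ambient matroid $\widehat{\SS}$ pays off: Crapo's classical theorem (Proposition \ref{prop:classicCrapo}) gives a decomposition $2^S=\biguplus_{\widehat{B}\in\widehat\BB}[\widehat R_{\widehat B},\widehat B\cup \widehat E(\widehat B)]$ of the power set of the ambient ground set, and the axioms (CR1) and (CR2) for $\SS$ translate into the statement that $\widehat B\in\BB$ as soon as $[\widehat R_{\widehat B},\widehat B\cup \widehat E(\widehat B)]\cap\CC\neq\emptyset$; intersecting each ambient interval with $\CC$ then produces exactly the intervals $[R_B,B\cup E(B)]$ of the claim, with equality $\widehat R_B=R_B$ and $\widehat E(B)=E(B)$ because activities in $\SS$ and $\widehat\SS$ agree along bases of $\SS$. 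The principal obstacle is this last compatibility between activities in $\SS$ and $\widehat{\SS}$, which requires a careful comparison of closure operators but is ultimately a routine consequence of Ardila's embedding.
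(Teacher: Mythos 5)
First, note that the paper does not prove this lemma at all: it is imported by direct citation to Proposition~9.11 of~\cite{Ard}, so there is no internal argument to compare against. With that caveat, your proposal contains a genuine gap in the partition step, concentrated in both of its proposed forms.

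The greedy procedure as described --- ``build a basis $B$ of $\cl_{\CC}(X)$'' --- produces a maximal independent subset of $\cl(X)$, which has cardinality $\rk(X)$. Whenever $\rk(X) < \rk(\SS)$ this set is not in $\BB$, so it cannot be the basis whose interval contains $X$. The correct construction must continue past $\cl(X)$ to a full-rank basis of $\SS$, and choosing that extension so that $R_B\subseteq X\subseteq B\cup E(B)$, while simultaneously certifying uniqueness and that all the intermediate sets touched by the procedure remain central, is precisely the non-trivial content of Ardila's proposition. The ambient-matroid fallback is more fundamentally broken. Take $\SS$ to be the semimatroid of three lines in general position in $\mathbb R^2$, so $S=\{1,2,3\}$, $\CC=2^S\setminus\{S\}$, and $\rk(X)=\vert X\vert$ for $X\in\CC$. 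The free matroid $U_{3,3}$ is a matroid on $S$ whose rank function restricts to $\rk$ on $\CC$, yet its unique basis $\widehat B=\{1,2,3\}$ has $[\widehat R_{\widehat B},\widehat B\cup\widehat E(\widehat B)]=2^S\ni\emptyset\in\CC$ while $\widehat B\notin\CC$. So your assertion that ``$\widehat B\in\BB$ as soon as $[\widehat R_{\widehat B},\widehat B\cup\widehat E(\widehat B)]\cap\CC\neq\emptyset$'' fails. Even when one arranges the ambient matroid to have the correct rank, activities generically disagree, because external activity in $\widehat M$ is computed using dependent sets that need not lie in $\CC$: for the same $\SS$ with $\widehat M=U_{2,3}$ and ordering $3<1<2$, element $3$ is externally active for $B=\{1,2\}$ in $U_{2,3}$ (being the minimum of the sole circuit $\{1,2,3\}$), but it is not externally active in $\SS$ because no dependent subset of $B\cup 3$ lies in $\CC$. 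The compatibility you call ``ultimately a routine consequence of Ardila's embedding'' is therefore simply not true, and the reduction collapses; the decomposition must be proved intrinsically in the semimatroid.

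The molecule verification is essentially sound in outline, though the key lemma it relies on --- that the fundamental circuit $C(B,e)$ of an externally active $e$ is disjoint from $I(B)$ --- is usually proved in matroids via the circuit--cocircuit pairing, which one must be careful about in the absence of semimatroid duality; the argument does go through by working inside the geometric lattice $[\hat 0,\cl(B)]$, which is matroidal.
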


We use this decomposition, which generalizes that for
matroids proved in \cite{Crapo}, in order to rewrite the sum in Definition \ref{def:Gtutte} as a sum over all bases.

\begin{Mthm}\label{thm:craponew}
  Let $\GS$ be an almost-arithmetic $G$-semimatroid such that $\SS_{\GS}$ is a semimatroid. Let $\BB_{\GS}$ denote the set of bases of $\SS_{\GS}$ and fix a total ordering of $\ES$. For $B\in \BB_{\GS}$ let $E(B)$, resp.\ $I(B)$ denote the set of externally, resp.\ internally active elements with respect to $B$, and write $R_B:= B\setminus I(B)$. Then
$$
T_\GS(x,y) = \sum_{B\in \BB_{\GS}} \left(\sum_{p\in \mathcal{Z}(B)} x^{\iota(p)}\right)
\left(\sum_{c\in \oC{R_B}/G} y^{\eta_{E(B)}(c)}\right)
$$
where 
 \begin{enumerate}
   \item[$\eta_{E(B)}(c)$] is the number of $e\in E(B)$ with $e\leq \kappa_{\GS}(c)$ in  $\CC_\GS$ (Definition \ref{def:eta}), 
   \item[$\mathcal{Z}(B)$] denotes the set $\mathcal{X}(I(B))$ associated to the molecule $(R_B,I(B),\emptyset)$ in Definition \ref{def:iota} and, accordingly,
   \item[$\iota(p)$] is the number defined in Definition \ref{def:iota}.
\end{enumerate}
In particular, the theorem holds when $\GS$ is centered, in which case it extends \cite[Theorem 6.3]{dAM} to the nonrealizable (and non-arithmetic) case. 
\end{Mthm}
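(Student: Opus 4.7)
The plan is to use Ardila's basis-activity decomposition of Lemma \ref{lemmardila} (applicable since $\SS_{\GS}$ is a semimatroid by hypothesis) as the indexing skeleton, then to factor the multiplicity using property (A2) from almost-arithmeticity, and finally to convert the two resulting sums into the claimed $x$- and $y$-polynomial pieces via Möbius inversion followed by Propositions \ref{prop:iota} and \ref{cor:eta}.

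First I would write $T_\GS(x,y) = \sum_{B \in \BB_\GS} \sum_{A \in [R_B, B\cup E(B)]} \mm(A)(x-1)^{r-\rkE(A)}(y-1)^{|A|-\rkE(A)}$ and parametrize each interval by pairs $(F',L)$ with $F' \subseteq I(B)$ and $L \subseteq E(B)$, setting $A = R_B \cup F' \cup L$. The molecule property of $(R_B, I(B), E(B))$ immediately gives $\rkE(A) = \rkE(R_B) + |F'|$, whence $r - \rkE(A) = |I(B)| - |F'|$ and $|A| - \rkE(A) = |L|$. The same identity shows that $(R_B, F', L)$ is itself a molecule for every choice of $F' \subseteq I(B)$ and $L \subseteq E(B)$. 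By Proposition \ref{prop:almost} (property (A2)) we therefore get
\begin{equation*}
\mm(R_B \cup F' \cup L) \;=\; \frac{\mm(R_B \cup F')\,\mm(R_B \cup L)}{\mm(R_B)}.
\end{equation*}

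Substituting this factorization lets me separate the interval sum as a product, so that
\begin{equation*}
T_\GS(x,y) \;=\; \sum_{B \in \BB_\GS} \Phi_B(x)\,\Psi_B(y),
\end{equation*}
where $\Phi_B(x) := \sum_{F' \subseteq I(B)} \frac{\mm(R_B \cup F')}{\mm(R_B)}(x-1)^{|I(B)|-|F'|}$ and $\Psi_B(y) := \sum_{L \subseteq E(B)} \mm(R_B \cup L)(y-1)^{|L|}$. It remains to identify $\Phi_B$ with $\sum_{p \in \mathcal Z(B)} x^{\iota(p)}$ and $\Psi_B$ with $\sum_{c \in \oC{R_B}/G} y^{\eta_{E(B)}(c)}$.

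For $\Psi_B$, I would invert the defining inclusion-exclusion $\rho(R_B \cup L, R_B \cup E(B)) = \sum_{L \subseteq L' \subseteq E(B)} (-1)^{|L'\setminus L|}\mm(R_B \cup L')$ to express $\mm(R_B \cup L) = \sum_{L \subseteq L' \subseteq E(B)} \rho(R_B \cup L', R_B \cup E(B))$, then swap the order of summation:
\begin{equation*}
\Psi_B(y) = \sum_{L' \subseteq E(B)} \rho(R_B \cup L', R_B \cup E(B)) \sum_{L \subseteq L'}(y-1)^{|L|} = \sum_{L' \subseteq E(B)} \rho(R_B \cup L', R_B \cup E(B))\, y^{|L'|},
\end{equation*}
using the binomial identity $\sum_{L \subseteq L'}(y-1)^{|L|} = y^{|L'|}$. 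Proposition \ref{cor:eta} applied to the molecule $(R_B, \emptyset, E(B))$ then yields $\Psi_B(y) = \sum_{c \in \oC{R_B}/G} y^{\eta_{E(B)}(c)}$. An entirely analogous manipulation applied to $\Phi_B$, using Möbius inversion on $\rho(R_B, R_B \cup F') = \sum_{F'' \subseteq F'}(-1)^{|F'\setminus F''|}\mm(R_B \cup F'')$ and then Proposition \ref{prop:iota} for the molecule $(R_B, I(B), \emptyset)$, gives $\Phi_B(x) = \sum_{p \in \mathcal X(I(B))} x^{\iota(p)} = \sum_{p \in \mathcal Z(B)} x^{\iota(p)}$.

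The main obstacle is conceptual rather than computational: verifying that the sub-molecule $(R_B, F', L)$ really is a molecule for every choice of $F',L$ inside the active sets (which is immediate from the molecule $(R_B, I(B), E(B))$ but must be stated explicitly to invoke (A2)), and checking that (A2) indeed applies in the almost-arithmetic setting (guaranteed by Proposition \ref{prop:almost}). Everything else is bookkeeping via two parallel applications of Möbius inversion on boolean intervals, together with the two enumerative identities Proposition \ref{cor:eta} and Proposition \ref{prop:iota} already proved in Sections \ref{sec:AAA} and \ref{sec:almost}.
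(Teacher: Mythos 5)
Your proof is correct and follows essentially the same path as the paper's: Ardila's activity decomposition (Lemma \ref{lemmardila}) for the skeleton, factorization of $\mm$ on molecules, then Propositions \ref{cor:eta} and \ref{prop:iota} to identify the two factors. The only difference is that where the paper cites \cite[Lemma 4.3]{BM} to pass from the interval sum to the factored form in terms of $\rho$, you derive that step directly from (A2) and two parallel M\"obius inversions on boolean intervals; this makes your argument a bit more self-contained but is not a different method.
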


\begin{proof}
  First, using Lemma \ref{lemmardila} we rewrite
 $$
 T_\GS(x,y)=\sum_{B\in \BB} \sum_{A\in \mu(B)} 
\mm (A) (x-1)^{\rk(\SS_{\GS})-\rkE(A)}(y-1)^{\vert A \vert -
    \rkE(A)}
 $$
and then, using  \cite[Lemma 4.3]{BM} (whose proof only uses axiom (A2)) we obtain
$$
 T_\GS(x,y)=$$
$$\sum_{B\in \BB} 
\left(
\sum_{F\subseteq I(B)} \frac{\rho(R_B,R_B \cup (I(B)\setminus
  F))}{m(R_B)} x^{\vert F \vert}
\right)\!
\left(
\sum_{T\subseteq E(B)}\rho(R_B\cup T, R_B\cup E(B)) y^{\vert T \vert}
\right).
$$
Here, in every summand the right-hand side factor is ready to be treated with
Proposition \ref{cor:eta} applied to the molecule $(R_B,\emptyset, E(B))$, while  the left-hand side
factor equals the claimed polynomial by Proposition \ref{prop:iota} applied to the molecule $(R_B,I(B),\emptyset)$.
\end{proof}

\subsection{Deletion-contraction recursion}\label{ss:TG}
We have seen (Section \ref{sec:defs}) that the matroid operations of contraction and deletion extend in a natural way to the context of $G$-semimatroids. In this section we study these operations, showing that the Tutte polynomial of a \ST action decomposes as a weighted sum of the polynomial of any single-element contraction and that of the corresponding deletion.

Recall the definitions and notations from Section \ref{ssec:FSM} and Section \ref{sec:defs}. In the following, given a locally ranked triple $\SS$ we will write $\CC(\SS)$ for its associated simplicial complex (the triple's ``second component''). 

\begin{lemma}\label{lem:contraction} 
Let $\GS: G\circlearrowright (S,\CC,\rk)$ be a \WT $G$-semimatroid, and fix $e\in \ES$. Then,
\begin{itemize}
\item[(1)] there is a surjection $\phi: \CC(\SS_{\GS / e}) \to \CC(\SS_{\GS}/ {e })$ with \hspace{3cm} \linebreak$\rk_{\GS}(\phi(A)\cup e) - \rk_{\GS}(e)=\rk_{\GS/e}(A)$ which, if the action is \ST, also satisfies $\vert \phi (A) \vert = \vert A\vert$;
\item[(2)] $\PP_{\GS/ e} = (\PP_{\GS})_{\geq e}.$
\end{itemize}
Moreover, 
\begin{itemize}
\item[(3)]
$\displaystyle{m_{\GS }(A \cup e) = \sum_{A'\in \phi^{-1}(A)} m_{\GS/e}(A').}$
\end{itemize}
\end{lemma}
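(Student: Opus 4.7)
Throughout the proof, fix a representative $x_0 \in e$, so that $\GS/e$ is the action of $\stab(x_0)$ on $\SS/x_0=(S_{/x_0},\CC_{/x_0},\rk_{/x_0})$, and write $e_\LL := \cl_\SS(\{x_0\}) \in \LL$ for the corresponding atom. The plan is to address the three parts in turn, relying on Corollary~\ref{cor:rank}, Lemma~\ref{lem:GeLa} and Lemma~\ref{lem:inj-surj} as the main inputs.

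For part (1), I would define
\[
\phi(\underline Y) := \underline{Y \cup \{x_0\}} \setminus \{e\},
\]
where the outer $\underline{Y}$ is a $\stab(x_0)$-orbit in $\CC_{/x_0}$ while the inner underline denotes $G$-orbits; well-definedness is immediate from $\stab(x_0) \subseteq G$. Surjectivity is by lifting: given $B$ with $B \cup \{e\} \in \underline{\CC}$, pick $X \in \oC{B \cup \{e\}}$, translate by some $g \in G$ so that $x_0 \in g^{-1}X$, and note that $g^{-1}X \setminus \{x_0\} \in \CC_{/x_0}$ maps to $B$. The rank identity follows from Corollary~\ref{cor:rank} applied to both $\GS$ and to $\GS/e$ (still \WT by Proposition~\ref{prop:CoDe}): one gets $\rkE(\phi(\underline Y)\cup e) = \rk(Y \cup x_0)$, $\rkE(e)=\rk(x_0)$, and $\rk_{\GS/e}(\underline Y)=\rk_{/x_0}(Y)=\rk(Y\cup x_0)-\rk(x_0)$. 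Under \STy, the condition $\{x_0, gx_0\} \in \CC \Rightarrow gx_0=x_0$ forces the elements of any $Y \cup \{x_0\} \in \CC$ to lie in pairwise distinct $G$-orbits, whence $|\phi(\underline Y)|=|Y|$, and \STy for $\GS/e$ similarly yields $|\underline Y|=|Y|$.

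For part (2), the plan is to use the classical identification $\LL(\SS/x_0) \cong \LL(\SS)_{\geq e_\LL}$ of posets of flats and then pass to the $\stab(x_0)$-quotient. The resulting map
\[
\psi:\LL_{\geq e_\LL}/\stab(x_0) \longrightarrow \PS_{\geq e}, \qquad \stab(x_0) F \mapsto GF,
\]
is order-preserving, and surjectivity holds because any $Gy \geq e$ has some representative $y' \geq e_\LL$. For injectivity, suppose $y'=gy$ with $y,y' \geq e_\LL$: by Lemma~\ref{lem:GeLa} the element $e_\LL$ is the unique element of the interval $[\hat 0,y]_\LL$ projecting to $e \in \PS$, and applying $g$ shows that $ge_\LL$ is the unique such element of $[\hat 0,y']_\LL$, so $ge_\LL = e_\LL$. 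In the simple case (to which one may always reduce by passing to the simplification of $\SS$ without changing $\LL$) this gives $g \in \stab(x_0)$ directly; in general, one must show that $\stab(e_\LL)/\stab(x_0)$ acts trivially on $\LL_{\geq e_\LL}$, again via an atom-wise argument using \WTy and Lemma~\ref{lem:GeLa}.

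For part (3), the plan is to establish the bijection
\[
\{GX \in \CGS : \underline X = A \cup e\} \;\stackrel{\sim}{\longleftrightarrow}\; \bigsqcup_{A' \in \phi^{-1}(A)} \{\stab(x_0)Y : Y \in \CC_{/x_0},\ \underline Y = A'\},
\]
given by $GX \mapsto \stab(x_0)(X' \setminus x_0)$ for any $X' \in GX$ containing $x_0$ (which exists because $e \in \underline X$ and is unique up to the $\stab(x_0)$-action via Lemma~\ref{lem:inj-surj}\ref{lem:geo-loop}), with inverse $\stab(x_0)Y \mapsto G(Y \cup x_0)$. Summing cardinalities over the fibre above $A$ then yields (3). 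The main technical obstacle lies in the injectivity of $\psi$ in part (2) beyond the simple case; parts (1) and (3) are essentially bookkeeping, once the basic compatibility between $\stab(x_0)$-orbits in $\CC_{/x_0}$ and $G$-orbits in $\CC$ is established via Lemma~\ref{lem:inj-surj}\ref{lem:geo-loop}.
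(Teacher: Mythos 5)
Your definition of $\phi$ and your treatment of part~(1) via Corollary~\ref{cor:rank} are sound (and in fact a touch more carefully set up than the printed argument, which obtains $\phi$ from a map $\widetilde\phi\colon\CC_{\GS/e}\to(\CC_\GS)_{\geq e}$ at the level of orbits of central sets). For part~(2) you take a genuinely different route --- working directly with the geometric semilattices and their quotients rather than, as the paper does, showing that $\widetilde\phi$ is a bijection that intertwines the closure operators $\kappa_{\GS/e}$ and $\kg$. However, the gap you flag in~(2) is not patchable under \WTy alone: the statement is in fact false there. Take $S=\{a_1,a_2,b_1,b_2\}$ realized by affine lines in $\mathbb R^2$ with $a_1=a_2=\{x=0\}$, $b_1=\{y=0\}$, $b_2=\{y=1\}$ (so $\CC$ is the family of subsets not containing both $b_1,b_2$), and let $G=\mathbb Z/2$ act by swapping $a_1\leftrightarrow a_2$, $b_1\leftrightarrow b_2$; this action is \WT but not \ST. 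With $e=Ga_1$, $f=Gb_1$, $x_0=a_1$, one computes $\stab(x_0)=\{1\}$ and $\LL(\SS/x_0)=\{\{a_2\},\{a_2,b_1\},\{a_2,b_2\}\}$, so $\PP_{\GS/e}$ has three elements, while $(\PP_\GS)_{\geq e}$ has only two; likewise $m_\GS(\{e,f\})=3$ while the four preimages under your $\phi$ of $\{f\}$ each have $m_{\GS/e}=1$, so~(3) fails as well. Thus~(2) and~(3) genuinely need \STy (which is the hypothesis under which the lemma is actually invoked, in Theorem~\ref{thm:MainCD}). In particular your proposed reduction to the simple case cannot work: $\PP_{\GS/e}$ is the quotient of $\LL_{\geq\cl(\{x_0\})}$ by $\stab(x_0)$, \emph{not} by $\stab(\cl\{x_0\})$, and it is exactly this difference that simplification erases.

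The same defect is present, unflagged, in your part~(3): for the forward map $GX\mapsto\stab(x_0)(X'\setminus x_0)$ to be well defined you need that any two $X',X''\in GX$ containing $x_0$ differ by an element of $\stab(x_0)$, i.e.\ that $\{x_0,gx_0\}\in\CC\Rightarrow gx_0=x_0$, which is precisely \STy. You invoke Lemma~\ref{lem:inj-surj}\ref{lem:geo-loop} here, but that lemma is stated under \STy and, moreover, asserts existence rather than the uniqueness you need. Under \STy everything falls into place: $\stab(\cl\{x_0\})=\stab(x_0)$ is immediate from the definition, $\psi$ becomes a poset isomorphism, and the fiber computation for~(3) is the correct one. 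It is worth noting that the well-definedness of the inverse to $\widetilde\phi$ in the paper's own proof has the identical unstated dependence on \STy, so in pinpointing this you have isolated an issue that the published argument passes over silently.
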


\begin{proof} Let us choose a fixed representative $x_e\in e$. In order to prove (1), we 
start by recalling that, by definition, 
$$\CC_{(\GS / e)} = (\CC_{/ x_e})/\stab(x_e).$$ 
From now, throughout this proof, we write $H:=\stab(x_e)$.
 Recall also the natural order on $\CC_{\GS}$ (Remark \ref{rem:CSposet}) and define 
$$
\widetilde\phi : \CC_{\GS / e} \to (\CC_{\GS})_{\geq e}, \quad H\{x_1,\ldots, x_k\} \mapsto G\{x_1,\ldots,x_k,x_e\}.
$$
The function $\widetilde \phi$ is a bijection, because the assignment 
$$
G\{x_1,\ldots,x_k,gx_e\} \mapsto H\{g^{-1}x_1,\ldots, g^{-1}x_k\}
$$
determines a well-defined function, inverse to $\widetilde \phi$. 

In order to prove (2) we notice that $\widetilde\phi$ commutes with
the relevant closure operators, i.e.,
$$
\widetilde\phi \circ \kappa_{\GS/e} = \kappa_{\GS} \circ \widetilde\phi.
$$
Bijectivity of $\widetilde \phi$ implies then that $\PP_{\GS/e} =
\kappa_{\GS} ((\CC_{\GS})_{\geq e})$, and the latter is easily
seen to equal $(\PP_{\GS} )_{\geq e}$. Thus, (2) holds.

Consider now the map
$$
\phi: \underline{\CC_{/x_e}} \to \underline{\CC}_{/e} ; \{Hx_1,\ldots, Hx_k\} \mapsto \{Gx_1,\ldots, Gx_k\}
$$
and the following diagram
$$
\begin{CD}
\CC_{\GS /e} @>{\widetilde \phi}>> (\CC_{\GS})_{\geq e} \\
@V{\lfloor \cdot \rfloor}VV  @VV{\lfloor \cdot \rfloor \setminus \{e\}}V \\
\underline{\CC_{/x_e}} @>{\phi}>> \underline{\CC}_{/e}
\end{CD}
$$
where commutativity  is evident once we evaluate all maps on a specific argument as follows.
$$
\begin{tikzcd}
H\{x_1,\ldots,x_k\} \arrow[mapsto]{r} \arrow[mapsto]{d} & 
G\{x_1,\ldots,x_k,x_e\} \arrow[mapsto]{d} \\
\{Hx_1,\ldots,Hx_k\} \arrow[mapsto]{r} &
 \{Gx_1,\ldots,Gx_k\}
\end{tikzcd}
$$
Now, for every $A\in \underline{\CC}_{/e}$ the map $\widetilde\phi$ gives a bijection between the $\lfloor \cdot \rfloor \setminus \{e\}$-preimage of $A$ and the $\lfloor \cdot \rfloor $-preimage of $\phi^{-1}(A)$, which proves (3). 
Claim (1) follows by inspecting the definition of the rank and, for the claim about cardinality, by noticing that if $Hx_1 \neq Hx_2$ and $gx_1=x_2$ for some $g\in G$, then $\{x_1,gx_1\}\in \CC$ and by \STy $x_1=gx_1=x_2$, a contradiction.
\end{proof}

\begin{proposition}\label{prop:CoDe} Let $\GS$ denote a $G$-semimatroid and fix $e\in \ES$.
	If $\GS$ is \WT{} -- resp.\ \ST, normal, arithmetic --,
  then so are also $\GS / e$ and $\GS \setminus e$. Moreover, if $\GS$ is \WT and cofinite, then $\GS/e$ and $\GS\setminus e$ are also cofinite.
\end{proposition}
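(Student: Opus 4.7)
The plan is to verify each property separately for deletion and for contraction. For the deletion $\GS\setminus e$, which is the restriction of the $G$-action to the $G$-invariant subset $S\setminus\cup e$, stabilizers, ranks and central sets disjoint from $e$ are all inherited unchanged from $\GS$. Moreover, for any $X\in\CC_{\setminus e}$ the set $\gamma.X$ stays disjoint from $e$ for every $\gamma\in\Gamma^X$ (since $G$-orbits in $S$ are pairwise disjoint), so $W_{\GS\setminus e}(X)=W_{\GS}(X)$. Consequently \WTy, \STy, normality and arithmeticity all transfer immediately, and cofiniteness follows from $\CC_{\GS\setminus e}\subseteq\CC_{\GS}$.

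For the contraction, fix $x_e\in e$ and write $H:=\stab(x_e)$, so that $\GS/e$ is the action of $H$ on $\SS/x_e$. To verify \WTy for $\GS/e$, assume $h\in H$, $s\in S_{/x_e}$ and $\{s,h(s)\}\in\CC_{/x_e}$, i.e.\ $\{s,h(s),x_e\}\in\CC$. Then $\{s,h(s)\}\in\CC$, so \WTy of $\GS$ gives $\rk(\{s,h(s)\})=\rk(s)$; applying (R3) to $\{s,h(s)\}$ and $\{s,x_e\}$ (whose union lies in $\CC$) yields $\rk(\{s,h(s),x_e\})\leq\rk(\{s,x_e\})$, while (R2) supplies the reverse inequality. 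This forces $\rk_{/x_e}(\{s,h(s)\})=\rk_{/x_e}(s)$. If $\GS$ is \ST, the same hypothesis forces $h(s)=s$ directly. For normality, note that for every $s\in S_{/x_e}$ the stabilizer of $s$ in $H$ equals $\stab_G(s)\cap H$, and the intersection of a normal subgroup of $G$ with any subgroup is normal in the latter.

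Arithmeticity is the main obstacle. For $X\in\CC_{/x_e}$, the inclusion $H\hookrightarrow G$ factors through $G/\stab_G(s)$ with kernel $\stab_G(s)\cap H$, yielding for each $s\in X$ an injective homomorphism $\iota_s\colon H/(\stab_G(s)\cap H)\hookrightarrow G/\stab_G(s)=\Gamma(s)$ and hence an injective group homomorphism
\[
\iota\colon\prod_{s\in X}H/\stab_H(s)\longrightarrow\Gamma^{X\cup x_e},\quad (\gamma_s)_{s\in X}\longmapsto\bigl((\iota_s\gamma_s)_{s\in X},[1]_{x_e}\bigr).
\]
For any $\gamma$ in the domain, choosing representatives of $\gamma$ in $H$ gives $\iota(\gamma).(X\cup x_e)=\gamma.X\cup\{x_e\}$ as sets, and since elements of $H$ fix $x_e$ one has $x_e\notin\gamma.X$. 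Hence $\gamma.X\in\CC_{/x_e}$ if and only if $\iota(\gamma).(X\cup x_e)\in\CC$, i.e.\ $W_{\GS/e}(X)=\iota^{-1}(W_{\GS}(X\cup x_e))$. Arithmeticity of $\GS$ then guarantees that the right-hand side is a subgroup, and its preimage under the homomorphism $\iota$ is a subgroup of the domain, as required. Finally, cofiniteness of $\GS/e$ is inherited via the bijection $\CC_{\GS/e}\to(\CC_{\GS})_{\geq e}$ established in the proof of Lemma \ref{lem:contraction}.
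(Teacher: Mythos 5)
Your proposal is correct and takes essentially the same route as the paper: trivial inheritance for deletion, and for contraction the identical choice $x_e\in e$, $H=\stab(x_e)$, the identical (R2)/(R3) argument for \WTy, and the identical injection $\prod_{s\in X}H/\stab_H(s)\hookrightarrow\Gamma^{X\cup x_e}$ for arithmeticity. The one small improvement is your observation that $W_{\GS/e}(X)$ is literally the preimage of the subgroup $W_{\GS}(X\cup x_e)$ under this homomorphism, which yields the subgroup property in one stroke, whereas the paper only explicitly verifies closure under products.
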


\begin{proof}
	The treatment of $\GS\setminus e$ is trivial: indeed, the same group acts on a smaller set of elements with the same constraints. We will thus examine the case $\GS/e$. Choose $x_e\in e$ and let $H:=\stab(x_e)$.
	\begin{itemize}
	\item[--] {\em $\GS$ \WT.} To check \WTy for $\GS/e$  consider some $y\in S_{/x_e}$ and suppose $\{y, hy\}\in \mathcal{C}_{/x_e}$ for some $h\in H$. This means by definition that $\{y,hy,x_e\}\in \CC$, thus $\{y,hy\}\in \CC$ and, by \WTy of $\GS$, we have $\rk_{\CC}(\{y,hy\})=\rk_{\CC}(\{y\})$. Now by (R3) we know
	$$\rk_{\CC}(\{y\}) + \rk_{\CC}(\{y,hy,x_e\}) \leq \rk_{\CC}(\{y,x_e\}) + \rk_{\CC}(\{y,hy\}).$$
	By subtracting $\rk_{\CC}(\{y\})$ from both sides we obtain the inequality \quad \linebreak$\rk_{\CC}(\{y,hy,x_e\}) \leq \rk_{\CC}(\{y,x_e\})$ and, by (R2), $\rk_{\CC}(\{y,hy,x_e\}) = \rk_{\CC}(\{y,x_e\})$. We are now left with computing
\begin{align*}
	\rk_{\CC/x_e}(\{y,hy\}) &\stackrel{\textrm{def.}}{=}  \rk_{\CC}(\{y,hy,x_e\}) - \rk_{\CC}(\{x_e\})\\ 
	&=\rk_{\CC}(\{y,x_e\}) - \rk_{\CC}(\{x_e\}) \stackrel{\textrm{def.}}{=}  \rk_{\CC/x_e}(\{y\})
	\end{align*}
	\item[--] {\em $\GS$ \ST.} As above, consider some $y\in S_{/x_e}$ and suppose $\{y, hy\}\in \mathcal{C}_{/x_e}$ for some $h\in H$. This means that $\{y,hy,x_e\}\in \CC$, thus $\{y,hy\}\in \CC$ and, by \STy of $\GS$, $hy=y$ as required.
	\item[--] {\em $\GS$ normal.} Let $X \in \CC_{/x_e}$ then $\stab_H(X) = \stab_G(X)\cap H$ is normal in $G$ because it is the intersection of two normal subgroups. {\em A fortiori} it is normal in $H$.
	\item[--] {\em $\GS$ arithmetic.} Let $X =\{x_1,\ldots,x_k\}\in \CC_{/x_e}$. For all $i$ there is a natural group homomorphism
	$$\omega_i: \Gamma_{/e}(x_i) = H /\stab_H(x_i) \hookrightarrow G/\stab_G(x_i)=\Gamma(x_i)$$
	and these induce {a} natural group homomorphism
	$$
	\omega : \Gamma_{/e}^X \to \Gamma^{X\cup x_e}, \quad (\gamma_1,\ldots,\gamma_k) \mapsto 
	(\id,\omega_1(\gamma_1),\ldots,\omega_k(\gamma_k)).
	$$ 
	Now consider $\gamma,\gamma'\in W_{/e}(X)$. Then clearly $\omega(\gamma),\omega(\gamma')\in W(X\cup x_e)$ and, by arithmeticity of $\GS$,  
	$$\quad\omega(\gamma)\omega(\gamma')=(\id,\omega_1(\gamma_1)\omega_1(\gamma'_1),\ldots )=(\id,\omega_1(\gamma_1\gamma'_1),\ldots )\in W(X\cup x_e).$$ 
	Now, this means that $\omega(\gamma\gamma').(X\cup x_e)=\gamma\gamma'.X\cup\{x_e\}\in \CC$, hence \linebreak$\gamma\gamma'.X\in \CC_{/x_e}$ thus by definition $\gamma\gamma'\in W_{/e}(X)$.
	\item[--] {\em $\GS$ (\WT and) cofinite.} Cofiniteness of $\GS\setminus e$ is trivial, and that of $\GS/e$ is a consequence of Lemma \ref{lem:contraction}.(3).
	\end{itemize}
\end{proof}

We can now state and prove the desired recursion for Tutte polynomials of \ST $G$-semimatroids, generalizing the corresponding result of \cite{BM} for the arithmetic and centered case.

\begin{proof}[Proof of Theorem \ref{thm:MainCD}]
 In this proof for greater clarity we will write $\rk_{\GS}$, resp.\ $\rk_{\GS/e}$ for the rank functions of $\SS_{\GS}$, resp.\ $\SS_{\GS/e}$ (in particular, $\rk_{\GS}$ corresponds to what we called $\rkE$ previously).
 
 We follow \cite[Proposition 8.2]{Ard}, where the analogous results for semimatroids are proved, and start by rewriting the definition.
\begin{align*}
T_\GS(x,y):&= \sum_{A\in \CS} \mm (A) (x-1)^{r(\SS_{\GS})-\rk_{\GS}(A)}(y-1)^{\vert A \vert - \rk_{\GS}(A)}\\
&= \sum_{
\underbrace{{\scriptstyle{ A\in \CS,\,\,\, e\not\in A}}}_{\textrm{i.e., }A \in \CS_{\setminus e} = \,\CC(\SS_{\GS
  \setminus e})}
} \mm (A) (x-1)^{r(\SS_{\GS})-\rk_{\GS}(A)}(y-1)^{\vert A \vert - \rk_{\GS}(A)}\\
 &\quad+ \sum_{A\cup e \in \CS} \mm (A\cup e)
 (x-1)^{r(\SS_{\GS})-\rk_{\GS}(A\cup e)}(y-1)^{\vert A \cup e
   \vert - \rk_{\GS}(A\cup e)}
\\
\end{align*}
The second summand can be rewritten as follows by Lemma \ref{lem:contraction}.
 $$
\underbrace{\sum_{A\in \CS_{/e}} \sum_{A'\in \phi^{-1}(A)}}_{A'\in \CC(\SS_{\GS/e})}
m_{\GS/e} (A')
 (x-1)^{r(\SS_{\GS/e})-\rk_{\GS/e}(A')}(y-1)^{\vert A'
   \vert +1 - \rk_{\GS/e}(A') - \rk_{\GS}(e)}
$$
If $e$ is neither a loop nor an isthmus, by Remark \ref{lem:deletion}
and  Lemma \ref{lem:contraction} 
we have $\rk(\SS_{\GS}) = \rk(\SS_{\GS \setminus e})$ and
$\rk_{\GS}(e)=1$, thus
the two summands are exactly $T_{\GS \setminus e} (x,y)$ and $T_{\GS /
  e}(x,y)$, respectively. If $e$ isn isthmus, $\rk(\SS_{\GS}) =
\rk(\SS_{\GS \setminus e})-1$ (and $\rk_{\GS}(e)=1$) and thus 
we have $T_{\GS}(x,y) = (x-1)T_{\GS \setminus e} (x,y) + T_{\GS /e}
(x,y) $. 
Finally, when $e$ is a loop we have $\rk_{\GS}(e)=0$ (but still
$\rk(\SS_{\GS}) = \rk(\SS_{\GS \setminus e})$) and we easily get the
claimed identity.
\end{proof}

\appendix
\section{An algebraic lemma}
We give the proof of the following auxiliary lemma for completeness'sake and in order not to clutter the exposition in the main text.

\begin{lemma}\label{lem:backdoor}
Consider the following commutative diagram of abelian groups with exact rows and where the arrows $\twoheadrightarrow$ denote epimorphisms.
\begin{center}
\begin{tikzcd}[column sep=.05in,row sep=.1in]
&&&&& B_0\arrow{rrr}\arrow{rdd}\arrow{ld} &   &   & C_0\arrow{ld}\arrow{rdd} &\\
&&&&B_1\arrow[crossing over, two heads]{rrr} & &   & C_1&   & \\
&&&A_2\arrow{rrr}\arrow[two heads]{dl}&&   & B_2\arrow[two heads]{rrr}\arrow[two heads]{dl} &   &   & C_2\arrow{dl}  \\
&&A_3\arrow{rrr}&&& B_3\arrow[two heads, from= uul, crossing over]\arrow[two heads]{rrr} &   &   & C_3\arrow[from= uul, crossing over]&  \\
\end{tikzcd}
\end{center}
If the square of the $B_i$ is a pushout square, then so is the square of the $C_i$.
\end{lemma}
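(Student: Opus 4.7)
The plan is to verify the pushout property of the $C$-face directly. In abelian groups, the $C$-cube is a pushout if and only if the sequence
\[
C_0 \longrightarrow C_1 \oplus C_2 \longrightarrow C_3 \longrightarrow 0,
\]
with arrows $c\mapsto(f_C(c),-g_C(c))$ and $(c_1,c_2)\mapsto u_C(c_1)+v_C(c_2)$ (here $f_C,g_C,u_C,v_C$ are the four $C$-maps of the cube, induced from their $B$-counterparts through the quotients $\pi_i\colon B_i\twoheadrightarrow C_i$), is exact. Writing $\iota_i\colon A_i\hookrightarrow B_i$ for the injections from the short exact rows, the pushout hypothesis on $B$ is precisely the exactness of the analogous sequence with $B$'s in place of $C$'s.

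First I would dispatch surjectivity of the right-hand map: given $c_3\in C_3$, lift along $\pi_3$ to some $b_3\in B_3$, decompose $b_3=u(b_1)+v(b_2)$ using the $B$-pushout, and push forward. Commutativity of the $C$-face makes the sequence a complex, so the only thing left is exactness at $C_1 \oplus C_2$.

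This is the main step. Given $(c_1,c_2)$ whose image vanishes in $C_3$, I would choose lifts $b_i\in B_i$ of $c_i$; then $u(b_1)+v(b_2) \in \ker\pi_3 = \iota_3(A_3)$, so write this as $\iota_3(a_3)$ for some $a_3 \in A_3$. Here is the single substantive use of the hypotheses: since $A_2\twoheadrightarrow A_3$ is epi, pick a lift $a_2\in A_2$ of $a_3$; commutativity then gives $v(\iota_2(a_2)) = \iota_3(a_3)$. Replacing $b_2$ by $b_2' := b_2 - \iota_2(a_2)$ leaves $c_2$ unchanged (since $\iota_2(a_2) \in \ker\pi_2$) while forcing $u(b_1)+v(b_2')=0$ in $B_3$. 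Exactness at $B_1 \oplus B_2$ in the $B$-sequence now yields $b_0\in B_0$ with $(b_1,b_2') = (f(b_0),-g(b_0))$, and projecting down gives $(c_1,c_2) = (f_C(\pi_0(b_0)),-g_C(\pi_0(b_0)))$, the desired preimage.

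The hard part, insofar as there is one, is really just bookkeeping: tracking the signs and pinpointing that the epimorphism $A_2\twoheadrightarrow A_3$ is used exactly once, namely to lift $a_3$ (by symmetry, $A_1\twoheadrightarrow A_3$ would work equally well). Notably, surjectivity of the $B_0$- or $A_0$-side arrows is never invoked, which matches the asymmetric hypotheses of the lemma.
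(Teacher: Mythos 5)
Your proof is correct, and it takes a genuinely different route from the paper's. The paper argues via the universal property of pushouts: it takes an arbitrary co-cone $(H,h_1,h_2)$ on the $C$-square, pulls it back along the epimorphisms $\epsilon_i\colon B_i\twoheadrightarrow C_i$ to obtain a co-cone on the $B$-square, invokes the $B$-pushout to get a unique $\varphi\colon B_3\to H$, shows $\varphi\circ j_3=0$ by composing with the epimorphism $a\colon A_2\twoheadrightarrow A_3$ and cancelling, and then factors $\varphi$ through the cokernel $\epsilon_3\colon B_3\twoheadrightarrow C_3$. You instead use the additive-category characterization of pushouts as right-exactness of $C_0\to C_1\oplus C_2\to C_3\to 0$ and run an element-wise diagram chase; the adjustment $b_2\mapsto b_2-\iota_2(a_2)$ plays the role that right-cancellation of $a$ plays in the paper. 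Both arguments isolate the same single use of the hypothesis that $A_2\to A_3$ is epi, and both correctly never need surjectivity on the $A_0$- or $B_0$-side. Your approach is more concrete and self-contained for abelian groups; the paper's is phrased so as to work verbatim in any category with cokernels, at the cost of being slightly more abstract. Either fits the application in the paper.
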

\begin{proof}
We name the arrows in the diagram as below and we verify the pushout property by considering a co-cone of the diagram spanned by $C_0,C_1, C_2$, which consists of a group $H$ and two arrows $h_1,h_2$ such that $h_1\circ \hat{c}_1 = h_2\circ\hat{c}_2$. One verifies that the group $H$ with the morphisms 
$\hat{h}_1:= h_1 \circ \epsilon_1$, $\hat{h}_2:= h_2 \circ \epsilon_2$ defines a co-cone on the diagram spanned by $B_0,B_1,B_2$. Since by assumption the $B_i$ span a pushout square, there is a unique arrow $\varphi$ with
$$
\varphi \circ b_1 = \hat{h}_1 =h_1 \circ \epsilon_1, \quad
\varphi \circ b_2 = \hat{h}_2 =h_2 \circ \epsilon_2 
$$
\begin{center}
\begin{tikzcd}[column sep=.3in,row sep=.2in]
&&& B_0\arrow{rrrr}\arrow{rdd}\arrow{ld} &  & &   & C_0\arrow[ld, swap, "\hat{c}_1"]\arrow{rdd}{\hat{c}_2} &\\
&&B_1\arrow[crossing over, two heads]{rrrr}[near end]{\epsilon_1} & &&   & C_1&   & \\
&A_2\arrow{rrr}[near start]{j_2}\arrow[two heads]{dl}{a}&&   & B_2\arrow[two heads]{rrrr}[near start]{\epsilon_2}\arrow[two heads]{dl}{b_2} & &  &   & C_2\arrow{dl}{c_2}  \\
%
%
A_3\arrow{rrr}{j_3}&&& B_3\arrow[two heads, from= uul, crossing over, swap, pos=0.7, "b_1"]\arrow[two heads]{rrrr}{\epsilon_3} &  & &   & C_3\arrow[from= uul, crossing over, swap, pos=0.7, "c_1"]&  \\
&&&&&&&&&& \\
&&&&&&& H
\arrow[from=uulllllll, dotted, bend right =10, swap, "0"]
\arrow[from=uullll, dotted, swap, pos=.4, "\exists ! \varphi"]
\arrow[from=uuuul, crossing over, dashed, bend right, swap, pos=0.7, "h_1"]
\arrow[from=uuur, crossing over, dashed, bend left, "h_2"] & \\
\end{tikzcd}
\end{center}
Notice that 
$$
\varphi \circ j_3 \circ a = \varphi \circ b_2 \circ j_2 = \hat{h}_2 \circ j_2 =
h_2 \circ \underbrace{\epsilon_2 \circ j_2}_{=0} = 0 = 0\circ a
$$
and, since $a$ is an epimorphism, by right cancellation we obtain
$$\varphi \circ j_3 = 0.$$
Exactness of the bottom row, by the universal property of cokernels, shows that there exist a unique $g$ with $g\circ \epsilon_3= \varphi$.

\noindent{\em Claim.} For every $g': C_3\to H$ and every $i=1,2$,
\begin{center}
 $g'\circ c_i=h_i$ is equivalent to $\epsilon_3 \circ g' = \varphi$.
\end{center}
\noindent{\em Proof.} By right cancellativity of epimorphisms, $g'\circ c_i=h_i$ is equivalent to  
$$g'\circ c_i\circ \epsilon_i =h_i\circ \epsilon_i.$$
By commutativity of the diagram, the left-hand side of this equation equals $g'\circ \epsilon_3\circ b_i$. By the definition of $\varphi$, the right-hand side equals $\varphi \circ b_i$. Again, by right-cancellativity of the epimorphism $b_i$ we obtain the claimed equivalence. \qed

Using the claim we see immediately that our $g$ satisfies $g\circ c_1=h_1$ and $g\circ c_2=h_2$. Moreover, for every $g'$ with the same commutativity properties the claim implies that $g'\circ\epsilon_3 = \varphi$, and by the uniqueness in the definition of $g$ we must have $g'=g$. This concludes the proof that the square of the $C_i$ is a pushout.
\end{proof}

\addtocontents{toc}{\protect\setcounter{tocdepth}{-1}}

\bibliographystyle{plain}
\bibliography{Bib}
\end{document}